\documentclass[a4paper, 12p]{article}
\usepackage{amsmath}
\usepackage{amssymb}
\usepackage{amsthm, mathtools}
\usepackage{enumerate}
\usepackage[shortlabels]{enumitem}
\usepackage{comment}
\usepackage{bbm}
\usepackage{graphicx} 
\usepackage[usenames,dvipsnames,svgnames,table]{xcolor}
\usepackage[colorlinks=true,
            linkcolor=RoyalBlue,
            urlcolor=Mulberry,
            citecolor=ForestGreen, hypertexnames=false]{hyperref}
\numberwithin{equation}{section}

\usepackage[nameinlink, capitalize]{cleveref}
\usepackage{hyperref}
\usepackage[backend=biber,style=alphabetic,sorting=anyt,maxbibnames=99, backref=true, backrefstyle=none]{biblatex}
\usepackage[margin=1in]{geometry}
\usepackage{microtype}

\newcommand{\Exp}{\mathbb{E}}
\newcommand{\FF}{F}

\newcommand{\Omproj}{\overline{\Omega}}

\newcommand{\piproj}{\overline{\pi}}
\newcommand{\Pproj}{\overline{P}}
\newcommand{\Qproj}{\overline{Q}}

\newcommand{\Omorproj}{\hat{\Omega}}
\newcommand{\Porproj}{\hat{P}}

\newcommand{\piorproj}{\hat{\pi}}

\newcommand{\flo}[2]{F_{#1 \rightarrow #2}}
\newcommand{\var}{\operatorname{Var}}
\newcommand{\dir}{\mathcal{E}}

\newcommand{\ent}{\operatorname{Ent}}

\newcommand{\rar}{\rightarrow}

\newcommand{\Om}{\Omega}

\def\RRp{\mathbb{R}_{\ge 0}}

\newtheorem{theorem}{Theorem}[section]
\newtheorem{lemma}[theorem]{Lemma}
\newtheorem{corollary}[theorem]{Corollary}
\newtheorem{remark}[theorem]{Remark}

\newtheorem{definition}[theorem]{Definition}
\newtheorem{fact}[theorem]{Fact}

\usepackage{stmaryrd}
\DeclarePairedDelimiter\set{\lbrace}{\rbrace}
\DeclarePairedDelimiter\parens{\lparen}{\rparen}
\DeclarePairedDelimiter\sqbr{[ }{]}

\DeclarePairedDelimiter\norm{\|}{\|}

\DeclarePairedDelimiter\inpr{\langle}{\rangle}
\def\Tmix{\tau_{\mathtt{mix}}}
\usepackage{boxedminipage}

\DeclareMathOperator{\Gap}{\mathtt{gap}}

\DeclareMathOperator{\LS}{\mathtt{ls}}
\def\TV{\mathtt{TV}}

\title{Faster Mixing for Triangulations via Transport Flows}
\author{Vedat Levi Alev\thanks{University of Haifa --- \href{mailto:vedatalev@math.haifa.ac.il}{vedatalev@math.haifa.ac.il}; Supported by the ISF Grant No. 721/2024 of Uriya A. First.}, Daniel Frishberg\thanks{California Polytechnic State University
---  \href{mailto:dfrishbe@calpoly.edu}{dfrishbe@calpoly.edu}}, Michail Sarantis, and Prasad Tetali\thanks{Carnegie Mellon University --- \href{mailto:ptetali@cmu.edu}{ptetali@cmu.edu}; Supported in part by the NSF DMS-2151283 grant
and Alexander M. Knaster Professorship.}}
\date{\today}

\begin{document}

\maketitle

\begin{abstract}
We prove an~$\widetilde O(n^2)$ bound for the \emph{relaxation time} and the \emph{log-Sobolev time} (inverse log-Sobolev constant) of the classical triangulation flip chain on a convex $(n+2)$-gon, implying a mixing time of $\widetilde O(n^2)$. The previous state of the art for the mixing time of this chain due to \citeauthor{eppstein2022improved} \cite{eppstein2022improved} was $\widetilde O(n^3)$, while the best known lower bound on the mixing time due to \citeauthor{mrs}~\cite{mrs} is~$\Omega(n^{3/2})$. Our relaxation time bound makes significant progress towards Aldous' \cite{aldousconj} conjectured bound of $\Theta(n^{3/2})$ for the relaxation time.

We improve upon the analysis of~\cite{eppstein2022improved} by further developing the framework of \emph{transport flows} introduced in~the work \cite{weimingjs} of \citeauthor{weimingjs}. In this light, our results can be seen as a more efficient way of using combinatorial decompositions to obtain functional inequalities for Markov chains. We hope our ideas will find other applications in the future. 
\end{abstract}

\section{Introduction}

\paragraph{Motivation.}

Local-to-global methods for bounding the mixing times of Markov chains \cite{kaufman2020high, alevlau, guo2020local, chen2021optimal, anari2022entropic} have enjoyed immense success in recent years in conjunction with the spectral independence framework of \cite{anarihardcore, chen2021rapidcol, feng2022rapid, anari2022entropic}. The basic idea intrinsic to these techniques is imposing on the state space a \emph{simplicial complex} structure and analyzing the \emph{global} chain by way of analyzing smaller \emph{local} chains, which are easier for analysis. While the idea of \emph{decomposing} the state space hierarchically and using intermediate chains to establish a log-Sobolev or Poincar\'e inequality is a classical idea in the study of Markov chains \cite{kaibelexp, eppstein2022improved, eppstein2021rapid, heinrich2020glauber, zongchentw,jstv,jstvimproved,federmihail}, the ideas involved in the classical works are typically of a combinatorial nature. In contrast, the ideas employed by the local-to-global framework are often more analytical or algebraic.

In this work, we adapt the local-to-global machinery to a well-studied chain on triangulations of a convex polygon. This walk is one of the most well-known random walks on the so-called \emph{Catalan structures} (see~\cite{aldousclad, aldousconj, cohenthesis, ardilamatroid, wilsonlozenge, mrs, mct, eppstein2022improved}). Catalan structures are a class of naturally defined combinatorial objects. These include but are not limited to \emph{Dyck paths} (paths in a lattice that stay above a particular line); \emph{balanced} sequences of ones and zeroes; \emph{non-crossing chord diagrams}; \emph{integer partitions}; \emph{binary plane trees}; and \emph{triangulations} of a convex polgyon. Chains on generalizations of Catalan structures have also been studied~\cite{eppstein2021rapid, caraceni2020, ncstmixing}.

A few of the Catalan structures (and their generalizations) are susceptible to recent methods using the theory of simplicial complexes (e.g.~the Catalan matroid studied by~\cite{ardilamatroid}). Others, such as the case of triangulations (and the isomorphic chain on binary search trees), have proven resistant to these methods.

In contrast, a few results over the past several years have achieved some success applying the classical method of \emph{canonical paths}, namely in the cases of triangulations~\cite{eppstein2022improved} and \emph{non-crossing spanning trees}~\cite{ncstmixing}.
The canonical paths technique, along with its generalization to multicommodity flows, is useful in bounding conductance or, when the paths are not too long, in directly establishing a Poincar\'e inequality~\cite{diacstroock,sinclairimproved}.

\paragraph{The Aldous conjecture.} A famous conjecture of \cite{aldous1994triangulating} states that the relaxation time of the \emph{triangulation flip walk} should be $\Theta(n^{3/2})$. \cite{mrs} proved that the mixing time is $\Omega(n^{3/2})$, but a matching upper bound still has not been found. \cite{mct} proved an $\widetilde O(n^5)$ upper bound\footnote{where we recall that $\widetilde\Omega(\bullet)$ and $\widetilde O(\bullet)$ hide polylogarithmic factors in $n$} via a comparison argument, using a tight result by Wilson \cite{wilsonlozenge} for the chain on \emph{Dyck paths}. \cite{eppstein2022improved} proved an $\widetilde O(n^3)$ upper bound, the best known result until this paper.

\paragraph{Transport flows.}
Recently~\cite{weimingjs} proved a refinement of the mixing result of \cite{jerrum1989approximating} for the natural random walk on perfect matchings using a technique they called \emph{transport flows}. This technique morally combines the ``best of both worlds'' from simplicial local-to-global methods and canonical paths. That is, in the case of both the recent local-to-global methods and also older decomposition techniques, one generally must consider the spectral gap of each intermediate localized walk in the hierarchy, and one incurs a multiplicative loss in the spectral gap at each level of the decomposition. This may lead to an undesirable blow-up in the relaxation time (inverse spectral gap) unless the structures considered are all strong expanders. In such a setting, it would be more desirable to trade a multiplicative loss for an additive one.   

While the result of \cite{weimingjs} trades the multiplicative loss for an additive one, using ideas similar to classical flow-based techniques, it is limited in the sense that the techniques only apply when the state space has a specific structure (a \emph{partite simplical complex}). One of our main contributions will be extending these ideas to a more general decomposition framework, more in line with classical decomposition techniques, e.g.~\cite{jstv} etc. 

\paragraph{Multi-way single-commodity flows.}
Inspired by the results of \cite{weimingjs}, we turn to a previous result proven in
\cite{eppstein2022improved}. This result used a multicommodity flow (a well-studied generalization of canonical paths) with bounded congestion in the state space of the triangulation walk, replacing the multiplicative loss with an additive loss using a construction analogous to transport flows. However, this combinatorial bound proved a bound on the conductance and consequently suffered a quadratic loss in bounding the \emph{spectral gap}, due to the celebrated Cheeger inequality, \cite{cheeger1, cheeger2}. 

A natural way to bound the spectral gap, without the aforementioned quadratic loss, is to appeal to the local-to-global methods. However, as the structures considered do not appear to meet the very strong expansion required of these methods, this is unlikely to work. 

\paragraph{Our contribution.}
We generalize the transport flow technique of \cite{weimingjs} from the setting of partite complexes to a more general decomposition framework. Na\"ively, a limitation of the flow-based techniques is the length of the paths used \cite{diacstroock}, and indeed this limitation also manifests in the techniques of \cite{weimingjs}. Inspired by \cite{montenegrovc}, we will avoid the quadratic loss and get a sharper spectral gap bound for the triangulation flip walk by studying the average congestion. These ideas will culminate in an $\widetilde O(n^2)$ relaxation time bound for the triangulation flip chain, which improves upon the relaxation time bound of $\widetilde O(n^3)$ proven in \cite{eppstein2022improved}.

To obtain an $\widetilde O(n^2)$ mixing time bound, we will prove a log-Sobolev inequality for the triangulation flip chain.  It is well known that a bound on the relaxation time can be converted to a bound on the mixing time, while suffering a logarithmic loss in the size of the state space \cite{diaconis1996logarithmic}. Usually, and indeed in the case of triangulations, this factor is unfortunately linear in the problem size $n$. Since our results are decomposition based, by appealing to this comparison when the compared chains are small enough, we only suffer a doubly logarithmic loss in the size of the state space, and consequently only a polylogarithmic loss in $n$.

This $\widetilde O(n^{2})$ mixing time bound on the triangulation flip chain improves the state of the art mixing time of $\widetilde O(n^3)$ by \cite{eppstein2022improved}, while also getting an improved result for the spectral gap and a novel log-Sobolev inequality. Our technical contribution can be thought of as a more efficient way of leveraging transport flow constructions, which we hope will inspire further research in the future.

Our main results are as follows.
\begin{theorem}[Functional Inequalities for the Flip Chain]
    \label{thm:lsi}
    The triangulation walk satisfies a log-Sobolev inequality with constant $\widetilde{\Omega}(n^{-2})$, and has spectral gap $\widetilde\Omega( n^{-2})$.
\end{theorem}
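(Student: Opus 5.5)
We plan to prove \cref{thm:lsi} by a hierarchical decomposition of the state space of triangulations of the convex $(n+2)$-gon, combined with a generalized transport flow argument that makes the loss across levels additive rather than multiplicative. The decomposition follows \cite{eppstein2022improved}. Partition the triangulations according to the triangle $t$ containing a fixed base edge, or more symmetrically according to a ``central'' diagonal that splits the polygon into two pieces of size between $n/3$ and $2n/3$. Each block is then a product of two smaller triangulation spaces. The projected chain on blocks, in which the splitting triangle or diagonal moves, is a chain on $O(n)$ or $O(n^2)$ labels with a Catalan-product stationary measure. Recursing gives depth $O(\log n)$.

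The first step is a decomposition theorem for variance and entropy. For a function $f$, we write
\[
\var(f) = \Exp_{\text{block}}[\var_{\text{block}}(f)] + \var(\bar f),
\]
with the analogous chain rule for $\ent$. The within-block term factors over the product structure and is handled by induction, with tensorization for products. The key new ingredient is to bound the between-block term $\var(\bar f)$ directly by the global Dirichlet form $\dir(f,f)$, rather than through the gap of the projected chain times a restriction-chain gap. We do this with a \emph{transport flow}. For each pair of adjacent blocks $B, B'$, we construct a coupling of $\pi_B$ and $\pi_{B'}$ supported on pairs joined by short flip paths, and write $\bar f(B)-\bar f(B')$ as an average of telescoping sums of $f$ along those paths. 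Applying Cauchy--Schwarz then yields $\var(\bar f)\le C\,\dir(f,f)$, where $C$ is a congestion quantity.

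The crucial point, following \cite{montenegrovc}, is to control the \emph{average} congestion weighted by path length, rather than the maximum congestion times the maximum length. The paths moving the central diagonal have length up to $O(n)$, but under the Catalan measure a typical move needs only $O(\sqrt n)$ or polylogarithmically many flips, because the relevant subtree sizes are heavy-tailed. The target is that the between-block cost at each level is $\widetilde O(n^2)$, and that these costs add, rather than multiply, across the $O(\log n)$ levels. Together with the inductive within-block bound, this gives relaxation time $\widetilde O(n^2)$.

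For the log-Sobolev inequality we run the same scheme with entropy. At the bottom of the recursion, once the pieces have size $\mathrm{polylog}(n)$, we switch to comparison: we convert the spectral gap to a log-Sobolev constant via \cite{diaconis1996logarithmic}, losing a factor $\log(1/\pi_{\min}) = O(\text{piece size}) = \mathrm{polylog}(n)$. At the upper levels we need an entropy version of the transport flow inequality. We bound $\ent(\bar f)$ by a variance-type quantity of $\sqrt{f}$ along the flow, using $\ent(g^2)\lesssim$ a Dirichlet form of $g$ on the flow edges, and ideally prove a log-Sobolev inequality for the projected label chain directly, since it lives on a small space.

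We expect the main obstacle to be the average-congestion bound for the transport flow. It requires explicitly constructing couplings between adjacent blocks whose flip paths have small expected squared length under the Catalan measure, and showing that no edge is overloaded. This is where the gain from $n^3$ to $n^2$ must come from. The entropy analogue of the between-block bound is a secondary difficulty.
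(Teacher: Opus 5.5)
Your outline of the Poincar\'e half has the paper's architecture: a balanced decomposition, tensorization plus induction inside blocks, a direct bound of the between-block variance by $\dir_\pi(f)$ via a transport flow with controlled average congestion, and costs that add over $O(\log n)$ levels. But the step carrying all the content is only stated as a target, and the pieces around it fail as written.

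\emph{Decomposition.} Neither decomposition you propose supports the recursion. Blocks given by the triangle on a fixed base edge are products, but they can be totally unbalanced; the recursion then has depth up to $n$ and loses a factor $n$. A balanced ``central diagonal'' is not unique in a triangulation, so any canonical choice produces blocks that are not products of unconstrained flip chains. The paper uses the triangle containing the center of the polygon, which is unique and gives exact products of three flip chains on at most $n/2+1$ vertices (\cref{lem:cartprod}).

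\emph{Routing between blocks.} Flows between \emph{adjacent} blocks do not by themselves bound $\var_{\piproj}F$, which involves all pairs of blocks, and going through the gap of the projected chain reinstates the multiplicative loss. The paper uses the Eppstein--Frishberg multicommodity flow on the projection chain (congestion $\widetilde O(\sqrt n)$, path length $\widetilde O(1)$). This reduces $\var_{\piproj}F$ to two kinds of terms (\cref{lem:flowreduction}): terms across the perfect matchings between $\Omega_{tt'}$ and $\Omega_{t't}$, bounded edge by edge (\cref{lem:perfmat}); and internal problems of spreading flow from a union of boundary classes of measure at most $3/4$ to the rest of a sub-polygon chain.

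\emph{The internal flow.} For these internal problems you give no flow at all. The paper builds one recursively over pinnings and proves two separate facts. First, Catalan-ratio monotonicity gives $|\phi_{xy}|\le 1$, i.e.\ maximum congestion $\rho\le\Delta$ (\cref{lem:rholeq1,lem:maxcong}). Second, the flow summed over any one depth level of a state's dual binary tree is at most the flow entering at the top (\cref{lem:rhodecomp}). So the load is governed by depths in random binary trees, which are $\widetilde O(\sqrt n)$ (\cref{lem:expdepth}). Summing over pinnings and triangles with the $\Theta((rs)^{-3/2})$ triangle probabilities then gives $\bar\rho_S=\widetilde O(\sqrt n)\,\pi(T)$ (\cref{lem:ldirstrong,lem:proj_flow}). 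Feeding $\rho\bar\rho$ into \cref{thm:ourtransport}, whose Cauchy--Schwarz step is weighted by the flow itself so that no path-length moments are needed, yields $n^2=\widetilde O(\sqrt n)\cdot\Delta\cdot\widetilde O(\sqrt n)$.

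For the log-Sobolev half, the gap is a missing idea rather than a missing computation. The ``entropy version of the transport flow inequality'' you say you need is neither supplied nor necessary. An LSI for the projected label chain would bound $\ent_{\piproj}$ by that chain's own Dirichlet form, and relating that back to $\dir_\pi(f)$ is exactly where the multiplicative loss enters. Converting gap to LSI only at the leaves of the recursion handles the base case, but not the $O(\log n)$ levels above it.

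What the paper does instead is apply the Diaconis--Saloff-Coste comparison on the projection space at every level. Set $g=f^2$ and $G(t)=\Exp_{\pi_t}g$. The projected measure has minimum mass $\Omega(n^{-3})$, so \cref{fac:varent_comp} gives $\ent_{\piproj}(G)\le O(\log n)\var_{\piproj}(\sqrt G)$. Jensen gives $\var_{\piproj}(\sqrt G)\le\var_\pi(f)$ (\cref{lem:sqrtG}). The global Poincar\'e inequality, proved first, bounds this by $\widetilde O(n^2)\dir_\pi(f)$. The within-block entropy is then handled by LSI tensorization (\cref{lem:product_ineq}) and the same recursion, so no entropic flow argument is needed.
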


\begin{corollary}[Mixing Time for the Flip Walk]
    \label{cor:mix_flip}
    The mixing time of the triangulation walk is $\widetilde{O}(n^{2})$.
\end{corollary}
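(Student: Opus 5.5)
The proof of \cref{cor:mix_flip} is a direct application of the standard implication from a log-Sobolev inequality to total-variation mixing, combined with \cref{thm:lsi}. I will use the classical bound of Diaconis and Saloff-Coste \cite{diaconis1996logarithmic}: for a reversible, lazy Markov chain with stationary distribution $\pi$ and log-Sobolev constant $\rho$,
\[
\Tmix(\varepsilon) \le \frac{1}{\rho}\left( \frac14 \log\log \frac{1}{\pi_{\min}} + \log\frac{1}{\varepsilon}\right) + O(1/\rho),
\]
or, in discrete time, the same bound up to constants. If the chain is not lazy, I first pass to the lazy version. This changes both the log-Sobolev constant and the mixing time only by a factor of $2$.

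The first step is to check the hypotheses. The triangulation flip walk is reversible with respect to the uniform distribution on triangulations of the convex $(n+2)$-gon. Its state space has size $C_n$, the $n$-th Catalan number, so $C_n \le 4^n$. Hence $\pi_{\min} = 1/C_n \ge 4^{-n}$, and therefore
\[
\log\log(1/\pi_{\min}) \le \log(n\log 4) = O(\log n).
\]

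The second step is to plug in. By \cref{thm:lsi}, $\rho = \widetilde\Omega(n^{-2})$, so $1/\rho = \widetilde O(n^2)$. Taking $\varepsilon = 1/4$ gives
\[
\Tmix = \widetilde O(n^2)\cdot O(\log n) = \widetilde O(n^2).
\]

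As a fallback, the spectral gap alone would only give $\Tmix \le \Gap^{-1}\log(1/\pi_{\min}) = \widetilde O(n^3)$. This loses a factor of $n$, which is precisely why the log-Sobolev part of \cref{thm:lsi} is needed.

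There is no real obstacle here beyond \cref{thm:lsi} itself. The only care needed is matching the normalization of the log-Sobolev constant in \cref{thm:lsi} (for example, $\mathcal{E}(\sqrt f,\sqrt f) \ge \rho\,\ent(f)$) with the convention in \cite{diaconis1996logarithmic}, and handling laziness or the continuous-to-discrete time conversion. Each of these costs only constant factors.
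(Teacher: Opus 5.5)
Your proposal is correct and is essentially the paper's argument. The paper derives the corollary directly from the log-Sobolev bound in \cref{thm:lsi}, via the LSI-to-mixing estimate of \cref{thm:lsi_mix} (Montenegro--Tetali) and the fact that the flip walk has holding probability $1/2$; this matches your use of the Diaconis--Saloff-Coste bound, and your explicit check that $\log\log(1/\pi^*) = O(\log n)$ because $\pi^* = 1/C_n \ge 4^{-n}$ is the step the paper leaves implicit.
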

As mentioned before, our results avoid the Cheeger loss, by bounding the average congestion of the flow we analyze. This will follow by an analysis of the heights of binary trees and insights concerning the isomorphism between the triangulation walk and the natural \emph{rotation walk} on binary trees (see e.g.~\cite{sleator1988rotation,hiltonpedersen,cohenthesis}). 

Montenegro~\cite{montenegrovc} noted that the average vertex congestion is essentially equivalent to the expected path length in a multicommodity flow; so our result can be thought of as a dual version of \cite{weimingjs} utilizing the average congestion in place of the expected path length.

\section{Preliminaries}\label{sec:prelim}
\subsection{Random Walks and Mixing Times}
\label{sec:mcmc}
A random walk matrix $P \in \mathbb R^{\Omega \times \Omega}$ is a matrix with non-negative entries, all of whose rows sum to 1. Formally,
\[ \forall~x, y \in \Omega:~ P(x, y) \ge 0 \qquad \textrm{and}\qquad \forall~ x \in \Omega:~ \sum_{y \in \Omega} P(x, y) = 1.\]
A distribution $\pi: \Omega \to [0, 1]$ is called stationary for $P$ if, $\pi P = \pi$. The walk described by $P$ is reversible, if the following \emph{detailed balance conditions} hold:
\[ \pi(x) P(x, y) = \pi(y) P(y, x)\,,\quad\quad\forall~x, y \in \Omega.\]
We will also write $\alpha(P) = \min_{x \in \Omega} P(x, x)$ for the \emph{holding probability} of the random walk $P$ and $\pi^*~=~\min_{x \in \Omega} \pi(x)$ for the \emph{minimum measure} of $\pi$.

The $\epsilon$-mixing time of a random walk is defined to be the least time point such that after $t$ steps of random walk according to $P$, the distribution of the random walk is $\epsilon$-close to the stationary distribution regardless of the initial distribution, i.e.~
\[ \Tmix(P, \epsilon) = \min\set*{  t \in \mathbb N  ~\left|~\norm*{ \pi - \mu P^t}_{\TV} \le \epsilon\,,~\textrm{for all probability distributions}~\mu: \Omega \to [0,1]\right.}, \]
where $\norm*{\mu - \nu}_{\TV} = 1/2 \cdot \sum_{x \in \Omega} |\mu(x) - \nu(x)|$.

\subsection{Projection-Restriction and Product Chains}

Let $(\Om,P,\pi)$ be an ergodic Markov chain and $\Om=\bigcup_{t\in T}\Om_t$ a decomposition of its state space. When the stationary distribution $\pi$ is clear from context, we will simply write $(\Omega, P)$ in place of $(\Omega, P, \pi)$. We write $\bar\pi(t) = \pi(\Omega_t)$ for all $t \in T$ and define the \ref{eq:proj_chain_def} by the triple $(\Omproj,\Pproj, \bar\pi)$ where
\begin{equation}\tag{projection chain}\label{eq:proj_chain_def}
    \Pproj(t,t') = \frac{\sum_{x \in \Omega_t, y \in \Omega_t'} \pi(x) P(x, y)}{\sum_{z \in \Omega_t}\pi(z)}\,,
\end{equation}
when $t \neq t'$ and with self loops for the remaining probabilities.
That is, the probability of transitioning from class $t$ to $t'$ in the \ref{eq:proj_chain_def} is the probability we transition from any element of $\Om_t$ to any element of $\Om_{t'}$ in the original chain conditioned on being in $\Om_t$. Naturally, by defining
$\pi_t(x)=\frac{\pi(x)}{\pi(\Omega_t)}$
we can also define the \ref{eq:restr_chain_def} as the triple $(\Omega_t, P_t, \pi_t)$ where
\begin{equation}
\label{eq:restr_chain_def}\tag{restriction chain}
P_t(x, y) = \frac{P(x, y)}{\sum_{z \in \Omega_t} P(x, z)}\,,
\end{equation}
for all $x, y \in \Omega_t$.

Let $k$ be a positive integer, $(\Omega_, P_i, \pi_i),\ 1\leq i\leq k$ be Markov chains and $w$ a distribution on $k$. Consider the \ref{eqn:prod_transition} $(\Om,P,\pi)$ with $\Om=\prod_{i=1}^n \Om_i$ and
\begin{equation}\label{eqn:prod_transition}\tag{product chain}
P(x,y)=\sum_{i=1}^n w_i P_i(x_i,y_i)\prod_{j\ne i}\mathbbm{1}\{x_j=y_j\}.    
\end{equation}
Equivalently, the transition from every state consists of picking a coordinate in $[k]$ according to $w$ and then change this coordinate according to $(\Om_i,P_i)$. It is straightforward to verify that the stationary distribution of the product chain is $\pi=\bigotimes_{i=1}^n\pi_i$. We note that the \emph{graph} which underlies the \ref{eqn:prod_transition} corresponds to a weighted Cartesian product of the graphs which underlie the individual chains~$(\Omega, P_i, \pi_i)$.

\subsection{Variance, Entropy and Functional Inequalities}
\label{sec:funcineq}
Let $f:\Om\to\mathbb{R}$ be any function. Given a probability measure $\pi:\Omega \to [0,1]$ the
\ref{eq:var-def} functional $\var_\pi(f)$  is:
\begin{equation} \var_\pi(f) = \Exp_\pi[f^2] - \parens*{ \Exp_{\pi}
	f}^2 = \inpr*{ f, (I - J_\pi)
f}_\pi.\tag{variance}\label{eq:var-def}
\end{equation}
If $\bigcup_{t\in T}\Om_t$ is a decomposition of $\Om$, then the following equation is known as the \ref{eqn:total_var}
\begin{equation}\label{eqn:total_var}
    \var_{\pi}(f) = \var_{\piproj} F + \Exp_{t \sim \piproj} \var_{\pi_t} f,
    \tag{law of total variance}
\end{equation}
where $F:T\to\mathbb{R}$ is defined as $F(t)=\Exp_{\pi_t}f$ and $\piproj$ is the stationary distribution of the projection chain.

We will make use of the following consequence of Jensen's inequality,
\begin{lemma}\label{lem:convexity_dpi}
    Let $\pi$ be a probability distribution supported on $\Omega = \bigcup_{t \in T} \Omega_t$ (for disjoint $\Omega_t$) and $g : \Omega \to \RRp$ be a non-negative valued function.

    We define the probability measure $\bar\pi$ on $T$ by setting $\bar\pi(t) = \pi(\Omega_t)$ for each $t \in T$ and set $G : T \to \RRp$ 
    \[ G(t) = \Exp_{x \sim \pi}\sqbr*{g(x)~\mid~x \in \Omega_t}\]
    for each $t \in T$.
    Then, for any function $\beta: \RRp \to \RRp$ such that $\beta^2$ is convex, we have
    \[ \var_{\bar \pi} \beta(G) \le \var_\pi \beta(g).\]
\end{lemma}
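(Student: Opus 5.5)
The plan is to use the pair representation of variance together with Jensen's inequality for a jointly convex function of two variables. For any probability measure $\mu$ and any $h$,
\[ \var_\mu(h) = \tfrac12\,\Exp_{x,y\sim\mu}\parens*{h(x)-h(y)}^2, \]
with $x,y$ independent. Applying this on both sides, it suffices to prove
\[ \parens*{\beta(G(t)) - \beta(G(t'))}^2 \le \Exp_{x\sim\pi_t,\ y\sim\pi_{t'}}\parens*{\beta(g(x))-\beta(g(y))}^2 \quad\text{for all } t,t'\in T. \]
Averaging this over independent $t,t'\sim\bar\pi$ gives the lemma. Indeed, drawing $t\sim\bar\pi$ and then $x\sim\pi_t$ (with $\pi_t(x)=\pi(x)/\pi(\Omega_t)$) yields $x\sim\pi$, because the $\Omega_t$ are disjoint. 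So the average of the right-hand side is exactly $2\var_\pi\beta(g)$.

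Since $G(t)=\Exp_{x\sim\pi_t} g(x)$ and $G(t') = \Exp_{y\sim\pi_{t'}} g(y)$, and $x,y$ are independent, the pointwise inequality is Jensen's inequality applied to
\[ \Psi(u,v)=(\beta(u)-\beta(v))^2 \quad\text{on } \RRp^2, \]
evaluated at the random point $(g(x),g(y))$. It therefore remains to show that $\Psi$ is jointly convex whenever $\beta\ge0$ and $\beta^2$ is convex. This is the main obstacle.

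For this I write $\varphi=\beta^2$, which is convex and non-negative. Then
\[ \Psi(u,v) = \varphi(u)+\varphi(v) - 2\sqrt{\varphi(u)\varphi(v)}, \]
and the model case $\varphi(u)=u$ is the Hellinger function $(\sqrt u-\sqrt v)^2$. It is convex because $(a,b)\mapsto\sqrt{ab}$ is jointly concave on $\RRp^2$. Composing this convex function with $(u,v)\mapsto(\varphi(u),\varphi(v))$ does not automatically preserve convexity, since $h(a,b)=(\sqrt a-\sqrt b)^2$ is not monotone. So the general case needs its own argument.

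My first attempt is to verify convexity of $\Psi$ along an arbitrary segment $(u_\lambda,v_\lambda)$. Set $p_\lambda=\beta(u_\lambda)$ and $q_\lambda=\beta(v_\lambda)$. Convexity of $\varphi$ gives
\[ p_\lambda \le \sqrt{(1-\lambda)p_0^2+\lambda p_1^2}, \]
that is, $\beta$ is convex in the quadratic-mean sense, and the same holds for $q_\lambda$. I would then try to combine the two quadratic-mean bounds with the elementary inequality $(p-q)^2\le \max(p,q)^2 - \min(p,q)^2$, valid for $p,q\ge0$. If a direct argument stalls, my fallback is a supremum representation: express $\Psi$ as a supremum over parameters $c$ of functions of the form $a\varphi(u)+b\varphi(v)+\text{const}$ with $a,b\ge0$. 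That representation would give joint convexity immediately, and I would try to extract it from the Hellinger case via the perspective representation of $(\sqrt a-\sqrt b)^2$. Once convexity of $\Psi$ is in hand, the steps above complete the proof.
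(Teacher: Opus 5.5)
\textbf{The step you call the main obstacle is false, and so is the lemma as stated.} Take $\beta(u)=u^2$, so $\beta\ge 0$ and $\beta^2(u)=u^4$ is convex. Then $\Psi(u,v)=(u^2-v^2)^2$ has $\partial_v^2\Psi=12v^2-4u^2<0$ near $v=0$, so $\Psi$ is not even convex in $v$ for fixed $u>0$. This is not only a failure of your method. Let $\Omega_1=\{a\}$ with $\pi(a)=1/2$ and $g(a)=3$, and let $\Omega_2=\{b_1,b_2\}$ with $\pi(b_1)=\pi(b_2)=1/4$, $g(b_1)=0$, $g(b_2)=2$. Then $G=(3,1)$, so $\var_{\bar\pi}\beta(G)=(9-1)^2/4=16$. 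On the other hand, $\beta(g)$ takes the values $9,0,4$ with probabilities $1/2,1/4,1/4$, so $\var_\pi\beta(g)=44.5-5.5^2=14.25<16$. Neither the quadratic-mean bound nor a supremum representation can close the argument. Even in the Hellinger case, $(\sqrt a-\sqrt b)^2=\sup_{c>0}\,[(1-c)a+(1-c^{-1})b]$ always has a negative coefficient. That is exactly what breaks when you substitute $a=\varphi(u)$, $b=\varphi(v)$.

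\textbf{The paper's proof does not establish the general statement either.} Its second line replaces $(\Exp_{t\sim\bar\pi}\beta(G(t)))^2$ by $(\Exp_\pi\beta(g))^2$, which is not an identity. The lemma does hold under the extra hypothesis that $\beta$ is concave, and that is all the paper uses (in \cref{lem:sqrtG}, with $\beta=\sqrt{\cdot}$). Jensen gives $\Exp_t\beta^2(G(t))\le\Exp_\pi\beta^2(g)$ and $\Exp_t\beta(G(t))\ge\Exp_\pi\beta(g)\ge 0$; subtracting the squares yields the claim. This is the paper's computation with its equality correctly replaced by an inequality.

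Your pairwise route also proves this corrected statement. Since $x\sim\pi_t$ and $y\sim\pi_{t'}$ are independent, you only need $\Psi$ convex in each variable separately: apply Jensen in $y$ with $x$ fixed, then in $x$. For concave $\beta\ge 0$, the map $v\mapsto\beta^2(v)-2\beta(u)\beta(v)+\beta(u)^2$ is convex because $\beta(u)\ge 0$. For $\beta=\sqrt{\cdot}$ this is your Hellinger observation. So add concavity of $\beta$ (or specialize to $\beta=\sqrt{\cdot}$) rather than try to prove the lemma as stated.
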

\begin{proof}
    We have,
    \begin{align*}
        \var_{\bar\pi} \beta(G)
        &~=~\Exp_{t \sim \bar\pi} \beta^2(G(t)) - \parens*{\Exp_{t \sim \bar\pi} \beta(G(t))}^2\\
        &~=~\Exp_{t \sim \bar\pi} \sqbr*{\beta^2\parens*{\Exp_{x \sim \pi}\sqbr*{ g(x)~\mid~x \in \Omega_t}}} - \parens*{\Exp_{t \sim \bar \pi} \Exp_{x \sim \pi}\sqbr*{ \beta(g(x))~\mid~x \in \Omega_t} }^2\\
        &~\le~\Exp_{t \sim \bar\pi} \Exp_{x \sim \pi}\sqbr*{ \beta^2(g(x))~\mid~x \in \Omega_t} -  \parens*{\Exp_{t \sim \bar \pi} \Exp_{x \sim \pi}\sqbr*{ \beta(g(x))~\mid~x \in \Omega_t} }^2\\
        &~=~\var_{\pi} \beta(g)\,,
    \end{align*}
    where for the inequality we have used Jensen's inequality, and the final equality is due to the law of total expectation.
\end{proof}

The second functional we will need is that of entropy. If $f: \Omega \to \mathbb R_{\ge 0}$ is a non-negative function, the \ref{eq:ent_def} $\ent_\pi(f)$ of $f$ is
\begin{equation}\label{eq:ent_def}\tag{entropy}
    \ent_\pi(f) = \Exp_{\pi}\sqbr*{ f \log f} - \parens*{ \Exp_\pi f} \log \parens*{\Exp_\pi f}\,.
\end{equation}
Similarly to variance, we have the analogous \ref{eqn:total_ent} for a decomposition $\cup_{t\in T}\Om_t$ of the state space:
\begin{equation}\label{eqn:total_ent}
    \ent_{\pi}( f )= \ent_{\piproj} (F) + \Exp_{t \sim \piproj} \ent_{\pi_t}(f).
    \tag{law of total entropy}
\end{equation}

The following comparison between \ref{eq:var-def} and \ref{eq:ent_def} is well-known:
\begin{fact}[Theorem A.1, \cite{diaconis1996logarithmic}]\label{fac:varent_comp}
    Let $\pi$ be a distribution supported on $\Omega$. Then, writing $f^2: \Omega \to \mathbb R$ for the function obtained by $f^2(x) = (f(x))^2$, for all $f: \Omega \to \mathbb R$,
    \[ \frac{1 - 2\pi^*}{\log(1/\pi^* - 1)} \cdot \ent_\pi(f^2) \le \var_\pi(f).\]
\end{fact}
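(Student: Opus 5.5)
We would prove \cref{fac:varent_comp} by identifying $\frac{1-2\pi^*}{\log(1/\pi^*-1)}$ as the log-Sobolev constant of the ``resample from $\pi$'' chain $K(x,y)=\pi(y)$. The Dirichlet form of this chain is exactly $\var_\pi(f)$, so the claim is that
\[ \rho := \inf\set*{\frac{\var_\pi(f)}{\ent_\pi(f^2)} : \ent_\pi(f^2)\neq 0} \]
is at least the stated constant.

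\textbf{Reduction to $f\ge 0$.} Replacing $f$ by $|f|$ leaves $\ent_\pi(f^2)$ unchanged and does not increase $\var_\pi(f)$, since $\var_\pi|f| = \Exp f^2 - (\Exp|f|)^2 \le \var_\pi f$. Both functionals are $2$-homogeneous, so we may also normalize $\Exp_\pi f^2=1$ and restrict to nonnegative $f$ on this compact set.

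\textbf{Existence of a minimizer.} We then show the infimum is attained at a nonconstant $f$. The only danger is near constant functions. Expanding $f=1+\epsilon h$ with $\Exp h=0$ gives $\ent_\pi(f^2)=2\epsilon^2\Exp h^2+O(\epsilon^3)$ and $\var_\pi f=\epsilon^2\Exp h^2$, so the ratio tends to $1/2$. Since the target constant is at most $1/2$ (by $\log(u)\ge 2(u-1)/(u+1)$ with $u=1/\pi^*-1$), we may assume $\rho<1/2$. A minimizing sequence therefore stays away from constants, and compactness yields a minimizer $f$, which we may take strictly positive (otherwise perturb).

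\textbf{Euler--Lagrange step.} This is the step we expect to be the main obstacle, since the perturbation must respect positivity and the normalization. Differentiating $\var_\pi(f)-\rho\,\ent_\pi(f^2)$ in the direction $\delta_x$ gives, for every $x$ with $\pi(x)>0$,
\[ f(x)-\Exp_\pi f = \rho\big(f(x)\log f(x)^2 - f(x)\log\Exp_\pi f^2\big), \]
up to the normalization constant. Each value $t=f(x)$ is therefore a positive root of $t\mapsto t - a - \rho\,t\log t^2 - bt$. This function is concave in $t>0$, because $(t\log t)''=1/t>0$ makes $-\rho\,t\log t^2$ concave. A concave function has at most two roots, so $f$ takes at most two values $u\neq v$, on a set $A$ and on $A^c$ respectively.

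\textbf{Two-point problem.} Put $p=\pi(A)$. Then $\var_\pi f$ and $\ent_\pi(f^2)$ coincide with the corresponding functionals for the two-point measure $(p,1-p)$ on $\{u,v\}$. We now invoke the sharp two-point log-Sobolev inequality (Diaconis--Saloff-Coste; Higuchi--Yoshida): for the measure $(p,1-p)$ the optimal ratio is
\[ c(p)=\frac{1-2p}{\log((1-p)/p)}, \]
interpreted as $1/2$ at $p=1/2$. If we have to prove this ourselves, we would parametrize $u=\sqrt{1+s}$, $v=\sqrt{1-ts}$ and reduce to a one-variable calculus inequality; that is the most computational part.

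\textbf{Conclusion.} The function $c$ is symmetric under $p\mapsto 1-p$ and increasing on $(0,1/2]$. Since $\min(p,1-p)\ge\pi^*$, we get $\rho\ge c(\pi^*)$, which is exactly the inequality of \cref{fac:varent_comp}.
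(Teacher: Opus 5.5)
Your proof is correct. It is essentially the proof of Theorem A.1 in Diaconis--Saloff-Coste, which the paper cites without proof: a nonconstant positive minimizer exists once the optimal ratio is below $1/2$, the Euler--Lagrange equation together with strict concavity of $t\mapsto t-a-bt-2\rho\,t\log t$ forces the minimizer to take only two values, and the sharp two-point constant $c(p)$, increasing on $(0,1/2]$, finishes the argument. The one step you should write out is positivity of the minimizer: if $f(x)=0$, then along $g=f+\epsilon\delta_x$ the quantity $\ent_\pi(g^2)$ changes by only $o(\epsilon)$, while $\var_\pi(g)$ changes by $-2\epsilon\,\pi(x)\Exp_\pi f+O(\epsilon^2)$, so $\var_\pi(g)-\rho\,\ent_\pi(g^2)$ would become negative, which contradicts the definition of $\rho$.
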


For a reversible $P \in \mathbb R^{\Omega \times \Omega}$ with
stationary measure of $\pi$, we define the \ref{eq:dirichlet_form} $\dir_P(f)$ of $f$ as follows:
\begin{equation}\label{eq:dirichlet_form}\tag{Dirichlet form} \dir_P(f) = \frac12 \cdot\sum_{x, y \in \Omega} \pi(x) P(x, y)( f(x) - f(y))^2\,.
\end{equation}

The Poincar\'e constant or the \ref{eq:gap-def} of $P$ and
is denoted by $\Gap(P)$ and is the solution to the following variational formula,
\begin{equation}\label{eq:gap-def}
	\Gap(P) = \min\set*{ \left. \frac{ \dir_P(f)
	}{\var_\pi(f) } ~\right|~ \var_\pi(f) \ne 0} = 1 -
	\lambda_2(P).\tag{spectral gap}
\end{equation}
The \ref{eq:logsob_def} $\LS(P)$ of $P$ is defined as the solution to the following variational formula,
\begin{equation}\label{eq:logsob_def}\tag{log-Sobolev constant}
    \LS(P) = \inf\set*{\left. \frac{\dir_P(f)}{\ent_\pi(f^2)}~\right|~\ent_\pi(f^2) \ne 0}.
\end{equation}
The following result is well-known, see e.g.~\cite{diacstroock}:
\begin{theorem}\label{thm:spec-mix}
    Let $P \in \mathbb R^{\Omega \times \Omega}$ be a self-adjoint row-stochastic matrix with stationary distribution $\pi$. Then,
    \[ \Tmix(P, \epsilon) \le \frac{1}{\Gap(P)} \cdot \log\parens*{\frac{1}{\sqrt{\epsilon \cdot \pi^\star}}}.\]
\end{theorem}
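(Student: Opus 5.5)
The plan is the standard spectral argument: control the distance to stationarity in $L^2(\pi)$ and then pass to total variation. The theorem states the gap only through $\Gap(P) = 1-\lambda_2(P)$, so I will use the setting the paper works in, namely a chain whose spectrum is nonnegative. This holds for instance if $\alpha(P)\ge 1/2$, i.e.\ the chain is lazy, as the flip walk is. In that case every nontrivial eigenvalue lies in $[0,\lambda_2]$.

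\textbf{Step 1 (spectral decay).} Self-adjointness of $P$ on $L^2(\pi)$ gives an orthonormal eigenbasis $\mathbf 1=\varphi_1,\varphi_2,\dots$ with eigenvalues $1=\lambda_1\ge\lambda_2\ge\dots\ge 0$. For any $g$ with $\Exp_\pi g=0$ this yields $\norm{P^t g}_\pi^2\le \lambda_2^{2t}\norm{g}_\pi^2$. By convexity it suffices to treat point masses $\mu=\delta_x$. Their density $h_x=\delta_x/\pi$ satisfies
\[\norm{h_x-\mathbf 1}_\pi^2=\tfrac1{\pi(x)}-1\le \tfrac1{\pi^\star}.\]
By reversibility, $\mu P^t/\pi = P^t(\mu/\pi)$. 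Hence the chi-square distance obeys
\[\chi_t^2:=\norm*{\tfrac{\delta_xP^t}{\pi}-\mathbf 1}_\pi^2\le \frac{\lambda_2^{2t}}{\pi^\star}\le \frac{e^{-2t\Gap(P)}}{\pi^\star},\]
using $\lambda_2=1-\Gap(P)\le e^{-\Gap(P)}$.

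\textbf{Step 2 (plug in $t$).} Take $t\ge \Gap(P)^{-1}\log\bigl(1/\sqrt{\epsilon\pi^\star}\bigr)$. Then $e^{-2t\Gap}\le \epsilon\pi^\star$, so $\chi_t^2\le\epsilon$.

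\textbf{Step 3 (to total variation), the main obstacle.} Cauchy--Schwarz gives $\norm{\mu P^t-\pi}_{\TV}=\tfrac12\norm{\mu P^t/\pi-\mathbf 1}_{1,\pi}\le \tfrac12\chi_t$. With Step 2 this yields only
\[\norm{\mu P^t-\pi}_{\TV}\le \tfrac12\sqrt{\epsilon},\]
which is at most $\epsilon$ precisely when $\epsilon\ge 1/4$. So the stated $\sqrt{\epsilon}$ inside the logarithm matches the argument exactly when closeness is measured by the chi-square distance, i.e.\ $\chi_t^2\le\epsilon$, or in the regime $\epsilon\ge1/4$. This covers the case used for $\Tmix$ with the customary $\epsilon=1/4$.

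For general small $\epsilon$ in total variation, the same computation must instead target $\chi_t\le 2\epsilon$. This costs $\log\bigl(1/(2\epsilon\sqrt{\pi^\star})\bigr)$, which differs from the stated bound only by an additive $\Gap^{-1}\cdot\tfrac12\log(1/(4\epsilon))$. I therefore expect the reconciliation in Step 3 to be the delicate point. I will present the proof with Steps 1--2 as the core, and Step 3 in the $\chi^2$ (equivalently $\epsilon\ge 1/4$) form in which the displayed inequality holds verbatim. Since $\log(1/\epsilon)$ is lower order for the $\widetilde O(n^2)$ mixing bound of \cref{cor:mix_flip}, this suffices for the application.
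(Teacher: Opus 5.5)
Your Steps 1--3 are the Diaconis--Stroock $\ell^2$ argument that the paper relies on; the paper gives no proof of its own, only the citation. The restrictions you impose are not weaknesses of your argument, because the displayed inequality is false as written, so there is nothing to ``reconcile'' in Step 3.

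\emph{The laziness hypothesis is necessary.} You need nonnegativity of the spectrum, or else $\Gap(P)$ must be replaced by the absolute spectral gap $\gamma_*=1-\max\{\lambda_2(P),\,|\lambda_{\min}(P)|\}$. The $2$-cycle $P=\bigl(\begin{smallmatrix}0&1\\1&0\end{smallmatrix}\bigr)$ has $\Gap(P)=2$, yet $\norm*{\delta_1P^t-\pi}_{\TV}=1/2$ for all $t$.

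\emph{The $\sqrt{\epsilon}$ fails in total variation even for lazy chains.} Take $P=\bigl(\begin{smallmatrix}1-p&p\\p&1-p\end{smallmatrix}\bigr)$ with $p\le 1/2$. Then $\Gap(P)=2p$, $\pi^\star=1/2$, and $\max_\mu\norm*{\mu P^t-\pi}_{\TV}=\tfrac12(1-2p)^t$. For $p=\epsilon=0.01$ the claimed bound is $50\log\sqrt{200}\approx 132.5$, while $\Tmix(P,0.01)=194$. More generally, $\Tmix(P,\epsilon)\ge\log\bigl(1/(2\epsilon)\bigr)/\log\bigl(1/(1-2p)\bigr)$ exceeds the claim for every $\epsilon<1/8$ once $p$ is small enough.

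The correct statement is the one your computation produces: $\Tmix(P,\epsilon)\le\gamma_*^{-1}\log\bigl(1/(2\epsilon\sqrt{\pi^\star})\bigr)$. The stated form holds only for $\chi^2$-closeness or for $\epsilon\ge 1/4$, exactly as you found. In all cases the bound must be rounded up to an integer, since $\Tmix$ is integer-valued, so ``verbatim'' is slightly too strong.

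One correction to your last sentence: \cref{cor:mix_flip} is not derived from this theorem. For the flip walk $\log(1/\pi^\star)=\log C_n=\Theta(n)$. Even the corrected spectral bound with $\Gap(P)=\widetilde\Omega(n^{-2})$ therefore gives only $\widetilde O(n^3)$; the obstruction is the $\log(1/\pi^\star)$ factor, not $\log(1/\epsilon)$. The paper obtains $\widetilde O(n^2)$ from the log-Sobolev part of \cref{thm:lsi} via \cref{thm:lsi_mix}, where only $\log\log(1/\pi^\star)=O(\log n)$ enters.
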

The following result shows that the \ref{eq:logsob_def} controls the mixing time in a very precise manner, 
\begin{theorem}[Corollary 2.4, \cite{montenegro2006mathematical}]\footnote{See the discussion following Corollary 2.4 for our precise statement.}\label{thm:lsi_mix}
    Let $P \in \mathbb R^{\Omega \times \Omega}$ holding probability $\alpha$ and with stationary distribution $\pi$. Then, there exists some absolute constant $C > 0$ such that for any $\epsilon > 0$:
    \[ \Tmix(P, \epsilon) \le \frac{C \alpha^{-1}}{\LS(P)} \cdot \parens*{\log \log \parens*{  \frac{1}{\pi^*}} + \log \frac{1}{\epsilon}}\,,\]
    where $C> 0$ does not depend on $\Omega, P, \pi$ or $\epsilon$ and $\pi^*$ is the minimum measure of $\pi$.
\end{theorem}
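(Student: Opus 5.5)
The plan is to derive the bound from one-step contraction of relative entropy under $P$, which is the standard route to Corollary 2.4 of \cite{montenegro2006mathematical}; we only sketch it. Fix an initial distribution $\mu$ and set $h_t = \mu P^t/\pi$, the density of $\mu P^t$ with respect to $\pi$. Then $h_{t+1} = P^* h_t$, where $P^*(x,y) = \pi(y)P(y,x)/\pi(x)$ is the time reversal. Two endpoint estimates are elementary. First, $\ent_\pi(h_0) \le \log(1/\pi^*)$, since by convexity the worst case is $\mu$ a point mass. Second, Pinsker's inequality gives $\norm{\mu P^t - \pi}_{\TV} \le \sqrt{\ent_\pi(h_t)/2}$. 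So it suffices to prove a geometric decay
\[ \ent_\pi(h_{t}) \le \parens*{1 - c\,\alpha\,\LS(P)}^{t}\, \ent_\pi(h_0)\]
for an absolute constant $c>0$. Given this, choosing $t \ge \frac{1}{c\alpha\LS(P)}\parens*{\log\log(1/\pi^*) + 2\log(1/\epsilon) + O(1)}$ forces $\ent_\pi(h_t)\le 2\epsilon^2$, which is the claimed bound.

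The decay will come from a Miclo-type one-step inequality. For any density $h$ with $\Exp_\pi h = 1$ one has $\ent_\pi(h) - \ent_\pi(P^*h) \ge \rho_0(PP^*)\,\ent_\pi(h)$ up to an absolute constant. Here $\rho_0(K)$ is the modified log-Sobolev constant, the infimum of $\dir_K(h,\log h)/\ent_\pi(h)$. The proof writes $\ent_\pi(h) - \ent_\pi(P^* h)$ via the convexity of $u\mapsto u\log u$ applied to the kernel of $PP^*$. This is the step I expect to be the main obstacle. In particular, I must handle non-reversible $P$, which is why the argument passes through the reversible kernel $PP^*$ rather than $P$ itself.

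Two comparisons then reduce $\rho_0(PP^*)$ to $\alpha\LS(P)$. The first is the inequality $\dir(h,\log h) \ge 4\,\dir(\sqrt h,\sqrt h)$, from $(a-b)(\log a - \log b) \ge 4(\sqrt a - \sqrt b)^2$. Applied with $f = \sqrt h$ in the definition of $\LS$, it gives $\rho_0(K) \ge 4\LS(K)$ for any kernel $K$. The second bounds $\LS(PP^*)$ from below by $\alpha\LS(P)$. Write $P = \alpha I + (1-\alpha) Q$ with $Q$ stochastic; this is possible because every $P(x,x)\ge\alpha$. Expanding $PP^* = \alpha^2 I + \alpha(1-\alpha)(Q+Q^*) + (1-\alpha)^2 QQ^*$ gives
\[ I - PP^* \succeq \alpha(1-\alpha)\parens*{2I - Q - Q^*} = \alpha\parens*{2I - P - P^*}.\]
Hence $\dir_{PP^*}(f) \ge 2\alpha\,\dir_P(f)$ for all $f$, and therefore $\LS(PP^*) \ge 2\alpha \LS(P)$, since both constants use the same entropy functional with respect to $\pi$.

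Chaining these gives $\ent_\pi(h_{t+1}) \le (1 - c\alpha\LS(P))\ent_\pi(h_t)$. Iterating it, together with the endpoint estimates of the first paragraph, completes the proof.
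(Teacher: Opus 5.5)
Your outer argument is the standard Montenegro--Tetali route that the paper cites, and those parts are correct: $\ent_\pi(h_0)\le\log(1/\pi^*)$, Pinsker, and the operator comparison $I-PP^*\succeq\alpha(2I-P-P^*)$ giving $\LS(PP^*)\ge 2\alpha\LS(P)$. The gap is the step you flagged, and it is not just hard but false. The one-step bound $\ent_\pi(h)-\ent_\pi(P^*h)\ge c\,\rho_0(PP^*)\,\ent_\pi(h)$ fails even for $\tfrac12$-lazy chains.

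Here is a counterexample. Let $\pi$ be uniform on $N\ge 3$ states and fix distinct $x_0,z_1,z_2$. Set $P(x_0,z_1)=P(x_0,z_2)=\tfrac12$. For $x\ne x_0$, set $P(x,z_1)=P(x,z_2)=\frac{1}{2(N-1)}$ and $P(x,w)=\frac{1}{N-1}$ for every $w\notin\{z_1,z_2\}$. This $P$ is doubly stochastic. Every off-diagonal entry of $PP^*$ is at least $\frac{1}{2N}$, so $\dir_{PP^*}(h,\log h)\ge\frac12\mathrm{Cov}_\pi(h,\log h)\ge\frac12\ent_\pi(h)$, i.e.\ $\rho_0(PP^*)\ge\frac12$. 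Yet for $h=N\mathbbm{1}_{x_0}$ we get $\ent_\pi(h)=\log N$ and $\ent_\pi(P^*h)=\log(N/2)$, a drop of only $\log 2$ rather than $\Omega(\log N)$. For $\frac12(I+P)$ the numbers become $\rho_0\ge\frac18$ and a drop of $\frac32\log 2$. So a good MLSI for $PP^*$ does not force one-step entropy contraction of $P^*$; only the converse implication holds.

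What you need is the inequality with the square-root Dirichlet form (Miclo's inequality, which underlies the cited corollary). Your convexity sketch proves exactly this if you carry it out. Since $\pi$ is stationary for $P^*$, the Jensen gap decomposes as
\[ \ent_\pi(h)-\ent_\pi(P^*h)=\sum_{y}\pi(y)\,\ent_{P^*(y,\cdot)}(h)\ \ge\ \sum_y\pi(y)\var_{P^*(y,\cdot)}\bigl(\sqrt h\bigr)=\dir_{PP^*}\bigl(\sqrt h\bigr)\ \ge\ \LS(PP^*)\,\ent_\pi(h). \]
The middle step is the elementary bound $\ent_\nu(h)\ge\var_\nu(\sqrt h)$: normalize $\Exp_\nu h=1$, then $\ent_\nu(h)\ge-2\log\Exp_\nu\sqrt h\ge1-(\Exp_\nu\sqrt h)^2$. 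Bounding the local entropies by $\mathrm{Cov}_{P^*(y,\cdot)}(h,\log h)$ instead goes the wrong way and only yields $\ent_\pi(h)-\ent_\pi(P^*h)\le\dir_{PP^*}(h,\log h)$.

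With this replacement the detour through $\rho_0\ge4\LS$ is unnecessary. Your comparison then gives $\ent_\pi(h_{t+1})\le(1-2\alpha\LS(P))\,\ent_\pi(h_t)$, and your endpoint argument completes the proof.
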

In our argument, it will be important to write the Poincar\'e and LSI constants of a \ref{eqn:prod_transition} in terms of the corresponding constants of its components. The relation between them is well-known and given by the following lemma: 
\begin{lemma}[Lemma 2.2.11, \cite{SC97}]\label{lem:product_ineq}
    Let $k$ be a positive integer, $w$ a distribution on $[k]$ and $(\Omega, P, \pi)$ the cartesian product of the chains $\{(\Omega_i, P_i) \mid i \in [k]\}$. Suppose further that each chain satisfies a Poincar\'e and a log-Sobolev inequality 
    \[
        \lambda_i \var_{\pi_i}( f ) \leq \dir_{\pi_i} (f) \qquad \text{and} \qquad \beta_i \ent_{\pi_i} (f^2) \leq \dir_{\pi_i}(f)\,,
    \]
    for all $f:\Om_i\to \mathbb{R}$ where $\lambda_i, \beta_i \ge 0$. Then the chain $(\Omega, P)$ satisfies the inequalities:
    \[
        \parens*{\min_{1\leq i\leq k} w_i\lambda_i} \var_{\pi}( f )\leq \dir_{\pi}(f) \qquad \text{and} \qquad \parens*{\min_{1\leq i\leq k} w_i\beta_i} \ent_{\pi} (f^2) \leq \dir_{\pi}(f)\,,
    \]
    for all $f:\Om\to\mathbb{R}$.
\end{lemma}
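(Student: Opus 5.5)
We prove the statement of \cref{lem:product_ineq} by tensorization. We treat the variance and entropy cases in parallel, since each rests on a subadditivity inequality over coordinates together with an exact decomposition of the product Dirichlet form.

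First, the Dirichlet form decomposes. By the definition of the \ref{eqn:prod_transition}, $P(x,y)$ is nonzero for $x\neq y$ only when $x,y$ differ in exactly one coordinate $i$, and then $P(x,y)=w_iP_i(x_i,y_i)$. Since $\pi=\bigotimes_j\pi_j$, we get
\[
\dir_P(f)=\sum_{i=1}^k w_i\,\Exp_{x_{-i}\sim\pi_{-i}}\sqbr*{\dir_{P_i}\parens*{f(\cdot,x_{-i})}},
\]
where $f(\cdot,x_{-i})$ is viewed as a function on $\Omega_i$ with the other coordinates frozen. Diagonal terms contribute nothing to the form, so self-loops cause no trouble.

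Second, we invoke the subadditivity (tensorization) of variance and entropy for product measures:
\[
\var_\pi(f)\le\sum_{i}\Exp_{\pi}\sqbr*{\var_{\pi_i}(f\mid x_{-i})}
\qquad\text{and}\qquad
\ent_\pi(f^2)\le\sum_i\Exp_\pi\sqbr*{\ent_{\pi_i}(f^2\mid x_{-i})}.
\]
For variance this is the Efron--Stein inequality. It follows by telescoping $f-\Exp f$ along the martingale of successive conditional expectations over coordinates $1,\dots,k$, using orthogonality of the increments, and applying the \ref{eqn:total_var} with Jensen to bound each increment's second moment by the expected conditional variance in that coordinate.

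For entropy, the telescoping gives $\ent_\pi(f^2)=\sum_i\Exp\sqbr*{\ent(g_{i-1}\mid\mathcal F_{i})}$ with $g_i=\Exp[f^2\mid x_{i+1},\dots,x_k]$, via repeated use of the \ref{eqn:total_ent}. Each term is then bounded by $\Exp[\ent_{\pi_i}(f^2\mid x_{-i})]$ using convexity of $\ent$ under averaging, i.e.\ joint convexity of $(a)\mapsto a\log a$ in the duality formula $\ent(h)=\sup\{\Exp[hg]:\Exp e^g\le1\}$. This entropy step is the main obstacle, since it needs the variational characterization rather than a direct Jensen argument. It is nonetheless standard and can be cited from \cite{SC97} if needed.

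Finally, we combine the two steps. For each fixed $x_{-i}$, the assumed inequalities give $\lambda_i\var_{\pi_i}(f(\cdot,x_{-i}))\le\dir_{P_i}(f(\cdot,x_{-i}))$, and likewise for entropy. Hence
\[
\parens*{\min_i w_i\lambda_i}\var_\pi(f)\le\sum_i w_i\lambda_i\Exp\sqbr*{\var_{\pi_i}}\le\sum_i w_i\Exp\sqbr*{\dir_{P_i}}=\dir_P(f),
\]
and the identical chain of inequalities yields the log-Sobolev bound.
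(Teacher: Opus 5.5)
Your proposal is correct. The paper gives no proof of its own and only cites Saloff-Coste; your argument is the standard tensorization proof behind that citation: decompose the product Dirichlet form coordinatewise, use subadditivity of variance and entropy for the product measure, and apply each factor's inequality on every fibre.

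One wording fix. The entropy step needs convexity of the functional $h\mapsto\ent_{\pi_i}(h)$. This holds because the duality formula writes it as a supremum of linear functionals of $h$; convexity of $a\mapsto a\log a$ alone would not suffice. Alternatively, you can avoid this step with the classical two-factor argument: apply the first factor's log-Sobolev inequality to $F(x_1)=\norm{f(x_1,\cdot)}_{L^2(\pi_2)}$ and bound its Dirichlet form by the triangle inequality.
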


\subsection{Catalan Structures: Triangulations and Trees}
A \emph{triangulation} of a point set (in, say, the Euclidean plane) is a maximal collection of pairwise non-crossing edges connecting pairs of points. In the special case that the point set is convex, every triangulation of the point set includes the convex hull, and thus we will assume the point set is a convex polygon. Since the edges belonging to the convex hull are in every triangulation, we will identify a triangulation $x$ by its set of non-hull edges, and we will call these edges \emph{diagonals}.

Given a diagonal~$d$, it will be convenient for us to define the \emph{length} of~$d$ to be the number of edges in a shortest path consisting of polygon edges that connects the two endpoints of~$d$. 

One can view a triangulation naturally as a planar graph. The dual graph of a convex polygon triangulation is a binary tree. One can orient the polygon so that the dual tree is rooted, and therefore the number of triangulations of an $n+2$-gon is equal to the number of binary plane trees with $n$ nodes. This is known to be equal to the Catalan number~\cite{hiltonpedersen}

\[
    C_n = \frac{1}{n+1}\binom{2n}{n}\,.
\] 

Notice that the Catalan numbers satisfy the recurrence relation,
\[ C_n = \sum_{j = 1}^{n} C_{j-1} C_{n-j}\,, \]
where $C_0 = 1$ (see for example, \cite[Chapters 5 and 7]{concretemathematics}). With this, it is easy to observe that $C_n$ counts the number of rooted subtrees of the infinite binary tree on $n$ vertices, or equivalently, unlabelled rooted plane trees on $n$-vertices. Henceforth, we will refer to such trees as Catalan trees.

Using Stirling's formula, $C_n$ is seen to grow asymptotically as $\Theta\left(\frac{1}{n^{3/2}}\cdot 4^n\right)$.

The \emph{triangulation flip walk} (which we will also call the \emph{triangulation walk}) is the following random walk, defined with respect to the regular $n+2$-gon: start with an arbitrary triangulation $x$. Then repeatedly \emph{flip} a uniformly random diagonal~$d$ of the current triangulation: that is, remove~$d$ and replace it with the unique diagonal $d' \neq d$ that can be added to the triangulation~$x$ without introducing a crossing. (The removal of $d$ induces a quadrilateral formed by the two triangles incident to $d$. The existence and uniqueness of~$d'$ can be seen to follow from the convexity of the $n+2$-gon.) We impose a holding probability of 1/2 (see \cref{sec:mcmc}).

The flip walk is invariant to perturbations of the $n+2$-gon, so long as it remains convex. 

The flip walk is known to be isomorphic to the \emph{rotation walk} (see e.g.~\cite{sleator1988rotation}) on binary trees. A \emph{rotation} in a binary tree is an operation of one of two forms. The first form is as follows: take a parent node $z$ and a left child $w$ of $z$. Replace $z$ by $w$: that is, make $w$ a child of the parent of $z$ (a left child if $z$ is a left child, a right child if $z$ is a right child)\textemdash or, if $z$ is the root, make $w$ the root. Let $w$ retain its left child; let $z$ retain its right child. Then, let $u$ be the right child of $w$; and make $u$ the new left child of $z$.

The second form is the mirror image of the first: let $w$ be a right child of $z$, and proceed as in the first case but with left and right reversed.

The rotation walk is as follows: start with an arbitrary binary tree on $n$ nodes. Then repeatedly choose a uniformly random edge in the current tree, and perform a rotation at the parent node of that edge. (Impose a $1/2$ holding probability as in the triangulation walk.)

The isomorphism between the two walks follows from observing the correspondence between a triangulation flip and a tree rotation.

\subsection{Multicommodity Flows and Canonical paths}
\label{sec:mcflow}
A standard technique in bounding the mixing times of Markov chains uses \emph{canonical paths}. The idea is to show that the state space of the chain is in some sense free of ``bottlenecks,'' by finding a path between each pair of states such that no edge belongs to too many paths. A generalization of canonical paths is to construct a \emph{multicommodity flow}: a collection of flow functions in which for every pair of states $s, t \in \Omega$, $s$ sends a unit of flow to $t$ through paths in the state space~$\Omega$. The \emph{congestion} of a multicommodity flow is the maximum, over all edges, of the amount of flow sent across the edge, summed over all $s,t$ pairs that use the edge.

Two classical theorems relate canonical paths (more generally, multicommodity flows) to mixing. The first uses the \emph{Cheeger} inequalities to obtain an upper bound on the relaxation time from an upper bound on the congestion in a multicommodity flow. However, one suffers a quadratic cost in this process, as well as an additional cost in passing from the relaxation time to the mixing time, \cite[Theorem 2.1]{Trevisan13}. The latter problem can be solved in some cases using a result in \cite{avgcond}. \cite{eppstein2022improved} used this theorem to obtain an~$\widetilde O(n^3)$ mixing time for the triangulation walk.

Another theorem~\cite{diacstroock} allows one to pass from flows to relaxation time without the quadratic cost; however, one trades this cost for a cost incurred in analyzing the worst-case length of a path in the construction. The worst-case path length is sufficiently long in the construction in~\cite{eppstein2022improved} that it is not clear how to apply the theorem in this case.

In this paper, rather than using multicommodity flows, it will be useful to consider a flow function from one set of states $S \subseteq \Omega$ to a set $T \subseteq \Omega$, where we only use a single commodity. \cite{eppstein2022improved} gave a notion of \emph{multi-way single commodity flows}. We adapt the formulation as follows.

Given $S, T \subseteq \Omega$ where $S \cap T = \emptyset$, let an \emph{$S$-$T$ flow} be a function $\phi: \{(x, y) \in \Omega^2 \mid P(x, y) > 0\} \rightarrow \mathbb{R}$ such that $\phi(x, y) = -\phi(y, x)$ for all $x, y$, and such that,  defining the \emph{net flow} out of a state $x \in \Omega$ to be $\sum_{y \in \Omega: P(x, y) > 0} \phi(x, y)$:
\begin{enumerate}[(i)]
\item the net flow out of each $x \in S$ is equal to $\pi(T)$\,,
\item the net flow into each $y \in T$ is equal to $\pi(S)$\,, and 
\item the net flow into (and the net flow out of) each $x \in \Omega \setminus \left(S \cup T\right)$ is zero.
\end{enumerate}
(Here, if the net flow out of~$x$ is $\phi$ then we let the net flow into $x$ be $-\phi$.) 

\section{Transport Flows}\label{sec:transport_flow}
We recall the definition of transport flows from \cite{weimingjs}. Let $(\Omega, P, \pi)$ be an ergodic Markov chain and   $\mu, \nu$  distributions supported on $\Omega$. A \emph{transport flow} from $\mu$ to $\nu$ is a distribution $\Gamma$ of paths such that when $\gamma$ is drawn from $\Gamma$, the starting state of the path is distributed according to $\mu$ and the ending state according to $\nu$. We will denote the starting and ending states of the path $\gamma$ by $s(\gamma)$ and $t(\gamma)$ respectively.

We will assume for convenience that for every $x, y \in \Omega$ with $P(x, y) > 0$ the transitions $(x,y)$ and $(y, x)$ are not both used in the construction: i.e.~for every $\gamma$ such that $\Gamma(\gamma) > 0$, if $(x, y) \in \gamma$ then for all $\gamma'$ such that $\Gamma(\gamma') > 0$ we have $(y, x) \notin \gamma'$. 

It is easy to modify any transport flow to satisfy this condition (\cref{lem:onedir}).

Let $S, T \subseteq \Omega$. A transport flow from $S$ to $T$ is a transport flow from $\mu = \pi_S$ to $\nu = \pi_T$, where $\pi_S$ is supported on $S$, $\pi_T$ is supported on $T$, and $\mu(x) = \pi_S(x) = \frac{\pi(x)}{\pi(S)}$ for all $x \in S$, and similarly $\nu(x) = \pi_T(x)$ for all $x \in T$.

Let $f: \Omega \rightarrow \mathbb{R}$ be an arbitrary function. Let $\FF(S) = \Exp_{x \sim \pi_S} f(x)$ be the expectation of $f$ over $\pi_S$. 

The main tool we will use for establishing our functional inequalities will be the following result,
\begin{theorem}\label{thm:ourtransport}
Let $S, T \subseteq \Omega$. Suppose a transport flow $\Gamma$ exists from $S$ to $T$ where the maximum congestion is $\rho$, i.e.~for all $x,y \in \Omega$, 

\[
   \phi_{xy} := \frac{\pi(S)\pi(T)}{\pi(x)P(x, y)} \cdot \Exp_{\gamma \sim \Gamma} \left[ 1[(x, y) \in \gamma]\right] \leq \rho\,.
\]

The {average congestion} is

\[
    \bar\rho =  \sum_{x,y\in\Omega} \phi_{xy}\pi(x)P(x, y) = \pi(S)\pi(T)\sum_{x, y \in \Omega} \Exp_{\gamma \sim \Gamma}  1[(x, y) \in \gamma]\,.
\]

Then 
\begin{equation}
    \pi(S)\pi(T)(\FF(S) - \FF(T))^2 \leq \frac{\bar\rho\rho}{\pi(S)\pi(T)}\sum_{x, y \in \Omega}\pi(x)P(x, y)(f(x) - f(y))^2\,.
    \label{eqn:transport}
\end{equation}

\end{theorem}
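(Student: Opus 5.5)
The idea is to write the difference of means as an expectation over paths, telescope along each path, and then apply Cauchy--Schwarz once, with weights chosen so that one factor produces the average congestion $\bar\rho$ and the other the maximum congestion $\rho$. This avoids any dependence on path length, which is the point of \cref{thm:ourtransport}.

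\textbf{Step 1: telescoping.} Under $\Gamma$ the start $s(\gamma)$ has law $\pi_S$ and the end $t(\gamma)$ has law $\pi_T$. Hence
\[
\FF(S)-\FF(T)=\Exp_{\gamma\sim\Gamma}\bigl[f(s(\gamma))-f(t(\gamma))\bigr]=\Exp_{\gamma\sim\Gamma}\sum_{(x,y)\in\gamma}\bigl(f(x)-f(y)\bigr).
\]
Exchanging the expectation and the sum gives
\[
\FF(S)-\FF(T)=\sum_{x,y}\Exp_{\gamma}\bigl[1[(x,y)\in\gamma]\bigr]\,(f(x)-f(y)).
\]
If paths may repeat an edge, we replace the indicator by the multiplicity of the edge in $\gamma$, or first reduce to simple paths. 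Writing $q_{xy}:=\Exp_\gamma 1[(x,y)\in\gamma]$, we have by definition $q_{xy}=\phi_{xy}\,\pi(x)P(x,y)/(\pi(S)\pi(T))$.

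\textbf{Step 2: weighted Cauchy--Schwarz.} Split each summand as $q_{xy}(f(x)-f(y))=\sqrt{q_{xy}}\cdot\sqrt{q_{xy}}\,(f(x)-f(y))$. Then
\[
(\FF(S)-\FF(T))^2\le\Bigl(\sum_{x,y}q_{xy}\Bigr)\Bigl(\sum_{x,y}q_{xy}(f(x)-f(y))^2\Bigr).
\]
The first factor equals $\bar\rho/(\pi(S)\pi(T))$ by the definition of the average congestion. For the second factor, $\phi_{xy}\le\rho$ gives $q_{xy}\le\rho\,\pi(x)P(x,y)/(\pi(S)\pi(T))$. Therefore
\[
\sum_{x,y} q_{xy}(f(x)-f(y))^2\le\frac{\rho}{\pi(S)\pi(T)}\sum_{x,y}\pi(x)P(x,y)(f(x)-f(y))^2 .
\]

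\textbf{Step 3: assembling.} Combining the two bounds yields
\[
(\FF(S)-\FF(T))^2\le\frac{\bar\rho\rho}{(\pi(S)\pi(T))^2}\sum_{x,y}\pi(x)P(x,y)(f(x)-f(y))^2 .
\]
Multiplying both sides by $\pi(S)\pi(T)$ gives \eqref{eqn:transport}. The one-directionality convention ensures each ordered pair is counted once, so the sum on the right is over all ordered pairs, as stated.

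\textbf{Main obstacle.} The only delicate step is choosing the weights in Step 2. The naive Diaconis--Stroock split, which applies Cauchy--Schwarz along each path separately, introduces the path length. Splitting globally by $\sqrt{q_{xy}}$ instead replaces the path length by the total flow mass $\sum q_{xy}$, which is exactly $\bar\rho/(\pi(S)\pi(T))$. Beyond that, we only need to check that the flow is properly normalized, i.e.\ that $\Gamma$ is a probability distribution on paths.
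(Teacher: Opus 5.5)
Your proposal is correct and is essentially the paper's argument: the paper also telescopes along paths and applies a single global Cauchy--Schwarz, with weights $\phi_{xy}\pi(x)P(x,y)=\pi(S)\pi(T)\,q_{xy}$, bounding one factor by $\bar\rho$ and the other by $\rho$ times the Dirichlet sum. Your version applies Cauchy--Schwarz directly to the signed sum instead of first passing to $|f(x)-f(y)|$. It is slightly cleaner, since it avoids squaring a one-sided inequality, and your handling of repeated edges via multiplicities is a small extra care the paper omits.
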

A precursor of \cref{thm:ourtransport} using \emph{average path length} instead of \emph{average congestion} already appeared in \cite{weimingjs}. For concreteness, we present an equivalent formulation of their result below, 
\begin{theorem}[Theorem 10, \cite{weimingjs}]
\label{thm:transport}
    Let $S, T \subseteq \Omega$. Suppose a transport flow $\Gamma$ from $S$ to $T$ exists, where for every $x, y \in \Omega$ satisfying $y \sim x$, we have
    \[
        \frac{\pi(S)\pi(T)}{\pi(x)P(x, y)} \cdot \Exp_{\gamma \sim \Gamma}\left[\frac{1[(x, y) \in \gamma]}{|\gamma|}\right] \leq \kappa
    \]
    and $\Exp_{\gamma \sim \Gamma}\left[|\gamma|^2\right] \leq L$. 
    
    Then for any function $f: \Omega \rightarrow \mathbb{R}$
    \[
        \pi(S)\pi(T)(\FF(S) - \FF(T))^2 \leq \kappa L \sum_{x, y \in \Omega} \pi(x)P(x, y)(f(x) - f(y))^2\,.
    \]
\end{theorem}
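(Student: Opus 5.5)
The plan is to follow the standard path-telescoping argument, choosing a Cauchy--Schwarz weighting that produces $\kappa$ (congestion weighted by $1/|\gamma|$) and $L=\Exp|\gamma|^2$.

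\textbf{Step 1: express the difference of means as an expectation over paths.} Since $s(\gamma)\sim\pi_S$ and $t(\gamma)\sim\pi_T$ under $\Gamma$, we have
\[ \FF(S)-\FF(T)=\Exp_{\gamma\sim\Gamma}\sqbr*{f(s(\gamma))-f(t(\gamma))}. \]
Each difference telescopes along the path:
\[ f(s(\gamma))-f(t(\gamma))=\sum_{(x,y)\in\gamma}\parens*{f(x)-f(y)}. \]

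\textbf{Step 2: apply Cauchy--Schwarz with weights $|\gamma|$ and $1/|\gamma|$.} Write the inner sum as $\sum_{(x,y)\in\gamma}\sqrt{|\gamma|}\cdot\frac{f(x)-f(y)}{\sqrt{|\gamma|}}$ and apply Cauchy--Schwarz jointly over the randomness of $\gamma$ and the edges of $\gamma$. This gives
\[ (\FF(S)-\FF(T))^2\le \Exp_\gamma\Big[\sum_{(x,y)\in\gamma}|\gamma|\Big]\cdot \Exp_\gamma\Big[\sum_{(x,y)\in\gamma}\frac{(f(x)-f(y))^2}{|\gamma|}\Big]. \]
The first factor equals $\Exp|\gamma|^2\le L$.

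\textbf{Step 3: exchange the order of summation in the second factor.} The second factor becomes
\[ \sum_{x,y}(f(x)-f(y))^2\,\Exp_\gamma\sqbr*{1[(x,y)\in\gamma]/|\gamma|}. \]
By the hypothesis, each weight satisfies
\[ \Exp_\gamma\sqbr*{1[(x,y)\in\gamma]/|\gamma|}\le \frac{\kappa\,\pi(x)P(x,y)}{\pi(S)\pi(T)}. \]

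\textbf{Step 4: combine.} Multiplying through by $\pi(S)\pi(T)$ yields exactly the claimed inequality. Edges not used by any path contribute zero. Orientation is harmless because each term $(f(x)-f(y))^2$ is symmetric and the sum on the right runs over all ordered pairs.

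The only delicate point is choosing the Cauchy--Schwarz weighting, putting $|\gamma|$ on the counting side and $1/|\gamma|$ on the energy side, so that the hypotheses match exactly. The remaining steps are bookkeeping.
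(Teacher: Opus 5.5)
Your proof is correct. Telescoping along each path, applying Cauchy--Schwarz on the space of (path, edge) pairs with weights $|\gamma|$ and $1/|\gamma|$, and then exchanging the order of summation gives exactly $L\cdot\kappa/(\pi(S)\pi(T))$ times the Dirichlet sum.

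The paper does not reprove this cited theorem. The closest argument it contains is its proof of \cref{thm:ourtransport}. That proof uses the same telescoping, but first collapses to edges via the triangle inequality, $\pi(S)\pi(T)|\FF(S)-\FF(T)|\le\sum_{(x,y)}\phi_{xy}\pi(x)P(x,y)|f(x)-f(y)|$. It then applies Cauchy--Schwarz over edges with measure $\phi_{xy}\pi(x)P(x,y)$.

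In your terms, this is the same Cauchy--Schwarz with weight $1$ instead of $|\gamma|$ on each (path, edge) pair. The first factor becomes $\Exp|\gamma|$, which equals $\bar\rho/(\pi(S)\pi(T))$ for edge-simple paths. The second factor is controlled by the undiscounted maximum congestion $\rho$. The resulting constant is $\rho\,\Exp|\gamma|$ in place of your $\kappa\,\Exp|\gamma|^2$.

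The trade-off is as follows. Your weighting discounts congestion carried by long paths, at the price of the second moment of the path length. The paper's weighting pays only the first moment, equivalently the average congestion, but with undiscounted congestion. That is what the paper needs, because its triangulation flows have long worst-case paths but small average congestion.

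You should make one bookkeeping point explicit. The telescoping sum counts edge traversals with multiplicity, so Step 3 actually produces $\Exp[N_{xy}(\gamma)/|\gamma|]$, where $N_{xy}(\gamma)$ is the number of times $\gamma$ traverses $(x,y)$. This agrees with the hypothesis when paths are edge-simple, or when $1[(x,y)\in\gamma]$ is read as $N_{xy}(\gamma)$ and $|\gamma|$ as the number of traversals. The paper's proof of \cref{thm:ourtransport} tacitly uses the same convention.
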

We now present the proof of \cref{thm:ourtransport}.
\begin{proof}[Proof of \cref{thm:ourtransport}]
    Let $\phi_{xy} = \sum_{\gamma} \Gamma(\gamma) \frac{\pi(S)\pi(T)}{\pi(x)P(x, y)}$. We have
    \begin{align}
        \pi(S)\pi(T)(\FF(S) - \FF(T)) &= \pi(S)\pi(T)\Exp_{\gamma \sim \Gamma} \left[\sum_{(x, y) \in \gamma} (f(x) - f(y))\right] \nonumber \\
        &\leq \pi(S)\pi(T)\Exp_{\gamma \sim \Gamma} \sum_{(x, y) \in \gamma}|f(x) - f(y)| \nonumber \\
        &= \sum_{(x, y)} \sum_{\gamma: (x, y) \in \gamma}\Gamma(\gamma)\pi(S)\pi(T)|f(x) - f(y)| \nonumber \\
        &= \sum_{(x, y)} \phi_{xy}\pi(x)P(x, y)|f(x) - f(y)|\,.
    \end{align}

    Therefore
    \begin{align}
        \left(\pi(S)\pi(T)(\FF(S) - \FF(T))\right)^2 & \leq \Bigl( \sum_{(x, y) \in \Omega} \phi_{xy}\pi(x)P(x, y)|f(x) - f(y)| \Bigr)^2 \nonumber \\
        &\leq \Bigl(\sum_{(x, y) \in \Omega} \phi_{xy}\pi(x)P(x, y)\Bigr) \label{eqn:avgrho} \\
        &\quad \cdot \Bigl( \sum_{(x, y) \in \Omega} \phi_{xy}\pi(x)P(x, y)(f(x) - f(y))^2\Bigr)\,,  \label{eqn:rho}
    \end{align}
    by the Cauchy-Schwarz inequality. \eqref{eqn:avgrho} is $\bar\rho$; the claim follows.
\end{proof}
In \cite{montenegrovc} it was observed that \emph{average path length} and \emph{average congestion} are highly related parameters; for our proofs it will be more convenient to work with the latter parameter. Indeed, our proof of \cref{thm:ourtransport} is inspired by the proof of \cref{thm:transport} in \cite{weimingjs}.

The utility of \cref{thm:ourtransport} is that it provides a way to bypass the Cheeger inequality when one has paths that can be long in the worst case but where the average congestion is small. (This is true for the construction of \cite{eppstein2022improved}.) It is important to find sufficiently large sets $S, T$ between which to send the flow, however, as the denominator of the right-hand side of \eqref{eqn:transport} indicates.

It will be useful for our purposes to refine \cref{thm:ourtransport} as follows:
\begin{corollary}\label{cor:bettertransport}
Let $S, T \subseteq \Omega$. Suppose a transport flow $\Gamma$ exists from $S$ to $T$ where the maximum congestion is $\rho$ and the paths in the flow only use edges between vertices in $S \cup T$.

Let
\[
    \bar\rho_S =  \pi(T)\sum_{x \in S, y \in S \cup T} \Exp_{\gamma \sim \Gamma}  1[(x, y) \in \gamma]
\]
be the average congestion across edges having an endpoint in $S$, and define $\bar\rho_T$ symmetrically.

Then
\begin{align}
    \pi(S)\pi(T)(\FF(S) - \FF(T))^2 &\leq \frac{\rho \bar\rho_S}{\pi(T)}\sum_{x \in S, y \in S \cup T}\pi(x)P(x, y)(f(x) - f(y))^2 \nonumber \\
    &\quad\quad+ \frac{\rho \bar\rho_T}{\pi(S)}\sum_{x \in T, y \in S \cup T}\pi(x)P(x, y)(f(x) - f(y))^2\,.\label{eqn:bettertransport}
\end{align}
\end{corollary}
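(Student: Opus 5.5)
The plan is to rerun the proof of \cref{thm:ourtransport}, but to apply Cauchy--Schwarz separately to the edges near $S$ and the edges near $T$. The factor $2$ lost in recombining the two pieces is then recovered from a double count in the statement: the right-hand side of \eqref{eqn:bettertransport} sums over \emph{ordered} pairs, while the flow uses each edge in only one direction.

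\textbf{Step 1 (first inequality, as before).} Write $w_{xy}=\pi(x)P(x,y)$; it is symmetric by reversibility. Telescoping $f$ along each path, exactly as in the proof of \cref{thm:ourtransport}, gives
\[
\pi(S)\pi(T)\,\bigl(\FF(S)-\FF(T)\bigr)\le \sum_{(x,y)\ \mathrm{used}} \phi_{xy}\,w_{xy}\,|f(x)-f(y)|.
\]
By the one-direction convention (\cref{lem:onedir}), each unordered edge appears at most once in this sum. By hypothesis, every used edge lies inside $S\cup T$.

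\textbf{Step 2 (split the sum).} Sort the used edges into three classes: those inside $S$, those inside $T$, and those crossing between $S$ and $T$. For each crossing edge, split its term $\phi_{xy}w_{xy}|f(x)-f(y)|$ into two equal halves. One half goes to an ``$S$-group'' and the other to a ``$T$-group''. This writes the sum as $X_S+X_T$, where
\[
X_S=\sum_{\text{inside } S}+\tfrac12\sum_{\text{crossing}}, \qquad X_T=\sum_{\text{inside } T}+\tfrac12\sum_{\text{crossing}}.
\]
Then $(X_S+X_T)^2\le 2X_S^2+2X_T^2$.

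\textbf{Step 3 (Cauchy--Schwarz within each group).} Apply Cauchy--Schwarz to $X_S$ with weights $\phi_{xy}w_{xy}$:
\[
X_S^2\le \Bigl(\sum_{\text{inside } S}\phi w+\tfrac12\sum_{\text{crossing}}\phi w\Bigr)\Bigl(\sum_{\text{inside } S}\phi w(\Delta f)^2+\tfrac12\sum_{\text{crossing}}\phi w(\Delta f)^2\Bigr).
\]
\emph{First factor.} Every edge in it has an endpoint in $S$. Expanding $\phi_{xy}$, each such edge contributes $\pi(S)\pi(T)\,\Exp_{\gamma\sim\Gamma} 1[(x,y)\in\gamma]$. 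The factor is therefore at most $\pi(S)\,\bar\rho_S$. Here I read $\bar\rho_S$ as charging every used edge incident to $S$, which includes crossing edges with either orientation. If the definition only counts the orientation with $x\in S$, I would note that flow paths from $S$ to $T$ can be taken to cross from the $S$ side. I expect this bookkeeping to be the only delicate point.

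\emph{Second factor.} Bound $\phi_{xy}\le\rho$. For an edge inside $S$, its single occurrence equals half of the two ordered terms $(x,y),(y,x)$ appearing in $D_S:=\sum_{x\in S,\,y\in S\cup T}w_{xy}(f(x)-f(y))^2$. For a crossing edge, its weight $\tfrac12$ term is half of the one ordered term of $D_S$ having its first coordinate in $S$; this uses the symmetry of $w$. Hence the second factor is at most $\rho D_S/2$, and so $X_S^2\le \tfrac12\,\rho\,\pi(S)\,\bar\rho_S\, D_S$. The same argument gives $X_T^2\le \tfrac12\,\rho\,\pi(T)\,\bar\rho_T\, D_T$, with $D_T$ defined analogously.

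\textbf{Step 4 (combine).} Steps 1 to 3 give
\[
\bigl(\pi(S)\pi(T)(\FF(S)-\FF(T))\bigr)^2\le \rho\,\pi(S)\,\bar\rho_S\, D_S+\rho\,\pi(T)\,\bar\rho_T\, D_T.
\]
Dividing by $\pi(S)\pi(T)$ yields \eqref{eqn:bettertransport} exactly, with no extra constant. The halves in Step 3 exactly cancel the $2$ from Step 2.
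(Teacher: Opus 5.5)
Your argument is correct under the reading you adopt: every used edge with an endpoint in $S$ (resp.\ $T$) is charged to $\bar\rho_S$ (resp.\ $\bar\rho_T$), so crossing edges are charged to both. It is also a different and more careful route than the paper's. The paper's proof only says that the corollary follows from the proof of \cref{thm:ourtransport} plus the identity $\bar\rho=\pi(S)\bar\rho_S+\pi(T)\bar\rho_T$. But that proof applies Cauchy--Schwarz once over all edges. Inserting the identity into $\bar\rho\cdot\sum\phi_{xy}\pi(x)P(x,y)(f(x)-f(y))^2$ leaves mixed products, such as $\pi(S)\bar\rho_S$ times the energy of edges inside $T$, which are absent from \eqref{eqn:bettertransport}; the example below shows they cannot simply be dropped. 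Your device is what actually produces a split bound: you split the telescoped sum into an $S$-part and a $T$-part before Cauchy--Schwarz, pay the $2$ in $(a+b)^2\le 2a^2+2b^2$, and recover it because $\sum_{x\in S,\,y\in S\cup T}$ counts edges inside $S$ twice and crossing edges once. If you keep the $\tfrac12$ on crossing edges in your first factor instead of rounding it up, you get the inequality with crossing edges charged half to each side. For that choice $\pi(S)\bar\rho_S+\pi(T)\bar\rho_T=\bar\rho$ holds exactly, so your sharpened argument is precisely what the paper's one-line appeal to that identity needs.

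Your fallback for the one-sided formula is wrong, however, and the reading is not optional. With the displayed definitions (only ordered pairs whose first coordinate lies in $S$, resp.\ $T$), the trouble is on the $T$ side. A crossing edge traversed from $S$ to $T$, which is the typical case, never enters $\bar\rho_T$. So your first-factor bound $\sum_{\text{inside } T}\phi w+\tfrac12\sum_{\text{crossing}}\phi w\le\pi(T)\bar\rho_T$ fails. Making all crossings leave from $S$ only worsens this. Moreover, rerouting $\Gamma$ changes $\rho,\bar\rho_S,\bar\rho_T$, and $T\to S$ crossings can be forced, e.g.\ on the path $s_1\sim t_1\sim s_2\sim t_2$ with most of $\pi_S$ at $s_1$ and most of $\pi_T$ at $t_2$.

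In fact the literal statement is false. Take $\Omega=\{s,t_1,t_2\}$ with $\pi$ uniform, $P(s,t_1)=P(t_1,s)=0.6$, $P(t_1,t_2)=P(t_2,t_1)=0.3$, and the rest holding. Let $S=\{s\}$, $T=\{t_1,t_2\}$, let $\Gamma$ give mass $\tfrac12$ to each of the paths $(s,t_1)$ and $(s,t_1,t_2)$, and let $(f(s),f(t_1),f(t_2))=(2,1,0)$. Both used edges have congestion $10/9=\rho$, and the displayed formulas give $\bar\rho_S=2/3$ and $\bar\rho_T=1/6$. The left side of \eqref{eqn:bettertransport} is $1/2$, while the right side is $\tfrac{10}{9}\cdot 0.2+\tfrac{10}{9}\cdot\tfrac12\cdot 0.4=4/9$. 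With your reading, $\bar\rho_T=1/2$ and the right side is $8/9$.

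So you should state the incident-edge (or half-charging) definition explicitly as a correction of the displayed formula, not as a convention; it matches the verbal description ``edges having an endpoint in $S$''. Nothing is lost downstream: the quantity bounded in the proof of \cref{lem:ldirstrong} sums $|\phi_{xy}|$ over all pairs with $x\in\Omega[S]$, so it already charges crossing edges irrespective of orientation.
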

\begin{proof}
    The theorem follows from the proof of \cref{thm:ourtransport} and the observation that, by the definition of $\bar\rho$, we have
    $\bar\rho = \pi(S)\bar\rho_S + \pi(T)\bar\rho_T$.
\end{proof}

The following will allow us to pass from a combinatorial flow construction to a transport flow:
\begin{lemma}
    Given $(\Omega, P, \pi)$, let $S, T \subseteq \Omega$. Let $\phi$ be an $S$-$T$ flow. Then there exists a transport flow $\Gamma^{S\rightarrow T}$ satisfying, for each edge $(x, y)$ such that $\phi(x, y) \geq 0$:

    \[
        \sum_{\gamma \ni (x, y)}\Gamma(\gamma) = \phi(x, y)\,.
    \]
    \label{lem:flowtotransportflow}
\end{lemma}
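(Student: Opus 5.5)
The plan is to run the standard flow-decomposition theorem on the positive part of $\phi$ and read the resulting weighted family of paths as the transport flow. Closed cycles in $\phi$ are then absorbed into the paths, so that every unit of flow on every edge is accounted for and the stated \emph{equality} holds, not merely an upper bound. Throughout, $\Gamma^{S\rightarrow T}$ is treated as a nonnegative weighting of directed paths whose total mass equals the flow value $\sum_{x\in S}\pi(T)=\pi(S)\pi(T)$. The lemma's equality fixes this mass: a probability normalization would rescale every edge load by $1/(\pi(S)\pi(T))$. The weight of paths starting at $x\in S$ is the net outflow at $x$, and the weight ending at $y\in T$ is the net inflow at $y$.

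First, let $E^+=\{(x,y): \phi(x,y)>0\}$. By antisymmetry, at most one orientation of each transition lies in $E^+$, which already gives the one-direction convention of \cref{sec:transport_flow}. Run the usual peeling procedure on the directed graph $(\Omega,E^+)$ with capacities $\phi$. While some $x\in S$ has residual net outflow, walk forward along positive residual edges until reaching either a vertex of $T$ with residual demand or a repeated vertex. In the first case, record a simple $S$--$T$ path with weight equal to its bottleneck. In the second, record a directed cycle with weight equal to its bottleneck. Subtract the weight along the recorded object and repeat. The walk cannot get stuck at an intermediate vertex: conservation (iii) gives every vertex of $\Omega\setminus(S\cup T)$ with residual inflow a positive out-edge, and a vertex of $T$ without residual demand also has a positive out-edge because its remaining inflow and outflow balance. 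Each step zeroes some edge, so the procedure terminates. At the end, any leftover positive flow is circulation-free of sources, so it decomposes into cycles as well. The output is
\[
\phi|_{E^+}=\sum_{P} w_P\,\mathbb 1_P+\sum_{C} c_C\,\mathbb 1_C ,
\]
where the paths carry total weight $\pi(S)\pi(T)$ with the prescribed endpoint masses.

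Second, eliminate the cycles by splicing. If a cycle $C$ shares a vertex $v$ with a path $P$ of weight $w_P$, split off weight $\min(w_P,c_C)$ of $P$. Replace that portion by the walk that follows $P$ to $v$, traverses $C$ once, and continues along $P$. Transport flows only require walks in the chain, not simple paths. Each edge of $C$ then has its load moved from the cycle term to the path term, so the edge sums, the endpoints and the endpoint masses are all unchanged. Iterating this, cycles that touch the support of some path disappear. A cycle lying inside the support of the paths can be spliced into as many paths as needed to exhaust its weight.

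The main obstacle is a cycle component of $E^+$ that is vertex-disjoint from every $S$--$T$ path. No genuine $S$--$T$ walk in the chain can carry its flow while keeping the edge loads exactly equal to $\phi$. My first attempt would be to show that such components can be assumed absent. If $\phi$ has a cycle component disjoint from all source--sink paths, then subtracting it yields another valid $S$--$T$ flow that agrees with $\phi$ on every edge the transport flow can possibly use. In every application in this paper, the flow we feed into the lemma is built from source--sink routings, so it has no such component. The equality $\sum_{\gamma\ni(x,y)}\Gamma(\gamma)=\phi(x,y)$ then holds on every edge with $\phi(x,y)\ge 0$. Edges with $\phi(x,y)=0$ lie in no path, so they trivially satisfy it. I would state this reduction explicitly as the first line of the proof.
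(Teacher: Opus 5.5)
Your route differs from the paper's. The paper adds a super-source and super-sink and gives each arc with $\phi(x,y)>0$ capacity $\phi(x,y)$. It then repeatedly raises the weight of an $s$--$t$ path with no tight arc, invoking max-flow/min-cut to argue that such a path exists until the full source--sink value is routed. That argument only maintains $\sum_{\gamma\ni(x,y)}\Gamma(\gamma)\le\phi(x,y)$ and stops once the value is routed. It therefore silently discards circulating flow and proves only the inequality.

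Your path--cycle decomposition with splicing is more explicit. It recovers equality on every cycle linked to the path support through a chain of shared vertices. Your diagnosis of the obstruction is also right. Consider a connected component of the support of $\phi$ containing no vertex of $S\cup T$, so it carries a pure circulation. It can only be entered through arcs with $\phi=0$ in both directions, and the equality forces those arcs to carry no load. So the lemma as literally stated is false.

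The flaw is in your way out. Subtracting the component gives equality for a different flow, not for $\phi$. And the claim that the paper's flows have no such component because they are ``built from source--sink routings'' does not hold: sums of path flows can cancel down to exactly this configuration. For example, take (suitably scaled) unit flows along $s\to u\to c\to d\to e\to v\to t$ and $s\to v\to e\to c\to u\to t$. They cancel on $\{u,c\}$ and $\{e,v\}$. What remains is $s\to u\to t$, $s\to v\to t$, and a circulation $c\to d\to e\to c$ cut off from $s$ and $t$. This is a valid $\{s\}$-$\{t\}$ flow for which the equality fails. Nothing in the construction of $\phi_{ij}$, or of $\sum_{i\in S,j\in T}\phi_{ij}$ in \cref{lem:ldirstrong}, excludes this.

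The correct fix is to state and prove $\sum_{\gamma\ni(x,y)}\Gamma(\gamma)\le\phi(x,y)$ for all arcs with $\phi(x,y)\ge 0$, with equality off such components. This is all the paper ever uses, since both the maximum and the average congestion bounds are upper bounds computed from $|\phi_{xy}|$.

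Two smaller points. First, when a cycle carries more weight than the paths through its vertices, splicing forces repeated traversals. Loads must then be counted with multiplicity, as the telescoping sum in \cref{thm:ourtransport} does, and not literally as $1[(x,y)\in\gamma]$. Second, your total mass $\pi(S)\pi(T)$ presupposes net outflow $\pi(x)\pi(T)$ at each $x\in S$, as in the paper's later usage. Definition (i) in \cref{sec:mcflow} literally gives $|S|\pi(T)$.
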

The proof of \cref{lem:flowtotransportflow} uses a straightforward iterative process of increasing the flow across a path until an edge becomes ``tight''; the argument is straightforward but we defer the details to \cref{sec:msf}.

\section{$\widetilde{O}(n^{2})$ Mixing for Triangulations}
\label{sec:trimixing}

\subsection{Decomposing the Triangulation Walk and Outline of the Proof}
\label{sec:partition}
We begin by presenting the two key partitions of the triangulation state space given by \cite{mrs} and \cite{eppstein2022improved} and outlining some of their main properties. The \textit{central triangle} of a triangulation is the triangle which contains the center of the polygon; in the case where the center lies on one of the diagonals, we slightly perturb the center so that it lies in a unique triangle. 

Let $T$ be the set of central triangles. Define the \emph{central triangle partition} as the projection chain $\left(\Omproj = T, \Pproj, \piproj\right)$ corresponding to the decomposition $\Omega_t = \{x \in \Omega \mid x \textnormal{ includes the central triangle } t\}$. Recall that the transitions are given by
\begin{equation*}
    \Pproj(t,t') = \frac{\sum_{x \in \Omega_t, y \in \Omega_{t'}} \pi(x) P(x, y)}{\sum_{z \in \Omega_t}\pi(z)}\,,
\end{equation*}
when $t \neq t'$ and with self loops for the remaining probabilities, while the stationary distribution $\piproj$ satisfies $\piproj(t) = \pi(\Omega_t) = \sum_{x \in \Omega_t} \pi(x)$. According to the \ref{eqn:total_var}, we may decompose
\begin{equation*}
    \var_{\pi} f = \var_{\piproj} F + \Exp_{t \sim \piproj} \var_{\pi_t} f.
\end{equation*}

Since our goal is to obtain a Poincar\'e inequality, we want to bound both the terms by the Dirichlet form of the overall chain. Note that the second term is the average restricted variance in $\Om_t$, i.e. over triangulations with a given central triangle $t$. As the average of the Dirichlet forms of the restrictions is bounded by Dirichlet form of the overall chain (this is straightforward to check, but we will prove it explicitly when we use it), we need a meaningful Poincar\'e inequality for each restriction chain. A crucial observation is that every restriction chain is in fact a product chain over smaller triangulation walks (see \Cref{fig:cartesian}). This, combined with \Cref{lem:product_ineq}, will enable us to obtain a recursive bound on the Poincar\'e constant.

\begin{figure}[h]
    \centering
    \includegraphics[width=0.38\linewidth]{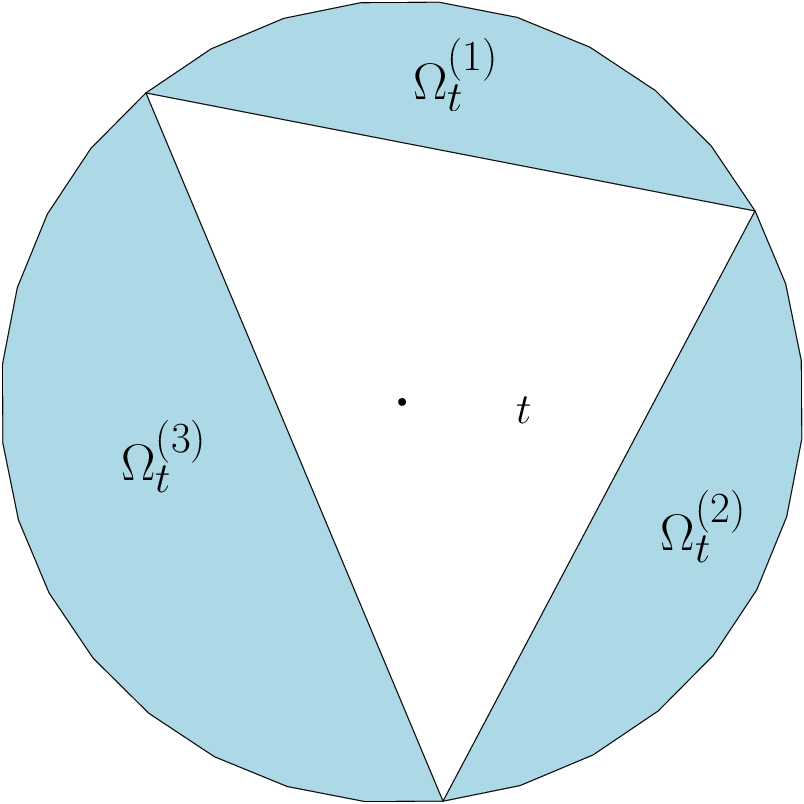}
    \caption{The original polygon is decomposed into three polygons by $t$. The restriction chain $(\Omega_t,P_t, \pi_t)$ is the product chain on the triangulations of each polygon. The filled region represents an arbitrary triangulation of the rest of the polygon.}
    \label{fig:cartesian}
\end{figure}

\begin{lemma}[\cite{eppstein2022improved}]
    \label{lem:cartprod}
    For each state $t \in \Omproj$, the restriction chain $(\Omega_t, P_t, \pi_t)$ is the Cartesian product of three chains $(\Omega_t^{(1)}, P_t^{(1)}, \pi_t^{(1)}), (\Omega_t^{(2)}, P_t^{(2)}, \pi_t^{(2)}), (\Omega_t^{(3)}, P_t^{(3)}, \pi_t^{(3)})$ each of which is isomorphic to the triangulation walk on a smaller polygon (possibly empty) on at most $n/2+1$ vertices.
\end{lemma}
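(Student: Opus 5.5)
Fix a central triangle $t$ with vertices $a,b,c$ on the $(n+2)$-gon. Each side of $t$ is either a hull edge or a diagonal, and each side cuts off a convex sub-polygon: the polygon formed by the boundary arc of the $(n+2)$-gon between its endpoints (the one not containing the third vertex of $t$) together with the side itself. Call these $Q_1,Q_2,Q_3$. A sub-polygon is empty, i.e.\ a single edge, when the corresponding side is a hull edge. The plan is to show that $\Omega_t$ is in bijection with $\Omega(Q_1)\times\Omega(Q_2)\times\Omega(Q_3)$, and that under this bijection $P_t$ becomes a product chain in the sense of \eqref{eqn:prod_transition}.

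\textbf{Bijection.} Every triangulation $x \in \Omega_t$ contains the three sides of $t$. No other diagonal of $x$ can cross them, so every remaining diagonal lies inside exactly one $Q_i$, and its restriction to $Q_i$ is a maximal non-crossing set, i.e.\ a triangulation of $Q_i$. Conversely, gluing arbitrary triangulations of $Q_1,Q_2,Q_3$ along the sides of $t$ gives a triangulation containing $t$. Its central triangle is $t$, since the perturbed center lies in the interior of $t$.

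\textbf{Stationary measure.} The flip walk has uniform stationary measure: it is symmetric, because every triangulation has exactly $n-1$ diagonals and flips are involutive. So $\pi_t$ is uniform on $\Omega_t$, which is the product of the uniform measures $\pi_t^{(i)}$.

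\textbf{Transitions.} From $x\in\Omega_t$, a move of $P$ that stays in $\Omega_t$ either holds or flips a diagonal that is not a side of $t$. Flipping a side of $t$ destroys $t$, so that move leaves $\Omega_t$. Flipping a diagonal $d$ inside $Q_i$ uses the quadrilateral formed by the two triangles adjacent to $d$. Both triangles lie in $Q_i$, so the flip is exactly a flip in the triangulation walk on $Q_i$ and leaves the other two coordinates unchanged.

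\textbf{Weights.} The restriction chain renormalizes by $\sum_{z\in\Omega_t}P(x,z)$. This equals $1-\frac{1}{2}\cdot\frac{s}{n-1}$, where $s$ is the number of sides of $t$ that are diagonals. Since $s$ depends only on $t$ and not on $x$, the renormalized probability of flipping a given diagonal in $Q_i$ is a constant $c_t$ independent of $x$. Let $m_i$ be the number of diagonals of $Q_i$, i.e.\ $|Q_i|-3$, so that $m_1+m_2+m_3=n-1-s$. Set $w_i \propto m_i$. Then $P_t$ is the product chain that picks coordinate $i$ with probability $w_i$ and takes a step of a lazy flip walk on $Q_i$. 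Holding probabilities have to be matched so the decomposition is exact. I would absorb the leftover mass into the holding of each component chain; the component chains are then lazy flip walks with possibly different laziness, which is still "isomorphic to the triangulation walk" up to a constant factor in laziness. I expect this bookkeeping of holding probabilities to be the only delicate point, and I would state the lemma with the components being the flip walks, with holding adjusted by at most a constant.

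\textbf{Size bound.} Since $t$ contains the center, each arc cut off by a side of $t$ spans at most half the polygon. Hence each $Q_i$ has at most $(n+2)/2+1$ vertices, i.e.\ about $n/2+1$ up to the rounding convention in the statement. If the center lay on the side itself, the arc would be exactly half, and the perturbation resolves this.
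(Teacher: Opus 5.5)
Your proof is correct and is the standard argument for this lemma, which the paper imports from earlier work without proof. The paper's later use of it matches your construction: in the proof of the main theorem it computes $P_t=\frac{n-1}{n-4}P$ and uses coordinate weights $\frac{i_1-1}{n-4}$, which is exactly your decomposition with $w_i\propto m_i$. Your laziness bookkeeping also closes exactly, with every factor having the common holding probability $\frac{n-1}{2(n-1)-s}$.

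Two small touch-ups. First, take $m_i=\max(|Q_i|-3,0)$ so that the two-vertex (hull-edge) case is covered. Second, stand by your bound of $\frac{n+2}{2}+1$ vertices rather than hedging toward the statement's $n/2+1$. The statement's figure is off by one: already for the pentagon ($n=3$) the central triangle cuts off triangles, which have $3 > 5/2$ vertices. The paper's later use of $(i+2)$-gons with $i\le n/2$ agrees with your bound.
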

\begin{remark}
    The last part of the lemma is crucial for our recursive argument. As each iteration reduces the problem to polygons of at most half the size of the original, we perform only a logarithmic number of iterations.
\end{remark}

It remains to bound $\var_{\piproj} F$, the variance on the projection chain, by the Dirichlet form on the overall chain. This will be done using the transport flow machinery from \Cref{sec:transport_flow}. The flow construction naturally points to studying boundaries between $\Om_t,\Om_{t'}$ for different central triangles, in which a new projection chain will arise.

Define the \emph{oriented partition chain} as the projection chain $\left(\Omorproj = [n], \Porproj, \piorproj\right)$ corresponding to the decomposition $\Omega_i = \{x \in \Omega \mid x \textnormal{ includes the triangle } (0, i, n+1)\}$, with $\Porproj, \piorproj$ defined in an analogous fashion to $\Pproj, \piproj$. Given $S \subseteq \Omorproj$, we will write $\Omega[S] := \bigcup_{i \in S}\Omega_i$.

Given $t, t' \in \Omproj$ with $\Pproj(t, t') > 0$, define
\[
    \Omega_{t t'} = \left\{x \in \Omega_t \mid \exists y \in \Omega_{t'}, P(x, y) > 0\right\}\,.
\]
The set $\Omega_{t t'}$ is the set of states in $\Omega_t$ having a neighboring state in $\Omega_{t'}$. 

Similarly given $i, j \in \Omorproj$ (recall that for all $i,j$, $\Porproj(i, j) > 0$), define
\[
    \Omega_{i j} = \left\{x \in \Omega_i \mid \exists y \in \Omega_{j}, P(i, j) > 0\right\}\,.
\]

See \cref{fig:pinningij}.

\begin{figure}[h]
    \centering
    \includegraphics[width=0.27\linewidth]{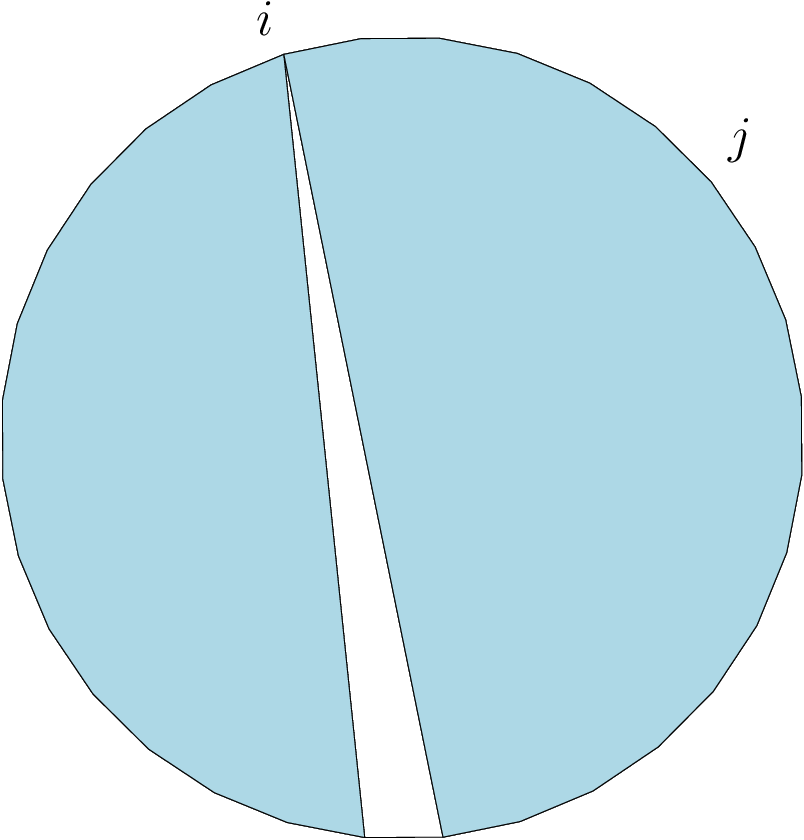}
    \hspace{0.2cm}
    \includegraphics[width=0.27\linewidth]{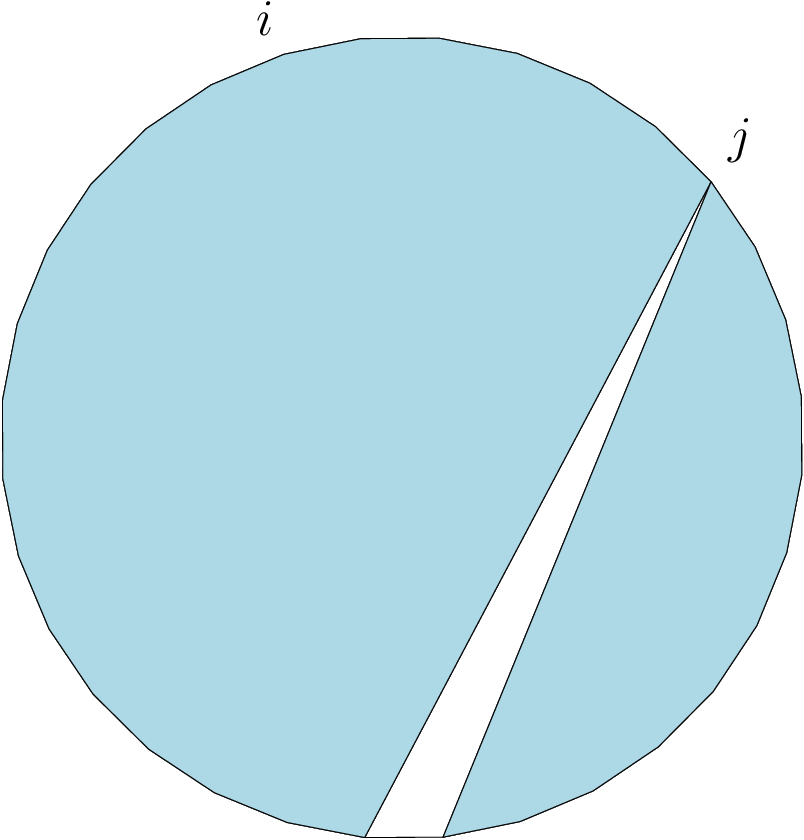}
    \hspace{0.2cm}
    \includegraphics[width=0.27\linewidth]{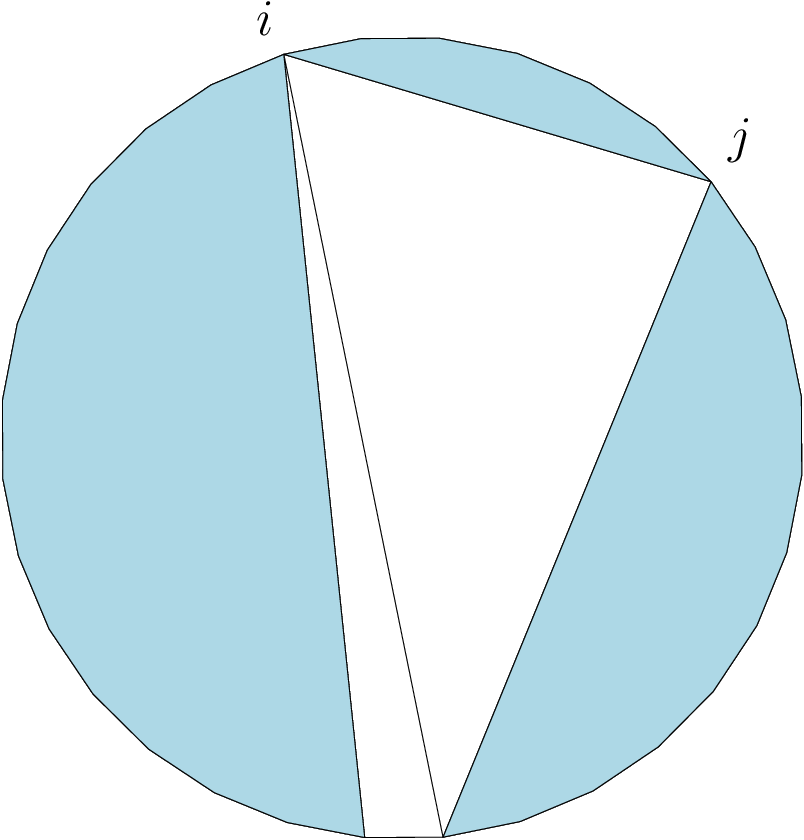}
    \caption{Left: the set $\Omega_i \subseteq \Omega$, represented by the state $i \in \Omorproj$, is the set of all triangulations that contain the triangle shown in the figure.
    Center: the set $\Omega_j$ is defined similarly.
    Right: The set $\Omega_{ij} \subseteq \Omega_i$ (see the definition of pinnings) is the set of triangulations that contain the two triangles shown in the figure.}
    \label{fig:pinningij}
\end{figure}

The central triangle~$t$ partitions the polygon into three smaller polygons (one of which may be the empty polgyon). Label the sub-polygon containing the triangle $u$ as polygon (1); label the other two sub-polgyons as (2) and (3). Consider any partial triangulation in which sub-polygons (2) and (3) are fully triangulated, but sub-polygon (1) is not triangulated. Denote such a sub-triangulation as $\eta \in \Omega_t^{(2)} \times \Omega_t^{(3)}$. Given $\eta$, let $\Omega^{(t, \eta)}$ denote the set of states in $\Omega_t$ having sub-polygons (2) and (3) triangulated according to~$\eta$. 

We extend this notation and use~$\left(\Omega^{(t,\eta)}, P^{(t,\eta)}, \pi^{(t,\eta)}\right)$ to refer respectively to the copy of the chain $\left(\Omega_t^{(1)}, P_t^{(1)}, \pi_t^{(1)}\right)$ induced by fixing~$\eta$. Given a function~$f:\Omega \rightarrow \mathbb{R}$ and given $t \in \Omproj$ and given $\eta \in \Omega_t^{(2)} \times \Omega_t^{(3)}$, define the function $f^{(t,\eta)}: \Omega^{(t,\eta)}\rightarrow \mathbb{R}$ so that, if $z \in \Omega^{(t,\eta)}$ we let $f^{(t,\eta)}(z) = f(z \cup \eta)$.

We will also denote by~$\Delta$ the degree of the (regular) graph induced by the chain  $(\Omega, P, \pi)$, and note that for all $x,y\in\Omega$ such that $P(x, y) > 0$ we have $P(x, y) = \frac{1}{\Delta}$. (We have $\Delta = \Theta(n)$, but it will be useful for clarity to distinguish it as a variable.) We will denote by~$\Delta_t^{(1)}$ the degree of the graph induced by $\left(\Omega_t^{(1)}, P_t^{(1)}, \pi_t^{(1)}\right)$ and define $\Delta_t^{(2)}, \Delta_t^{(3)}$ similarly.

\cite{eppstein2022improved} observed that given $t,t' \in \Omproj$, the boundary set $\Omega_{t t'}$ is precisely the set of triangulations that contain both the triangle $t$, and a particular triangle $u$ formed by two vertices of $t$ and an additional vertex of $t'$.  Letting $i$ be that additional vertex of $t'$, $u$ is the unique triangle having vertex $i$ and sharing its other two vertices with $t$. See \Cref{fig:boundary_decomp}.

We will denote by $\Omorproj^{(t,\eta)}$ the set of all vertices~$i$ induced by some triangle~$u$ as described above, and denote by $\left(\Omorproj^{(t,\eta)}, \Porproj^{(t,\eta)}, \piorproj^{(t,\eta)}\right)$ the oriented projection chain induced by fixing the ``special edge'' to be the edge of~$u$ that bounds polygon (1) (i.e., the edge $(j, b)$ in \Cref{fig:boundary_decomp}).  We then define $\FF^{(t,\eta)}:\Omorproj^{(t,\eta)} \rightarrow \mathbb{R}$ so that 
\[\FF^{(t,\eta)}(i) := \sum_{z \in \Omega_i^{(t,\eta)}}\frac{\pi^{(t,\eta)}(z)}{\piorproj^{(t,\eta)}(i)}\,.\]

We have:

\begin{lemma}[\cite{eppstein2022improved}]
For all $t, t' \in \Omproj$ where $P(t, t') > 0$, for all $\eta \in \Omega_t^{(2)} \times \Omega_t^{(3)}$: \begin{equation}
    \Omega_{t t'} = \bigcup_{\eta \in \Omega_t^{(2)} \times \Omega_t^{(3)}} \Omega_{i}^{(t,\eta)} \ \,,\label{eqn:relateproj}
\end{equation}
\label{lem:relateproj}
\end{lemma}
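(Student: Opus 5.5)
The plan is a double inclusion. Throughout, $\Omega_i^{(t,\eta)}$ denotes the set of triangulations in $\Omega^{(t,\eta)}$ (so containing $t$, with polygons (2) and (3) triangulated according to $\eta$) that also contain the triangle $u$ with apex $i$ on the special edge. Here $i$ is the vertex of $t'$ determined by the pair $(t,t')$.

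The first step is to characterize when a triangulation $x\in\Omega_t$ has a neighbor $y\in\Omega_{t'}$. A single flip changes exactly one diagonal. If the central triangle changes from $t$ to $t'$, the flipped diagonal must be a side $e$ of $t$ that is removed. The quadrilateral formed by $t$ and the triangle $u$ across $e$ is then retriangulated, and $t'$ must be one of the two new triangles, namely the one containing the center.

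Write $t=(a,b,c)$ with $e=(a,b)$, and let $u=(a,b,i)$. After the flip the new triangles are $(a,c,i)$ and $(b,c,i)$. For the center to move into one of them, $e$ must be the side of $t$ separating the center from polygon (1), and $t'$ is the new triangle containing the center. So $t'$ determines both the side $e$ and the apex $i$: $i$ is the vertex of $t'$ not in $t$.

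The step needing the most care is the converse: showing that whenever $x$ contains both $t$ and $u=(a,b,i)$, flipping $e$ does produce a triangulation whose central triangle is exactly $t'$. The argument is that the center lies in the quadrilateral $a,c,b,i$ but not in $u$. Hence it lies in one of $(a,c,i)$ and $(b,c,i)$, and which one is fixed by $t'$, since $t'$ is a central triangle adjacent to $t$ in the projection chain. I would handle this with a short case check on which side of the diagonal $(c,i)$ the center falls, using the perturbation convention for the center.

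With this characterization, $\Omega_{tt'}$ is exactly the set of $x\in\Omega_t$ that contain $u$. The last step is to decompose by $\eta$. Every $x\in\Omega_t$ restricts to a unique triangulation $\eta$ of polygons (2) and (3), and $u$ lies inside polygon (1). Therefore $x\in\Omega_{tt'}$ if and only if $x\in\Omega_i^{(t,\eta)}$ for that $\eta$, which gives \eqref{eqn:relateproj}. The union is moreover disjoint.
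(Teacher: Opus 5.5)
Your proposal is correct and follows the paper's route, which it takes from the cited prior work: identify $\Omega_{tt'}$ as exactly those $x\in\Omega_t$ that contain the fixed triangle $u$ whose apex $i$ is the vertex of $t'$ outside $t$, then split this set disjointly according to the triangulation $\eta$ of polygons (2) and (3). Two small tightenings: the side $e$ is determined because $i$ lies in exactly one of the three regions cut off by the sides of $t$ (your ``polygon (1)'' phrasing is circular, since polygon (1) is defined via $u$); and the converse needs no case check, because $t'$ contains the center, so it cannot be $u$ and must be one of the two triangles $(a,c,i)$, $(b,c,i)$ created by flipping $e$, which puts the flipped triangulation in $\Omega_{t'}$ directly.
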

where by $\Omega_{i}^{(t,\eta)} \subseteq \Omega^{(t, \eta)}$ we denote the set of states in $\Omega^{(t,\eta)}$ that contain the triangle $u$ (having vertex $i$) described above.

In other words, \cref{lem:relateproj} states that a triangulation $x \in \Omega_t$ is in $\Omega_{tt'}$ if and only if $x$ contains the triangle $u$ (which has vertex $i$). These triangulations can then grouped according to how they triangulate $\Omega_t^{(2)}$ and $\Omega_t^{(3)}$, and by definition this union is disjoint. See \Cref{fig:boundary_decomp}.

\cref{lem:relateproj} is a key observation that relates the central-triangle projection chain and the oriented projection chain. It allowed \cite{eppstein2022improved} to reduce the problem of sending flow between central-triangle projection states $t$ and $t'$ to two problems: (i) sending flow from the boundary set $\Omega_{t't}$ to $\Omega_{tt'}$, and (ii) sending flow from the boundary set $\Omega_{tt'}$ to the rest of $\Omega_t$. Problem (i) is easy to solve as there is a perfect matching (see \cref{rmk:perfmat}) between $\Omega_{tt'}$ and $\Omega_{t't}$. Problem (ii) was solved by reducing to the problem of sending flow from $\Omega_i^{(t,\eta)}$ to the rest of $\Omega^{(t, \eta)}$\textemdash that is, the problem of sending flow from a state in the oriented projection chain $\Omorproj^{(t, \eta)}$ to the other states in that projection chain. This problem in turn, \cite{eppstein2022improved} showed (we will retrace the proof rigorously), can be recursively decomposed with no blowup in congestion.

\begin{figure}[h]
    \centering
    \includegraphics[width=0.4\linewidth]{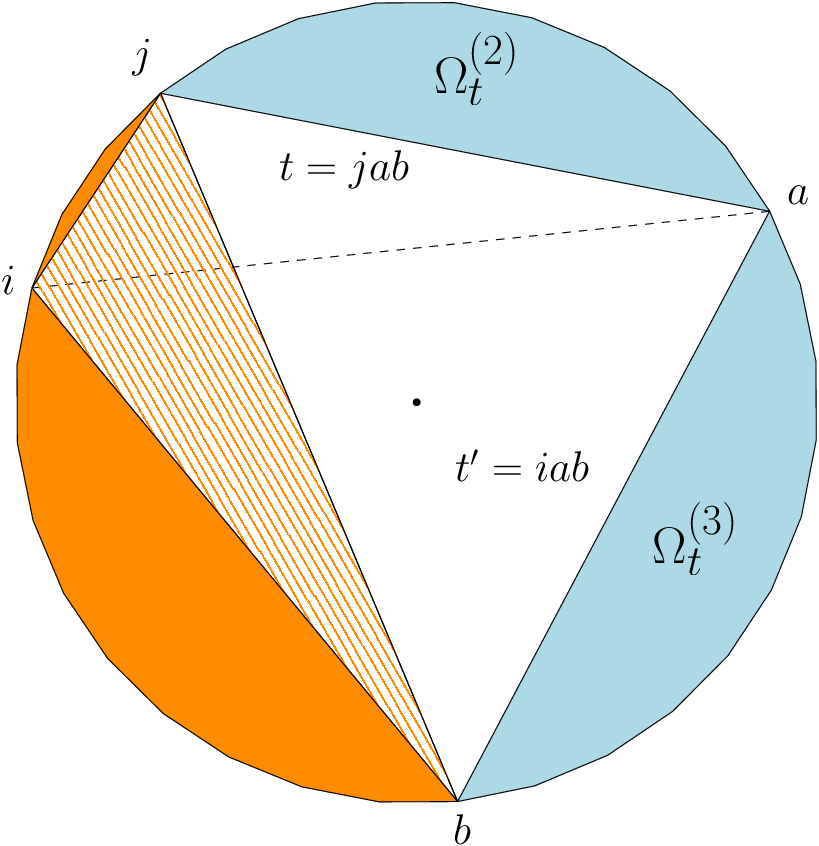}
    \caption{A triangulation in $\Om_{tt'}$ must necessarily contain the triangles $t$ and $u:=ijb$. Given a triangulation in $\Omega_t^{(2)}$ and $\Omega_t^{(3)}$ (light blue), we are left with triangulations in $\Omega_t^{(1)}$ (orange) subject to containing $u$ (shaded orange). We can now think as simply discarding polygons $(2)$, $(3)$. Then, we set the side of $t\cap (1)$ as the special edge of polygon $(1)$, and such triangulations of $(1)$ correspond to the elements of the oriented chain.}
    \label{fig:boundary_decomp}
\end{figure}

\subsection{Bounding the Variance of the Function over the Projection Chain}\label{sec:boundprojvar}
The aim of this section is to show the following lemma:

\begin{lemma}\label{lem:boundprojvar}
    The variance of the function $\FF$ over the projection chain satisfies the inequality
    $$\var_{\piproj} \FF \leq C(\log^{c}n)n^{2} \dir_{\pi}(f)\,,$$
    for all $f:\Om\to\mathbb{R}$, for some universal constants $C,c>0$.
\end{lemma}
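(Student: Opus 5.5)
We plan to write the variance over the projection chain as a sum of pairwise squared differences,
\[
\var_{\piproj}\FF \;=\; \tfrac12\sum_{t,t'\in\Omproj}\piproj(t)\piproj(t')\bigl(\FF(t)-\FF(t')\bigr)^2 ,
\]
and to bound each term with \cref{cor:bettertransport}, taking $S=\Omega_t$ and $T=\Omega_{t'}$. Applying it pair by pair would lose too much. We will instead route all pairs simultaneously: for each $t$, the flow out of $\Omega_t$ is split according to the boundary sets $\Omega_{tt'}$. The needed flows are built in the style of \cite{eppstein2022improved} and turned into transport flows by \cref{lem:flowtotransportflow}. Since $\Omega_t$ and $\Omega_{t'}$ are nonadjacent for most pairs, the flow from $t$ to $t'$ will travel along a path $t=t_0,t_1,\dots,t_k=t'$ in the projection graph. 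The central-triangle graph has only polynomially many states, and its projected distribution is reasonably spread out: $\piproj(t)$ is a product of three Catalan ratios, and the central triangle forces all three sub-polygons to have size at most $n/2$. So we can afford a fixed canonical routing, for example through a hub triangle, or by shrinking the triangle's sides one step at a time. Each segment is then handled with the three-stage decomposition described after \cref{lem:relateproj}.
\begin{enumerate}[(i)]
\item Move mass inside $\Omega_{t_j}$ from its stationary distribution onto the boundary set $\Omega_{t_jt_{j+1}}$.
\item Cross the perfect matching from $\Omega_{t_jt_{j+1}}$ to $\Omega_{t_{j+1}t_j}$ (\cref{rmk:perfmat}).
\item Spread the mass back out inside $\Omega_{t_{j+1}}$.
\end{enumerate}

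Stage (i) is the heart of the argument. By \cref{lem:relateproj} and \cref{lem:cartprod}, once $\eta$ in polygons (2) and (3) is fixed, it reduces to sending flow from one state $i$ of the oriented projection chain $\Omorproj^{(t,\eta)}$ to the stationary distribution on the sub-polygon (1). We will prove, by induction on polygon size, a flow lemma for the oriented chain. It should give a flow from $\Omega_i$ to $\Omega$ whose maximum congestion $\rho$ is $O(\Delta)$ up to polylog factors, in normalized units. This is the ``no blowup in congestion'' recursion of \cite{eppstein2022improved}. The key point is that splitting at the apex $i$ leaves two independent oriented sub-instances, whose flows combine as a product flow, so the congestion does not multiply across levels.

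The new ingredient, and the step we expect to be the main obstacle, is bounding the average congestion $\bar\rho_S$ (equivalently the expected path length) by $\widetilde O(n)$ rather than the worst-case $O(n^2)$. Our approach is to translate the recursive construction into the rotation walk on binary trees. A flow path from a uniform-ish state to a pinned state then corresponds to rotating a node up to the root, and then recursing in the subtrees. The expected number of rotations is governed by depths of random Catalan trees, which are $O(\sqrt n)$ in expectation. Summing over the $O(\log n)$ levels of the halving recursion and over the at most $n$ flip locations, we hope to get an average congestion of about $n^{3/2}\cdot\mathrm{polylog}$ times the normalization, or at least a bound giving $\bar\rho\rho/(\pi(S)\pi(T))=\widetilde O(n^2)\cdot\Delta^{-1}$ after accounting for $P=1/\Delta$. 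If the depth heuristic falls short, the fallback is a cruder bound that still gives $\widetilde O(n^2)$: use the size-biased law of the apex position under $\piorproj$, which is heavily concentrated near the ends.

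Finally, we will plug $\rho$ and $\bar\rho_S,\bar\rho_T$ into \eqref{eqn:bettertransport} and sum over pairs $t,t'$ weighted by $\piproj(t)\piproj(t')$. Here we must check that each edge $(x,y)$ of $\Omega$ is charged only polylogarithmically many times in total. Edges inside some $\Omega_t$ are charged once per routing segment through $t$, and these are weighted by $\piproj$. Matching edges are charged with congestion bounded by the ratio $\piproj(t)\piproj(t')/\pi(\Omega_{tt'})$, which \cite{eppstein2022improved} controls polynomially. The sum then becomes $C(\log^c n)\,n^2\cdot 2\dir_\pi(f)$, which is the statement of \cref{lem:boundprojvar}.
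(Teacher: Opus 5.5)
Your overall architecture matches the paper's. You use total variance over central triangles, cross boundary sets through the perfect matchings of \cref{rmk:perfmat}, and spread mass inside $\Omega^{(t,\eta)}$ with transport flows whose average congestion is governed by Catalan-tree depth. However, two load-bearing steps are missing.

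\textbf{First gap: the projection-level routing.} You claim that because $\Omproj$ is polynomially sized and $\piproj$ is spread out, any fixed canonical routing is affordable, and that matching edges need only ``polynomial'' control. Neither holds at the precision required.

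For a hub $t_0$: each $x\in\Omega_{t_0}$ lies in at most three boundary sets $\Omega_{t_0t'}$, since only flips of the sides of $t_0$ leave $\Omega_{t_0}$. So the hub's crossing edges have total weight $\sum_{t'}\piproj(t_0)\Pproj(t_0,t')\le 3\piproj(t_0)/\Delta$. Moreover $\piproj(t_0)=O(n^{-3/2})$ by \cref{lem:sidelengths}, because the second-longest side of a central triangle has length at least $(n+2)/4$. Flow of total order $1$ must cross these edges, so some hub matching edge carries congestion $\Omega(\Delta/\piproj(t_0))=\Omega(n^{5/2})$. That already exceeds the $\widetilde O(n^2)$ budget.

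Shrinking one side at a time instead gives projection paths of $\Theta(n)$ hops. Cauchy--Schwarz along such a serial chain costs a factor $\Theta(n)$ (the $L$ in \cref{cor:msfdecomp}). The lack of a combining mechanism also shows in your last step. You earlier said that using \cref{cor:bettertransport} pair by pair loses too much, yet you finish by summing \eqref{eqn:bettertransport} over pairs $t,t'$.

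The paper does not build this routing itself. It imports the multi-way single-commodity flow of \cite{eppstein2022improved} (\cref{lem:flowreduction}, via \cref{lem:efmtffull}). In that flow:
\begin{itemize}
\item serial chains have length $\widetilde O(1)$;
\item matching terms carry weight $\widetilde O(n^{3/2})$;
\item internal subproblems carry surplus $\widetilde O(\sqrt n)$ on sets $\hat S_k^{(t,\eta)}$ with $\piorproj(\hat S_k^{(t,\eta)})\le 3/4$.
\end{itemize}
Without such a construction, your plan has no projection-level bound.

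\textbf{Second gap: the internal average congestion.} Your target drifts: first $\widetilde O(n)$, then $n^{3/2}$ up to polylogs, then $\widetilde O(n^2)\Delta^{-1}$. The accounting that actually yields $\widetilde O(n^2)$ is missing. Given the $\widetilde O(\sqrt n)$ surplus above, the paper needs $\rho\le\Delta$ together with $\bar\rho_S\le\widetilde O(\sqrt n)\,\pi(T)$ and $\bar\rho_T\le\widetilde O(\sqrt n)\,\pi(S)$ (\cref{lem:proj_flow}). That is, the expected path length must be $\widetilde O(\sqrt n)$, so the total is $\Delta\cdot\sqrt n\cdot\sqrt n$. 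An internal bound of $\widetilde O(n)$ would give only $\widetilde O(n^{5/2})$, unless the projection-level multiplicity were polylogarithmic. You assert that (``each edge charged only polylogarithmically many times'') but do not establish it.

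Your depth heuristic also does not explain why the load at a state $x$ scales with tree depth rather than with the $\Theta(n)$ flips available at $x$. Bounding each coefficient by one and summing over flips gives only $O(n)$. The step that fixes this is \cref{lem:rhodecomp}, proved from the Catalan monotonicity in \cref{lem:pimono,lem:rhomono}. It shows that at each level of the dual tree $x^*$, the absolute flow coefficients of all flips at that level sum to at most $\hat\phi=\piorproj(j)/\piorproj_\eta(j)$. Hence the per-state load is at most depth times $\hat\phi$.

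Two further summations are then needed:
\begin{itemize}
\item \cref{lem:numeta} controls the overcount over nested pinnings by the expected depth $\widetilde O(\sqrt{l_i(t)})$.
\item \cref{lem:boundlu} sums over apex triangles $t\in\mathcal T_{ij}$ using the $\Theta((rs)^{-3/2})$ triangle probabilities.
\end{itemize}
Together these give $\widetilde O(\sqrt n)\,\piorproj(i)\piorproj(j)$ per pair $(i,j)$. None of this appears in your sketch, and the size-biased-apex fallback is not an argument.
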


\cref{lem:boundprojvar}, combined with a hierarchical decomposition of the (variance of the) overall chain using the projection chain $(\Omproj, \Pproj, \piproj)$, will allow us to establish the desired lower bound on the spectral gap.

\cite{eppstein2022improved} constructed a multicommodity flow in the projection chain $(\Omproj, \Pproj, \piproj)$ and analyzed its congestion. Their analysis can be used to show the following (we show this in \cref{sec:msf}):

\begin{lemma}[\cite{eppstein2022improved}]
Let~$f:\Omproj \rightarrow \mathbb{R}$ be a function over the projection chain $\left(\Omproj, \Pproj, \piproj)\right)$ defined in \cref{sec:partition}; let $\FF$ be defined with respect to~$f$ as in \cref{sec:funcineq}. Then:
{\small
\begin{align}
    &\var_{\piproj} F\nonumber\\
    &~\leq C_1\log^{c_1} n  \cdot n^{3/2} \sum_{t, t'} \piproj(t)\Pproj(t, t')(f(\Omega_{t t'}) - f(\Omega_{t' t}))^2 \label{eqn:bdryterm} \\
    &~+ C_1  \sqrt n\log^{c_1} n  \sum_{t} \Exp_{\eta \sim \nu_{2,3}}\left[ \sum_{k = 1}^{C_1 \log^{c_1} n} \piproj(t)\piorproj^{(t,\eta)}\left(\hat S_k^{(t,\eta)}\right)\left(\FF^{(t,\eta)}(S_k^{(t,\eta)}) - \Exp_{\pi^{(t,\eta)}} f^{(t,\eta)}\right)^2\right] \nonumber \\
    &~+ C_1\sqrt{n}\log^{c_1} n\sum_{t}\Exp_{\eta \sim \nu_{1,3}} \left[ \sum_{k = 1}^{C_1 \log^{c_1} n} \piproj(t)\piorproj^{(t,\eta)}\left(\hat S_k^{(t,\eta)}\right)\left(\FF^{(t,\eta)}(S_k^{(t,\eta)}) - \Exp_{\pi^{(t,\eta)}} f^{(t,\eta)}\right)^2\right] \nonumber \\
    &~+ C_1\sqrt n \log^{c_1} n  \sum_{t} \Exp_{\eta \sim \nu_{1,2}}\left[ \sum_{k = 1}^{C_1 \log^{c_1} n} \piproj(t)\piorproj^{(t,\eta)}\left(\hat S_k^{(t,\eta)}\right)\left(\FF^{(t,\eta)}(S_k^{(t,\eta)}) - \Exp_{\pi^{(t,\eta)}} f^{(t,\eta)}\right)^2\right]\,, \label{eqn:Ssetterm}
\end{align}}
for constants $C_1, c_1 > 0$,
where $\nu_{i,j}$ is the uniform distribution on $\Omega_t^{(i)} \times \Omega_t^{(j)}$, and
for all $t, i, k$, $\hat S_k^{(t,\eta)} \subseteq \Omorproj^{(t,\eta)}$, and $S_k^{(t,\eta)} = \bigcup_{i\in \hat S_k^{(t,\eta)}}\Omega_i^{(t,\eta)}$, and where $\piorproj(\hat S_k^{(t,\eta)}) \leq 3/4$.
\label{lem:flowreduction}
\end{lemma}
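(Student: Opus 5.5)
We write the variance on the projection chain as a sum over pairs,
\[
\var_{\piproj}\FF=\tfrac12\sum_{t,t'}\piproj(t)\piproj(t')\bigl(\FF(t)-\FF(t')\bigr)^2 ,
\]
and route a multi-way single-commodity flow in the projection chain $(\Omproj,\Pproj,\piproj)$ following the construction of \cite{eppstein2022improved}.

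\textbf{Step 1: telescoping along a canonical path.} For each ordered pair $(t,t')$ fix a canonical path $t=t_0,t_1,\dots,t_m=t'$ in $\Omproj$. There are only $O(n^3)$ central triangles, and the canonical path has length $O(\log n)$ because central triangles can be moved through a balanced sequence of adjacent triangles. Along this path we telescope
\[
\FF(t)-\FF(t')=\sum_{\ell}\Bigl[\bigl(\FF(t_\ell)-f(\Omega_{t_\ell t_{\ell+1}})\bigr)+\bigl(f(\Omega_{t_\ell t_{\ell+1}})-f(\Omega_{t_{\ell+1}t_\ell})\bigr)+\bigl(f(\Omega_{t_{\ell+1}t_\ell})-\FF(t_{\ell+1})\bigr)\Bigr],
\]
where $f(A)$ denotes the $\pi$-average of $f$ over $A$. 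Cauchy--Schwarz over the $O(\log n)$ terms gives two kinds of contributions: boundary-crossing terms and internal terms.

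\textbf{Step 2: the boundary terms, giving \eqref{eqn:bdryterm}.} For each crossing edge $(t,t')$ we sum $\piproj(s)\piproj(s')$ over all pairs $(s,s')$ whose canonical path uses $(t,t')$. The congestion estimate of \cite{eppstein2022improved} bounds this sum by $\widetilde O(n^{3/2})\,\piproj(t)\Pproj(t,t')$. That estimate rests on the Catalan asymptotics $C_n=\Theta(4^n n^{-3/2})$, which control how $\piproj(t)$ compares with the boundary mass $\pi(\Omega_{tt'})$. Combining these gives the first line of the lemma.

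\textbf{Step 3: the internal terms, giving \eqref{eqn:Ssetterm}.} By \cref{lem:relateproj}, $\Omega_{tt'}$ is a disjoint union over $\eta$ of the sets $\Omega_i^{(t,\eta)}$. Hence $f(\Omega_{tt'})-\FF(t)$ is an $\eta$-average of $\FF^{(t,\eta)}(i)-\Exp_{\pi^{(t,\eta)}}f^{(t,\eta)}$ inside the oriented chain, up to the mismatch between the law of $\eta$ and its conditional law given the boundary. Jensen's inequality (in the spirit of \cref{lem:convexity_dpi}) moves the square inside the expectation over $\eta$. There are three choices of which sub-polygon plays the role of (1), which produces the three terms with $\nu_{2,3}$, $\nu_{1,3}$ and $\nu_{1,2}$.

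Next we decompose the single point $i$ recursively. Halving the oriented polygon yields a nested sequence $\hat S_1\supset\hat S_2\supset\cdots\ni i$ of $O(\log n)$ sets, each of measure at most $3/4$, obtained by grouping vertices into intervals. The difference $\FF^{(t,\eta)}(i)-\Exp f^{(t,\eta)}$ telescopes through the averages $\FF^{(t,\eta)}(S_k)$. Summing the weights $\piorproj(\{i\})$ against the congestion from Step 2 then bounds the coefficient of each $\piorproj(\hat S_k)\bigl(\FF(S_k)-\Exp f\bigr)^2$ by $\widetilde O(\sqrt n)$.

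\textbf{Main obstacle.} The hardest step is the congestion bookkeeping in Step 3. We must show that, after aggregating over all pairs $(s,s')$ routed through $t$, the weight assigned to each $S_k$-term is at most $\sqrt n\,\mathrm{polylog}(n)\,\piproj(t)\piorproj(\hat S_k)$. This requires the estimate from \cite{eppstein2022improved} that the oriented-chain marginal $\piorproj(i)$ behaves like $i^{-3/2}(n-i)^{-3/2}$ times a normalization. We also need the heavy tails to sum to $\widetilde O(\sqrt n)$ over dyadic scales, which should follow from this estimate. We would verify these estimates by direct Catalan-number computation, working scale by scale.
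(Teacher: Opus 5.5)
There is a genuine gap in Step 3. It concerns the feature of the lemma that matters downstream: for each $(t,\eta)$ only $C_1\log^{c_1}n$ sets $\hat S_k^{(t,\eta)}$ may appear.

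The measure condition $\piorproj(\hat S)\le 3/4$ is not the obstruction, since singletons have oriented measure at most $1/2$ whenever the oriented chain has more than one state. The count is the obstruction. In the proof of \cref{lem:boundprojvar}, every set-term is bounded through \cref{cor:bettertransport2} by $\widetilde O(\sqrt n)\,\Delta^{(1)}_t\piproj(t)$ times the full Dirichlet form on $\Omega^{(t,\eta)}$, however small $\hat S_k$ is. So the number of sets multiplies the final bound.

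Your per-pair telescoping through $\Omega_{tt'}$ creates one internal term $(f(\Omega_{tt'})-\FF(t))^2$ for every neighbour $t'$ of $t$ used by some path. A central triangle has $n-1$ neighbours, and for fixed $\eta$ these correspond to all states $i\in\Omorproj^{(t,\eta)}$. That gives up to $\Theta(n)$ singleton terms per $(t,\eta)$. The dyadic nesting does not reduce this: each chain still ends at $\{i\}$, and the union over $i$ of the nested families is all $\Theta(n)$ dyadic intervals. Fed into \cref{lem:boundprojvar}, this would cost an extra factor of order $n$ (giving $\widetilde O(n^3)$), and in any case it is not the stated inequality.

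The missing idea is aggregation at the level of flows, before Cauchy--Schwarz is applied. Once each link of each pair's path is squared separately, flow entering $\Omega_t$ through different boundaries can never be merged into one term $\sigma\pi(S)(f(S)-f(\Omega_t))^2$ for a union $S$ of boundary sets. By Jensen, such a merged term is at most the sum of the individual boundary terms, so merging is exactly what saves the factor.

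The paper proceeds as follows:
\begin{enumerate}[(i)]
\item It writes $\var_{\piproj}\FF=\sum_s\piproj(s)(\FF(s)-\Exp_\pi f)^2$ and treats each summand as the MSF problem $(\Omega_s,\Omega,1,\piproj(s))$.
\item It uses the Eppstein--Frishberg construction (\cref{lem:efmtffull}). The whole collection is equivalent to crossing subproblems with surplus $\widetilde O(\sqrt n)$, plus internal subproblems $(S_k^{(t,\eta)},\Omega_t,\sigma_k,\delta_k)$ with uniform surplus density $\sigma_k=\widetilde O(\sqrt n)$ on a union of boundary sets. Each $t$ occurs in only $\widetilde O(1)$ subproblems, and serial chains have length $\widetilde O(1)$.
\item Commodities passing through the same intermediate set are merged. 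Only then is Cauchy--Schwarz applied along chains (\cref{cor:msfdecomp}).
\end{enumerate}

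Separately, the congestion bound you invoke in Step 2 is proved for that specific fractional flow. It does not transfer to an unspecified family of $O(\log n)$-length canonical paths of your own choosing without redoing the analysis.
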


The term in the RHS of \eqref{eqn:bdryterm} describes the problem of sending flow between the boundary sets $\Omega_{tt'}$ and $\Omega_{t't}$; similarly the RHS of \eqref{eqn:Ssetterm} describes the problem of sending flow from a set $S_k^{(t, \eta)} \subseteq \Omega^{(t,\eta)}$, i.e.~a union of subsets of the chain $\Omega^{(t,\eta)}$, to the rest of $\Omega^{(t,\eta)}$. The expectation runs over all partial triangulations in $\Omega_t$ in which two of the three sub-polgyons are triangulated.

We derive \cref{lem:flowreduction} from \cite[Lemma 32]{eppstein2022improvedfull}, which is the congestion analysis of a flow construction. In the construction, each $\Omega_t$ begins with  uniformly concentrated flow that $\Omega_t$ needs to route to the rest of $\Omega$ (through edges in~$\Omega$). (This flow problem corresponds to the terms of the form $\piproj(t)(F(t) - \Exp_{\pi} f)^2$ in the variance of the projection function~$\FF$.) The authors then reduce the problem to a collection of flow subproblems in which (i) a pair of adjacent projection chain states send flow across the boundary between them, and (ii) a state $t$ receives flow from other states that it must distribute from its boundary throughout $\Omega_t$. 

Subproblems of the form (i) correspond to \eqref{eqn:bdryterm}, and subproblems of the form (ii) correspond to \eqref{eqn:Ssetterm}. For reasons specific to the flow construction, each subproblem of form (ii) involves distributing flow from multiple boundary sets (i.e.~multiple states in the oriented projection chain $\Omorproj^{(t,\eta)}$), of the form $\hat S_k^{(t, \eta)} \subseteq \Omorproj^{(t, \eta)}$.

The following insight from~\cite{eppstein2022improved} will allow us to bound \eqref{eqn:bdryterm}:
\begin{lemma}[Lemma 8, \cite{eppstein2022improved}]
    For all $t, t' \in \Omproj$ such that $P(t, t') > 0$, 
    the set of edges between states in $\Omega_{tt'}$ and $\Omega_{t't}$ is a perfect matching.
    \label{rmk:perfmat}
\end{lemma}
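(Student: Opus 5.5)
The plan is to prove \cref{rmk:perfmat} directly from the geometry of a single flip. A flip changes exactly two triangles of a triangulation, namely the two triangles of the quadrilateral whose diagonal is flipped. We show that whenever $x \in \Omega_t$ and $y \in \Omega_{t'}$ are adjacent with $t \ne t'$, the flip between them is forced to be one specific flip determined by $t$ and $t'$ alone. This gives a bijection $\Omega_{tt'} \to \Omega_{t't}$ realized by edges of the chain, and shows that no other edges join the two sets.

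\textbf{Step 1: the flip is determined by $t$ and $t'$.} Let $x \in \Omega_t$ and $y \in \Omega_{t'}$ with $P(x,y) > 0$ and $t \neq t'$. The perturbed center lies in exactly one triangle of each triangulation. Since $t' \ne t$, the triangle $t$ does not belong to $y$, so the flip from $x$ to $y$ destroys $t$. Hence the flipped diagonal $d$ is a side of $t$, and the flip takes place in a quadrilateral $Q = t \cup u$, where $u$ is the triangle of $x$ on the other side of $d$. After the flip, $Q$ is split by the other diagonal $d'$ into two triangles $t'', u''$, and all other triangles of $x$ are unchanged. The center lies in $t \subseteq Q$, so in $y$ it lies in $t''$ or $u''$. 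Being central is a property of the triangle as a point set, so that triangle is $t'$. Thus $t' \subseteq Q$, and both $t$ and $t'$ are triangles of the quadrilateral $Q$ cut by different diagonals. Consequently $Q$ is the convex hull of the four vertices of $t \cup t'$, and $u = Q \setminus t$ and $d = t \cap u$ are determined by $t, t'$ alone. (Conversely, $\Pproj(t,t')>0$ means some such flip exists, so $t,t'$ do span a quadrilateral in this way.)

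\textbf{Step 2: the matching.} By Step 1, $x \in \Omega_t$ lies in $\Omega_{tt'}$ if and only if $x$ contains both $t$ and $u$. This agrees with the description in \cref{lem:relateproj}. Symmetrically, $y \in \Omega_{t'}$ lies in $\Omega_{t't}$ if and only if $y$ contains $t'$ and $u' = Q \setminus t'$.

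Define $\sigma: \Omega_{tt'} \to \Omega_{t't}$ by flipping $d$ in $Q$. Its image contains $t'$ and $u'$, so $\sigma(x) \in \Omega_{t't}$. Flipping $d'$ reverses this, so $\sigma$ is a bijection. By Step 1, every edge from $x \in \Omega_{tt'}$ into $\Omega_{t'}$ is the flip of $d$, so $\sigma(x)$ is the unique neighbor of $x$ in $\Omega_{t't}$, and by symmetry the same holds from the other side. Edges between $\Omega_{tt'}$ and $\Omega_{t't}$ are edges between $\Omega_t$ and $\Omega_{t'}$, so the edge set between the two sets is exactly $\{x,\sigma(x)\}$, a perfect matching.

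\textbf{Main obstacle.} The only delicate point is in Step 1: justifying that the triangle containing the center after the flip is exactly $t'$. This could fail only if the center sat on a diagonal or at a vertex of $Q$. The generic perturbation of the center fixed in \cref{sec:partition} rules this out, so the central triangle is well defined as a point-set condition independent of the rest of the triangulation.
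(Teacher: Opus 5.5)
Your proof is correct and takes essentially the same route the paper relies on. The paper gives no proof of its own; it cites Lemma 8 of \cite{eppstein2022improved}, and what it says of that argument is exactly your Step 1: $\Omega_{tt'}$ is the set of triangulations containing both $t$ and the triangle $u$ on the other side of the diagonal shared with $\mathrm{conv}(t \cup t')$, and the matching is the flip of that diagonal.

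Two assumptions you use are implicit in the paper: that $t \neq t'$, and that the perturbed center is a fixed generic point. You are right to make both explicit, since they are what make the central-triangle condition independent of the rest of the triangulation.
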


Using \cref{rmk:perfmat} and the fact that~$\pi$ is uniform, we prove (see e.g.~\cite{federmihail, jstv} for a similar technique):
\begin{lemma}
For all $t, t' \in \Omproj$ such that $\Pproj(t, t') > 0$:
\begin{align}
    \piproj(t)\Pproj(t, t')(f(\Omega_{tt'}) - f(\Omega_{t't}))^2 &\leq \sum_{x \in \Omega_{tt'}, y \in \Omega_{t't}} \pi(x)P(x, y)(f(x) - f(y))^2\,.\label{eqn:perfmat}
\end{align}

\label{lem:perfmat}
\end{lemma}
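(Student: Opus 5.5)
We read $f(\Omega_{tt'})$ as the average $\Exp_{x\sim \pi_{\Omega_{tt'}}} f(x)$ of $f$ over the boundary set. The plan is to use the perfect matching of \cref{rmk:perfmat} to pair each state of $\Omega_{tt'}$ with its neighbour in $\Omega_{t't}$, and then apply Jensen (Cauchy--Schwarz).

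First we pin down the left-hand side. Let $M$ be the set of matching edges between $\Omega_{tt'}$ and $\Omega_{t't}$. Every edge of the chain from $\Omega_t$ to $\Omega_{t'}$ has its $\Omega_t$-endpoint in $\Omega_{tt'}$ by definition. Its other endpoint is then a neighbour in $\Omega_{t'}$ of a state of $\Omega_t$, so it lies in $\Omega_{t't}$. Hence the edges counted in $\Pproj(t,t')$ are exactly the edges of $M$, and
\[
\piproj(t)\Pproj(t,t') \;=\; \sum_{x\in\Omega_t,\,y\in\Omega_{t'}}\pi(x)P(x,y) \;=\; \sum_{(x,y)\in M}\pi(x)P(x,y) \;=\; \frac{|M|}{|\Omega|\Delta},
\]
using that $\pi$ is uniform and $P(x,y)=1/\Delta$ on edges. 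Since $M$ is a perfect matching, $|M|=|\Omega_{tt'}|=|\Omega_{t't}|$.

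Next we rewrite the difference of averages along the matching. Because both sets have size $|M|$ and $\pi$ is uniform,
\[
f(\Omega_{tt'})-f(\Omega_{t't}) \;=\; \frac{1}{|M|}\sum_{(x,y)\in M}\bigl(f(x)-f(y)\bigr).
\]
By Jensen's inequality (Cauchy--Schwarz),
\[
\bigl(f(\Omega_{tt'})-f(\Omega_{t't})\bigr)^2 \;\le\; \frac{1}{|M|}\sum_{(x,y)\in M}\bigl(f(x)-f(y)\bigr)^2.
\]
Multiplying by $\piproj(t)\Pproj(t,t')=|M|/(|\Omega|\Delta)$ gives
\[
\piproj(t)\Pproj(t,t')\bigl(f(\Omega_{tt'})-f(\Omega_{t't})\bigr)^2 \;\le\; \sum_{(x,y)\in M}\frac{1}{|\Omega|\Delta}\bigl(f(x)-f(y)\bigr)^2 \;=\; \sum_{(x,y)\in M}\pi(x)P(x,y)\bigl(f(x)-f(y)\bigr)^2 .
\]
This is at most the right-hand side of \eqref{eqn:perfmat}, since the sum there runs over all pairs $x\in\Omega_{tt'}$, $y\in\Omega_{t't}$ and the pairs with $P(x,y)>0$ are exactly those in $M$. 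This proves the lemma.

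The only delicate point is the first step: checking that all edges between $\Omega_t$ and $\Omega_{t'}$ land in the boundary sets, so that the projection weight equals the matching weight exactly. Once that identity holds, the rest is routine.
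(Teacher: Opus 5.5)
Your proof is correct and follows essentially the same route as the paper. Both arguments use the perfect matching of \cref{rmk:perfmat} to identify $\piproj(t)\Pproj(t,t')$ with the total matching weight, rewrite $f(\Omega_{tt'})-f(\Omega_{t't})$ as an average of $f(x)-f(y)$ over matched pairs, and apply Jensen; you also state explicitly (correctly) that every edge between $\Omega_t$ and $\Omega_{t'}$ joins $\Omega_{tt'}$ to $\Omega_{t't}$, which the paper uses only implicitly.
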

\begin{proof}   
    By the perfect matching between $\Omega_{tt'}$ and $\Omega_{t't}$ (\cref{rmk:perfmat}) we have
    \begin{align}
          \piproj(t)\Pproj(t, t') &= \sum_{x \in \Omega_{tt'}, y \in \Omega_{t't}}\pi(x)P(x, y) = \sum_{x \in \Omega_{tt'}} \frac{\pi(x)}{\Delta} \,.\label{eqn:sumxDelta}
    \end{align}

    Therefore:
    \begin{align}
        \piproj(t)\Pproj(t, t')(f(\Omega_{tt'}) - f(\Omega_{t't}))^2 &= \piproj(t)\Pproj(t, t')\left(\sum_{x \in \Omega_{tt'}}\frac{\pi(x)}{\Delta\piproj(t)\Pproj(t, t')}f(x) - \sum_{y \in \Omega_{t't}} \frac{\pi(y)}{\Delta\piproj(t')\Pproj(t', t)}f(y)\right)^2 \label{eqn:splitxy} \\
        &= \piproj(t)\Pproj(t, t')\left(\sum_{x \in \Omega_{tt'}, y \in \Omega_{t't}: P(x, y) > 0} \frac{\pi(x)P(x, y)}{\piproj(t)\Pproj(t, t')}(f(x) - f(y))\right)^2 \label{eqn:matchxy} \\
        &\leq\sum_{x \in \Omega_{tt'}, y \in \Omega_{t't}: P(x, y)  > 0} \pi(x)P(x, y)\left(f(x) - f(y)\right)^2 \,,\label{eqn:jensenxy}
    \end{align}
    where \eqref{eqn:splitxy} is by combining \eqref{eqn:sumxDelta} with the definition of $f(\Omega_{tt'})$; \eqref{eqn:matchxy} is again by \cref{rmk:perfmat} and by reversibility of the projection chain; 
    and \eqref{eqn:jensenxy} is by Jensen's inequality.
\end{proof}
The more challenging task is to bound \eqref{eqn:Ssetterm}:

\begin{lemma}\label{lem:proj_flow}
     There exist constants $C_2, C_3, c_2, c_3$ such that for sufficiently large $n$, the following holds:
     
     Given $i \in \Omorproj$ and letting $k, \hat S_k^{(t,\eta)}, S_k^{(t,\eta)}$ be as in \cref{lem:flowreduction}, let $T = \Omega^{(t,\eta)} \setminus S$. There exists a transport flow from $S$ to $T$ (in $\Omega^{(t, \eta)}$) with maximum congestion~$\rho \leq \Delta$, and with average congestion 
     \[
     \bar\rho_S \leq C_2 \sqrt n \log^{c_2} n \cdot \pi^{(t,\eta)}(T)\,,\]
     
     over all $(x, y)$ pairs with $x, y \in S$, and with average congestion
     \[
        \bar\rho_T \leq C_3 \sqrt n \log^{c_3} n \cdot \pi^{(t,\eta)}(S)\,,
    \]
    over all $(x, y)$ pairs with $x, y \in T$.
\end{lemma}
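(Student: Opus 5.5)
We work inside a single copy $\Omega^{(t,\eta)}$. This is a triangulation walk on a polygon with a distinguished special edge, and its oriented partition is $\Omega^{(t,\eta)}=\bigcup_{i}\Omega_i^{(t,\eta)}$, where $\Omega_i^{(t,\eta)}$ consists of the triangulations containing the triangle on the special edge with apex $i$. Each $\Omega_i^{(t,\eta)}$ is a product of the triangulation chains of the two sub-polygons cut off by that triangle. The plan has three steps: build an $S$-$T$ flow by the \cite{eppstein2022improved} construction, convert it to a transport flow via \cref{lem:flowtotransportflow}, and bound the maximum and average congestion separately.

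\textbf{Step 1: the flow.} Since $\pi$ is uniform on $\Omega^{(t,\eta)}$, a state in $S$ (a union of apex classes $\hat S_k$) must ship $\pi(T)$ units and a state in $T$ must absorb $\pi(S)$ units. We route this along the oriented projection chain from apex $i$ to apex $j$. Between adjacent apexes $i$ and $j$ the boundary $\Omega_{ij}$ is again perfectly matched to $\Omega_{ji}$, by the analogue of \cref{rmk:perfmat}. So flow in class $i$ is first spread from $\Omega_{ij}$ uniformly over $\Omega_i$, using the product structure and recursing into the two sub-polygons; it then crosses the matching and is spread again inside $\Omega_j$.

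The key property of \cite[Lemma 32]{eppstein2022improvedfull} is that this recursive spreading creates no congestion blowup. Each edge carries at most $\pi(x)P(x,y)\cdot\Delta/(\pi(S)\pi(T))$ normalized flow, which after normalization gives $\rho\le\Delta$. We will re-derive this by induction on the polygon size. The inductive claim is that distributing uniform flow from one oriented class to the whole chain has per-edge load at most $\pi(x)$ times the fraction leaving. Since every transition has probability $1/\Delta$, this is where the factor $\Delta$ comes from.

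\textbf{Step 2: average congestion.} By the definition in \cref{cor:bettertransport}, $\bar\rho_S/\pi(T)$ is the expected number of edges inside $S$ traversed by a random path, and similarly for $\bar\rho_T/\pi(S)$. This is the expected path length, in the spirit of \cite{montenegrovc}. So it suffices to show that a path started at a uniform triangulation of $S$ has expected length $O(\sqrt n\,\mathrm{polylog}\, n)$, and likewise for a path ending at a uniform triangulation of $T$.

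We will bound this by the number of flips needed to move a triangulation onto a given apex class. In the rotation-walk picture, the apex of the special-edge triangle is the root of the dual binary tree. Changing the root from $i$ to an adjacent apex costs rotations along a spine of length at most the tree height. Spreading flow within $\Omega_i$ then recursively rotates subtrees, with the number of moves controlled by depths in the subtrees.

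A uniform Catalan tree on $m$ nodes has expected height $\Theta(\sqrt m)$, with sub-Gaussian tails. The recursion halves the size, by the balance condition $\piorproj(\hat S_k)\le 3/4$ together with the Remark after \cref{lem:cartprod}. So we sum $\sqrt{m}$ over $O(\log n)$ levels, which gives $O(\sqrt n\log n)$ per level chain. The additional polylog factors come from the $O(\log n)$ sets $\hat S_k$ and the number of oriented-chain steps traversed, which is $O(\log n)$ in expectation if the oriented flow is routed through a balanced hierarchy as in \cite{eppstein2022improved}.

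\textbf{Main obstacle.} The hardest step is Step 2. Path lengths are long in the worst case, up to $\Theta(n)$ per level, so we need that the flow's paths are essentially canonical rotation sequences whose length is governed by the heights of uniformly random subtrees. We must also verify that the measure induced on intermediate states along the flow is comparable to uniform, so that the height tail bounds apply at every stage. We will first try to make each path deterministic given its start state, namely a rotate-to-root sequence. Then the expected length becomes an explicit expectation of root-path depths under the uniform Catalan measure, which we bound via the generating-function estimate $\mathbb E[\text{depth of a random node}]=\Theta(\sqrt m)$.
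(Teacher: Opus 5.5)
Your reduction of $\bar\rho_S/\pi(T)$ to the expected number of path edges with tail in $S$ is fine; by Montenegro's observation this is the same quantity the paper bounds as total flow through states, via \cref{lem:ldirstrong} applied inside $\Omega^{(t,\eta)}$. The gap is in how you bound it.

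Your count of ``$\sqrt m$ per level over $O(\log n)$ levels'' rests on the claim that the recursion halves the polygon, and that is false for this flow. The halving in the Remark after \cref{lem:cartprod} belongs to the outer central-triangle recursion in the proof of \cref{thm:lsi}. The condition $\piorproj(\hat S_k^{(t,\eta)})\le 3/4$ only gives $\pi^{(t,\eta)}(T)\ge 1/4$. Inside $\Omega^{(t,\eta)}$ the flow recurses along pinnings $\eta\to\eta k$, which can shrink the polygon by a single vertex. The number of levels a state $x$ sees is the depth of a node in its dual tree, typically $\Theta(\sqrt n)$ and up to $\Theta(n)$.

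Nor is there a hierarchy of oriented-chain steps. All pairs in $\Omorproj$ are adjacent, and the flow goes $\Omega_i\to\Omega_{ij}\to\Omega_{ji}\to\Omega_j$ directly; the $O(\log n)$ sets $\hat S_k$ are already paid for in \cref{lem:flowreduction}. With the correct depth, your cost model (each re-spreading costs a subtree height) sums a $\sqrt m$ term over $\Theta(\sqrt n)$ levels. That gives order $n$, which is exactly the loss the lemma must avoid.

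The missing idea is that flow does not compound across levels. \cref{lem:rhodecomp}, proved from the Catalan ratio monotonicity (\cref{lem:catalanmono}, \cref{lem:pimono}, \cref{lem:rhomono}), shows that the two children at level $d+1$ carry in total at most the $|\phi|$ of level $d$. Hence the flow through all flips at a state is at most the incoming value times the height of its dual tree. That is $O(1)$ per level over $\widetilde O(\sqrt{u_i(t)})$ levels, by \cref{lem:heightwhp} (this is \cref{lem:ldirstronger}).

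Because states in $\Omega_{\eta j}$ carry flow amplified by $1/\piorproj_\eta(j)$, two further steps are needed. First, control the overcounting over nested pinnings (\cref{lem:numeta}, a factor $\widetilde O(\sqrt{l_i(t)})$). Second, use the triangle probabilities $\Theta((rs)^{-3/2})$ of \cref{lem:sidelengths} to get $\sum_{t\in\mathcal{T}_{ij}}\pi(\Omega_{it})\sqrt{l_i(t)u_i(t)}=\widetilde O(\sqrt n)\,\piorproj(i)$ (\cref{lem:boundlu}). Without this weighting the two square-root factors multiply to $n$.

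Your fallback of deterministic rotate-to-root paths does not help. That map $\Omega_i\to\Omega_j$ is not measure preserving, so endpoints are not distributed as $\pi_T$, as you yourself note. Changing the routing also forfeits the cited bound $\rho\le\Delta$, which depends on the specific fractional splitting by $\piorproj_\eta(t)/\piorproj_{\eta s}(t)$. That same splitting is what keeps the superposition over all $i\in S$, $j\in T$ at congestion at most one per edge (\cref{lem:maxcong}), a point your sketch does not address.
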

We defer the proof of \cref{lem:proj_flow} to \Cref{sec:avgcond}.
We will combine these bounds with the following straightforward corollary of \cref{cor:bettertransport}.
\begin{corollary}\label{cor:bettertransport2}
In the notation of \Cref{cor:bettertransport},
    \begin{align}
    \frac{\pi(S)}{\pi(S^c)}(\FF(S) - \Exp_\pi f)^2 &\leq \frac{\rho \bar\rho_S}{\pi(S^c)}\sum_{x \in S, y \in \Omega}\pi(x)P(x, y)(f(x) - f(y))^2 \nonumber \\
    &\quad\quad+ \frac{\rho \bar\rho_{S^c}}{\pi(S)}\sum_{x \in T, y \in \Omega}\pi(x)P(x, y)(f(x) - f(y))^2\,.\label{eqn:bettertransport2}
\end{align}
\end{corollary}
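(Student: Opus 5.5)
We specialize \cref{cor:bettertransport} to the case $T = S^c := \Omega \setminus S$. The only work is to rewrite the left-hand side in terms of $\Exp_\pi f$ and then enlarge the edge sums on the right.

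\textbf{Step 1: rewrite the left-hand side.} By the law of total expectation,
\[
\Exp_\pi f = \pi(S)\FF(S) + \pi(S^c)\FF(S^c).
\]
Since $\pi(S) + \pi(S^c) = 1$, this gives
\[
\FF(S) - \Exp_\pi f = \pi(S^c)\bigl(\FF(S) - \FF(S^c)\bigr),
\]
and hence
\[
\bigl(\FF(S) - \FF(S^c)\bigr)^2 = \frac{(\FF(S) - \Exp_\pi f)^2}{\pi(S^c)^2}.
\]
Multiplying by $\pi(S)\pi(S^c)$, the left-hand side of \eqref{eqn:bettertransport} becomes
\[
\pi(S)\pi(S^c)\bigl(\FF(S)-\FF(S^c)\bigr)^2 = \frac{\pi(S)}{\pi(S^c)}\bigl(\FF(S)-\Exp_\pi f\bigr)^2,
\]
which is exactly the left-hand side of \eqref{eqn:bettertransport2}.

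\textbf{Step 2: the right-hand side.} With $T = S^c$ we have $S \cup T = \Omega$. The sums $\sum_{x\in S,\, y\in S\cup T}$ and $\sum_{x\in T,\, y \in S\cup T}$ in \eqref{eqn:bettertransport} therefore become sums over $y\in\Omega$. The prefactors $\rho\bar\rho_S/\pi(T)$ and $\rho\bar\rho_T/\pi(S)$ become $\rho\bar\rho_S/\pi(S^c)$ and $\rho\bar\rho_{S^c}/\pi(S)$. This is precisely \eqref{eqn:bettertransport2}, with the $T$ in the second sum read as $S^c$.

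\textbf{Alternative reading: $T \subsetneq S^c$.} If the intended $T$ is a proper subset of $S^c$, the hypothesis that paths use only edges inside $S\cup T$ still bounds the edge sums. All summands are nonnegative, so enlarging the range of $y$ to all of $\Omega$ only increases the right-hand side. The left-hand side, however, would then need $\FF(T)$ replaced by the mean over $S^c$, and that identity needs $T = S^c$. So I would state the corollary for $T=S^c$; this matches its use with $T=\Omega^{(t,\eta)}\setminus S$ in \cref{lem:proj_flow}.

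The only possible subtlety is this bookkeeping of which complement is meant. There is no analytic obstacle: the argument is the one-line identity $\FF(S)-\Exp_\pi f=\pi(S^c)\bigl(\FF(S)-\FF(S^c)\bigr)$ together with nonnegativity of the Dirichlet summands.
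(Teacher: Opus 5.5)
Your proof is correct and follows the paper's argument. Both specialize \cref{cor:bettertransport} to $T = S^c$, so that $S\cup T=\Omega$, and both rewrite the left-hand side via the identity $\FF(S)-\Exp_\pi f=\pi(S^c)\bigl(\FF(S)-\FF(S^c)\bigr)$, which the paper checks by the same direct computation.
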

\begin{proof}
    This follows immediately by \Cref{cor:bettertransport} for $T:=S^c$ and by observing that
    \begin{align*}
        F(S)-F(S^c)&=\frac{1}{\pi(S)}\sum_{x\in S}\pi(x)f(x)-\frac{1}{1-\pi(S)}\sum_{y\in S^c}\pi(y)f(y)\\
        &=\frac{1}{\pi(S)}\sum_{x\in S}\pi(x)f(x)-\frac{1}{1-\pi(S)}\left[\sum_{y\in \Omega}\pi(y)f(y)-\sum_{x\in S}\pi(x)f(x)\right]\\
        &=\frac{1}{1-\pi(S)}\left[\frac{1}{\pi(S)}\sum_{x\in S}\pi(x)f(x)-\sum_{y\in \Omega}\pi(y)f(y)\right]\\
        &=\frac{1}{\pi(S^c)}(F(S)-\Exp_\pi f).
    \end{align*}
\end{proof}

We are now ready to prove \Cref{lem:boundprojvar}.

\begin{proof}[Proof of \Cref{lem:boundprojvar}]
Given $\eta \in \Omega_t^{(2)} \times \Omega_t^{(3)}$, applying \Cref{cor:bettertransport2} for $S := S_k^{(t, \eta)}$, the bounds on the congestion from \Cref{lem:proj_flow}, and using the fact that $\hat{\pi}_t^{(\eta)}(\hat S_k^{(t,\eta)})\leq 3/4$, we get that for 
\begin{align}
&\piproj(t)\piorproj_t\left(\hat S_k^{(t,\eta)}\right)\left(\FF^{(t,\eta)}(S_k^{(t,\eta)}) - \Exp_{\pi^{(t,\eta)}} f\right)^2 \notag 
    \\
    &\qquad\leq C_2C_3 \Delta_t^{(1)}\cdot \sqrt n \log^{c_2+c_3} n\piproj(t)\sum_{x, y \in \Omega^{(t,\eta)}} \pi^{(t,\eta)}(x)P^{(t,\eta)}(x, y)(f^{(t,\eta)}(x) - f^{(t,\eta)}(y))^2\,,\label{eqn:trixport}
\end{align}
for some constants $C_2, C_3, c_2, c_3$.

Furthermore:
\begin{align}
    \sum_{x,y \in \Omega_t}\pi(x)P(x,y)(f(x)-f(y))^2 &= \Exp_{\eta \sim \nu_{2,3}}\sum_{x,y\in\Omega^{(t,\eta)}}\pi(x)P(x,y)(f(x)-f(y))^2 \nonumber \\
    &\qquad + \Exp_{\eta \sim \nu_{1,3}}\sum_{x,y\in\Omega^{(t,\eta)}}\pi(x)P(x,y)(f(x)-f(y))^2 \nonumber \\
    &\qquad + \Exp_{\eta \sim \nu_{1,2}}\sum_{x,y\in\Omega^{(t,\eta)}}\pi(x)P(x,y)(f(x)-f(y))^2\,.
    \label{eqn:etatofull}
\end{align}

We set $S = S_k^{(t,\eta)}, T = \Omega^{(t,\eta)} \setminus S_k^{(t,\eta)}$. We have used that by \cref{lem:proj_flow}, $\rho \leq \Delta$ and $\bar \rho_S \leq \pi^{(t,\eta)}(T)\Delta$ and $\bar \rho_T \leq \pi^{(t,\eta)}(S)\Delta$.

Plugging \eqref{eqn:perfmat} into \eqref{eqn:bdryterm} and  \eqref{eqn:trixport} into \eqref{eqn:Ssetterm} yields
\begin{align}
    & \var_{\piproj} F\nonumber\\
    &~\leq C \Delta \sqrt n \log^c n \sum_{t \neq t'}\sum_{x \in \Omega_t, y \in \Omega_{t'}} \pi(x)P(x, y)(f(x) - f(y))^2 \nonumber  \\
    &~~~ + C \Delta_t^{(1)}\cdot n\log^c n \sum_{t}\piproj(t)\Exp_{\eta \sim \nu_{2,3}}\sum_{x, y \in \Omega_t^{(\eta)}}\left[ \pi^{(t,\eta)}(x)P^{(t,\eta)}(x, y)(f^{(t,\eta)}(x) - f^{(t,\eta)}(y))^2\right] \nonumber  \\ 
    &~~~ + C \Delta_t^{(2)}\cdot n \log^c n \sum_{t}\piproj(t)\Exp_{\eta \sim \nu_{1,3}}\sum_{x, y \in \Omega_t^{(\eta)}}\left[ \pi^{(t,\eta)}(x)P^{(t,\eta)}(x, y)(f^{(t,\eta)}(x) - f^{(t,\eta)}(y))^2\right] \nonumber  \\ 
    &~~~ + C \Delta_t^{(3)}\cdot n \log^c n \sum_{t}\piproj(t)\Exp_{\eta \sim \nu_{1,2}}\sum_{x, y \in \Omega_t^{(\eta)}}\left[ \pi^{(t,\eta)}(x)P^{(t,\eta)}(x, y)(f^{(t,\eta)}(x) - f^{(t,\eta)}(y))^2\right] \nonumber  \\ 
    &~\leq C \Delta \sqrt n \log^c n \sum_{t \neq t'}\sum_{x \in \Omega_t, y \in \Omega_{t'}} \pi(x)P(x, y)(f(x) - f(y))^2  \nonumber \\
    &~~~ + C \Delta\cdot n\log^c n \sum_t \sum_{x, y \in \Omega_t} \pi(x)P(x, y)(f(x) - f(y))^2 \label{eqn:vardirt} \\
    &~\leq C(\log^cn)n^{2}\cdot \dir_\pi(f)\,,\nonumber
\end{align}
where $C, c$ are constants determined by $C_1, C_2, C_3, c_1, c_2, c_3$. For \eqref{eqn:vardirt} we have used \eqref{eqn:etatofull} and also the observation that $\Delta_t^{(j)}P^{(t,\eta)} = \Delta P(x, y) = 1$ for $j \in \{1, 2, 3\}$ and for all $t, \eta, x, y$. 
\end{proof}

\subsection{Proof of the Main Theorem}
The following is an immediate consequence of \cref{lem:convexity_dpi},
\begin{lemma}
Let $g:= f^2$ (pointwise) and let $G(t) := \Exp_{x \sim \pi_t} g(x)$. Then

    \[
        \var_{\bar \pi} \sqrt{G} \leq \var_{\bar \pi} F + \Exp_{t \sim \bar \pi} \var_{\pi_t} f = \var_\pi f\,.
    \]
\label{lem:sqrtG}
\end{lemma}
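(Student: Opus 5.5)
We will prove the inequality by applying \cref{lem:convexity_dpi} with $\beta(s) = \sqrt{s}$ and the non-negative function $g = f^2$, and then closing the chain with the \ref{eqn:total_var}. The only subtlety is that $\beta^2(s) = s$ is linear, hence convex, so the hypothesis of \cref{lem:convexity_dpi} is satisfied. The lemma then compares $\var_{\bar\pi}\sqrt{G}$ with $\var_\pi \sqrt{g}$, and $\sqrt{g} = |f|$, not $f$.

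Concretely, the decomposition $\Omega = \bigcup_t \Omega_t$ is the central-triangle partition. The measure $\bar\pi(t) = \pi(\Omega_t)$ is exactly $\piproj$, and $G(t) = \Exp_{x\sim\pi}[g(x) \mid x\in\Omega_t] = \Exp_{\pi_t} f^2$. \cref{lem:convexity_dpi} therefore gives
\[
\var_{\bar\pi}\sqrt{G} \le \var_\pi |f|.
\]

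Next we need $\var_\pi |f| \le \var_\pi f$. This follows from
\[
\var_\pi|f| = \Exp_\pi f^2 - (\Exp_\pi|f|)^2 \le \Exp_\pi f^2 - (\Exp_\pi f)^2 = \var_\pi f,
\]
since $|\Exp_\pi f| \le \Exp_\pi |f|$.

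Finally, the equality $\var_{\bar\pi}F + \Exp_{t\sim\bar\pi}\var_{\pi_t} f = \var_\pi f$ is exactly the \ref{eqn:total_var} with $F(t) = \Exp_{\pi_t} f$. Chaining the three steps gives the claimed bound.

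No step is a real obstacle. The only point needing care is the $|f|$ versus $f$ issue, which is handled by the elementary inequality above. Equivalently, one can bypass $|f|$ by using Jensen directly: $\sqrt{G(t)} = \sqrt{\Exp_{\pi_t} f^2} \ge \Exp_{\pi_t}|f|$, together with $\Exp_{\bar\pi} G = \Exp_\pi f^2$. Then
\[
\var_{\bar\pi}\sqrt{G} = \Exp_\pi f^2 - \bigl(\Exp_{\bar\pi}\sqrt{G}\bigr)^2 \le \Exp_\pi f^2 - (\Exp_\pi |f|)^2 \le \var_\pi f.
\]
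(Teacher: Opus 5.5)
Your proof is correct and follows the paper's route. The paper simply calls the statement an immediate consequence of \cref{lem:convexity_dpi} with $\beta=\sqrt{\cdot}$ and $g=f^2$, together with the law of total variance, and you also supply the step $\var_\pi|f|\le\var_\pi f$ that the paper leaves implicit. Your direct Jensen variant is in fact the more robust of your two arguments: the proof of \cref{lem:convexity_dpi} silently replaces $\Exp_{t\sim\bar\pi}\beta(G(t))$ by $\Exp_\pi\beta(g)$, which is not an identity in general (the lemma as stated fails, e.g., for $\beta(s)=s^2$), but is valid as an inequality in the needed direction when $\beta$ is concave, as $\sqrt{\cdot}$ is. That concavity step is exactly your observation $\sqrt{G(t)}\ge\Exp_{\pi_t}|f|$.
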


\begin{proof}[Proof of \cref{thm:lsi}]
We first prove the Poincar\'e inequality. We will use the law of total variance and the decomposition properties of the chain to get a recursive bound on the relaxation time. Let $t_n$ be the relaxation time for the flip walk on an $n+2$-gon, that is, $\var_\pi f\leq t_n\dir_\pi(f)$ for all real-valued functions $f$, and $t_n$ is the smallest such constant. 

Recall that, by the total law of variance, we may write
\begin{equation}\label{eqn:vardecomp_proof}
    \var_{\pi} f = \var_{\piproj} \FF + \Exp_{t \sim \piproj} \var_{\pi_t} f\,.
\end{equation}
By \Cref{lem:boundprojvar},
\begin{equation}\label{eqn:varterm}
    \var_{\piproj} \FF \leq C'(\log^{c'}n)n^{2} \dir_{\pi}(f, f)=:h(n)\dir_\pi(f) \,,
\end{equation}
for some constants $C',c'>0$.

To bound $\Exp_{t \sim \piproj} \var_{\pi_t} f$ recall that $(\Omega_t, P_t)$ is a product chain of at most three chains, each of which is the triangulation flip walk on a smaller polygon each of which has $i+2$ sides for some $i\leq n/2$ (\Cref{lem:cartprod})
Hence, by \Cref{lem:product_ineq},
\begin{equation}\label{eqn:poi_prod}
\var_{\pi_t}f\leq \frac{n-4}{i_1-1}t_{i_1}\dir_{\pi_t}(f)\,,
\end{equation}
for some $i_1\leq n/2$.
Plugging in \eqref{eqn:varterm} and \eqref{eqn:poi_prod} into \eqref{eqn:vardecomp_proof} we get
\begin{equation}\label{eqn:var_recurs}
    \var_\pi f \leq h(n)\dir_\pi(f)+\frac{n-4}{i_1-1}t_{i_1}\Exp_{t\sim\pi_t}\dir_{\pi_t}(f)\,.
\end{equation}
Recall that by the definition of the restriction chain $(\Omega_t, P_t)$, $\pi_t(x)=\frac{\pi(x)}{\piproj(t)
}$ and
\[P_t(x, y) = \frac{P(x, y)}{\sum_{z \in \Omega_t}P(x, z)}=\frac{n-1}{n-4}P(x,y)\,,\]
for all $x,y\in\Om_t$. Thus,
\begin{align}
    \Exp_{t\sim\pi_t}\dir_{\pi_t}(f)&=\sum_{t\in\Omproj}\piproj(t)\sum_{x,y\in\Om_t}\pi_t(x)P_t(x,y)(f(x)-f(y))^2 \nonumber \\
    &=\frac{n-1}{n-4}\sum_{t\in\Omproj}\sum_{x,y\in\Om_t}\pi(x)P(x,y)(f(x)-f(y))^2 \nonumber \\
    &\leq \frac{n-1}{n-4}\dir_\pi(f)\,.\label{eqn:exp_dir} 
\end{align}
Combining \eqref{eqn:var_recurs} and \eqref{eqn:exp_dir} we get that for all functions $f$
\begin{equation}\label{eqn:var_recurs2}
    \var_\pi f\leq \left[h(n)+ \frac{n-1}{i_1-1}t_{i_1}\right]\dir_\pi(f),
\end{equation}
i.e.
\begin{equation}\label{eqn:rel_recurs}
    t_n\leq h(n)+\frac{n-1}{i_1-1}t_{i_1}\,,
\end{equation}
for some $i_1\leq n/2$. Iterating this $M\leq\log n$ times to reach the relaxation time of a constant-size walk, we get
\begin{align*}
    t_n&\leq h(n)+\frac{n-1}{i_1-1}\left(h(i_1)+\frac{i_1-1}{i_2-1}t_{i_2}\right)\\
    &=h(n)+\frac{n-1}{i_1-1}h(i_1)+\frac{n-1}{i_2-1}t_{i_2}\\
    &\leq h(n)+\frac{n-1}{i_1-1}h(i_1)+\dots+\frac{n-1}{i_M-1}h(i_M)+\frac{n-1}{i_M-1}.
\end{align*}
Note that for any $i\leq n$ we have $\frac{n-1}{i-1}h(i)=\frac{n-1}{i-1}i^{2}\log^ci\leq h(n)$, so
$$t_n\leq (\log n+1)h(n)+n=\widetilde O(n^{2}).$$
This concludes the proof for the spectral gap.

For the log-Sobolev inequality, the proof is nearly identical. Let $g:=f^2$ and $\tilde t_n$ be the inverse log-Sobolev constant for the flip walk on an $n+2$-gon, that is, $\ent_\pi g\leq\tilde t_n\dir_\pi(f)$ for all non-constant functions $f$, and $\tilde t_n$ is the smallest such constant. By the \ref{eqn:total_ent} we have
\begin{equation}\label{eqn:entdecomp_proof}
    \ent_{\pi} g = \ent_{\piproj} G + \Exp_{t \sim \piproj} \ent_{\pi_t} g,
\end{equation}
where $G(t):=\Exp_{x\sim\pi_t}g(x)$.

By standard comparisons between variance and entropy (\cref{fac:varent_comp}) and by applying \cref{lem:sqrtG}, we get
\[\ent_{\bar\pi}(G) \le O(\log n) \cdot \var_{\bar\pi}(\sqrt{G}) \le O(\log n) \var_{\pi} f,\]
observing that the minimum measure of a state in the projection chain~$\piproj$ is $\Omega(n^{-3})$, thus passing from variance to entropy incurs at most a logarithmic cost. Using the Poincar\'e inequality we proved, this yields
\begin{equation}\label{eqn:entterm}
    \ent_{\piproj} G\leq C''(\log^{c''}n)n^{2}\dir_\pi(f):=\tilde{h}(n)\dir_\pi(f).
\end{equation}
To bound $\Exp_{t\sim\piproj}\ent_{\pi_t} g$ we simply repeat the argument for the bound of $\Exp_{t\sim\piproj}\var_{\pi_t} f$. We rewrite the main steps for completeness.

Recall that $(\Omega_t, P_t)$ is a product chain of at most three chains, each of which is the triangulation flip walk on a smaller polygon each of which has $i+2$ sides for some $i\leq n/2$ (\Cref{lem:cartprod})
Hence, by the entropy part of \Cref{lem:product_ineq} we get
\begin{equation}\label{eqn:lsi_prod}
\ent_{\pi_t}g\leq \frac{n-4}{i_1-1}\tilde t_{i_1}\dir_{\pi_t}(f)\,,
\end{equation}
for some $i_1\leq n/2$.
Plugging in \eqref{eqn:entterm} and \eqref{eqn:lsi_prod} into \eqref{eqn:entdecomp_proof} we get
\begin{equation}\label{eqn:var_recurslsi}
    \ent_\pi g \leq \tilde h(n)\dir_\pi(f)+\frac{n-4}{i_1-1}\tilde t_{i_1}\Exp_{t\sim\pi_t}\dir_{\pi_t}(f).
\end{equation}
Observing that \eqref{eqn:var_recurslsi} is analogous to \eqref{eqn:var_recurs}, the rest of the proof follows the identical calculations for the Dirichlet form and the recursion, which lead to
$$\tilde t_n\leq (\log n+1)\tilde{h}(n)+n=\widetilde O(n^{2}),$$
thus proving the desired log-Sobolev inequality.
\end{proof}
\cref{cor:mix_flip} follows immediately from \cref{thm:lsi} and the observation that the holding probability of the flip walk is $1/2$.

\section{A Transport Flow for Triangulations}
\subsection{Transport Flow Idea}
\label{sec:trixport}
In this section we describe and analyze the transport flow that will enable us to establish \Cref{lem:proj_flow}, 
the remaining ingredient in proving \cref{thm:lsi}.

\cite{eppstein2022improved} gave a flow construction in combinatorial terms --- a \emph{multi-way single-commodity flow (MSF)} in their language. They gave a bound on the maximum congestion, which we will also use. For our purposes, we will also need a bound on the average congestion or the average path length in this construction (by \cite{montenegrovc} these are equivalent), which is not immediate from their analysis. In this section we retrace their construction rigorously and establish the bounds we need to prove \cref{thm:lsi}. To this end, we characterize the flow as a functional \emph{equality} (not inequality)\textemdash in the spirit of~\cite{diacstroock}. In intuitive terms,  we describe the problem of sending flow between a pair of states $i,j\in\Omorproj$ as the difference $\FF(i) - \FF(j)$ in the value of the function $\FF$ at these two states (\cref{lem:l1dirweak}). We then describe the flow construction as an expectation of telescoping sums of differences over the paths in the flow. This formal description of the flow will allow us to analyze the average congestion in \cref{sec:avgcond}.

\begin{lemma}
    \label{lem:l1dirweak}
    Let $f:\Omega\rightarrow \mathbb{R}$ be a function and consider the function~$\FF$. For all $i, j \in \Omorproj,$ there exists a flow function $\phi_{ij}: \left\{(x, y) \in (\Omega_i \cup \Omega_j)^2\mid P(x, y) > 0\right\} \rightarrow \mathbb{R}$ satisfying
    
    \begin{align}
        \FF(i) - \FF(j) &= \frac{\Delta}{2\piorproj(i)\piorproj(j)}\sum_{x, y \in \Omega_i}\phi_{ij,xy}\pi(x)P(x, y)(f(x) - f(y)) \nonumber \\
        & \qquad + \frac{\Delta}{2\piorproj(i)\piorproj(j)}\sum_{x, y \in \Omega_j}\phi_{ij,xy}\pi(x)P(x, y)(f(x) - f(y)) \label{eqn:theflow} \\
        & \qquad + \frac{\Delta}{\piorproj(i)\piorproj(j)}\sum_{x \in \Omega_i, y \in \Omega_j}\phi_{ij,xy}\pi(x)P(x, y)(f(x) - f(y))\,.
    \end{align}
    where the function~$\phi_{ij}$ describes the congestion across the edge $(x, y)$, and where $\phi_{ij,xy}$ is bounded in absolute value by 1.

    (From our definition of $\phi_{ij}$ it will follow that $\phi_{ij,xy} = -\phi_{ij,yx}$ for all $x,y$, so $\phi_{ij,xy}(f(x) - f(y)) = \phi_{ij,yx}(f(y) - f(x))$.)

    The function~$\phi_{ij}$ induces a
    transport flow $\Gamma^{\Omega_i\rightarrow \Omega_j}$ in $\Omega$ that produces maximum congestion at most~$\Delta$.
\end{lemma}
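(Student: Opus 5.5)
We will build $\phi_{ij}$ explicitly from the combinatorics of the oriented partition, following \cite{eppstein2022improved}. Throughout, $\pi$ is uniform and every edge has weight $\pi(x)P(x,y)=\pi(x)/\Delta$.

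\textbf{Step 1: the boundary sets and the matching.} Fix $i\neq j\in\Omorproj$. A triangulation in $\Omega_i$ contains the triangle $(0,i,n+1)$. It is adjacent to $\Omega_j$ exactly when flipping the diagonal $(0,i)$ or $(i,n+1)$ produces $(0,j,n+1)$. This forces the triangle on the other side of that diagonal to have apex $j$. Hence the boundary set $\Omega_{ij}$ consists of the triangulations containing both $(0,i,n+1)$ and the triangle with vertices $0,i,j$ (if $j<i$) or $i,j,n+1$ (if $j>i$). The analogous description holds for $\Omega_{ji}$, and the flip gives a bijection $\Omega_{ij}\leftrightarrow\Omega_{ji}$, i.e.\ a perfect matching, exactly as in \cref{rmk:perfmat}. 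Note that $\Omega_{ij}$ is a product of triangulations of three sub-polygons, and $\pi(\Omega_{ij})=\pi(\Omega_{ji})$.

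\textbf{Step 2: the telescoping identity.} We write
\[\FF(i)-\FF(j)=\bigl(\FF(i)-f(\Omega_{ij})\bigr)+\bigl(f(\Omega_{ij})-f(\Omega_{ji})\bigr)+\bigl(f(\Omega_{ji})-\FF(j)\bigr),\]
and treat the three terms separately.

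The middle term is an average of $f(x)-f(y)$ over the matching edges. Multiplying through by $\piorproj(i)\piorproj(j)$ turns this into a sum of $\pi(x)P(x,y)(f(x)-f(y))$ over the cross edges. Each such edge gets weight $\phi_{ij,xy}=\piorproj(i)\piorproj(j)/(\Delta\,\pi(\Omega_{ij}))$. We then check $|\phi_{ij,xy}|\le1$ using $\pi(\Omega_{ij})\ge \piorproj(i)\piorproj(j)$ up to constants. This follows from the Catalan product formula for $|\Omega_{ij}|$ compared with $|\Omega_i||\Omega_j|/C_n$.

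For the first term, $\FF(i)-f(\Omega_{ij})$ is the difference of means of $f$ between $\Omega_i$ and its subset $\Omega_{ij}$. We realise it as a flow inside $\Omega_i$ from $\pi_{\Omega_{ij}}$ to $\pi_{\Omega_i}$. Since $\Omega_i$ is itself a product of two triangulation chains (left polygon $0..i$ and right polygon $i..n+1$), and $\Omega_{ij}$ pins a single triangle in one factor, the problem reduces to spreading flow from one oriented-projection state of a smaller polygon to all of them. We do this recursively with the same construction at half the size, or, following \cite{eppstein2022improved}, by moving the apex vertex one step at a time. Summing the elementary differences along each path gives the $\Omega_i$ sum in \eqref{eqn:theflow}; the third term is handled symmetrically inside $\Omega_j$. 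The factor $\tfrac12$ in front of those two sums comes from counting each unordered edge twice in the double sum, with antisymmetry $\phi_{ij,xy}=-\phi_{ij,yx}$.

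\textbf{Step 3: congestion and the transport flow.} Define $\phi_{ij}$ on each edge as the signed total flow it carries. Antisymmetry is built in, and the net-flow conditions make it an $\Omega_i$-$\Omega_j$ flow in the sense of \cref{sec:mcflow}. \Cref{lem:flowtotransportflow} then yields $\Gamma^{\Omega_i\to\Omega_j}$, and after \cref{lem:onedir} its congestion is $\frac{\piorproj(i)\piorproj(j)}{\pi(x)P(x,y)}\cdot\phi_{ij,xy}\cdot(\text{normalisation})\le\Delta$ once $|\phi_{ij,xy}|\le1$.

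\textbf{Main obstacle.} The hard part is proving $|\phi_{ij,xy}|\le1$ for the internal spreading flows of Step 2. There the flow entering $\Omega_{ij}$ must be diluted through $\Omega_i$ without any edge carrying more than its capacity $\pi(x)/\Delta$ times $\piorproj(i)\piorproj(j)$. I would first try the explicit ``slide the apex'' construction: flow at apex $k$ in a sub-polygon is forwarded to apex $k\pm1$ in proportion to the Catalan mass beyond it. I would then verify edge loads by a direct Catalan-ratio computation, bounding the ratios of the form $C_aC_b/C_{a+b+1}$ that appear. This is precisely the estimate carried out in \cite{eppstein2022improved}, which we would retrace.
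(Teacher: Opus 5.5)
Your telescoping $\FF(i)\to f(\Omega_{ij})\to f(\Omega_{ji})\to\FF(j)$ and your treatment of the matching term follow the paper. The core of the lemma, however, is left open, and the construction you propose for it fails. That core is spreading the flow from $\Omega_{ij}$ over all of $\Omega_i$ with every coefficient bounded by $1$.

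Suppose flow only slides the apex from position $a$ to $a\pm1$ in the polygon on vertices $i,\dots,n+1$, which has $m=n-i$ apex positions. Then all mass destined for apices beyond a middle position $a$ must cross the matching between apex classes $a$ and $a+1$. That matching has measure $\piorproj(i)\,C_{a-1}C_{m-a-1}/C_m=\piorproj(i)\,\Theta(m^{-3/2})$, while roughly $\tfrac12\piorproj(i)\piorproj(j)$ must cross it. The coefficient on those edges is therefore $\Theta(\piorproj(j)\,m^{3/2})$, for instance $\Theta(n^{3/2})$ when $i=1$, $j=2$.

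Relatedly, the sub-polygons are not of half size: for $i=1$ the sub-polygon has $n+1$ vertices. The recursion over pinnings can therefore have linear depth, so bounds that hold only up to constants cannot be iterated. Even for the matching term you need the exact inequality $\piorproj_i(j)\ge\piorproj(j)$ of \cref{lem:pigood}. With the statement's normalization, the weight there is $\piorproj(i)\piorproj(j)/\pi(\Omega_{ij})=\piorproj(j)/\piorproj_i(j)$; your extra factor $1/\Delta$ is a slip.

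The missing idea is the paper's potential recursion. Any two apex classes are adjacent in the oriented projection chain. So at each pinning $\eta$, after splitting via $\FF(\eta)=\sum_k\piorproj_\eta(k)\FF(\eta k)$, flow is sent directly between every pair $s,t$ of apex classes across the matching $\Omega_{\eta st}\leftrightarrow\Omega_{\eta ts}$ (\cref{lem:matching,lem:ijji}). The signed superposition of these star-shaped spreads gives the coefficient $\phi_{ij,\eta st}=\frac{\piorproj_\eta(t)}{\piorproj_{\eta s}(t)}(\phi_{ij,\eta t}-\phi_{ij,\eta s})$, which becomes the potential one level down.

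The bound $|\phi|\le1$ then comes from an induction on the pinning length (\cref{lem:rholeq1}). The potentials are monotone in the apex position and confined to $[0,1]$ or $[-1,0]$. This is preserved from level to level because the ratio $\piorproj_\eta(s)/\piorproj_{\eta'}(s)$ is itself monotone in $s$ (\cref{lem:pimono}, which follows from the monotonicity of $C_k/C_{k+1}$ in \cref{lem:catalanmono}). Bounding individual Catalan ratios such as $C_aC_b/C_{a+b+1}$ and adding absolute values does not give this. The cancellation in the potential differences, together with this finer monotonicity across nested pinnings, is essential.
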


In combinatorial terms, \cref{lem:l1dirweak} describes a flow construction in which we route flow from $\Omega_i$ to $\Omega_j$, through the state space of the overall chain. The congestion incurred by this flow is at most $\Delta$. 

In this section, we will prove \cref{lem:l1dirweak}. In \cref{sec:avgcond}, combining \cref{lem:l1dirweak} with an additional average congestion analysis relying on the asymptotic behavior of Catalan structures, we will then prove the following:

\begin{lemma}
 \label{lem:ldirstrong}
 For all $S, T \subseteq \Omorproj,$ there exists a transport flow $\Gamma^{\Omega[S]\rightarrow \Omega[T]}$ in $\Omega$ that produces maximum congestion~$\rho \leq \Delta$ and that satisfies $\bar\rho_{\Omega[S]} \leq C\log^c n \cdot \sqrt n \piorproj(T)$ and $\bar\rho_{\Omega[T]} \leq C \log^c n \cdot \sqrt n \piorproj(S)$ for constants $C > 0, c > 0$.\footnote{In \cref{lem:proj_flow} we considered sets $S,T \subseteq \Omega$ that were subsets of the overall state space. Here we have redefined $S,T \subseteq \Omorproj$, as this will be convenient in the proofs in \cref{sec:avgcond}. Thus we are using the notation $\Omega[S], \Omega[T]$ (which we defined in \cref{sec:prelim}) to denote the induced subsets of the overall state space~$\Omega$.}
\end{lemma}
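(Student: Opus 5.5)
The plan is to build $\Gamma^{\Omega[S]\rightarrow\Omega[T]}$ by superposing the pairwise flows of \cref{lem:l1dirweak}. For every $i\in S$ and $j\in T$ we take $\phi_{ij}$ and give it weight $\piorproj(i)\piorproj(j)/(\piorproj(S)\piorproj(T))$. The source marginal of the resulting mixture is then $\pi$ conditioned on $\Omega[S]$, and the sink marginal is $\pi$ conditioned on $\Omega[T]$, as \cref{lem:flowtotransportflow} requires.

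Concretely, summing \eqref{eqn:theflow} over $i\in S$, $j\in T$ with weights $\piorproj(i)\piorproj(j)$ gives
\[
\piorproj(S)\piorproj(T)\bigl(\FF(\Omega[S])-\FF(\Omega[T])\bigr)=\frac{\Delta}{2}\sum_{x,y}\Phi_{xy}\,\pi(x)P(x,y)(f(x)-f(y)),
\]
where $\Phi_{xy}=\sum_{i\in S,j\in T}\phi_{ij,xy}$ (up to the factor $2$ on cross edges). Reading $\Phi$ as an $\Omega[S]$-$\Omega[T]$ flow and applying \cref{lem:flowtotransportflow} yields the transport flow, whose per-edge congestion is $\frac{\Delta}{2}|\Phi_{xy}|$.

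For the maximum congestion we cannot simply sum the bound $|\phi_{ij,xy}|\le1$ over all pairs. Instead we must use the structure of the construction in \cite{eppstein2022improved}: an edge $(x,y)$ inside $\Omega_i$ only carries flow of pairs $(i,j)$ in a way that is routed consistently (e.g.\ a fixed ordering of the oriented states along the special edge). This structure gives $|\Phi_{xy}|\le 1$, and hence $\rho\le\Delta$. I would rederive this from their recursive decomposition: an edge of $\Omega_i$ lies in a sub-polygon, and flow through it is a convex combination of the flows used by the recursive subproblems.

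The main step, and the obstacle, is the average congestion. Since $\bar\rho_{\Omega[S]}=\pi(\Omega[T])\sum_{x\in\Omega[S]}\Exp_\gamma 1[(x,\cdot)\in\gamma]$, it suffices to show that the expected number of edges in $\Omega[S]$ traversed by a random path is $\widetilde O(\sqrt n)$. By Montenegro's equivalence, this is an expected path-length bound. In the recursive construction, the path from a uniform triangulation $x\in\Omega_i$ to the boundary with $\Omega_j$ performs rotations that move the apex $i$ of the triangle on the special edge. In the binary-tree picture, this amounts to rotating along the left or right spine of the dual tree, so the number of flips used within $\Omega_i$ is bounded by the length of a root-to-leaf spine, or more generally by the depth of the relevant node, plus $O(\log n)$ recursion overhead.

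The remaining task is to estimate the expected height or spine length of a uniform Catalan tree on $m\le n$ nodes, which is $O(\sqrt m)$ (Flajolet–Odlyzko; for spines it is even $O(1)$ in expectation, while general depth is $\Theta(\sqrt m)$). The difficulty is that the conditioning on $\Omega_i$ and the nonuniform weights over pairs $(i,j)$ could bias the tree toward tall shapes. I would handle this by decomposing $x\in\Omega_i$ into two independent uniform Catalan trees of sizes $i-1$ and $n-i$ and bounding each height by $C\sqrt n\log n$ with high probability via the Catalan asymptotics $C_m=\Theta(4^m m^{-3/2})$. The polylogarithmic factor absorbs the union over the $O(\log n)$ recursion levels. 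The bound for $\bar\rho_{\Omega[T]}$ follows symmetrically.
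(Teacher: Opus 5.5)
\textbf{The gap is the average-congestion step.} The parts before it match the paper. Superposing the pairwise flows $\phi_{ij}$ of \cref{lem:l1dirweak} over $i\in S$, $j\in T$ and converting with \cref{lem:flowtotransportflow} is exactly the paper's construction. The bound $\rho\le\Delta$ is \cref{lem:maxcong}, and it works by linearity rather than by convexity: $\sum_{j\in T}\phi_{ij,\eta}$ obeys the same recursion \eqref{eqn:rhoetast}; it starts in $[-1,1]$ because at the top level only $j=s$ contributes to $\phi_{ij,is}$; and \cref{lem:rhomono,lem:pigood} propagate the bound.

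Your key claim is that flow leaving a triangulation in $\Omega_i$ makes about as many flips as the depth of the relevant node, plus $O(\log n)$ overhead. That comes from a canonical-path picture (rotate $j$ up a spine) which the construction does not have.
\begin{itemize}
\item The transport flow is a path decomposition of the net flow $\phi_{ij}$.
\item The recursion of \cref{lem:l1dirweak} runs along pinnings $\eta=(i,k_2,\dots,k_d)$ down to the triangle $t\ni j$. There are $d_j(x)$ such levels, typically of order $\sqrt n$, not $O(\log n)$. The logarithmic depth belongs to the central-triangle recursion of \cref{lem:cartprod}, which plays no role here.
\item At each level, all flow originating in $\Omega_\eta$ is funneled into $\Omega_{\eta j}$, where $\phi_{ij,\eta j}=\piorproj(j)/\piorproj_\eta(j)$ (\cref{lem:rhoj1}).
\item It is then rebalanced inside $\Omega_{\eta j}$ by terms like \eqref{eqn:ijkij}, which recurse through the whole subtree below $t$.
\end{itemize}
So the cost is not a single root-to-node path; a rebalancing subflow of comparable mass is paid at every pinning level.

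\textbf{How the paper handles this.} It uses the per-level bound \cref{lem:rhodecomp}: on the flips at any one level below $t$, the flow sums to at most $\phi_{ij,\eta j}$. Hence each $x\in\Omega_{\eta j}$ carries about $\phi_{ij,\eta j}$ times the height of that subtree (\cref{lem:ldirstronger}, via \cref{lem:heightwhp}). Summing over $x$ and $\eta$ gives $\piorproj(j)\sum_\eta\piorproj(\eta)\sqrt{u_i(t)}$, which is depth \emph{times} height. With that structure, bounding depth and height separately by $C\sqrt n\log n$, as your last paragraph proposes, gives only $\widetilde O(n)$. That would lose a factor $\sqrt n$ in \cref{lem:boundprojvar}.

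The paper tries to decouple the two factors through the triangle probabilities of \cref{lem:sidelengths} (\cref{lem:numeta,lem:boundlu}). Your proposal has neither the per-level control nor any such decoupling.

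\textbf{This decoupling is delicate in the paper too.} Take $t=(i,j,n+1)$ and $\eta=(i)$. Then $\sum_{\eta\sim t}\piorproj(\eta)/\pi(\Omega_{it})\ge 1/\piorproj_i(j)$, while $l_i(t)=O(1)$. Since $\sum_j\piorproj_i(j)=1$, this contradicts \cref{lem:numeta} for all but polylogarithmically many $j$. More generally, $\sum_{\eta\sim t}\piorproj(\eta)$ is at least the $\pi$-measure of the triangulations in $\Omega_i$ containing the base of $t$. Plugging this into the chain of \cref{lem:numetastrong} yields only a bound of order $n$ when $j$ is central.

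A genuine $\widetilde O(\sqrt n)$ bound therefore needs an extra ingredient beyond a height estimate, for example showing that the rebalancing flow decays along the levels below $t$.
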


\cref{lem:ldirstrong} strengthens \cref{lem:l1dirweak} in two ways: adding an average congestion analysis, and allowing for the sets $S, T$ in the transport flow to include  multiple states in the projection chain. Once proven, \cref{lem:ldirstrong} will imply \cref{lem:proj_flow}, which completes the proof of \cref{thm:lsi} given in \cref{sec:trimixing}.

\subsection{Pinnings}
\label{sec:pinnings}
Our aim is now to prove \cref{lem:l1dirweak}. We need to adapt the idea of a \emph{pinning} from existing local-to-global frameworks. However, our case involves a recursive partitioning of the state space, rather than a simplicial complex. We will define a pinning to be a sequence $\eta = (i_1, i_2, \dots, i_s)$ of polygon vertices, where no $j < j' < j''$ triple exists such that $i_{j'} < i_{j} < i_{j''}$ or $i_{j''} < i_j < i_{j'}$. The pinning induces a collection of triangles which we construct recursively: we begin with the triangle $(0, n+1, i_1)$, then for every $2 \leq j \leq s$, we add the unique triangle that includes vertex $i_j$ as well as some edge already added to the pinning. See \Cref{fig:pinning_detailed}.

\begin{figure}[h]
    \centering
    \includegraphics[width=0.3\linewidth]{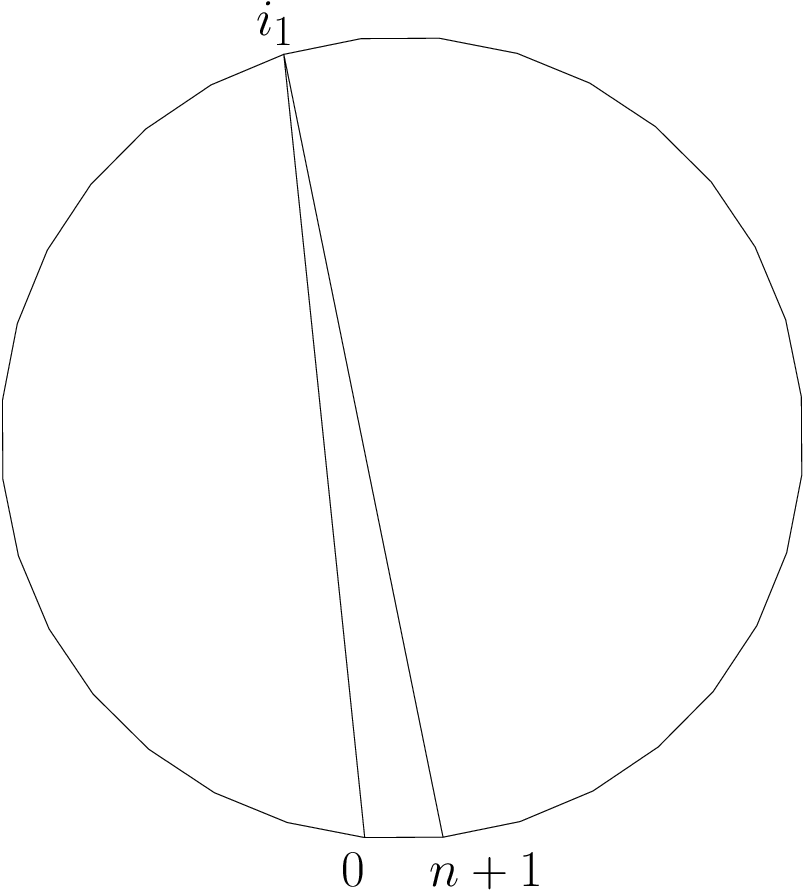}
    \hspace{1cm}
    \includegraphics[width=0.3\linewidth]{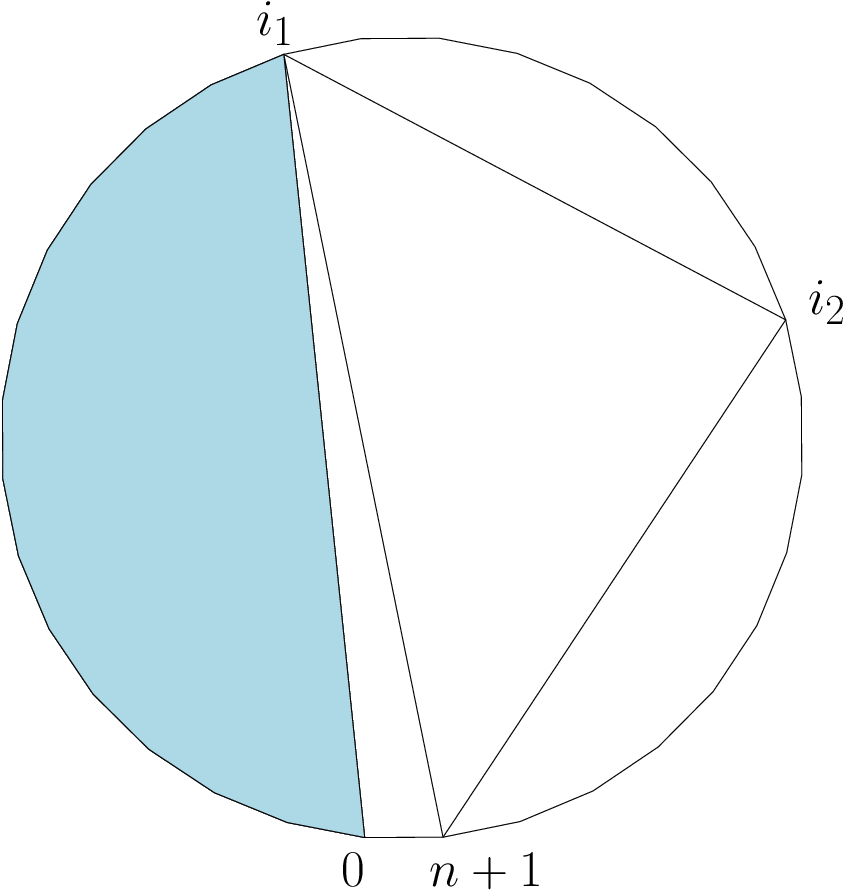}
    \\
    \vspace{0.5cm}
    \includegraphics[width=0.3\linewidth]{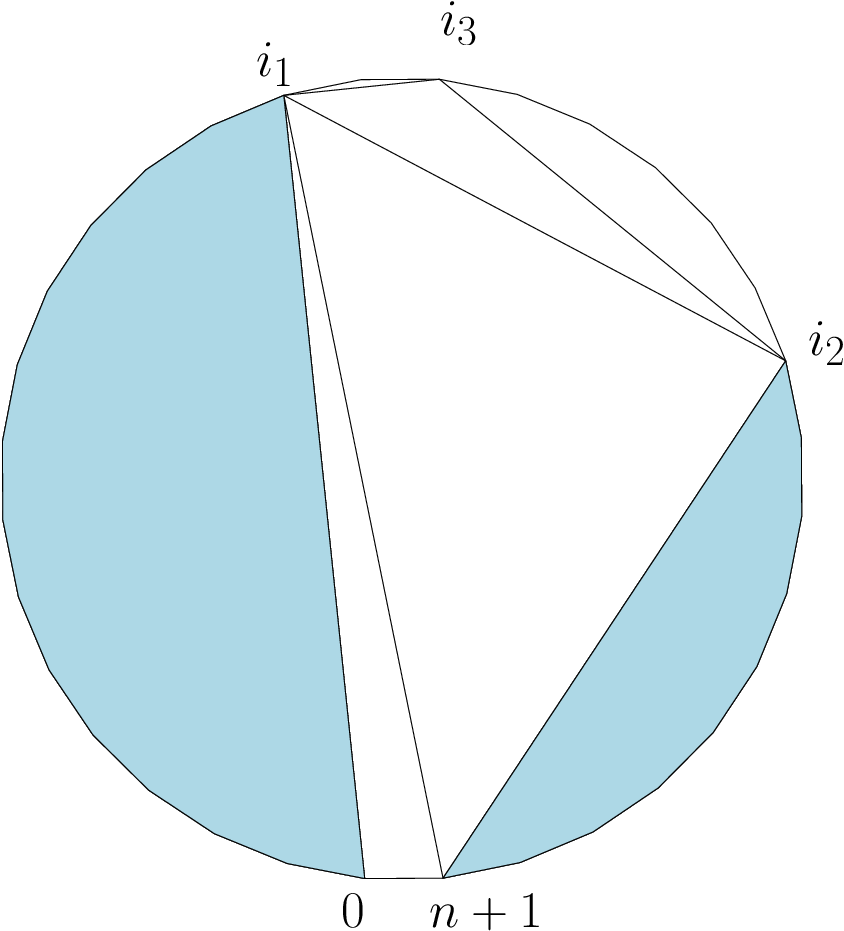}
    \hspace{1cm}
    \includegraphics[width=0.3\linewidth]{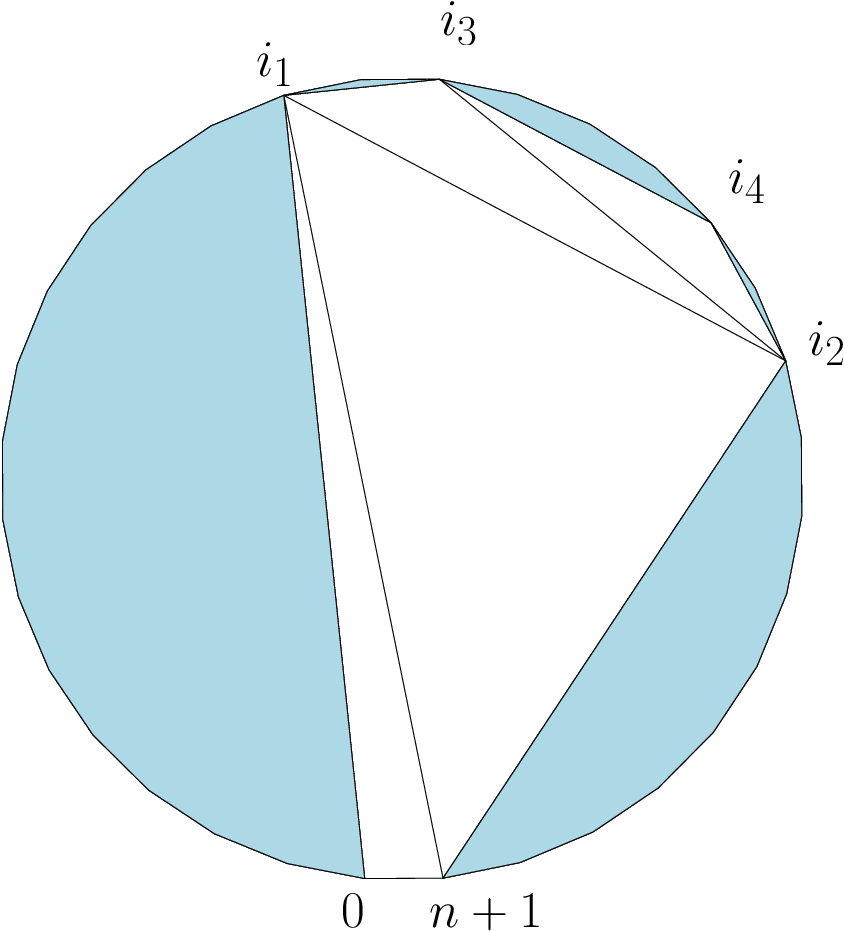}
    \caption{A pinning $\eta = (i_1, i_2, i_3, i_4)$ and the corresponding sequence of triangles.}
    \label{fig:pinning_detailed}
\end{figure}

Let $\Omega_{\eta}$ be the set of triangulations $x \in \Omega$ such that $x$ contains the triangles induced by the pinning $\eta$. The ``innermost'' triangle $T$ of $\eta$ (the one having the last vertex of $\eta$ as a vertex) induces a split of part of the $n$-gon into left and right sides, see \Cref{fig:pinning}. Each state $x \in \Omega_{\eta}$ has one or two triangles sharing an edge with $T$. Define $\Omorproj_{\eta,l}$ and respectively $\Omorproj_{\eta, r}$ to be the projection chains induced by identifying each state $x \in \Omega_{\eta}$ with its triangles to the left and right of $T$. (In \cref{fig:pinning}, $\Omorproj_{\eta,l} = \{i_3+1, i_3+2, \dots, i_4-1\}$ and $\Omorproj_{\eta,r} = \{i_4+1, i_4+2, \dots, i_2-1\}$. In \cref{fig:pinning_l}, we zoom in to the equivalence classes that form the projection chain $\Omorproj_{\eta,l}$.)

\begin{figure}[h]
    \centering
    \includegraphics[width=0.5\linewidth]{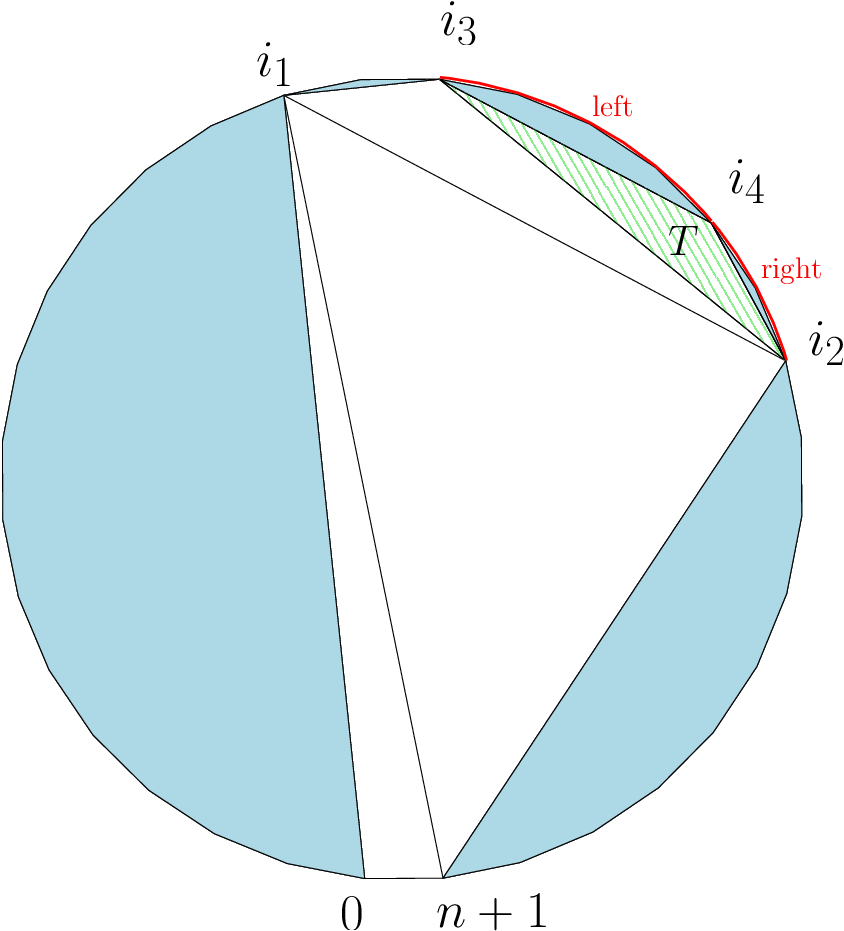}
    \caption{A pinning $\eta = (i_1, i_2, i_3, i_4)$. The untriangulated region is shown with a fill.}
    \label{fig:pinning}
\end{figure}

\begin{figure}[h]
    \centering
    \includegraphics[width=0.6\linewidth]{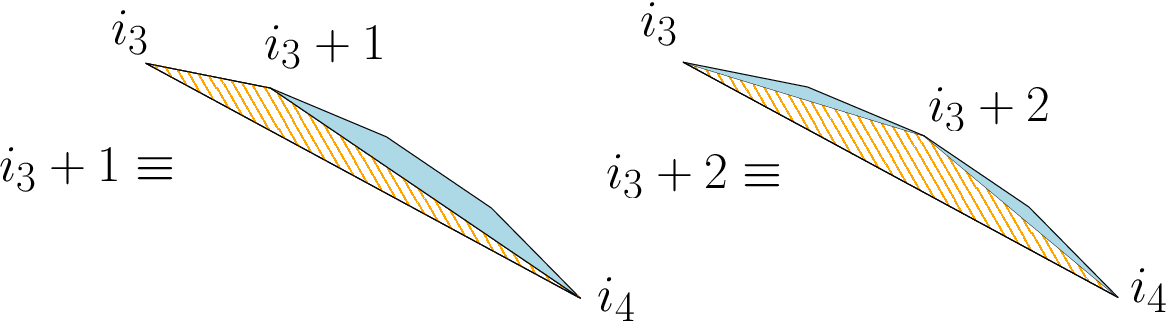}
    \caption{For the pinning of \Cref{fig:pinning}, the elements $i_3+1,i_3+2$ are the equivalence classes of triangulations containing the pinning $\eta$ and the respective shaded orange triangle. They are naturally identified with the triangulations of the polygon label as ``left'' in \Cref{fig:pinning} which contain the appropriate triangle.}
    \label{fig:pinning_l}
\end{figure}

Given a pinning $\eta = (i_1, i_2, \dots, i_k)$, let $\piorproj(\eta) = \pi(\Omega_{\eta})$. Given a vertex $j \in \Omorproj_{\eta, l}\cup{\Omorproj}_{\eta,r}$, let $\eta j$ be the pinning $(i_1, i_2, \dots, i_k, j)$. In general, writing a string of pinnings and/or vertices will denote the pinning derived by concatenating the elements of the string. E.g.~$\Omega_{ijk}$ where $i,j,k\in\Omorproj$ is equivalent to $\Omega_{\eta}$ where $\eta = (i, j, k)$. Let $\piorproj_{\eta}(j) = \frac{\pi(\Omega_{\eta j})}{\pi(\Omega_{\eta})}$.

When considering a function $f: \Omega \rightarrow \mathbb{R}$ we will write $\FF(\eta) = \Exp_{\pi_{\eta}} [f]$.

We begin by restating two properties of Catalan numbers that are central to~\cite{eppstein2022improved}:

\begin{lemma}[\cite{eppstein2022improved}]
If $0 \leq k < l$ then
\[
    \frac{C_k}{C_{k+1}} > \frac{C_l}{C_{l+1}} \,.
\]
\label{lem:catalanmono}
\end{lemma}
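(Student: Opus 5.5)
The plan is to reduce the inequality $\frac{C_k}{C_{k+1}} > \frac{C_l}{C_{l+1}}$ to showing that the single-step ratio $C_{m}/C_{m+1}$ is strictly decreasing in $m$. Once that is established, the claim for arbitrary $0 \le k < l$ follows by chaining the inequalities $\frac{C_k}{C_{k+1}} > \frac{C_{k+1}}{C_{k+2}} > \dots > \frac{C_l}{C_{l+1}}$, or simply by monotonicity of the sequence $m \mapsto C_m/C_{m+1}$.

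To compute the ratio I will use the closed form $C_n = \frac{1}{n+1}\binom{2n}{n}$ recalled in the preliminaries. This gives
\[
\frac{C_{m+1}}{C_m} = \frac{(2m+2)(2m+1)}{(m+2)(m+1)} = \frac{2(2m+1)}{m+2}.
\]
Hence
\[
\frac{C_m}{C_{m+1}} = \frac{m+2}{2(2m+1)} = \frac14 + \frac{3}{4(2m+1)}.
\]
The right-hand side is strictly decreasing in $m \ge 0$, since $2m+1$ is strictly increasing and positive. Plugging in $m=k$ and $m=l$ with $k<l$ then gives the claim directly, without even needing the chain. As a sanity check, $m=0$ gives $C_0/C_1 = 1$ and $m=1$ gives $C_1/C_2 = 1/2$, both matching the formula.

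There is no real obstacle here; the only step needing care is the algebraic simplification of $\binom{2m+2}{m+1}/\binom{2m}{m} = \frac{(2m+2)(2m+1)}{(m+1)^2}$ together with the prefactor $\frac{m+1}{m+2}$. If one prefers to avoid the closed form, an alternative route is to prove log-concavity $C_m^2 < C_{m-1}C_{m+1}$ (really log-convexity) from the recurrence. The closed-form computation is simpler, so I will use it.
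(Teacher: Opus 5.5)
Your proof is correct. The identity $C_{m+1}/C_m = \frac{2(2m+1)}{m+2}$ gives $C_m/C_{m+1} = \frac14 + \frac{3}{4(2m+1)}$. This is strictly decreasing in $m \ge 0$, so taking $m=k$ and $m=l$ yields the claim directly.

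The paper itself gives no proof of this lemma; it simply quotes it from \cite{eppstein2022improved}. Your closed-form computation therefore supplies a complete, self-contained argument rather than an alternative to one in the text.

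What your route buys is an explicit formula, which is stronger than needed. It makes strictness immediate and shows the ratio decreases to $1/4$, consistent with $C_n = \Theta(4^n n^{-3/2})$. The paper only ever uses the qualitative monotonicity, in \cref{lem:pimono}. Your aside that the recurrence-based alternative amounts to log-convexity, $C_m^2 < C_{m-1}C_{m+1}$, is also correctly identified.
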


The following is a corollary of \cref{lem:catalanmono}:

\begin{lemma}[\cite{eppstein2022improved},               Lemma 10]
For all $j, i_1,\ldots, i_s$, writing $\eta = (i_1, i_2, \dots, i_s)$ and $\eta' = \eta i_{s+1} = (i_1, i_2, \dots, i_s, i_{s+1})$, we have $\piorproj_{\eta'}(j) > \piorproj_{\eta}(j)$.
\label{lem:pigood}
\end{lemma}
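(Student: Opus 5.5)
The plan is to make the conditional probabilities $\piorproj_{\eta}(j)$ explicit as ratios of Catalan numbers, and then reduce the comparison to \cref{lem:catalanmono} by a telescoping product.

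\textbf{Step 1: a formula for $\piorproj_\eta(j)$.} Fix a pinning $\eta$ and let $T$ be its innermost triangle. Conditioned on $\Omega_\eta$, the uniform measure $\pi$ factorizes as a product of uniform measures on the triangulations of the untriangulated sub-polygons. Suppose $j\in\Omorproj_{\eta,l}$ (the case $j\in\Omorproj_{\eta,r}$ is symmetric). Then $j$ is a vertex of the left sub-polygon $Q$, which is bounded by an edge $e=(a_0,b_0)$ of $T$. Say $Q$ has $m+2$ vertices, so that it has $C_m$ triangulations. The event $\Omega_{\eta j}$ says that the triangle $(a_0,b_0,j)$ is present. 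That triangle splits $Q$ into two polygons, with $p+2$ and $q+2$ vertices, where $p+q=m-1$ and $p$ counts the vertices of $Q$ strictly between $a_0$ and $j$. Since the right side is unaffected, it cancels, and we get
\[
\piorproj_\eta(j)=\frac{C_pC_q}{C_m}.
\]

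\textbf{Step 2: effect of adding the pin $i_{s+1}$.} For the statement to make sense, $j$ must still be a valid index for $\eta'$, meaning $j\in\Omorproj_{\eta',l}\cup\Omorproj_{\eta',r}$.
\begin{itemize}
\item If $i_{s+1}$ lies in the right region, the left factor is unchanged. One then checks that the definition of the new left/right split makes $j$'s region strictly smaller. This is the situation the lemma is about; the equal case is excluded by the validity condition on $j$.
\item If $i_{s+1}\in Q$, the new innermost triangle is $(a_0,b_0,i_{s+1})$. The vertex $j$ now lies in the sub-polygon $Q'$ cut off by, say, the edge $(a_0,i_{s+1})$, so $j$ sits between $a_0$ and $i_{s+1}$. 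Relative to the new special edge $(a_0,i_{s+1})$, the triangle through $j$ still leaves $p$ vertices on the $a_0$ side. On the other side it leaves $q'=q-d$ vertices, where $d=m-m'\ge1$ is the number of vertices removed, and $Q'$ has $m'+2$ vertices. Hence
\[
\piorproj_{\eta'}(j)=\frac{C_pC_{q-d}}{C_{m-d}}.
\]
\end{itemize}

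\textbf{Step 3: the Catalan inequality.} It remains to show
\[
\frac{C_{q-d}}{C_{m-d}}>\frac{C_q}{C_m}.
\]
Write
\[
\frac{C_q}{C_m}=\prod_{k=q}^{m-1}\frac{C_k}{C_{k+1}}
\qquad\text{and}\qquad
\frac{C_{q-d}}{C_{m-d}}=\prod_{k=q}^{m-1}\frac{C_{k-d}}{C_{k-d+1}}.
\]
Both products have the same number of factors, $m-q=p+1\ge1$. By \cref{lem:catalanmono}, each factor $C_{k-d}/C_{k-d+1}$ strictly exceeds $C_k/C_{k+1}$, because $k-d<k$. Multiplying these factorwise inequalities between positive numbers gives the strict inequality.

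\textbf{Main obstacle.} The main difficulty is the bookkeeping in Step 2. One has to identify which endpoint of the old special edge survives in $j$'s new region, and confirm that the count $p$ on that side is genuinely unchanged, so that only the opposite count and the total shrink by the same amount $d$. I would settle this by a careful case analysis of the order of $a_0$, $j$, $i_{s+1}$, $b_0$ along the boundary, using the non-crossing condition in the definition of a pinning. This includes the mirrored case where $j$ lies between $i_{s+1}$ and $b_0$; there the roles of $p$ and $q$ swap and the same telescoping argument applies.
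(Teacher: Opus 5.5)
Your proof is correct and follows the route the paper intends: the paper records this lemma as a direct corollary of \cref{lem:catalanmono}, and your explicit formula $\piorproj_\eta(j)=C_pC_q/C_m$ followed by the factorwise telescoping comparison $C_{q-d}/C_{m-d}>C_q/C_m$ is exactly how that corollary is obtained (compare the Catalan-ratio formulas in the paper's proof of \cref{lem:pimono}). The one muddled spot is the first bullet of Step 2: if $i_{s+1}$ falls in the region not containing $j$, then $j\notin\Omorproj_{\eta',l}\cup\Omorproj_{\eta',r}$ and the case is simply vacuous, since no region containing $j$ shrinks, so you should say that rather than speak of $j$'s region becoming strictly smaller.
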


Intuitively, \Cref{lem:pigood} says the probability having a triangulation with a specific vertex forming a triangle with a given side increases as the polygon becomes smaller. For example, this becomes clear in the extremal case outlined in \Cref{fig:lem_incr}.

\begin{figure}[h]
    \centering
    \includegraphics[width=0.4\linewidth]{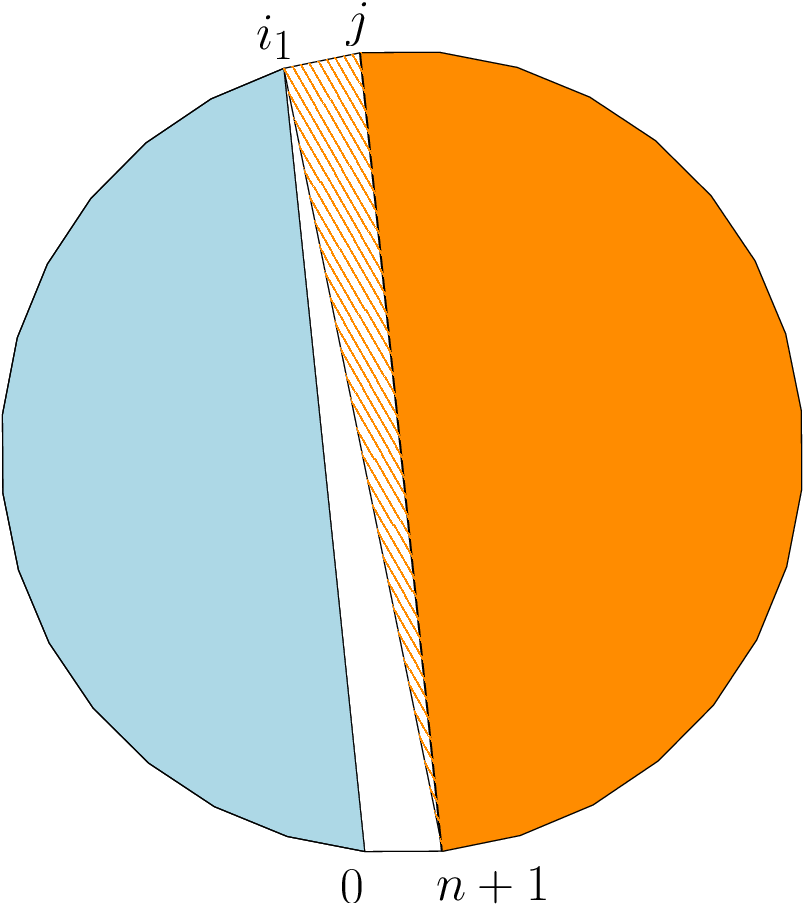} \hspace{1cm}
    \includegraphics[width=0.4\linewidth]{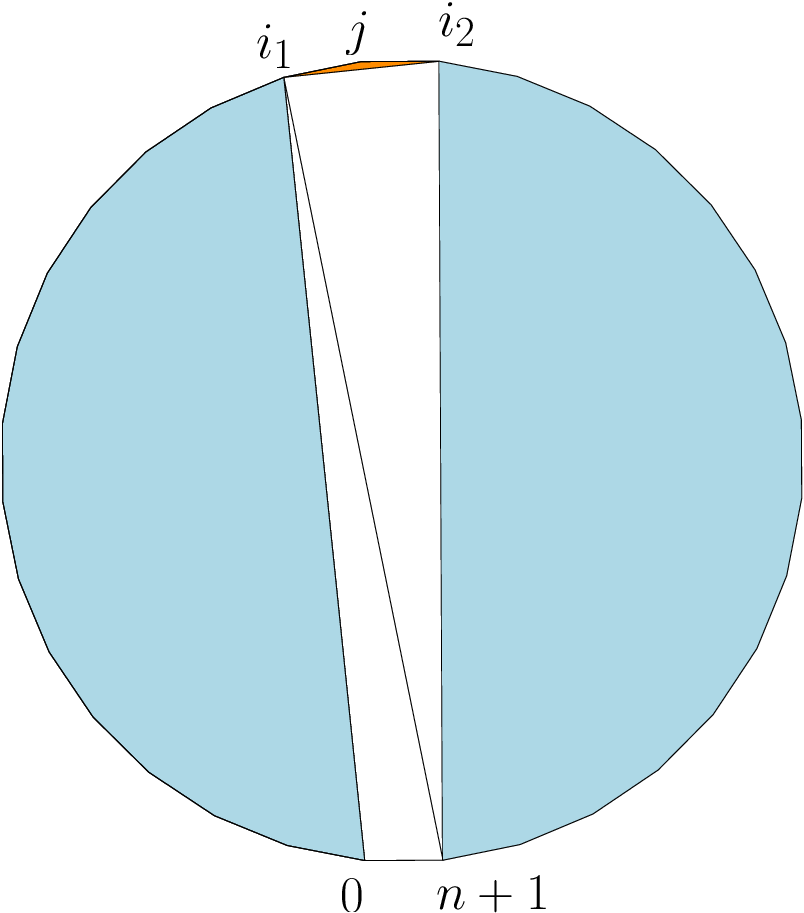}

    \caption{When $\eta:=(i_1)$ and $j:=i_1+1$, $\piorproj_\eta(j)$ equals the proportion of triangulations of the orange region that contain the shaded triangle. This is the ratio of two consecutive Catalan numbers. When $\eta'=(i_1,i_2)$ with $i_2:=i_1+2$, the triangle $i_1ji_2$ is always contained, so $\piorproj_{\eta'}(j)=1$. }
    \label{fig:lem_incr}
\end{figure}

Building upon the proof of \cref{lem:pigood} one can obtain the following additional monotonicity property. Recall that the states in $\Omorproj$ are indexed $1, 2, \dots, n$ and thus admit a natural total ordering.

The following lemma states that if $t$ is farther away from $k_d$ than $s$ is from $k_d$, then the increase in conditional probability of the triangle with vertex $t$  occurring as one extends $\eta$ to $\eta'$ is greater than the corresponding increase for $s$.

\begin{lemma}
    Given a pinning $\eta = (k_1, k_2, \dots, k_{d-1}, k_d)$, and another pinning $\eta' = (k_1, \dots, k_{d+1})$, if $s, t \in \Omorproj_{\eta', r}$ and $s < t$  we have
    \[
        \frac{\piorproj_{\eta}(s)}{\piorproj_{\eta'}(s)} \leq \frac{\piorproj_{\eta}(t)}{\piorproj_{\eta'}(t)}\,.
    \]
    and the same holds if $s, t \in \Omorproj_{\eta, l}$ and $s > t$.
    \label{lem:pimono}
\end{lemma}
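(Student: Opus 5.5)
The plan is to write both conditional probabilities explicitly as ratios of Catalan numbers and reduce the claim to the log-convexity statement of \cref{lem:catalanmono}. This mirrors how \cref{lem:pigood} is derived in \cite{eppstein2022improved}.

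\textbf{Step 1: closed forms.} Fix $\eta=(k_1,\dots,k_d)$ and write the untriangulated region of $\Omega_\eta$ as the union of the left and right sub-polygons cut off by the innermost triangle. The uniform measure on $\Omega_\eta$ is a product over these two sub-polygons, so $\piorproj_\eta(s)$ for a right-side vertex $s$ depends only on the right sub-polygon. Let its special edge (an edge of the innermost triangle) be $(a,b)$, with the vertices $a<a+1<\dots<b$ between them. The triangulations containing the triangle $(a,s,b)$ split into independent triangulations of the polygons on $a,\dots,s$ and on $s,\dots,b$. Hence
\[
\piorproj_\eta(s)=\frac{C_{s-a-1}\,C_{b-s-1}}{C_{b-a-1}} .
\]
Appending $k_{d+1}$ replaces this polygon by one with special edge $(a',b)$, where $a'=k_{d+1}$ and the endpoint $b$ is the one shared with the old innermost triangle. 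For $s\in\Omorproj_{\eta',r}$ we get
\[
\piorproj_{\eta'}(s)=\frac{C_{s-a'-1}C_{b-s-1}}{C_{b-a'-1}} .
\]
I would check the case distinction on whether $k_{d+1}$ falls on the left or right of the old innermost triangle. In either case, the vertex $s$ sits between one endpoint that is common to both pinnings and one endpoint that moves inward by a fixed $\delta=a'-a>0$. Orientation is handled by reflection.

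\textbf{Step 2: cancel the common factors.} Taking the ratio, the factor $C_{b-s-1}$ (the side toward the fixed endpoint) cancels. The normalizing factor $C_{b-a'-1}/C_{b-a-1}$ does not depend on $s$. So
\[
\frac{\piorproj_\eta(s)}{\piorproj_{\eta'}(s)}=\frac{C_{b-a'-1}}{C_{b-a-1}}\cdot\frac{C_{m+\delta}}{C_m},\qquad m=s-a'-1 ,
\]
and it suffices to show that $m\mapsto C_{m+\delta}/C_m$ is nondecreasing.

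\textbf{Step 3: monotonicity.} Telescope:
\[
\frac{C_{m+\delta}}{C_m}=\prod_{r=0}^{\delta-1}\frac{C_{m+r+1}}{C_{m+r}} .
\]
By \cref{lem:catalanmono}, each factor $C_{k+1}/C_k$ is strictly increasing in $k$. So the product increases with $m$, i.e.\ with $s$, which gives the inequality for $s<t$ on the right. The left-side statement follows by the mirror-image argument, where increasing distance from the moving endpoint corresponds to decreasing index.

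I expect the main obstacle to be Step 1, not the inequality. One must correctly identify which endpoint of the special edge is shared between $\eta$ and $\eta'$, in all configurations of where $k_{d+1}$ lies relative to $k_d$ and the previous pinned vertices. One must also verify that the triangle ``adjacent to the innermost triangle with vertex $s$'' in $\eta$ and in $\eta'$ leaves the far sub-polygon $\{s,\dots,b\}$ identical in both. If in some configuration the roles of the endpoints swap, the same telescoping argument applies with $s$ replaced by its reflection. This is why the statement pairs ``right, $s<t$'' with ``left, $s>t$''.
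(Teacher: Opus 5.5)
Your proposal is correct and follows essentially the same route as the paper. Both write $\piorproj_\eta(s)$ and $\piorproj_{\eta'}(s)$ as Catalan ratios, cancel the factor coming from the side toward the shared endpoint, and conclude from the monotonicity of $C_{k+1}/C_k$ in \cref{lem:catalanmono}. The only difference is cosmetic: the paper reduces to $t=s+1$ and bounds a single cross-ratio, whereas you telescope over the shift $\delta$ of the moving endpoint, and your explicit check of which endpoint is shared in each configuration is more careful than the paper's write-up.
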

\begin{proof}
    Consider the convex polygon $P$ induced by~$\eta$ with respect to which $\Omorproj_{\eta, r}$ (or $\Omorproj_{\eta, l}$) is defined, in which the innermost triangle of~$\eta'$ (the one with vertex~$k_{d+1}$) lies.

    Let $P'$ be the polygon with respect to which $\Omorproj_{\eta', r}$ is defined.
    
    Without loss of generality, let $s,t \in \Omorproj_{\eta', r}$ and let $s < t$. Suppose further that $t = s + 1$, as it suffices to prove the claim in this case.

    \begin{align*}
        \piorproj_{\eta}(s) &= \frac{C_{a}C_{k-a-2}}{C_{k-1}}\,, \mbox{ and }\\
        \piorproj_{\eta}(t) &= \frac{C_{a-1}C_{k-a-1}}{C_{k-1}}\,,
    \end{align*}
    where $k + 1$ is the number of sides of $P'$, and $1 \leq a \leq k - 1$.

    Similarly
    \begin{align*}
        \piorproj_{\eta'}(s) &= \frac{C_{a}C_{l-a-2}}{C_{l-1}}\,, \mbox{ and}\\
        \piorproj_{\eta'}(t) &= \frac{C_{a-1}C_{l-a-1}}{C_{l-1}}\,,
    \end{align*}
    where $l + 1 > k + 1$ is the number of sides of $P$.

    Therefore
    \begin{equation*}
        \frac{\piorproj_{\eta}(s)\piorproj_{\eta'}(t)}{\piorproj_{\eta}(t)\piorproj_{\eta'}(s)} = \frac{C_{k-a-2}C_{l-a-1}}{C_{k-a-1}C_{l-a-2}} \leq 1 \,,
    \end{equation*}
    by \cref{lem:catalanmono}.
\end{proof}

The ratios being compared in \cref{lem:pimono} will determine the congestion incurred across certain sets of edges in our transport flow. \cref{lem:pimono} states that as one moves around the vertices of the polygon, these factors increase monotonically. This will be important in obtaining our average congestion bound.

The following is immediate from \cref{rmk:perfmat}:
\begin{lemma}
    For every pinning $\eta$ and for all $i,j \in \Omorproj_{\eta, l}$ (similarly for all $i,j \in \Omorproj_{\eta, r}$), there is a perfect matching between $\Omega_{\eta ij}$ and $\Omega_{\eta ji}$.
    \label{lem:matching}
\end{lemma}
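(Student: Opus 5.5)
The statement to prove is \cref{lem:matching}: for a pinning $\eta$ and $i,j\in\Omorproj_{\eta,l}$, there is a perfect matching between $\Omega_{\eta ij}$ and $\Omega_{\eta ji}$. The plan is to reduce it to \cref{rmk:perfmat}, or more directly to the same geometric flip argument, applied inside the sub-polygon on which $\Omorproj_{\eta,l}$ is defined.

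\textbf{Step 1: restrict to the left polygon.} Let $Q$ be the left polygon determined by the innermost triangle of $\eta$, with special edge $e$. This is the side of the innermost triangle bounding $Q$.

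Every $x\in\Omega_\eta$ splits as a triangulation of $Q$ together with a triangulation of the complement of $Q$, which contains the pinned triangles. A flip of a diagonal strictly inside $Q$ changes only the $Q$-part. Membership in $\Omega_{\eta i}$ or $\Omega_{\eta j}$ is determined by the triangle on $e$ inside $Q$. So it suffices to prove the statement for the oriented chain of the polygon $Q$ with special edge $e$, and then take a product with the fixed outside triangulation.

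\textbf{Step 2: identify the matched states.} Write $e=(a,b)$. Then $\Omega_{\eta ij}$ consists of triangulations containing the triangle $(a,b,i)$ together with the triangle of $\eta i$ that contains $j$. That second triangle shares an edge of $(a,b,i)$: it is $(a,i,j)$ or $(i,b,j)$ according to which side of $i$ the vertex $j$ lies on.

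Say it is $(i,b,j)$. Then the quadrilateral $a,i,j,b$ is triangulated by $(a,b,i)$ and $(i,b,j)$, so $x$ contains the diagonal $(i,b)$. Flipping it replaces these two triangles by $(a,b,j)$ and $(a,i,j)$, and the result lies in $\Omega_{\eta ji}$.

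Symmetrically, every $y\in\Omega_{\eta ji}$ contains $(a,b,j)$ and $(a,j,i)$. Flipping $(a,j)$ returns an element of $\Omega_{\eta ij}$.

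\textbf{Step 3: check that this is exactly the edge set.} These two maps are mutually inverse, so they give a bijection realized by chain edges. It remains to show there are no other edges between $\Omega_{\eta ij}$ and $\Omega_{\eta ji}$.

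Any single flip from a triangulation containing $(a,b,i)$ to one containing $(a,b,j)$ with $j\neq i$ must destroy the triangle on $e$. Hence it must flip $(i,b)$ or $(a,i)$, and the resulting apex is determined by the triangle adjacent across that diagonal. That adjacent triangle is $(i,b,j)$ precisely because $x\in\Omega_{\eta ij}$. So each $x$ has exactly one neighbour in the other set, and the edge set is a perfect matching.

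The main obstacle is the bookkeeping for the case where $j$ lies on the other side of $i$, and confirming that pinning orientation conventions match this picture. This is handled by the mirror-image argument. The right-side case $\Omorproj_{\eta,r}$ is identical.
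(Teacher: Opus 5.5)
Your argument is correct and is essentially the paper's approach. The paper simply declares the lemma immediate from \cref{rmk:perfmat}; your steps are the reasoning behind that cited matching lemma, made explicit: restrict to the sub-polygon $Q$ with special edge $e=(a,b)$, then observe that the only flip from $\Omega_{\eta i}$ into $\Omega_{\eta j}$ is the flip of $(i,b)$, reversed by flipping $(a,j)$. Your Step 3 excludes flipping $e$ (the target must still contain $(a,b,j)$) and flipping $(a,i)$ (its new apex lies on the wrong side of $i$), which gives the stronger fact that the whole edge set between $\Omega_{\eta ij}$ and $\Omega_{\eta ji}$ is this matching, and that is what \cref{lem:ijji} actually uses.
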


We now strengthen \cref{lem:perfmat}; this will be a key ingredient in passing from the Dirichlet form of~$F$ (defined over the projection chain) to the Dirichlet form of the overall function~$f$:

\begin{lemma}
\label{lem:ijji}
Let $\eta$ be a pinning. Suppose $i,j \in \Omorproj_{\eta, l}$, or $i,j \in \Omorproj_{\eta, r}$. Then
\begin{align}
    \piorproj(\eta i)\piorproj_{\eta i}(j)(\FF(\eta ij) - \FF(\eta j i)) &= \sum_{x \in \Omega_{\eta ij}, y \in \Omega_{\eta ji}} \Delta \pi(x)P(x, y)(f(x) - f(y))\,.
\end{align}
\end{lemma}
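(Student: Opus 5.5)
This is an exact identity, so I will prove it by direct computation, mirroring the first two steps of the proof of \cref{lem:perfmat} but without the Jensen step. Write $M$ for the perfect matching between $\Omega_{\eta ij}$ and $\Omega_{\eta ji}$ given by \cref{lem:matching}. Every pair $(x,y)$ with $x\in\Omega_{\eta ij}$, $y\in\Omega_{\eta ji}$ and $P(x,y)>0$ is an edge of $M$, and on each such edge $\Delta P(x,y)=1$.

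The right-hand side is therefore
\[
\sum_{(x,y)\in M}\pi(x)\,(f(x)-f(y)) \;=\; \sum_{x\in\Omega_{\eta ij}}\pi(x)f(x)-\sum_{y\in\Omega_{\eta ji}}\pi(y)f(y).
\]
To get the last expression, I use that $\pi$ is uniform, so $\pi(x)=\pi(y)$ on matched pairs, and that $M$ is perfect, so each endpoint appears exactly once. By definition of $\FF$ on pinnings, this equals $\pi(\Omega_{\eta ij})\FF(\eta ij)-\pi(\Omega_{\eta ji})\FF(\eta ji)$.

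It remains to identify the prefactor. By the definition of $\piorproj_{\eta i}$,
\[
\piorproj(\eta i)\,\piorproj_{\eta i}(j)=\pi(\Omega_{\eta i})\cdot\frac{\pi(\Omega_{\eta ij})}{\pi(\Omega_{\eta i})}=\pi(\Omega_{\eta ij}).
\]
Since $M$ is a bijection and $\pi$ is uniform, $\pi(\Omega_{\eta ij})=\pi(\Omega_{\eta ji})$. Factoring this common value out of the previous display gives exactly $\piorproj(\eta i)\piorproj_{\eta i}(j)(\FF(\eta ij)-\FF(\eta ji))$, which is the claim.

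The only point needing care is the bookkeeping that the pinnings $\eta ij$ and $\eta ji$ are legitimate and that $\Omega_{\eta ij}$ is precisely the set of states of $\Omega_{\eta i}$ adjacent to $\Omega_{\eta j}$. This is the content of \cref{lem:matching}, inherited from \cref{rmk:perfmat} applied inside the subpolygon determined by $\eta$, so I take it as given. I also need to check that the sum runs over ordered pairs with $x$ on the $\eta ij$ side only, so that no factor of $2$ appears. With the statement's normalization (a single ordered sum times $\Delta$), the matching edges are counted once and the identity holds exactly.
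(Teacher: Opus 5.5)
Your proof is correct and follows the same route as the paper. The paper also proceeds as in \cref{lem:perfmat} without the Jensen step, using the perfect matching of \cref{lem:matching} and the uniformity of $\pi$ to rewrite the difference of conditional averages as a sum over matched edges. Your bookkeeping is slightly cleaner than the paper's: you identify the prefactor as $\pi(\Omega_{\eta ij})$ and keep track of the factor $\Delta$ in the statement, which the paper's final displayed line drops.
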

\begin{proof}
    Proceeding as in the proof of \cref{lem:perfmat} we get
    \begin{align*}
        \piorproj(\eta i)\piorproj_{\eta i}(j)(\FF(\eta ij) - \FF(\eta j i)) &= \piorproj(i)\Porproj(j)\left(\sum_{x \in \Omega_{\eta i j}}\frac{\pi(x)}{\Delta \piorproj_{\eta}(i)\Porproj_{\eta}(i, j)} f(x) - \sum_{y \in \Omega_{\eta j i}}\frac{\pi(y)}{\Delta \piorproj(j)\Porproj(j, i)}f(y)\right) \\
        &= \piorproj(i)\Porproj(i, j)\sum_{x \in \Omega_{\eta ij}, y \in \Omega_{\eta j i}: P(x, y) > 0} \frac{\pi(x)P(x, y)}{\piorproj(i)\Porproj(i, j)}(f(x) - f(y)) \\
        &= \sum_{x \in \Omega_{\eta i j}, y \in \Omega_{\eta j i}}\pi(x)P(x, y)(f(x) - f(y))\,.
    \end{align*}
\end{proof}

\begin{lemma}
    \label{lem:iij}
    Let $\eta$ be a pinning. Suppose $i,j \in \Omorproj_{\eta, l}$ or $i,j \in \Omorproj_{\eta, r}$. Suppose $j > i$ without loss of generality. Then
\begin{align}
    \FF(\eta i) - \FF(\eta ij) &= \sum_{k \in \Omorproj_{\eta i,r}} \piorproj_{\eta i}(k)(\FF(\eta ik) - \FF(\eta i j))\,. \label{eqn:etaik}
\end{align}
\end{lemma}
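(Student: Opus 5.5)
The identity is just the law of total expectation applied to a suitable partition of $\Omega_{\eta i}$. The only real work is checking that the sets $\Omega_{\eta i k}$, $k \in \Omorproj_{\eta i, r}$, form that partition and that $j$ is one of the indices.

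\textbf{Step 1: $j$ lies on the right side.} Appending $i$ to $\eta$ adds the triangle with vertex $i$ on the special edge of the current untriangulated region. This new innermost triangle splits the region into a left part, with vertices strictly between the left endpoint of the special edge and $i$, and a right part, with vertices strictly between $i$ and the right endpoint. Since $i, j$ are both in $\Omorproj_{\eta, l}$ (or both in $\Omorproj_{\eta, r}$) and $j > i$, the vertex $j$ lies strictly between $i$ and the right endpoint of the relevant edge. Hence $j \in \Omorproj_{\eta i, r}$, so $\eta i j$ is a valid pinning and the term $\FF(\eta i j)$ makes sense. The orientation conventions for the right side have to be matched to the definition of $\Omorproj_{\eta, r}$, so this step should be checked carefully against the paper's figures.

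\textbf{Step 2: the partition.} Every triangulation $x \in \Omega_{\eta i}$ contains exactly one triangle lying on the right edge of the innermost triangle of $\eta i$ inside the right sub-polygon. Its third vertex is some $k \in \Omorproj_{\eta i, r}$. Therefore
\[
\Omega_{\eta i} = \bigsqcup_{k \in \Omorproj_{\eta i, r}} \Omega_{\eta i k},
\]
and in particular $\sum_{k} \piorproj_{\eta i}(k) = 1$, where $\piorproj_{\eta i}(k) = \pi(\Omega_{\eta i k})/\pi(\Omega_{\eta i})$.

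If the right part is degenerate, meaning the right edge is a polygon side, then $\Omorproj_{\eta i, r}$ would be empty. This cannot happen here, because $j \in \Omorproj_{\eta i, r}$ by Step 1. In the other degenerate case the right part is a single triangle, $\Omorproj_{\eta i, r} = \{j\}$, and both sides of \eqref{eqn:etaik} are $0$.

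\textbf{Step 3: total expectation.} By the law of total expectation for this partition,
\[
\FF(\eta i) = \Exp_{\pi_{\eta i}} f = \sum_{k \in \Omorproj_{\eta i, r}} \piorproj_{\eta i}(k)\, \FF(\eta i k).
\]
Subtracting $\FF(\eta i j) = \sum_k \piorproj_{\eta i}(k)\, \FF(\eta i j)$, which holds because the weights sum to $1$, gives \eqref{eqn:etaik} immediately.

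I expect no analytic obstacle. The only delicate point is the combinatorial bookkeeping in Steps 1 and 2: that the right triangle adjacent to the innermost triangle is unique, that it always exists, and that the left/right conventions line up so that $j > i$ really places $j$ in $\Omorproj_{\eta i, r}$.
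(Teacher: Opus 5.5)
Your proposal is correct and follows the same route as the paper. The paper's proof writes down the two identities $\FF(\eta i)=\sum_{k\in\Omorproj_{\eta i,r}}\piorproj_{\eta i}(k)\FF(\eta ik)$ and $\sum_{k}\piorproj_{\eta i}(k)=1$, which is the law of total expectation over $\Omega_{\eta i}=\bigsqcup_k\Omega_{\eta ik}$, and then subtracts; your Steps 1--2 spell out the facts the paper compresses into ``using the definition,'' namely that $j\in\Omorproj_{\eta i,r}$ and that the sets $\Omega_{\eta ik}$ partition $\Omega_{\eta i}$.
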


\begin{proof}
Using the definition of $\FF(\cdot)$ and $\piorproj_{(\cdot)}(\cdot)$ we obtain the identities
$\FF(\eta i)~=~\sum_{k \in \Omorproj_{\eta i, r}}~\piorproj_{\eta i}(k)\FF(\eta i k)$
and $\sum_{k \in \Omorproj_{\eta i, r}} \piorproj_{\eta i}(k) = 1$. \eqref{eqn:etaik} follows.
\end{proof}

In the proof of \cref{lem:l1dirweak} we will construct a flow in which states in $\Omega_i$ send flow to states in $\Omega_j$. In this flow, a commodity is initially uniformly distributed throughout $\Omega_i$ and must be sent along edges (transitions) in the state space $\Omega$ so that it is uniformly distributed throughout $\Omega_j$. The expression
\[
\piorproj(i)\piorproj(j)(\FF(i) - \FF(j))
\]
captures the ``problem'' of routing this flow from $\Omega_i$ to $\Omega_j$. Constructing a flow is equivalent to decomposing this expression into a summation over expressions 
\[\phi_{ij,xy}\pi(x)P(x, y)(f(x) - f(y))\quad\textrm{or}\quad\phi_{ji,xy}\pi(x)P(x, y)(f(x) - f(y))\,,\]

with $\phi_{ij,xy}$ (or $\phi_{ji,xy}$) giving the flow across each $(x,y)$ edge (and thus $\Delta\phi_{ij,xy}$ or $\Delta\phi_{ji,xy}$ giving the congestion across each edge). In fact, the $\phi_{ij,xy}$, $\phi_{ji,xy}$ values will obey the axioms of an $\Omega_i$-$\Omega_j$ flow as described in \cref{sec:mcflow}.

I.e.~the net flow out of each $x \in \Omega_i$ will be $\pi(x)\piorproj(j)$, the net flow into each $y \in \Omega_j$ will be $\piorproj(i)\pi(y)$, i.e.
\[
    \sum_{y \sim x}\phi_{ij,xy} = \pi(x)\piorproj(j)\,,
\]
for all $x \in \Omega_i$
and
\[
    \sum_{x \sim y}\phi_{ji,xy} = \piorproj(i)\pi(y)\,,
\]
for all $y \in \Omega_j$.

Also, we will have $\phi_{ij,xy} = -\phi_{ij,yx}$.

The way we will accomplish this decomposition is to recursively decompose $\Omega$ by first partitioning using the projection chain $\Omorproj$, then recursively decomposing $\Omega_i$ for each $i \in \Omorproj$. 

\begin{definition}
    Given $\eta = (k_1 = i, k_2, \dots, k_d)$ define a function $\phi_{ij,\eta s t}$ recursively so that:
\begin{itemize}
    \item $\phi_{ij,ij} = \frac{\piorproj(j)}{\piorproj_i(j)}$\,,
    \item  $\phi_{ji,ji} = -\phi_{ij,ij}$\,,
    \item  $\phi_{ij,ij'} = 0$ for all $j' \neq j$, and
    \item $\phi_{ji,ji'} = 0$ for all $i' \neq i$\,.
\end{itemize}

For all $s, t \in \Omorproj_{\eta, l}$ (similarly for all $s, t \in \Omorproj_{\eta, r}$) let
\begin{align}
   \phi_{ij, \eta s t} = \frac{\piorproj_{\eta}(t)}{\piorproj_{\eta s}(t)}(\phi_{ij, \eta t} - \phi_{ij, \eta s}) = \frac{\piorproj_{\eta}(s)}{\piorproj_{\eta t}(s)}(\phi_{ij, \eta t} - \phi_{ij, \eta s}) \,,\label{eqn:rhoetast}
\end{align}
and whenever $\eta' = \eta k_{d+1}$ let $\phi_{ij,\eta' s} = \phi_{ij,\eta k_{d+1} s}$.

For all $x \in \Omega_{\eta st}, y \in \Omega_{\eta t s}$ let    $\phi_{ij,xy} = \phi_{ij,\eta s t}$.

Finally, for $x \in \Omega_i, y \in \Omega_j, P(x, y) > 0$, let $\phi_{ij,xy} = \phi_{ij,ij}$.
\end{definition}

This function will describe the flow across each edge in our construction. In particular, $\phi_{\eta st}$ describes the flow across edges $(x, y)$ where $x \in \Omega_{\eta s}, y \in \Omega_{\eta t}$.  

Conceptually, at each step of the decomposition, we are within some $\Omega_{\eta}$. We imagine that $\phi_{ij,\eta s}$ flow is concentrated within states in $\{\Omega_{\eta s} \mid s \in \Omorproj_{\eta, l}\}$ (and similarly for $\Omorproj_{\eta, r}$), which must be distributed to the rest of $\Omega_{\eta}$. This reduces to the problem of distributing the flow to each $\Omega_{\eta t}, t \in \Omorproj_{\eta, l}$. The resulting flow across the matching $\Omega_{\eta st}, \Omega_{\eta t s}$ is precisely 
\begin{align}
    \phi_{ij, \eta s}  \cdot \frac{\piorproj_{\eta}(s)\piorproj_{\eta}(t)}{\piorproj_{\eta}(s t)}  &= \phi_{ij, \eta s}\cdot \frac{\piorproj_{\eta}(t)}{\piorproj_{\eta s}(t)} = \phi_{ij, \eta s}\cdot \frac{\piorproj_{\eta}(s)}{\piorproj_{\eta t}(s)}\,. \label{eqn:rhodistmat}
\end{align}
This is because the total flow concentrated in $\Omega_{\eta s}$ is $\phi_{ij, \eta s}\piorproj(\eta)\piorproj_{\eta}(s)$, and a $\piorproj_{\eta}(t)$ portion of this flow must be sent to $\Omega_{\eta t}$. Distributing this flow across the edges in the matching $(\Omega_{\eta s t}, \Omega_{\eta t s})$, each side of which has measure $\piorproj(\eta)\piorproj_{\eta}(st)$, justifies the left-hand side of \eqref{eqn:rhodistmat}. The right-hand side follows from the definition of $\piorproj_{(\cdot)}(\cdot)$.

We obtain \eqref{eqn:rhoetast} from considering the symmetric problem of distributing flow from $\Omega_{\eta t}$ to $\Omega_{\eta s}$.

Before presenting the formal construction, we prove the following lemmas.
\begin{lemma}
    For all $i,j\in\Omorproj$, for all pinnings $\eta = (i = k_1, k_2, \dots, k_d)$ such that $j \sim \eta$,
    \[
      \phi_{ij,\eta j} = \frac{\piorproj(j)}{\piorproj_{\eta}(j)}\,.
    \]
    \label{lem:rhoj1}
\end{lemma}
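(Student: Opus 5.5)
Induction on the length $d$ of the pinning $\eta = (i = k_1, k_2, \dots, k_d)$, where $j \sim \eta$ means that $j$ is one of the vertices in $\Omorproj_{\eta,l}\cup\Omorproj_{\eta,r}$ available for extending $\eta$. The base case $d=1$, $\eta = (i)$, is the first clause of the definition: $\phi_{ij,ij} = \piorproj(j)/\piorproj_i(j)$, which is exactly the claim since $\piorproj_{\eta}(j) = \piorproj_i(j)$. This normalization is intended: the portion $\piorproj(j)$ of the mass of $\Omega_i$ destined for $\Omega_j$ is spread over the boundary $\Omega_{ij}$, which has relative measure $\piorproj_i(j)$ inside $\Omega_i$.

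For the inductive step, let $\eta' = \eta k_{d+1}$ with $j \sim \eta'$. I would first check that $j \sim \eta$ and that $j$ and $k_{d+1}$ lie on the same side ($\Omorproj_{\eta,l}$ or $\Omorproj_{\eta,r}$). This should follow from the non-crossing condition on pinnings: the polygon cut out by $\eta'$ is contained in the one cut out by $\eta$, on the side containing $k_{d+1}$. By the convention $\phi_{ij,\eta' s} = \phi_{ij,\eta k_{d+1} s}$, the quantity $\phi_{ij,\eta' j}$ is the flow value $\phi_{ij,\eta k_{d+1} j}$ given by \eqref{eqn:rhoetast} with $s = k_{d+1}$, $t = j$:
\[
\phi_{ij,\eta k_{d+1} j} = \frac{\piorproj_{\eta}(j)}{\piorproj_{\eta k_{d+1}}(j)}\bigl(\phi_{ij,\eta j} - \phi_{ij,\eta k_{d+1}}\bigr).
\]
The induction hypothesis gives $\phi_{ij,\eta j} = \piorproj(j)/\piorproj_\eta(j)$. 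Plugging this in, the factor $\piorproj_\eta(j)$ cancels and the first term becomes $\piorproj(j)/\piorproj_{\eta'}(j)$, which is the desired value.

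So the remaining task, and the main obstacle, is to show that the subtracted term vanishes, i.e.\ $\phi_{ij,\eta k_{d+1}} = 0$ when $k_{d+1} \neq j$. I would prove this as a companion statement in the same induction: for every pinning $\eta$ starting at $i$ and every $s \sim \eta$ with $s \neq j$, $\phi_{ij,\eta s} = 0$. At depth one this is the third clause of the definition, $\phi_{ij,ij'} = 0$ for $j' \neq j$. The difficulty is the bookkeeping between the one-index quantities $\phi_{ij,\eta s}$ (the flow concentrated in $\Omega_{\eta s}$) and the two-index edge values from \eqref{eqn:rhoetast}. The reading I would adopt is that $\phi_{ij,\eta' s}$ for the extended pinning inherits the concentrated flow, and this flow is nonzero only on the branch toward $j$, since flow enters $\Omega_i$ only through the matching with $\Omega_j$. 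I would therefore formalize the convention as: $\phi_{ij,\eta' s}$ is the concentrated amount, which is zero off the $j$-branch and given by the recursion on it. I expect the real care to be needed in pinning down this convention consistently with the definition; once it is fixed, the computation above closes the induction. The case $k_{d+1} = j$ has to be handled separately: then $j \not\sim \eta'$, so it does not arise.
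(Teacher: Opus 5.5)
Your induction is the right proof, and it is what the paper relies on. The paper never writes out a proof of this lemma. However, the base case of the proof of \cref{lem:rholeq1} uses exactly your two facts: the value $\phi_{ij,\tau' j}=\piorproj(j)/\piorproj_{\tau'}(j)$ on the branch toward $j$, and $\phi_{ij,\tau' s}=0$ for $s\neq j$. Two repairs are needed.

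\textbf{The companion claim is false as stated.} You quantify over \emph{all} pinnings starting at $i$. For $\eta=(i,j)$ and any $s\sim\eta$, the recursion gives
\[
\phi_{ij,ijs}=\frac{\piorproj_i(s)}{\piorproj_{ij}(s)}\bigl(\phi_{ij,is}-\phi_{ij,ij}\bigr)=-\frac{\piorproj_i(s)}{\piorproj_{ij}(s)}\cdot\frac{\piorproj(j)}{\piorproj_i(j)}\neq 0 .
\]
So an induction over all pinnings breaks exactly at $k_{d+1}=j$. Restrict the companion claim to pinnings with $j\sim\eta$, which is all you use. Then prove it jointly with the main claim over that class. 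The class is closed under deleting the last vertex, by the nesting observation you already made.

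\textbf{No convention needs to be adopted.} Imposing the vanishing as a convention would replace the paper's definition rather than use it. The rule $\phi_{ij,\eta' s}:=\phi_{ij,\eta k_{d+1} s}$, together with \eqref{eqn:rhoetast}, already determines every value. The vanishing then follows from the same one-line computation you did for $j$. Suppose $j\sim\eta'$ and $s\sim\eta'$ with $s\neq j$. Then $s\sim\eta$ and $k_{d+1}\sim\eta$, and both differ from $j$, so
\[
\phi_{ij,\eta' s}=\frac{\piorproj_\eta(s)}{\piorproj_{\eta'}(s)}\bigl(\phi_{ij,\eta s}-\phi_{ij,\eta k_{d+1}}\bigr)=0
\]
by the hypothesis at $\eta$. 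The base case is the definitional clause $\phi_{ij,ij'}=0$ for $j'\neq j$.

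With these two adjustments your argument is complete.
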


The following will ensure that $\phi_{ij,\eta t} - \phi_{ij,\eta s}$ is always bounded in absolute value by $1$:
\begin{lemma}
For all $i,j\in\Omorproj$, for all pinnings $\eta = (i = k_1, k_2, \dots, k_d)$ such that $j = k_u$  for some $u \in [d]$, and for all $s, t \in \Omorproj_{\eta,r}$ where $s < t$ (similarly for all $s, t \in \Omorproj_{\eta, l}$ where $t < s$):
either
\begin{equation}
    0 \leq \phi_{ij, \eta s} \leq \phi_{ij, \eta t} \leq 1 \label{eqn:mono+}
\end{equation}
or
\begin{equation}
    0 \geq \phi_{ij, \eta s} \geq \phi_{ij, \eta t} \geq -1\,. \label{eqn:mono-}
\end{equation}
\label{lem:rholeq1}
\end{lemma}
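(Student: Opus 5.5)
We argue by induction on the length $d$ of the pinning $\eta$, starting from the base case supplied by the first bullet of the definition together with \cref{lem:rhoj1}. The key observation is that once $j$ has been pinned (i.e.\ $j = k_u$ for some $u$), the values $\phi_{ij,\eta s}$ for $s \in \Omorproj_{\eta, r}$ are obtained from the values one level up by the recursion \eqref{eqn:rhoetast}. This recursion expresses $\phi_{ij,\eta s}$ as a ratio of conditional probabilities multiplied by a difference of earlier $\phi$-values. In the base case $\eta = (i,j)$, every $s \in \Omorproj_{ij,r}$ with $\phi_{ij,ij s}$ defined inherits $\phi_{ij,ij} = \piorproj(j)/\piorproj_i(j)$. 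By \cref{lem:pigood}, $\piorproj_i(j) \ge \piorproj(j)$, so this value lies in $[0,1]$. The entries $\phi_{ij,ij'}=0$ for $j'\ne j$ give the remaining boundary values, and these are trivially ordered relative to $\phi_{ij,ij}$.

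For the inductive step, fix $\eta$ and write $\eta' = \eta k_{d+1}$. We use the recursion in the form
\[
\phi_{ij,\eta' s} \;=\; \frac{\piorproj_{\eta}(s)}{\piorproj_{\eta'}(s)}\,\bigl(\phi_{ij,\eta s} - \phi_{ij,\eta k_{d+1}}\bigr) + (\text{correction}),
\]
which reads $\phi_{ij,\eta'}$ as the flow that remains to be distributed inside $\Omega_{\eta'}$ after the matching transfer. The plan is to unwind the definition so that, in the positive case \eqref{eqn:mono+}, $\phi_{ij,\eta' s}$ is a product of a nonnegative quantity bounded by a previous $\phi$-value and the ratio $\piorproj_{\eta}(s)/\piorproj_{\eta'}(s)$. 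This ratio is at most $1$ by \cref{lem:pigood}, which gives both nonnegativity and the upper bound $\le 1$ from the inductive hypothesis. Monotonicity in $s$ then comes from \cref{lem:pimono}: for $s<t$ in $\Omorproj_{\eta',r}$ the ratio is nondecreasing in $t$. By the inductive hypothesis the inherited differences are also ordered monotonically in the same direction. A product of two nonnegative nondecreasing sequences is nondecreasing, which yields $\phi_{ij,\eta' s}\le \phi_{ij,\eta' t}$. The case $\Omorproj_{\eta,l}$ follows by mirror symmetry, and the negative case \eqref{eqn:mono-} follows by the antisymmetry $\phi_{ji} = -\phi_{ij}$, i.e.\ by swapping the roles of $i$ and $j$.

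The main obstacle is pinning down exactly which earlier value each $\phi_{ij,\eta' s}$ inherits: whether the relevant sign is fixed by the side of $k_{d+1}$ relative to $j$, and whether the recursion produces a single product or a difference whose sign must be controlled. We will first verify the claim by direct computation for $d=2,3$ using the explicit Catalan ratios from the proof of \cref{lem:pimono}. We will then set up the induction hypothesis strongly enough to include the statement that the ordering direction on $\Omorproj_{\eta,r}$ points away from the side on which $j$ lies. This is what makes the monotonicity of \cref{lem:pimono} align with the monotonicity of the inherited values.
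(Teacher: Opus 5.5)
Your outline (recursion as ratio times difference, ratio $\le 1$ by \cref{lem:pigood}, ratio monotone by \cref{lem:pimono}, product of nonnegative nondecreasing quantities) matches the final step of the paper's argument. However, the two places where real work is needed are wrong or missing.

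\textbf{The base case.} It is mis-evaluated. By definition,
$\phi_{ij,ijs}=\frac{\piorproj_i(s)}{\piorproj_{ij}(s)}\bigl(\phi_{ij,is}-\phi_{ij,ij}\bigr)=-\phi_{ij,ij}\cdot\frac{\piorproj_i(s)}{\piorproj_{ij}(s)}$.
So it does not ``inherit'' $\phi_{ij,ij}$: it is negative and depends on $s$. Its monotonicity in $s$ is exactly an application of \cref{lem:pimono}, and $|\phi_{ij,ijs}|\le 1$ follows from $\phi_{ij,ij}\le 1$ together with the ratio being at most $1$. Moreover, $(i,j)$ is not the only base case. Since $j$ can be pinned at any depth, every pinning $\eta=\tau' j$ with $j\notin\tau'$ must be treated directly, using $\phi_{ij,\tau' s}=0$ for $s\neq j$ and \cref{lem:rhoj1}. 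Your inductive step only extends pinnings that already contain $j$, so these pinnings are never reached. The values $\phi_{ij,ij'}=0$ you mention live on the pinning $(i)$, which does not contain $j$, and are not part of the claim.

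\textbf{The inductive step.} There is no ``correction'' term: by definition $\phi_{ij,\eta' s}=\frac{\piorproj_{\eta}(s)}{\piorproj_{\eta'}(s)}\bigl(\phi_{ij,\eta s}-\phi_{ij,\eta k_{d+1}}\bigr)$ exactly. The point you leave open, namely the sign, order and size of these differences, is the heart of the proof. It is settled by applying the inductive hypothesis for $\eta$ to the triple $k_{d+1},s,t$, which lie in the same side class of $\eta$. Say $k_{d+1}\in\Omorproj_{\eta,r}$; the other case is the mirror image.
\begin{itemize}
\item If $s<t$ lie in $\Omorproj_{\eta',r}$, then $k_{d+1}<s<t$. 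So $\phi_{ij,\eta k_{d+1}},\phi_{ij,\eta s},\phi_{ij,\eta t}$ share a sign and grow in absolute value. The two differences then have that same sign, are ordered, and are bounded by $|\phi_{ij,\eta t}|\le 1$.
\item If $t<s$ lie in $\Omorproj_{\eta',l}$, then $k_d<t<s<k_{d+1}$. Here the differences have the sign \emph{opposite} to $\phi_{ij,\eta k_{d+1}}$, with absolute value growing toward $k_d$ and bounded by $|\phi_{ij,\eta k_{d+1}}|\le 1$.
\end{itemize}
In both cases you then multiply by the ratios, which are nondecreasing away from $k_{d+1}$ and at most $1$.

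This analysis also shows that two of your claims are false.
\begin{itemize}
\item The invariant you propose to add is the wrong one. Growth is away from the last pinned vertex $k_d$, as the statement itself says, not away from $j$. For $\eta=(i,j)$, on $\Omorproj_{\eta',l}$ the values grow toward $j$.
\item The nonpositive alternative cannot be obtained from $\phi_{ji}=-\phi_{ij}$. That antisymmetry only relates the top-level matching between $\Omega_{ij}$ and $\Omega_{ji}$, and $\phi_{ji}$ is built on pinnings starting at $j$, not $i$. Negative values arise inside $\phi_{ij}$ itself: already in the base case, and at every sign flip in an $l$-region. So the nonpositive case must be propagated by running the same argument with signs reversed.
\end{itemize}
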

\begin{proof}
We proceed by induction on $d$, the number of vertices in $\eta$. Suppose $j = k_d$, Let $\tau = (k_1, \dots, k_{d-2}), \tau' = (k_1, \dots, k_{d-1}).$ Then for all $s, t \in \Omorproj_{\eta, r}, s < t$ we have $\phi_{ij,\tau' s} = \phi_{ij, \tau' t} = 0$ and $\phi_{ij, \tau' j} = \frac{\piorproj(j)}{\piorproj_{\tau'}(j)}$ (by \cref{lem:rhoj1}) and therefore
\begin{align}
  \phi_{ij,\eta s} &= \phi_{ij, \tau' j s} \\
  &= \frac{\piorproj_{\tau'}(s)}{\piorproj_{\tau' j}(s)}(\phi_{ij, \tau' s} - \phi_{ij, \tau' j}) \\
  &= -\frac{\piorproj(j)}{\piorproj_{\tau'}(j)}\cdot \frac{\piorproj_{\tau'}(s)}{\piorproj_{\tau' j}(s)}\,.
\end{align}

Similarly $\phi_{ij,\eta t} = -\frac{\piorproj(j)}{\piorproj_{\tau'}(j)}\cdot \frac{\piorproj_{\tau'}(s)}{\piorproj_{\tau' j}(s)}\,.$

The claim then follows from \cref{lem:pimono}.

For the inductive step, let $\eta = (i = k_1, k_2, \dots, k_d)$ be a pinning such that $j \in \eta$, and let $\eta' = \eta k_{d+1}$ be a pinning. Suppose $s, t \in \Omorproj_{\eta', r}$ and $s < t$ (the proof is symmetric in the other case).

Let $\tau = \eta \setminus \{k_d\}$.

By \cref{lem:pimono} we have 
\begin{equation}
    \frac{\piorproj_{\tau}(k_{d+1})}{\piorproj_{\eta}(k_{d+1})}\leq \frac{\piorproj_{\tau}(s)}{\piorproj_{\eta}(s)} \leq \frac{\piorproj_{\tau}(t)}{\piorproj_{\eta}(t)}\,.
\end{equation}
By the inductive hypothesis either \eqref{eqn:mono+} or \eqref{eqn:mono-} holds for the pairs $k_{d+1}, s$ and $s,t$.

Then, from the definition:
\begin{align*}
    \phi_{ij,\eta's} &= \frac{\piorproj_{\eta}(s)}{\piorproj_{\eta'}(s)}(\phi_{ij,\eta s} - \phi_{ij,\eta'})\,, \nonumber \\
\end{align*}
and similarly 
\begin{align*}
   \phi_{ij,\eta' t}  &= \frac{\piorproj_{\eta}(t)}{\piorproj_{\eta'}(t)}(\phi_{ij,\eta t} - \phi_{ij,\eta'})\,.
\end{align*}

Furthermore, by the inductive hypothesis we have
\begin{align}
    0 \leq \phi_{ij,\eta s} - \phi_{ij,\eta'} \leq \phi_{ij,\eta t} - \phi_{ij,\eta'} \leq 1
\end{align}
or
\begin{align}
    0 \geq \phi_{ij,\eta s} - \phi_{ij,\eta'} \geq \phi_{ij,\eta t} - \phi_{ij,\eta'} \geq -1\,,
\end{align}
and by \cref{lem:pimono} we have
\begin{align}
    \frac{\piorproj_{\eta}(s)}{\piorproj_{\eta'}(s)} < \frac{\piorproj_{\eta}(t)}{\piorproj_{\eta'}(t)}\,.
\end{align}
One obtains one of $0 \leq \phi_{\eta' s} \leq \phi_{\eta' t} \leq 1$ or $0 \geq \phi_{\eta' s} \geq \phi_{\eta' t} \geq -1$ by observing that for any real $\rho_1, \rho_2$ such that $0 \leq \rho_1 \leq \rho_2 \leq 1$ and for any $0 < \pi_1 \leq \pi_2 \leq 1$  we have

\[
  0 \leq \pi_1 \rho_1 \leq \pi_2 \rho_2 \leq 1\,.
\]

\end{proof}

To prove \cref{lem:l1dirweak}, we will introduce the following notation for convenience:

\begin{definition}
    Given a pinning $\eta$ and given $s, t \in \Omorproj_{\eta, l}$ (similarly given $s, t \in \Omorproj_{\eta, r}$) define the following:

    \begin{itemize}
        \item     Let $\flo{\eta s}{\eta t} = \piorproj(\eta)\piorproj_{\eta}(s)\piorproj_{\eta}(t)(\FF(\eta s) - \FF(\eta t))$.
        \item Let $\flo{\eta s t}{\eta t s} = \piorproj(\eta)\piorproj_{\eta}(s)\piorproj_{\eta s}(t)(\FF(\eta s t) - \FF(\eta t s))$.
        \item Define $\flo{\eta}{\eta s} = \piorproj(\eta)\piorproj_{\eta}(s)(\FF(\eta) - \FF(\eta s))$.
    \end{itemize}
\end{definition}

Intuitively, $\flo{\eta s}{\eta t}$ is a term that describes the weighted difference of $\FF$ between $\eta s$ and $\eta t$.

\begin{proof}[Proof of \cref{lem:l1dirweak}]
    Recall that our goal is to send the flow $\flo{i}{j}$ from $\Omega_i$ to $\Omega_j$ and bound its congestion. The main idea is that, by an appropriate rerouting and decomposition of the flow, the original problem is reduced to a set of equivalent problems of smaller size. A recursive application of this idea will result in the proof of the lemma.

    We begin by applying the flow rerouting $\Om_i\rar\Om_{ij}\rar\Om_{ji}\rar\Om_{j}$ to get
    \begin{align}
        \flo{i}{j} &= 
                \phi_{ij,ij}\flo{i}{ij} \nonumber \\
        &\qquad + \phi_{ij,ij}\flo{ij}{ji} \nonumber \\
        &\qquad + \phi_{ij,ij}\flo{ji}{j}  \nonumber \\
       &= \phi_{ij,ij}\flo{i}{ij} \label{eqn:iij} \\
        &\qquad + \phi_{ij,ij}\flo{ij}{ji} \label{eqn:ijji}  \\
        &\qquad + \phi_{ji,ji}\flo{j}{ji} \label{eqn:jij} 
    \end{align}

    (and note $\phi_{ij,ij} = \frac{\piorproj(j)}{\piorproj_i(j)} = \frac{\piorproj(i)}{\piorproj_j(i)} = -\phi_{ji,ji}$, so \eqref{eqn:ijji} is in fact symmetrically consistent).

    First we consider \eqref{eqn:ijji}. By \Cref{lem:ijji}, we immediately get
    \begin{equation}\label{eqn:ijjidir}
        \frac{\piorproj(j)}{\piorproj_i(j)}\flo{ij}{ji}\ = \sum_{x\in\Om_i, y \in \Om_j}\Delta\phi_{ij,xy}\pi(x)P(x, y)\left(f(x) - f(y)\right)\,,
    \end{equation}
    where $0 \leq \phi_{ij,xy} = \phi_{ij,ij} = \frac{\piorproj(j)}{\piorproj_i(j)} \leq 1.$    

    Next we observe that \eqref{eqn:iij} and \eqref{eqn:jij} are symmetric. Thus it suffices to consider \eqref{eqn:iij}. Without loss of generality assume $j > i$. Every triangulation in $\Omega_i$ has to triangulate $\Omega_{i,r}$ and thus we can apply the flow decomposition $\cup_{k\in\Omorproj_{i,r}}\Om_{ik}\rar \Om_{ij}$ to get
    \begin{equation} \label{eqn:ikij}
        \frac{\piorproj(j)}{\piorproj_i(j)}\flo{i}{ij}=\frac{\piorproj(j)}{\piorproj_i(j)}\sum_{k\in\Omorproj_{i,r}}\flo{ik}{ij} = \sum_{k \in \Omorproj_{i,r}} \phi_{ij,ij}\flo{ik}{ij}\,.
    \end{equation}
    
    Note that the problem of sending flow from $\Om_{ik}$ to $\Om_{ij}$ can be viewed as the problem of sending flow from ``$\Om_k$'' to ``$\Om_j$'' in the smaller polygon that has $i0$ as a side and contains $k,j$. We thus repeat the equivalent rerouting as the one in the beginning of the proof, that is $\Om_{ik}\rar\Om_{ikj}\rar\Om_{ijk}\rar\Om_{ij}$ to get
    
    \begin{align}\label{eqn:ikij_decomp}
        \flo{ik}{ij}&=\frac{\piorproj_i(j)}{\piorproj_{ik}(j)}\flo{ik}{ikj}+\frac{\piorproj_i(j)}{\piorproj_{ik}(j)}\flo{ikj}{ijk}+\frac{\piorproj_i(k)}{\piorproj_{ij}(k)}\flo{ijk}{ij}\,.
    \end{align}
    Substituting \eqref{eqn:ikij_decomp} into \eqref{eqn:ikij} we get
    \begin{align}
        \frac{\piorproj(j)}{\piorproj_i(j)}\flo{i}{ij}&=\frac{\piorproj(j)}{\piorproj_i(j)}\sum_{k\in\Omorproj_{i,r}} \left[ \frac{\piorproj_i(j)}{\piorproj_{ik}(j)}\flo{ik}{ikj}+\frac{\piorproj_i(j)}{\piorproj_{ik}(j)}\flo{ikj}{ijk}+\frac{\piorproj_i(j)}{\piorproj_{ik}(j)}\flo{ijk}{ij}\right] \nonumber\\
        &=\sum_{k\in\Omorproj_{i,r}}\phi_{ij,ikj}\flo{ik}{ikj} \label{eqn:ikikj}\\
        &\qquad + \sum_{k\in\Omorproj_{i,r}}\phi_{ij,ikj}\flo{ikj}{ijk} \label{eqn:ikjijk}\\
        &\qquad +\sum_{k\in\Omorproj_{i,r}}\phi_{ij,ijk}\flo{ij}{ijk} \,.\label{eqn:ijkij} 
    \end{align}

Observe that, again, there is a clear analogy between the pairs of equations \eqref{eqn:ikikj}-\eqref{eqn:iij}, \eqref{eqn:ikjijk}-\eqref{eqn:ijji}, \eqref{eqn:ijkij}-\eqref{eqn:jij}.

Iterating the previous steps, one obtains a decomposition of the original term $\flo{i}{ij}$ into expressions of the form \eqref{eqn:ikjijk}-\eqref{eqn:ijji}. To prove the lemma, it suffices to bound the coefficients $\phi_{ij,xy}$ on the right-hand side.

Furthermore, this decomposition preserves the invariant that the coefficient of the term $\flo{i}{ij}$ in \eqref{eqn:ijji}\textemdash and similarly $\flo{ikj}{ijk}$ in \eqref{eqn:ikjijk} and all other terms of the same form\textemdash is bounded in absolute value by one.

In general, the coefficients are of the form
\begin{equation}
    \label{eqn:rhoform}
    \phi_{ij,\eta st} = \frac{\piorproj_{\eta}(t)}{\piorproj_{\eta s}(t)}(\phi_{ij,\eta t} - \phi_{ij, \eta s})
\end{equation}
(as in the definition)\,,
which is bounded in absolute value by one, by \cref{lem:rholeq1}.

\eqref{eqn:rhoform} can be justified as follows: first, note the coefficient $\phi_{ij,xy} = \phi_{ij,ij} = \frac{\piorproj(j)}{\piorproj_i(j)} $ in \eqref{eqn:ijji}. 

For a given $\eta$ and $\eta' = \eta u$, and for a given $s, t \in \Omorproj_{\eta', r}$, the term $\flo{\eta' s}{\eta' t}$ arises from equalities of the following form

\begin{align}
    \sum_t \phi_{ij,\eta t}\flo{\eta}{\eta t} &= \sum_t\sum_s \phi_{ij,\eta t}\flo{\eta s}{\eta t} \nonumber \\
    &= \sum_{s,t: s < t}(\phi_{ij,\eta t} - \phi_{ij, \eta s})\flo{\eta s}{\eta t} \nonumber \\
    &= \sum_{s,t:s<t}(\phi_{ij,\eta t} - \phi_{ij, \eta s})\left[\frac{\piorproj_{\eta}(t)}{\piorproj_{\eta s}(t)}\flo{\eta s}{\eta s t} + \frac{\piorproj_{\eta}(t)}{\piorproj_{\eta s}(t)}\flo{\eta s t}{\eta t s} + \frac{\piorproj_{\eta}(s)}{\piorproj_{\eta t}(s)}\flo{\eta t s}{\eta t}\right] \nonumber \\
    &= \sum_{s,t: s < t}\left[\phi_{ij,\eta s t}\flo{\eta s}{\eta st} + \phi_{ij,\eta s t}\flo{\eta st}{\eta ts} + \phi_{ij, \eta t s}\flo{\eta t}{\eta t s}\right]\,.
\end{align}

The function~$\phi_{ij}$ describes the desired $\Omega_i$-$\Omega_j$ flow. Furthermore, the maximum congestion is at most~$\Delta$ since $\phi_{ij}$ takes on values bounded in absolute value by one, and since $P(x, y) = \frac{1}{\Delta}$ whenever $P(x, y) > 0$.. Finally, the existence of the desired transport flow follows from applying \cref{lem:flowtotransportflow} to~$\phi_{ij}$.
\end{proof}

\section{Bounding the Maximum and Average Congestion}
\label{sec:avgcond}

First we give a bound on the maximum congestion, which we will prove in \cref{sec:deferredproofs}:

\begin{lemma}
    In the flow construction in \cref{lem:l1dirweak}, let $x,y \in \Omega$. Then
    
    \[
        \Bigl|\sum_{i \in S, j \in T}\phi_{ij,xy}\Bigr| \leq 1\,.
    \]
    \label{lem:maxcong}
\end{lemma}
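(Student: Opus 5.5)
The plan is to exploit the linearity of the recursive definition of $\phi_{ij,\cdot}$ in its boundary data. I would define the aggregated potentials
\[
\Phi_{\eta s} := \sum_{i\in S,\,j\in T}\phi_{ij,\eta s}, \qquad \Phi_{\eta s t}:=\sum_{i\in S,\,j\in T}\phi_{ij,\eta st},
\]
so that $\sum_{i\in S,j\in T}\phi_{ij,xy}=\Phi_{\eta st}$ for $x\in\Omega_{\eta st}$, $y\in\Omega_{\eta ts}$. Every pinning $\eta$ begins with a single vertex $k_1$. Since $S\cap T=\emptyset$, only pairs with $i=k_1\in S$ (or, symmetrically, $j=k_1\in T$) contribute to edges inside $\Omega_{k_1}$. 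Hence I would fix $i=k_1\in S$ and treat $\Phi$ as a sum over $j\in T$ alone. The recursion \eqref{eqn:rhoetast} is linear, so $\Phi$ obeys the same recursion
\[
\Phi_{\eta st}=\frac{\piorproj_\eta(t)}{\piorproj_{\eta s}(t)}(\Phi_{\eta t}-\Phi_{\eta s}),
\]
with top-level data $\Phi_{i j}=\piorproj(j)/\piorproj_i(j)$ for $j\in T$ and $\Phi_{ij}=0$ otherwise. The top-level edges between $\Omega_i$ and $\Omega_j$ receive only the single term $\phi_{ij,ij}\le 1$, so for them the claim is immediate.

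For deeper edges, the first step is a sign-coherence observation. By \cref{lem:rholeq1} (its induction, started from $\eta=(i)$), for fixed $i\in S$ and each $j\in T$, the quantities $\phi_{ij,\eta st}$ for $s<t$ in $\Omorproj_{\eta,r}$ all have the same sign, which is determined only by the side of $\eta$ on which $j$ lies. The same holds with the orientation reversed on the left side. I would then check that this sign is the same for all $j$: every $j\in T$ represents commodity originating in $\Omega_i$ that must leave $\Omega_i$, and within each sub-polygon it is pushed from the pinned boundary outward. Granting this, no cancellation occurs, and $|\Phi_{\eta st}|=\sum_j|\phi_{ij,\eta st}|$.

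Bounding this sum of absolute values by $1$ is the main obstacle. Termwise, \cref{lem:rholeq1} only gives $|T|$. The approach I would try first is to rerun the induction of \cref{lem:rholeq1} directly on $\Phi$, using the stronger invariant
\[
0\le \Phi_{\eta s}\le\Phi_{\eta t}\le 1 \qquad (s<t \text{ on the appropriate side}).
\]
The inductive step goes through exactly as in that lemma, since it only needs monotonicity of the data together with \cref{lem:pimono}. What remains is the base case, i.e.\ the first level below a newly pinned vertex. There one needs the analogue of the computation in \cref{lem:rholeq1}: summed over $j\in T$, the values $\frac{\piorproj(j)}{\piorproj_{\tau'}(j)}\cdot\frac{\piorproj_{\tau'}(s)}{\piorproj_{\tau'j}(s)}$ must be bounded by $1$ and monotone in $s$.

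For this base case I would use a conservation interpretation. The quantity $\Phi_{\eta s}\,\piorproj(\eta)\piorproj_\eta(s)$ is the total commodity held in $\Omega_{\eta s}$ that is still to be routed. The commodity destined to all of $T$ originates with density $\piorproj(T)\le1$ per unit measure of $\Omega_i$, and the flow across the matching $(\Omega_{\eta st},\Omega_{\eta ts})$ is a fraction of the commodity held in $\Omega_{\eta s}\cup\Omega_{\eta t}$, normalized by the measure of the matching. Combining this with \cref{lem:catalanmono}, the per-edge load should be at most $\piorproj(T)\le 1$. If the conservation argument does not close cleanly, the fallback is to verify the base-case inequality explicitly via the Catalan ratio formulas used in the proof of \cref{lem:pimono}.
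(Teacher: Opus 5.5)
\paragraph{Verdict.} Your reduction matches the paper's proof, but the invariant you propagate is false, so there is a genuine gap. Shared skeleton: aggregate $\Phi_{\eta s}=\sum_{i\in S,j\in T}\phi_{ij,\eta s}$, note that $\Phi$ satisfies the same linear recursion with top-level data $\Phi_{is}=\mathbbm{1}[s\in T]\,\piorproj(s)/\piorproj_i(s)\in[0,1]$, and induct on $|\eta|$.

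\paragraph{The gap.} The sign-coherence claim fails already at $\eta=(i)$. Take $s<t$ both in $T\cap\Omorproj_{i,r}$. Then $\phi_{it,ist}=\frac{\piorproj_i(t)}{\piorproj_{is}(t)}\cdot\frac{\piorproj(t)}{\piorproj_i(t)}>0$, while $\phi_{is,ist}=-\frac{\piorproj_i(t)}{\piorproj_{is}(t)}\cdot\frac{\piorproj(s)}{\piorproj_i(s)}<0$. So the commodities bound for $\Omega_s$ and for $\Omega_t$ cross the matching $(\Omega_{ist},\Omega_{its})$ in opposite directions.

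\cref{lem:rholeq1} only concerns $j\in\eta$, and even there the sign depends on $j$. For example, at $\eta=(i,k_2,k_3)$ with $i<k_2<k_3$, the children $u\in(k_2,k_3)$ get potentials $\ge 0$ from $j=k_2$ and $\le 0$ from $j=k_3$.

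Cancellation is not just present but essential. Take $i=1$, $t=n$ and $1\ll s\ll n$. Since $C_{m+1}/C_m\to 4$, both magnitudes above tend to $1$, so $\sum_j|\phi_{ij,\eta st}|\to 2$. The quantity you set out to bound by $1$ is therefore not bounded by $1$.

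The other ingredients of your plan fail for the same reason.
\begin{itemize}
\item The invariant $0\le\Phi_{\eta s}\le\Phi_{\eta t}\le 1$ fails at the base, since $\mathbbm{1}[s\in T]\,\piorproj(s)/\piorproj_i(s)$ is not monotone for general $T$.
\item At $\eta=(i,k)$ with $k,s\in T$, the potential $\Phi_{iks}$ is the sum of $\piorproj(s)/\piorproj_{ik}(s)>0$ (from $j=s$) and $-\frac{\piorproj_i(s)}{\piorproj_{ik}(s)}\cdot\frac{\piorproj(k)}{\piorproj_i(k)}<0$ (from $j=k$), so it has no fixed sign.
\item The conservation bound ``load $\le\piorproj(T)$'' is false. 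For $i=1$, $T=\{n\}$, the matching $(\Omega_{1sn},\Omega_{1ns})$ carries $\piorproj(n)/\piorproj_{1s}(n)\to 1$, while $\piorproj(T)\to 1/4$.
\end{itemize}

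\paragraph{A fix.} Use an oscillation invariant instead, which needs no sign or monotonicity information. It uses only the contraction from \cref{lem:pigood}, which the paper's inductive step also invokes.

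\emph{Invariant:} for every pinning $\eta$ starting at $i$, and each side of it, the aggregated potentials $\Phi_{\eta u}$ lie in an interval $[m,M]$ with $M-m\le 1$.

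\emph{Base:} this holds at $\eta=(i)$, because $\Phi_{iu}\in[0,1]$ (as $\phi_{ij,ij}\le 1$).

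\emph{Step:} the potentials at level $\eta s$ are $\Phi_{\eta st}=\lambda_t(\Phi_{\eta t}-\Phi_{\eta s})$, where $\lambda_t=\piorproj_\eta(t)/\piorproj_{\eta s}(t)\in(0,1]$ by \cref{lem:pigood}.
\begin{itemize}
\item The differences $\Phi_{\eta t}-\Phi_{\eta s}$ lie in $[m-\Phi_{\eta s},\,M-\Phi_{\eta s}]$. This interval has length at most $1$ and contains $0$.
\item Multiplication by any scalar in $[0,1]$ maps such an interval into itself.
\item Hence all potentials at level $\eta s$, on both of its sides, again lie in an interval of length at most $1$ containing $0$.
\end{itemize}

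\emph{Conclusion:} in particular $|\Phi_{\eta st}|\le 1$, and $\Phi_{\eta st}$ is exactly the aggregated value on every edge of the matching $(\Omega_{\eta st},\Omega_{\eta ts})$. Combine this with your treatment of the crossing edges ($\phi_{ij,ij}\le 1$) and the symmetric argument inside $\Omega_j$ for $j\in T$ (top-level data in $[-1,0]$). That proves the lemma.
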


The rest of this section is devoted to proving a bound on the average congestion.

Given a triangulation pair~$x, y$ satisfying $P(x, y) > 0$, there exists a unique maximal pinning $\eta$ such that $x,y \in \Omega_{\eta}$. We will call this pinning $\eta_{xy}$. See \cref{fig:etaxy}. 

\begin{figure}
    \centering
    \includegraphics[width=0.4\linewidth]{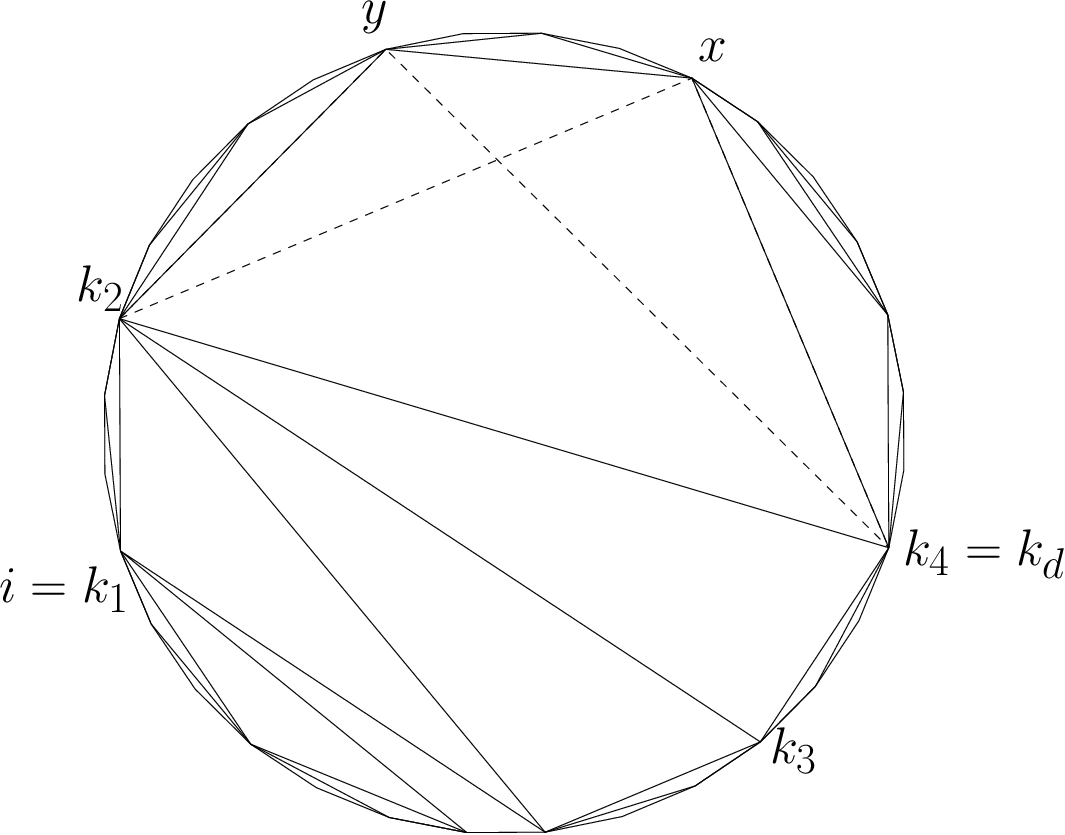}
    \caption{Two triangulations $x,y\in\Omega_{\eta}$, where $\eta = \eta_{xy} = (k_1, k_2, k_3, k_d)$.}
    \label{fig:etaxy}
\end{figure}

Given $x \in \Omega$, each diagonal in $x$ induces a transition to $y \in \Omega$, and thereby induces a pinning $\eta_{xy}$. Furthermore, using the duality between Catalan triangulations and Catalan trees, let $x^*$ be the tree that corresponds to the triangulation~$x$. One can identify each diagonal in~$x$\textemdash and therefore each pinning~$\eta_{xy}$\textemdash with a tree edge in~$x^*$ between a parent node and a child node. We will define $d_x(\eta) = d_x(\eta_{xy})$ to be the depth of the child node of this tree edge.

Our strategy in this section will be as follows: let $i,j \in \Omorproj$ be as in \cref{lem:l1dirweak}. We observe that given $x \in \Omega_i$, and given a pinning $\eta~=~(k_1 = i, \dots, k_d),$ there are (at most) two additional pinnings $\eta^{(l)} = \eta k_{d+1}^{(l)}$ and $\eta^{(r)} = \eta k_{d+1}^{(r)}$ such that $k_{d+1}^{(l)} \in \Omorproj_{\eta l}$, $k_{d+1}^{(r)} \in \Omorproj_{\eta r}$, and $x \in \Omega_{\eta^{(l)}}$ and $x \in \Omega_{\eta^{(r)}}$. See \cref{fig:etaxsplit}. 

Given  a pinning $\eta$ and a vertex $k \in \Omorproj_{\eta l}$ or $k \in \Omorproj_{\eta r}$, let $\eta' = \eta k$. Then given $x \in \Omega_{\eta'}$ define $\phi_{ij,\eta'}^{(x)} = \phi_{ij,\eta k}$\textemdash where $\phi_{ij,\eta k}$ is the notation used in \cref{lem:l1dirweak}. We will show that given $i\in S, j \in T$, for each depth level~$d$ we have 
\[
    \sum_{\eta: x \in \Omega_{\eta}, d(\eta) = d} |\phi_{ij,\eta}^{(x)}| \leq 1\,.\]
That is, the sum of the congestion values incurred in the flow in \cref{lem:l1dirweak} across flips at a given level of (the dual tree of) a given triangulation $x \in \Omega_i$ sums to at most one. Formally:

\begin{lemma}
    \label{lem:rhodecomp}
    With the notation of \cref{lem:l1dirweak}, given $i,j\in \Omorproj$ and given  $\eta = (k_1 = i, k_2, \dots, k_d)$ such that $j \in \eta$, let 
    \begin{align*}
        \phi_{ij,l}^*(\eta) &= \max_{s \in \Omorproj_{\eta,l}}\{|\phi_{ij,\eta s}|\}, \\
        \phi_{ij,r}^*(\eta) &= \max_{s \in \Omorproj_{\eta,r} }\{|\phi_{ij,\eta s} |\}\,.
    \end{align*}
    Then  if $\eta' = (k_1, k_2, \dots, k_d, k_{d+1})$ where $k_{d+1} \in \Omorproj_{\eta,l}$ then $\phi_{ij,l}^*(\eta') + \phi_{ij,r}^*(\eta') \leq \phi_{ij,l}^*(\eta)$, and the symmetric fact holds for $k_{d+1} \in \Omorproj_{\eta, r}$.

\end{lemma}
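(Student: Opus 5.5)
The idea is to write each $\phi_{ij,\eta' s}$ in terms of level-$\eta$ quantities using the recursive definition \eqref{eqn:rhoetast}, and then use the monotonicity in \cref{lem:rholeq1} to see that the two sides of $k_{d+1}$ together use up at most the total spread of the values at level $\eta$. I treat the case $k := k_{d+1} \in \Omorproj_{\eta,l}$; the case $k \in \Omorproj_{\eta,r}$ is symmetric.

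\emph{Step 1 (where the children sit).} Adding the triangle with apex $k$ splits the sub-polygon on which $\Omorproj_{\eta,l}$ is defined into two pieces. Hence $\Omorproj_{\eta',l}$ and $\Omorproj_{\eta',r}$ are the two contiguous intervals of $\Omorproj_{\eta,l}$ lying strictly on either side of $k$ in the natural order. For every $s$ in either interval, the definition \eqref{eqn:rhoetast} (in the form used in the proof of \cref{lem:rholeq1}) gives
\[
\phi_{ij,\eta' s} \;=\; \frac{\piorproj_{\eta}(s)}{\piorproj_{\eta'}(s)}\bigl(\phi_{ij,\eta s}-\phi_{ij,\eta k}\bigr).
\]

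\emph{Step 2 (drop the ratio).} By \cref{lem:pigood}, $\piorproj_{\eta'}(s) > \piorproj_{\eta}(s)$, so the prefactor lies in $(0,1)$. Therefore
\[
|\phi_{ij,\eta' s}| \le |\phi_{ij,\eta s}-\phi_{ij,\eta k}|.
\]

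\emph{Step 3 (monotonicity).} Since $j\in\eta$, \cref{lem:rholeq1} applies to $\eta$. It says that the map $s\mapsto\phi_{ij,\eta s}$ on the interval $\Omorproj_{\eta,l}$ is monotone and of constant sign. Without loss of generality its values lie in $[m,M]$ with $0\le m\le M=\phi^*_{ij,l}(\eta)$; the sign-negative case is identical after negating. Monotonicity splits the values on the two sides of $k$:
\begin{itemize}
\item on one side of $k$, every $\phi_{ij,\eta s}$ lies in $[m,\phi_{ij,\eta k}]$;
\item on the other side, every $\phi_{ij,\eta s}$ lies in $[\phi_{ij,\eta k},M]$.
\end{itemize}
Consequently one of $\phi^*_{ij,l}(\eta'),\phi^*_{ij,r}(\eta')$ is at most $\phi_{ij,\eta k}-m$, and the other is at most $M-\phi_{ij,\eta k}$. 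Adding these gives
\[
\phi^*_{ij,l}(\eta')+\phi^*_{ij,r}(\eta') \le M-m \le M = \phi^*_{ij,l}(\eta).
\]

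The main point to verify carefully is Step 1 together with the orientation in Step 3. One must check that the index ordering under which \cref{lem:rholeq1} asserts monotonicity on $\Omorproj_{\eta,l}$ (decreasing direction for the left side) is the same linear order in which $k$ separates $\Omorproj_{\eta',l}$ from $\Omorproj_{\eta',r}$. The argument only needs monotonicity along a single linear order of the interval, so the direction of monotonicity does not matter.

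One also has to handle degenerate cases. If one child interval is empty, its maximum is taken to be $0$, and the bound still holds. The case in which $\eta'$ is the first pinning containing $j$ must be covered as well; there \cref{lem:rhoj1} supplies $\phi_{ij,\eta k}$, exactly as in the base case of \cref{lem:rholeq1}.
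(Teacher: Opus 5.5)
Your proof is correct and follows the paper's argument: you expand $\phi_{ij,\eta' s}$ through the recursive definition, discard the factor $\piorproj_{\eta}(s)/\piorproj_{\eta'}(s)<1$ using \cref{lem:pigood}, and telescope using the constant-sign monotonicity of $s\mapsto\phi_{ij,\eta s}$ on $\Omorproj_{\eta,l}$. The paper instead first locates the maxima at the endpoints $i_l$ and $k_d-1$ via \cref{lem:rhomono} at level $\eta'$ and then combines them through an opposite-sign identity; your version bounds each child's maximum directly from monotonicity at level $\eta$ alone (\cref{lem:rholeq1}), which is slightly cleaner and even gives $M-m$. Your closing remark about $\eta'$ being the first pinning containing $j$ is unnecessary, since the hypothesis $j\in\eta$ rules that case out.
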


Then we will use the fact that when $x$ is a (uniformly random) triangulation drawn from $\pi$, the expected depth of $x^*$ is $\widetilde O(\sqrt n)$:

\begin{lemma}[\cite{fgor}]
    The probability that a random Catalan tree on $n$ nodes has depth at least $d$ is
    \[
        O\left(n^{3/2}e^{-d^2/(4n)}\right)\,.
    \]
    The probability that a random Catalan tree on $n$ nodes has depth at most $d$ is 
    \[
        O\left(n^{3/2}e^{-\delta n / d^2}\right)\,,
    \]
    for a constant $\delta > 0$.
    \label{lem:heightwhp}
\end{lemma}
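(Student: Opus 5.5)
Both tail bounds will come from the exact counting formula for Catalan trees of bounded height, namely the classical de Bruijn--Knuth--Rice representation. Let $A_{n,h}$ be the number of binary (Catalan) trees on $n$ nodes with height at most $h$. The generating function $\sum_n A_{n,h} z^n$ is a ratio of Chebyshev-type polynomials, and the standard trigonometric identity gives
\[
A_{n,h} \;=\; \frac{4^{n+1}}{h+2}\sum_{k=1}^{h+1} \sin^2\!\Big(\frac{k\pi}{h+2}\Big)\cos^{2n}\!\Big(\frac{k\pi}{h+2}\Big)
\]
(up to a harmless shift in the indexing convention). Dividing by $C_n = \Theta(4^n n^{-3/2})$ converts counts into probabilities, and the factor $n^{3/2}$ in both bounds is exactly this normalization, so it does not need to be sharp.

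\textbf{Small depth.} This is the direct half. Write $\theta_k = k\pi/(h+2)$. For $1\le k\le h+1$ the largest value of $\cos^2\theta_k$ is attained at $k=1$ (and symmetrically $k=h+1$), where $\cos^2(\pi/(h+2)) \le 1 - c/h^2$. Hence
\[
A_{n,h}\le 4^{n+1}\,(1-c/h^2)^n \le 4^{n+1} e^{-cn/h^2},
\]
since the sum has $h+1$ terms each weighted by $1/(h+2)$. Dividing by $C_n$ gives $\Pr[\text{height}\le d]= O(n^{3/2}e^{-\delta n/d^2})$.

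\textbf{Large depth.} This half uses the complementary formula for the number of trees of height $\ge d$. I see two routes.

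The first is to apply the Poisson summation / theta-function identity to the sum above. This rewrites $C_n - A_{n,d}$ as an alternating sum of second differences of binomial coefficients,
\[
\sum_{k\ge1}\Big[\tbinom{2n}{n+1-k(d+2)} - 2\tbinom{2n}{n-k(d+2)} + \tbinom{2n}{n-1-k(d+2)}\Big],
\]
which is the de Bruijn--Knuth--Rice / Flajolet--Odlyzko formula.

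The second, more elementary, route is a reflection-principle argument. Encode trees as Dyck paths via the standard bijection with height preserved up to $\pm1$. The number of paths exceeding level $d$ is then bounded by the number of reflected paths, i.e.\ $\le 2\binom{2n}{n+d}$ or so.

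Either way, the Chernoff/Stirling bound $\binom{2n}{n+m}\le 4^n e^{-m^2/(2n)}$ applies. With $m\approx d$, or $d/2$ after accounting for the bijection's factor between tree height and path height, this gives $\Pr[\text{height}\ge d]=O(n^{3/2}e^{-d^2/(4n)})$; the $n^{3/2}$ is again the normalization by $C_n$.

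\textbf{Main obstacle.} The hard step is the height-preserving bijection between binary trees and Dyck/lattice paths, so that tree depth is controlled by path height with the right constant; the constant in the exponent depends on it. I will use the rotation correspondence in which depth of a node corresponds to (roughly) the number of left-branchings plus right-branchings along the root path. For an $O(\cdot)$ statement with a non-optimized constant, a loss of a factor $2$ in the height map is acceptable: if the clean bijection only yields $e^{-d^2/(16n)}$, I would instead cite Flajolet--Odlyzko's precise formula directly, since the lemma is attributed to \cite{fgor}.
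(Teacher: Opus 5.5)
\textbf{There is a genuine gap, and it sits where you suspected.} The paper gives no proof of this lemma; it imports it from \cite{fgor}, where heights of binary trees are treated analytically. The problems with your argument are these.

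\emph{The formulas are for the wrong objects.} The trigonometric formula, and its Poisson-summed form as second differences of binomial coefficients, is the de Bruijn--Knuth--Rice count of \emph{Dyck paths} (equivalently plane trees) of bounded height. It is not a count of binary trees. For binary trees the height-restricted generating functions satisfy the quadratic recursion $y_{h+1}=1+z\,y_h^2$ and have no comparable closed form. So your $A_{n,h}$ does not satisfy the identity you wrote, and there is no exact Flajolet--Odlyzko formula for binary trees to fall back on.

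\emph{The height comparison fails for large depth.} The rotation correspondence does not preserve binary-tree height up to $\pm 1$, or even up to a factor $2$. The plane-tree depth of a node is $1$ plus the number of \emph{left} edges on its binary root path only. For example, a right spine of length $n$ becomes a star of height $1$. So large binary depth does not force a high Dyck path, and your large-depth step fails as written.

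\emph{The Chernoff constant is off by a factor of $2$.} The correct bound is $\binom{2n}{n+m}\le 4^n e^{-m^2/n}$. With your $e^{-m^2/(2n)}$ and $m=d/2$ you would only reach $e^{-d^2/(8n)}$, not $e^{-d^2/(4n)}$.

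\textbf{Both halves can be repaired within your framework.}

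\emph{Small depth.} Use the correspondence in the direction it does control. Plane depth is at most $1+{}$binary depth, so binary trees of height $\le d$ map injectively to Dyck paths of length $2n$ and height $\le d+1$. Your cosine estimate applies to those and gives $O(n^{3/2}e^{-\delta n/d^2})$.

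\emph{Large depth.} Take a node at depth $d$. Its root path has at least $\lceil d/2\rceil$ left edges or at least $\lceil d/2\rceil$ right edges. Hence under either the rotation correspondence or its mirror image, the associated Dyck path reaches height $\ge d/2+1$; both maps are bijections, so both preserve the uniform measure. Combine a union bound over the two maps, the reflection bound $\binom{2n}{n+m}$, and the sharp estimate above with $m\ge d/2$. This gives $\Pr[\text{height}\ge d]\le 2\cdot 4^n e^{-d^2/(4n)}/C_n=O(n^{3/2}e^{-d^2/(4n)})$, with exactly the constant $1/4$ of the statement.

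\emph{Alternative without bijections.} Use a first-moment count. The number of binary trees on $n$ nodes with a marked node at depth $d$ is $2^d\frac{d+2}{2n-d}\binom{2n-d}{n-d-1}$. Hoeffding applied to this binomial, whose index deviates by $d/2+1$ from the centre, again yields $O(n^{3/2}e^{-d^2/(4n)})$.

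Either repair gives an elementary, self-contained proof in place of the analytic results of \cite{fgor} that the paper cites.
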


\begin{lemma}[\cite{meirmoon}]
    \label{lem:meirmoon}
    There exist constants $C>0$ and~$c$ such that for sufficiently large $n$, the probability that a uniformly chosen node in a uniformly chosen Catalan tree on $n$ nodes has depth $d$ is proportional to $d$ whenever $d \leq C \sqrt n \log^c n$.
\end{lemma}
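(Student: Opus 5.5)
The plan is to prove the statement by exact enumeration followed by a uniform asymptotic estimate; this is the classical route of Meir and Moon. I read ``proportional to $d$'' as: the probability is $\Theta(d/n)$ for $d = O(\sqrt n)$, and $O(d/n)$ throughout the range $d \le C\sqrt n\log^c n$. The upper bound $O(d/n)$ is the direction used later in the average-congestion analysis.

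\textbf{Step 1 (exact count).} I would count pairs (Catalan tree on $n$ nodes, marked node at depth $d$) via a spine decomposition. The root-to-node path has $d+1$ nodes, and each of its $d$ edges is a left or a right step, giving $2^d$ choices. Removing the path leaves $d+2$ (possibly empty) binary trees: one hanging off the free side of each of the $d$ proper ancestors, and the two subtrees of the marked node. These have $n-d-1$ nodes in total, and the decomposition is a bijection. Writing $C(z)=\sum_m C_m z^m$, the count is therefore
\[
N_{n,d} = 2^d\,[z^{n-d-1}]\,C(z)^{d+2} = 2^d\cdot\frac{d+2}{2n-d}\binom{2n-d}{n-d-1}.
\]
The second equality uses the standard Lagrange-inversion formula $[z^m]C(z)^k = \frac{k}{2m+k}\binom{2m+k}{m}$. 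The desired probability is then $p_{n,d}=N_{n,d}/(nC_n)$.

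\textbf{Step 2 (asymptotics).} I would write
\[
\binom{2n-d}{n-d-1}=\binom{2n-d}{n}\cdot\frac{n-d}{n+1}\cdots
\]
or, more simply, apply Stirling directly to $\binom{2n-d}{n-d-1}$ as $d$ ranges over $[0,n]$. Combined with $C_n=\Theta(4^n n^{-3/2})$, this should give
\[
p_{n,d}=\Theta\!\left(\frac{d+2}{n}\right)\exp\!\left(-\Theta\!\left(\frac{d^2}{n}\right)\right)\cdot\bigl(1+o(1)\bigr),
\]
uniformly for $d\le n^{2/3}$, say. The factor $2^d$ cancels against the $2^{-d}$ coming from the central binomial estimate. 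The main term is the log-ratio
\[
\log\frac{(2n-d)!\,n!^2}{(2n)!\,(n-d)!\,n!},
\]
which I would expand to second order as $-d^2/(4n)+O(d^3/n^2)$.

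Consequently:
\begin{itemize}
\item for $d\le\sqrt n$, the exponential factor is $\Theta(1)$, so $p_{n,d}=\Theta(d/n)$, i.e.\ proportional to $d$;
\item for $\sqrt n\le d\le C\sqrt n\log^c n$, we still get $p_{n,d}=O(d/n)$, with the additional decay only helping.
\end{itemize}

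\textbf{Main obstacle.} The main obstacle is making the Stirling estimate uniform in $d$ with explicit constants. It is cleanest to bound the ratio $p_{n,d+1}/p_{n,d}$ directly from the closed form, since this ratio is a simple rational function of $n$ and $d$. I would show that it equals
\[
\frac{d+3}{d+2}\,\bigl(1-\Theta(d/n)\bigr),
\]
and then telescope from $d=0$, where $p_{n,0}=1/n$. This avoids any delicate error terms.
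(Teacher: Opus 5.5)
Your proposal is correct. The paper gives no proof of this lemma and simply imports it from Meir and Moon, so your argument is a self-contained replacement along the classical generating-function lines.

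Your spine count $N_{n,d}=2^d\frac{d+2}{2n-d}\binom{2n-d}{n-d-1}$ is right; it checks against all cases with $n\le 3$. The ratio you plan to telescope is in fact exactly
\[
\frac{p_{n,d+1}}{p_{n,d}}=\frac{d+3}{d+2}\Bigl(1-\frac{d+1}{2n-d-1}\Bigr),
\qquad\text{so}\qquad
p_{n,d}=\frac{d+2}{2n}\prod_{j=1}^{d}\Bigl(1-\frac{j}{2n-j}\Bigr).
\]
This closed form makes the Stirling analysis of Step 2 unnecessary. It also sidesteps a slip there: the log-ratio you display omits the factor $2^d$, so as written it equals $-d\log 2-d^2/(4n)+\cdots$. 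Directly from the product:
\begin{itemize}
\item $p_{n,d}\le\frac{d+2}{2n}e^{-d(d+1)/(4n)}$ for every $d$;
\item using $1-x\ge e^{-2x}$ on $[0,1/2]$, $p_{n,d}\ge e^{-O(C^2)}\,d/n$ for $1\le d\le C\sqrt n$.
\end{itemize}

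Your reinterpretation of ``proportional to $d$'' is not just convenient but forced. A two-sided bound $\Theta(d/n)$ holds only for $1\le d=O(\sqrt n)$, i.e.\ the lemma as stated is true with $c\le 0$. It fails for $d\gg\sqrt n$ because of the Gaussian factor. Read literally, it also fails at $d=0$, where $p_{n,0}=1/n$.

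Compared with the citation, your route pins down exactly what is true. Summing the closed form over $d$ also gives directly two things the paper derives by combining \cref{lem:meirmoon} with \cref{lem:heightwhp}:
\begin{itemize}
\item the $\Theta(\sqrt n)$ expected node depth of \cref{lem:expdepth}, with no logarithmic loss;
\item a node-depth tail bound $\Pr[\text{depth}>D]=O(e^{-D^2/(4n)})$, which is all the proof of \cref{lem:ldirstronger} needs, since the expected number of nodes deeper than $D$ is $n$ times this probability. This avoids the $n^{3/2}$ prefactor of the height bound.
\end{itemize}
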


A straightforward consequence of \cref{lem:heightwhp,lem:meirmoon} is:

\begin{corollary}
    \label{lem:expdepth}
    The expected depth of a random node in a random Catalan tree is $\widetilde \Theta(\sqrt n)$.
\end{corollary}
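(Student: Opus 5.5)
The expected depth is obtained by splitting the range of depths into a bulk part, handled by \cref{lem:meirmoon}, and a tail, handled by \cref{lem:heightwhp}. Let $X$ be a uniform random Catalan tree on $n$ nodes, $v$ a uniform node of $X$, and $D$ the depth of $v$. Set $d_0 = C\sqrt n \log^c n$, with $C, c$ as in \cref{lem:meirmoon}, and
\[ \Exp D = \Exp\sqbr*{D \,1[D \le d_0]} + \Exp\sqbr*{D\,1[D > d_0]}. \]
We must show $\Exp D \le \widetilde O(\sqrt n)$ and $\Exp D \ge \widetilde\Omega(\sqrt n)$.

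\emph{Upper bound.} On $\{D \le d_0\}$ we have trivially $D \le d_0$, so $\Exp[D\,1[D\le d_0]] \le d_0 = \widetilde O(\sqrt n)$. For the tail, note $D$ is at most the height of $X$ and $D \le n$. Choose $d_1 = 4\sqrt{n\log n}$, enlarging $c$ if needed so that $d_1 \le d_0$. Then the event $D > d_0$ forces the height to be at least $d_1$. By the first part of \cref{lem:heightwhp}, this has probability $O(n^{3/2} e^{-d_1^2/(4n)}) = O(n^{3/2}\cdot n^{-4}) = O(n^{-5/2})$. Multiplying by the bound $D \le n$ gives a tail contribution of $o(1)$.

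\emph{Lower bound.} By \cref{lem:meirmoon}, $\Pr[D = d] = \kappa d$ for all $d \le d_0$, with a normalizing factor $\kappa$. Summing, $\kappa d_0^2/2 \approx \Pr[D \le d_0] \ge 1 - O(n^{-5/2})$ by the tail estimate above, so $\kappa = \Theta(d_0^{-2})$. Therefore
\[ \Exp D \ge \sum_{d \le d_0} d\cdot \kappa d = \Theta(\kappa d_0^3) = \Theta(d_0) = \widetilde\Omega(\sqrt n). \]
The second part of \cref{lem:heightwhp} (the height is rarely $o(\sqrt n/\log n)$) offers an alternative route: the height is likely $\gtrsim \sqrt n/\log n$, and the linear profile then places a constant fraction of nodes at depth $\Omega(\sqrt n/\log n)$.

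The main obstacle is the exact form of \cref{lem:meirmoon}: "proportional to $d$" needs to hold uniformly, with a constant independent of $d$ up to constant factors, on a range reaching $\widetilde\Theta(\sqrt n)$. If the range were only up to order $\sqrt n$ instead of $d_0$, the upper bound would still go through: there I would bound the contribution of $\sqrt n \le d \le d_1$ trivially by $d_1 = \widetilde O(\sqrt n)$ and control only $d > d_1$ with \cref{lem:heightwhp}. So the upper bound is robust, and the lower bound only needs linear growth on some range of length $\widetilde\Omega(\sqrt n)$.
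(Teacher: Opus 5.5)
Your argument is correct and follows the route the paper intends: the paper gives no separate proof and simply calls the corollary a straightforward consequence of \cref{lem:heightwhp} (for the upper tail) and \cref{lem:meirmoon} (for the linear depth profile), which is exactly the split you use. One small repair: you cannot enlarge $c$ in \cref{lem:meirmoon} at will, since that would extend the lemma beyond the range where it is asserted. You do not need to, though. The upper bound works directly with threshold $d_1$. For the lower bound, the linear profile alone gives $\Pr[D\le d_0/2]\le(\tfrac14+o(1))\Pr[D\le d_0]\le\tfrac14+o(1)$, hence $\Exp D\ge(\tfrac38-o(1))\,d_0$ without the tail estimate.
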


We will use \cref{lem:expdepth} to bound the total congestion incurred across edges in $x$ as the sum of congestion over $O(\sqrt n)$ levels, which in turn will be $O(\sqrt n)$ total congestion by \cref{lem:rhodecomp}. Since the transition probability in the flip walk between any adjacent $x,y$ pair is $\Theta\left(\frac{1}{n}\right)$, this in turn will imply that the average congestion over the edges (transitions) incident to $x$ is $O\left(\frac{\sqrt n}{n}\right)$. We state this in \cref{lem:ldirstronger}. Roughly speaking, this will, with some algebraic manipulation, enable the shaving of a $\Theta(\sqrt{n})$ factor from our overall average congestion bound. More precisely, we will use \cref{lem:ldirstronger} to prove \cref{lem:ldirstrong}.

\begin{figure}
    \centering
    \includegraphics[width=0.4\linewidth]{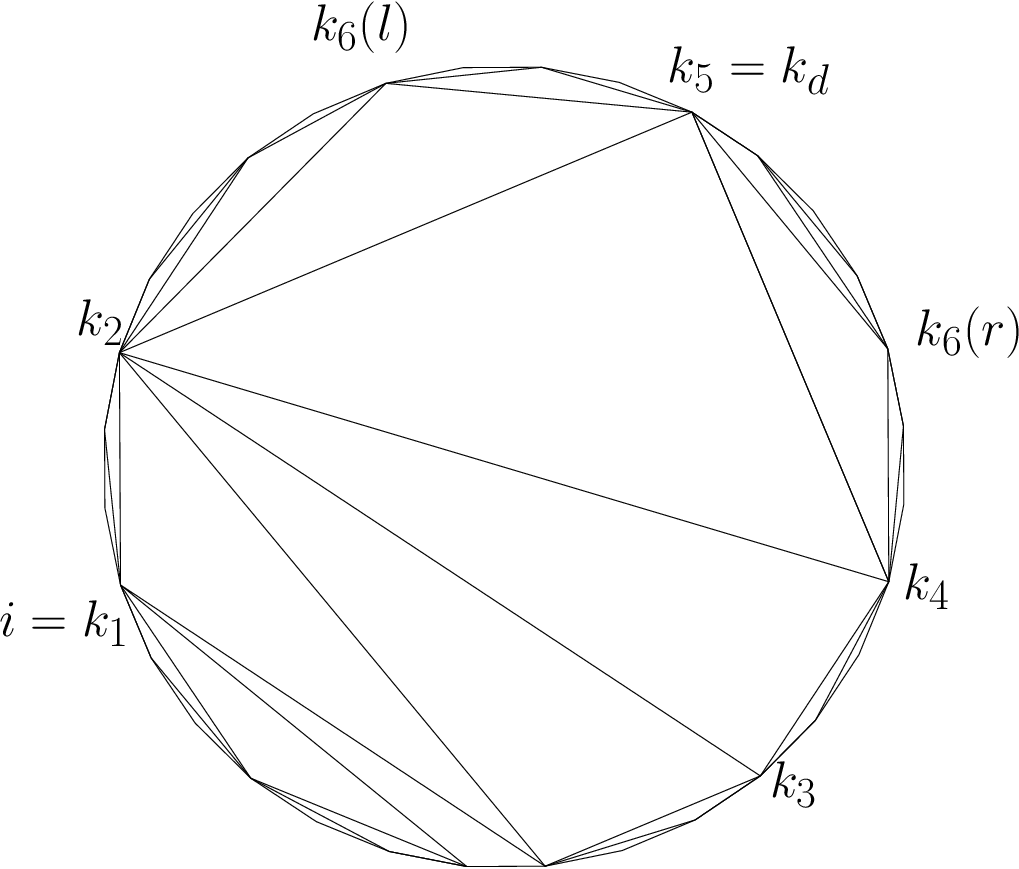}
    \caption{A triangulation $x \in \Omega$, with a pinning $\eta = (k_1, \dots, k_d)$ indicated such that $x \in \Omega_{\eta}$. Here we have $x \in \Omega_{\eta^{(l)}}$ and $x \in \Omega_{\eta^{(r)}}$ where $\eta^{(l)} = \eta k_{d+1}^{(l)} = \eta k_6^{(l)}$ and similarly $x \in \Omega_{\eta^{(r)}}$ where $\eta^{(r)} = \eta k_6^{(r)}$.}
    \label{fig:etaxsplit}
\end{figure}

Let $\mathcal{T}_{ij}$ be the set of all triangles that have one vertex at $j$ and two other vertices on opposite sides of the special edge $(i, n+1)$ (see \cref{fig:Tij}). Given a triangle $t \in \mathcal{T}_{ij}$, let $\Omega_{it} := \{x \in \Omega_i \mid x \textnormal{ includes the triangle } t\}$. The triangle $t$ divides the polygon into three sub-polygons, bounded by the three sides of $t$. Let $l_i(t)$ denote the size of the sub-polygon on the side bounded by the ``base'' of~$t$ (the side closer to the special edge) and the edge $(i,n+1)$. Let $u_i(t)$ denote the size of the larger of the other two sub-polygons.

Let $d(\eta)$ be the depth of (i.e.~the number of triangles in) a given pinning $\eta$. Given $x \in \Omega_i$ let $d_j(x)$ denote the depth of the unique pinning $\eta$ such that $x \in \Omega_{\eta j}$. See \cref{fig:notation_sec_6}.

\begin{figure}[h]
    \centering
    \includegraphics[width=0.4\linewidth]{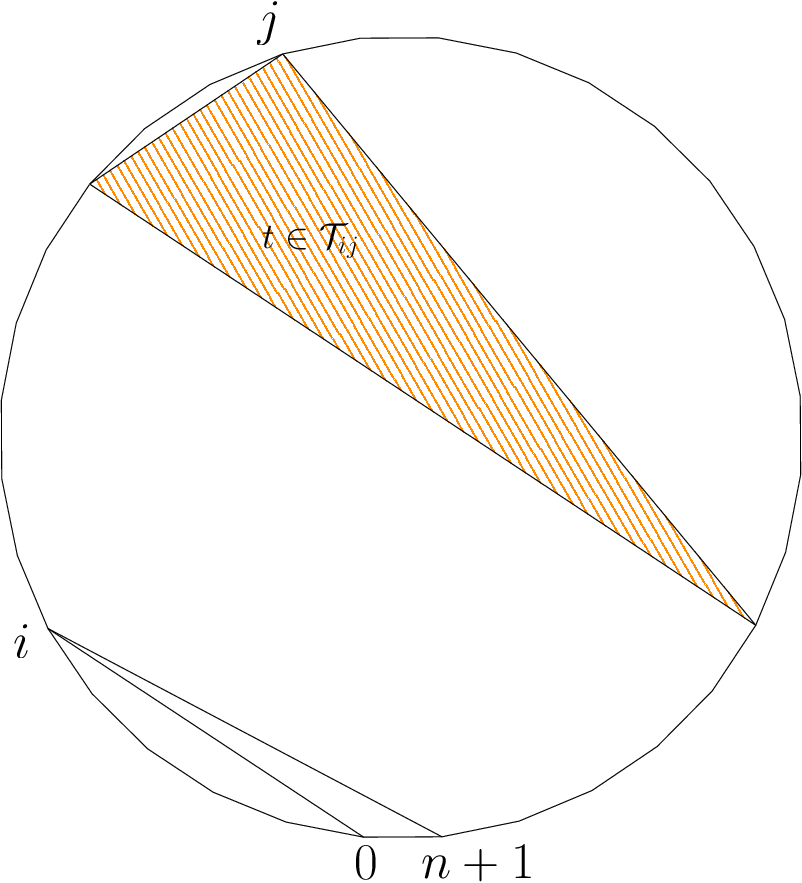} \hspace{2cm}
    \includegraphics[width=0.4\linewidth]{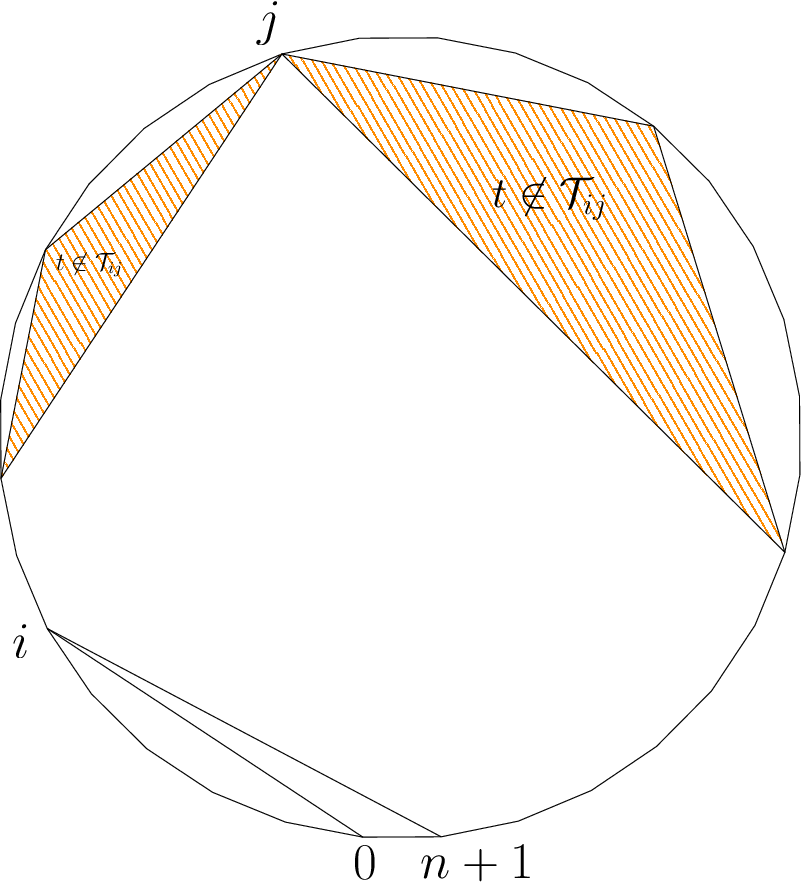}
    \caption{Elements and non-elements of $\mathcal{T}_{ij}$.}
    \label{fig:Tij}
\end{figure}

\begin{figure}
    \centering
    \includegraphics[width=0.5\linewidth]{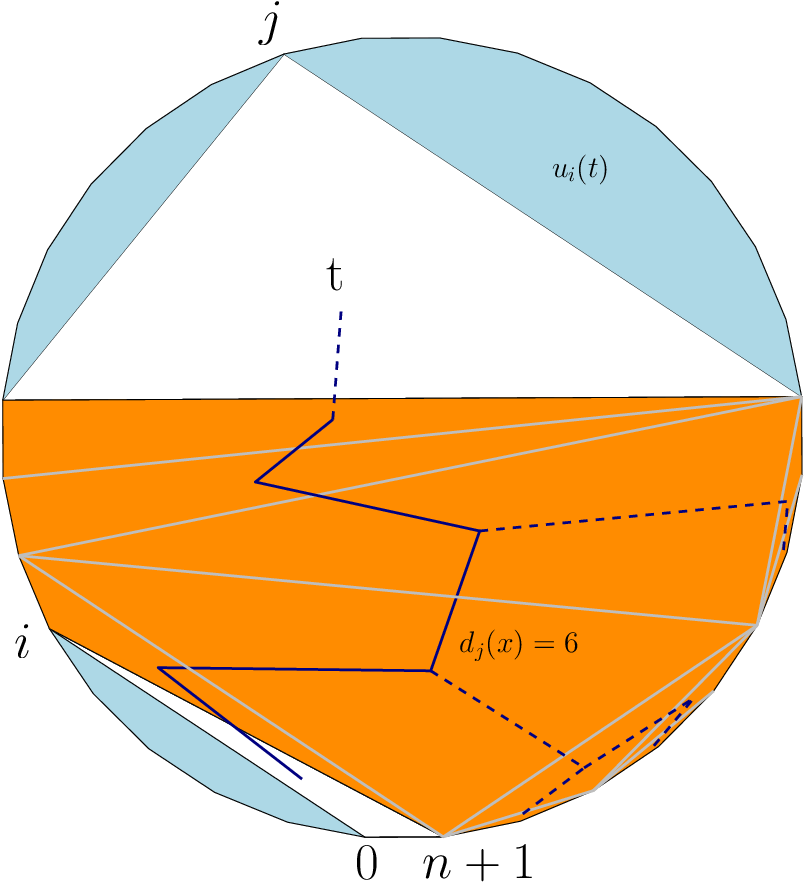}
    \caption{$t$ is a triangle in $\mathcal{T}_{ij}$. Given a triangulation $x\in \Om_i$ containing $t$, we recover the pinning $\eta:x\in\Om_{\eta j}$ by considering the dual graph of the triangulation. The triangles of $\eta$ correspond to the vertices of the (unique) path from $(0,n+1,i)$ to $t$ (excluded). $l_i(t)$ is the size of the orange polygon.}
    \label{fig:notation_sec_6}
\end{figure}

Given a pinning $\eta$ and a vertex $j \notin \eta$, let $j \sim \eta$ if $j \in \Omorproj_{\eta, l}$ or $j \in \Omorproj_{\eta, r}$. Given $t \in \mathcal{T}_{ij}$, let $\eta \sim t$ if $j \sim \eta$ and $\eta$ contains the side of $t$ opposite to $j$.

Then:
\begin{lemma}
    Let $i, j \in \Omorproj$. Let $t \in \mathcal{T}_{ij}$ and $\eta = (i, k_2, k_3, \dots, k_d)$ be a pinning in $\Omorproj_i$ such that $j \sim \eta$ and $\eta \sim t$. Then, there exist constants $C, c > 0$ such that if $n$ is sufficiently large then
    
    \[
        \sum_{x\in \Omega_{\eta j}, y \in \Omega_{\eta}}|\phi_{ij,xy}|\pi(x)P(x, y) \leq \frac{C\log^c n \sqrt{u_i(t)}}{\Delta}\cdot \piorproj(\eta) \piorproj(j)\,.
    \]
    
    \label{lem:ldirstronger}
\end{lemma}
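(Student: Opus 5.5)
We read the left-hand side as the total flow of $\phi_{ij}$ carried on transitions $(x,y)$ with $x\in\Omega_{\eta j}$ and $y\in\Omega_\eta$, i.e.\ on flips inside $\Omega_{\eta}$ that start from triangulations containing both the pinning $\eta$ and the triangle $t$ with apex $j$. Since $P(x,y)=1/\Delta$ on every edge, the bound reduces to
\[
\sum_{x\in\Omega_{\eta j}}\pi(x)\sum_{y\sim x,\,y\in\Omega_\eta}|\phi_{ij,xy}| \le C\log^c n\sqrt{u_i(t)}\;\piorproj(\eta)\piorproj(j).
\]
We therefore fix $x\in\Omega_{\eta j}$ and bound the total $|\phi_{ij,xy}|$ over the diagonals of $x$, organised by depth in the dual tree $x^*$.

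\textbf{Step 1: normalise the flow values.} By \cref{lem:rhoj1}, $\phi_{ij,\eta j}=\piorproj(j)/\piorproj_\eta(j)$. Every edge $(x,y)$ used inside $\Omega_{\eta j}$ is of the form $x\in\Omega_{\eta' s t}$, $y\in\Omega_{\eta' t s}$ for some pinning $\eta'$ extending $\eta j$, and then $|\phi_{ij,xy}|=|\phi_{ij,\eta' st}|\le$ (by \eqref{eqn:rhoetast} and the monotonicity of \cref{lem:rholeq1}) the larger of $\phi^*_{ij,l}(\eta')$ and $\phi^*_{ij,r}(\eta')$. The diagonals of $x$ lying outside the region cut off by $t$ are fixed by $\eta$ or lie in other sub-polygons; they carry flow only through the earlier levels, and the same accounting absorbs them.

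\textbf{Step 2: per-level bound via \cref{lem:rhodecomp}.} Starting from $\eta j$, we iterate \cref{lem:rhodecomp} along the dual tree of $x$ restricted to the two sub-polygons of $t$ (these have size at most $u_i(t)$). This gives, for each depth $d$ below $t$,
\[
\sum_{\eta'\ni x,\ d(\eta')=d}|\phi^{(x)}_{ij,\eta'}|\le \phi_{ij,\eta j}=\frac{\piorproj(j)}{\piorproj_\eta(j)}.
\]
Each diagonal of $x$ corresponds to a unique tree edge, hence to a unique level. Summing over levels, the total congestion at $x$ is at most $\frac{\piorproj(j)}{\piorproj_\eta(j)}\cdot h(x)$, where $h(x)$ is the height of the subtree of $x^*$ rooted at $t$. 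It is also at most the number of such diagonals, trivially.

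\textbf{Step 3: average over $x$.} Conditioned on $\Omega_{\eta j}$, the triangulations of the two sub-polygons of $t$ are independent and uniform Catalan, of sizes at most $u_i(t)$. By \cref{lem:heightwhp}, each of these trees has height at most $C\sqrt{u_i(t)}\log n$ except with probability $O(u^{3/2}e^{-C^2\log^2 n/4})=n^{-\omega(1)}$. On the exceptional event we use the trivial bound of $n$ levels, each contributing at most $\phi_{ij,\eta j}$, which is negligible. Hence $\Exp_{x\sim\pi_{\eta j}}h(x)=O(\sqrt{u_i(t)}\log n)$. Multiplying by $\pi(\Omega_{\eta j})=\piorproj(\eta)\piorproj_\eta(j)$, the factor $\piorproj_\eta(j)$ cancels, leaving $C\log n\sqrt{u_i(t)}\,\piorproj(\eta)\piorproj(j)/\Delta$, as claimed.

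The main obstacle is Step 2. We must check that \cref{lem:rhodecomp} really applies at every level inside the region below $t$, including the first split at $t$ into left and right pieces. We must also check that $|\phi_{ij,\eta' st}|$, which carries the extra factor $\piorproj_{\eta'}(t)/\piorproj_{\eta's}(t)\le 1$, is dominated by the $\phi^*$ values at the parent pinning. This needs the sign-consistency in \cref{lem:rholeq1}, so that differences $\phi_{ij,\eta' t}-\phi_{ij,\eta' s}$ are bounded by the maxima rather than by twice them. If that fails, we lose a factor of $2$ per level only additively, since we sum rather than multiply over levels, so the bound should survive.
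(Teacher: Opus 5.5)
Your proposal is correct and follows the paper's proof. You take the per-level bound from \cref{lem:rhoj1} and iterated \cref{lem:rhodecomp}, sum it over the levels of the subtree of $x^*$ below $t$, and average over $x\in\Omega_{\eta j}$ using the height tail bound of \cref{lem:heightwhp}. The only cosmetic difference is that you bound the expected height directly, while the paper truncates at depth $\widetilde O(\sqrt{u_i(t)})$ and counts the deeper nodes; this is the same estimate.

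On the obstacle you flag, the outcome is as you hope, but your reasoning (``a factor of 2 per level only additively'') is off. A loss at every level would compound, because each level's bound is derived from the one above it. What actually happens is a single loss:
\begin{itemize}
\item \cref{lem:rhodecomp} requires $j\in\eta$, so it does not cover the split at $t$ itself.
\item There, with $\hat\phi=\phi_{ij,\eta j}$, each side's value is $|\phi_{ij,\eta j s}|=\hat\phi\,\piorproj_\eta(s)/\piorproj_{\eta j}(s)\le\hat\phi$, by \cref{lem:pigood} and $\phi_{ij,\eta s}=0$ for $s\neq j$. Hence $\phi^*_{ij,l}(\eta j)+\phi^*_{ij,r}(\eta j)\le 2\hat\phi$, and this can genuinely be close to $2\hat\phi$.
\item From $\eta j$ onward \cref{lem:rhodecomp} applies, so every level sum is at most $2\hat\phi$.
\end{itemize}
This single factor of $2$ is absorbed into $C$. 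The paper's proof passes over the same point.
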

\begin{proof}
    Let $x^*$ denote the dual tree of a given triangulation $x$. Let $x_t^*$ denote the subtree of $x^*$ rooted at the node corresponding to the triangle $t$. Consider a breadth-first partitioning of the non-root nodes in $x_t^*$ into levels $L_1, L_2, \dots, L_d$ where we set $d \leq C'(\log^{c'}u_i(t)) \cdot \sqrt {u_i(t)}$ is either the maximum level in $x_t^*$, or $C'(\log^{c'} u_i(t)) \cdot \sqrt {u_i(t)}$, whichever is smaller. (We defer the choice of $C', c'$ for now.) Identify each node with the neighbor $y$ of $x$ obtained by performing a rotation at the edge connecting the node to its parent. By assumption $j \sim \eta$ (and therefore $j \notin \eta$) let $\hat\phi = \phi_{ij,\eta j}$. By  \cref{lem:rhoj1} and by applying induction to \cref{lem:rhodecomp} we obtain  $\hat\phi = \phi_{ij,\eta j} = \frac{\piorproj(j)}{\piorproj_{\eta}(j)}$ and $\sum_{y \in L_t}|\phi_{ij,xy}| \leq \frac{\piorproj(j)}{\piorproj_{\eta}(j)}$. Therefore letting $L(y)$ denote the level of $y$ we have
    \begin{align}
        \Exp_x \Exp_{y \sim x} |\phi_{ij,xy}| &= \Exp_x \left[\frac{1}{\Delta} \sum_{y \sim x} |\phi_{ij,xy}|\right] \nonumber \\
        &= \Exp_x\left[\frac{1}{\Delta}\sum_{t=1}^d \sum_{y \in L_t}|\phi_{ij,xy}| + \frac{1}{\Delta}\sum_{y: L(y) > d}|\phi_{ij,xy}|\right] \nonumber \\
        &\leq \Exp_x\left[\frac{1}{\Delta}\sum_{t=1}^d\hat\phi\right] + \Exp_x \left[\frac{1}{\Delta}(\textnormal{\# nodes at depth }> d) \cdot\hat\phi\right] \nonumber \\
        &= \frac{d \hat\phi}{\Delta} + \Exp_x \left[\frac{1}{\Delta}(\textnormal{\# nodes at depth }> d) \right]\cdot\hat\phi \nonumber \\
        &\leq \frac{d \hat\phi}{\Delta} + \frac{u_i(t)}{\Delta}\cdot \frac{C'\log^{c'} (u_i(t))}{\sqrt {u_i(t)}}\cdot \hat\phi \label{eqn:depthbound} \\
        &\leq \frac{C\log^c (u_i(t)) \cdot \sqrt {u_i(t)}}{\Delta}\cdot \frac{\piorproj(j)}{\piorproj_{\eta}(j)}\,,\label{eqn:expfinal}
    \end{align}

    where we justify~\eqref{eqn:depthbound} as follows: if $x^*$ is a random tree and $\nu$ is a random node in $x^*$ then by \cref{lem:heightwhp}: 
    \begin{align*}
        \Pr[(\textnormal{depth of }\nu) >d] &\leq \Pr[(\textnormal{depth of }x^*) > d] \\
        &\leq C' n^{3/2}\exp(-d^2/(4u_i(t))) \\
        &\leq C''n^{3/2}\exp(-(C')^2\left(\log^{2c'} u_i(t)\right) /4) \\
        &\leq \frac{1}{\sqrt {u_i(t)}} \,,
    \end{align*}
    whenever $C' \geq \sqrt 2$ and $c' \geq 1/2$. 
    
    Finally, the first claim follows from multiplying \eqref{eqn:expfinal} by $\piorproj(\eta j) = \piorproj(\eta)\piorproj_{\eta}(j)$.
\end{proof}

\begin{lemma}
    \label{lem:numeta}
    There exist constants $C, c > 0$ such that, for sufficiently large $n$ and for given $i,j \in \Omorproj$ and $t \in \mathcal{T}_{ij}$, we have 
    \[
        \sum_{ \eta \sim t} \frac{\piorproj(\eta)}{\pi(\Omega_{it})} \leq C\sqrt {l_i(t)}\log^c n\,.
    \]
\end{lemma}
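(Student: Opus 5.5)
The plan is to read the sum over $\eta \sim t$ as an expected number, and then reduce it to a depth statement about a random Catalan tree. The first step is to identify exactly which pinnings satisfy $\eta \sim t$. Such an $\eta$ starts at $i$, contains the base edge of $t$ (the side opposite $j$), and has $j \sim \eta$. So $\eta$ is a chain of nested triangles starting from $(0,n+1,i)$ and ending at a triangle whose side is the base of $t$. All of these triangles lie in the sub-polygon bounded by the base of $t$ and the special edge, which has size $l_i(t)$.

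For a fixed triangulation $x \in \Omega_{it}$, there is exactly one such $\eta$ with $x \in \Omega_\eta$: it is the dual-tree path from the root triangle to the parent of $t$, as in \cref{fig:notation_sec_6}. However, $\sum_{\eta\sim t}\piorproj(\eta)$ counts every triangulation containing $\eta$, not only those containing $t$. So I will rewrite each term as
\[
\piorproj(\eta) = \frac{\pi(\Omega_{\eta}\cap\Omega_{it})}{\Pr_{\pi_\eta}[t\in x]}.
\]
For $\eta\sim t$, the conditional probability $\Pr_{\pi_\eta}[t \in x]$ is a ratio of Catalan numbers in the remaining polygon adjacent to the base. This gives
\[
\sum_{\eta\sim t}\frac{\piorproj(\eta)}{\pi(\Omega_{it})}=\Exp_{x\sim\pi_{\Omega_{it}}}\Bigl[\frac{1}{\Pr_{\pi_{\eta(x)}}[t\in x]}\Bigr],
\]
where $\eta(x)$ is the unique pinning determined by $x$.

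The second step is to bound this expectation. Conditioned on $x \in \Omega_{it}$, the triangulation restricted to the $l_i(t)$-polygon is uniform among triangulations of that polygon containing the base edge as a boundary side. In that polygon, $\eta(x)$ corresponds to the path from the root to the leaf slot of the dual tree at the base edge. Its length $d(\eta(x))$ is therefore the depth of a specified external position in a uniform Catalan tree on about $l_i(t)$ nodes.

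The weight $1/\Pr_{\pi_\eta}[t\in x]$ is the factor that needs control. I would try to show it equals, up to constants, the ratio $\frac{C_{m}}{C_{a}C_{b}}$ of the relevant Catalan numbers (with $m$ the size of the untriangulated region adjacent to the base once $\eta$ is pinned). Combining this with $\pi(\Omega_{it})$ via the recurrence $C_n=\sum C_{j-1}C_{n-j}$, the sum should telescope into a count of "how many pinnings lie above the base". That count is a depth-type quantity. The cleanest route is to redo the computation directly in counting terms. Each $\eta\sim t$ is a path in the dual structure ending at the base, so
\[
\sum_{\eta\sim t}\#\Omega_\eta \;=\; \sum_{x\in\Omega_i}\#\{\eta\sim t:\ x\in\Omega_\eta\}.
\]
For each $x$, the pinnings $\eta \sim t$ contained in $x$ are prefixes of the root-to-base path in $x$ that end adjacent to the base. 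Comparing with $\#\Omega_{it}$, the left side becomes $\#\Omega_{it}$ times the expected path length to the base in a random triangulation of the $l_i(t)$-polygon, up to $O(1)$ factors from the $j$-side.

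The third step is to finish with the depth bounds. By \cref{lem:heightwhp}, with probability $1 - O(l_i(t)^{-1})$ a random Catalan tree on $l_i(t)$ nodes has height at most $C\sqrt{l_i(t)}\log n$. On the complementary event the path length is trivially at most $l_i(t)$, which contributes $O(1)$. Alternatively, \cref{lem:meirmoon} with \cref{lem:expdepth} bounds the expected depth of a given external position by $\widetilde O(\sqrt{l_i(t)})$ directly.

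The main obstacle is the exact bookkeeping in the second step. I need to show that the Catalan ratio weights $\piorproj(\eta)/\pi(\Omega_{it})$ do not blow up beyond the depth count, i.e.\ that conditioning on $t$ does not heavily reweight $\eta$. I would first try to establish a pointwise bound $\piorproj(\eta)\le O(1)\cdot\pi(\Omega_{\eta}\cap\Omega_{it})\cdot(\text{something summable})$ using \cref{lem:catalanmono} and \cref{lem:pigood}. Failing that, I would compute the sum exactly as a convolution of Catalan numbers and apply $C_n=\Theta(4^n n^{-3/2})$.
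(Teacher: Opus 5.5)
There is a genuine gap at exactly the step you flagged as the main obstacle, and it cannot be closed. Your first step is correct, and it already determines the value of the sum. Let $t$ have apex $j$ and base $(w,z)$ with $i\le w<j<z\le n+1$. For every $\eta\sim t$ the innermost triangle of $\eta$ has $(w,z)$ as a side. So under $\pi_\eta$ the polygon on vertices $w,w+1,\dots,z$ is triangulated uniformly, independently of everything else $\eta$ pins. Hence
\[
\Pr_{\pi_\eta}[t\in x]=\piorproj_\eta(j)=p_t:=\frac{C_{j-w-1}C_{z-j-1}}{C_{z-w-1}}\qquad\text{for every }\eta\sim t.
\]
The region adjacent to the base is the same for every $\eta$, so there is nothing to telescope. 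The sets $\Omega_{\eta j}$ ($\eta\sim t$) partition $\Omega_{it}$, so your identity gives exactly
\[
\sum_{\eta\sim t}\frac{\piorproj(\eta)}{\pi(\Omega_{it})}=\frac{1}{p_t}=\frac{C_{z-w-1}}{C_{j-w-1}\,C_{z-j-1}}=\Theta\bigl(\min\{j-w,\,z-j\}^{3/2}\bigr).
\]
This is independent of $l_i(t)$. Take $i=1$, $w=2$, $z=n+1$, $j=\lfloor n/2\rfloor$: then $l_i(t)=3$, but the left-hand side is of order $n^{3/2}$. So the inequality as stated is false, and no pointwise estimate from \cref{lem:catalanmono} or \cref{lem:pigood} can rescue it.

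Your fallback counting route is the paper's own proof, and it fails at the same place. The paper writes
\[
\sum_{\eta\sim t}\piorproj(\eta)=\sum_{x\in\Omega_{it}}\pi(x)\,\#\{\eta: j\sim\eta,\ x\in\Omega_\eta\}=\sum_{x\in\Omega_{it}}\pi(x)\,d_j(x),
\]
and then finishes with \cref{lem:expdepth}. This identity is wrong in two ways, and your version makes the same two errors.

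\begin{enumerate}[(i)]
\item Among the prefixes of the root-to-$t$ path, only the last has the base edge as a side. So each $x$ containing the base lies in exactly one $\Omega_\eta$ with $\eta\sim t$, not in $d_j(x)$ of them.
\item $\Omega_\eta$ also contains triangulations that contain the base but not $t$. Restricting the outer sum to $\Omega_{it}$ discards them, and that discarded mass is precisely the factor $1/p_t$.
\end{enumerate}

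A concrete check: take $n=6$, $i=1$, $j=4$, $t=(2,4,7)$. The only $\eta\sim t$ is $(1,2)$. The left side is $C_4/(C_1C_2)=7$, whereas $\Exp_{x\sim\pi_{\Omega_{it}}}d_j(x)=2$.

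What the depth argument in your third step (and in the paper) actually proves is a different statement. It sums over all prefixes, with $\pi(\Omega_\eta\cap\Omega_{it})$ in place of $\piorproj(\eta)$:
\[
\sum_{\eta:\,j\sim\eta}\frac{\pi(\Omega_\eta\cap\Omega_{it})}{\pi(\Omega_{it})}=\Exp_{x\sim\pi_{\Omega_{it}}}\bigl[d_j(x)\bigr]\le C\sqrt{l_i(t)}\log^c n.
\]
That is not the quantity produced by \cref{lem:ldirstronger} in \eqref{eqn:usenumeta}, so it does not substitute for the lemma as stated.
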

\begin{proof}

    We compute
    \begin{align*}
          \sum_{\eta \sim t} \frac{\piorproj(\eta)}{\pi(\Omega_{it})} &= \sum_{x\in \Omega_{it}} \sum_{\eta \in \Omorproj_i: j\sim \eta, x \in \Omega_{\eta}}\frac{\pi(x)}{\pi(\Omega_{it})} \nonumber \\
          &= \sum_{x\in \Omega_{it}} \frac{\pi(x)}{\pi(\Omega_{it})}\left(|\{\eta \in \Omorproj_i: j \sim \eta, x \in \Omega_{\eta}\}|\right) \\
          &= \sum_{x \in \Omega_{it}}\frac{\pi(x)}{\pi(\Omega_{it})} d_j(x) \\
          &\leq C \sqrt{l_i(t)}\log^c n\,,
    \end{align*}
    for constants $C,c$.
    The last inequality is by \cref{lem:expdepth} and by the observation that $d_j(x)$ is bounded by the depth of the binary tree corresponding to the triangulation $x$.
\end{proof}

In other words, summing the combined measure of all pinnings $\eta$ such that $\eta j$ is a valid pinning, overcounts many states $x$, as each state belongs to multiple nested pinnings. \cref{lem:numeta} states that the factor of overcounting is $\widetilde O(\sqrt n)$, because the expected depth of the nesting is $\widetilde O(\sqrt n)$.

The following was observed in~\cite{eppstein2022improved, mrs} and follows from straightforward computations using Stirling's approximation and the Catalan formulas:

\begin{lemma}[\cite{eppstein2022improved, mrs}]
    There exist constants $C_1, C_2 > 0$ such that sufficiently large $n$, the probability that a random triangulation of the convex $n+2$-gon contains a given triangle with side lengths $r, s, n+2-r-s$, where $r \leq \frac{n+2}{2}$ and $s \leq \frac{n+2}{2}$, is bounded by
    \[
        \frac{C_1}{(rs)^{3/2}} \leq \sum_{x\in \Omega: x \textnormal{ contains } t}\pi(x) \leq \frac{C_2}{(rs)^{3/2}}\,,
    \]
    recalling that $\pi$ is the uniform measure.
    \label{lem:sidelengths}
\end{lemma}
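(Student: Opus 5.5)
Since $\pi$ is uniform, the probability in question equals the number of triangulations containing the fixed triangle $t$, divided by $C_n$. The plan is to compute this count exactly, then pass to asymptotics using the estimate $C_m=\Theta(4^m m^{-3/2})$ from the preliminaries.

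For the exact count, note that the three sides of $t$ split the $(n+2)$-gon into three sub-polygons, one cut off by each side. A side of length $r$ together with the boundary arc it subtends bounds an $(r+1)$-gon, which has $C_{r-1}$ triangulations (this is $C_0=1$ when $r=1$ and the side is a hull edge). A triangulation contains $t$ exactly when it is $t$ together with an independent triangulation of each of the three pieces. The number of such triangulations is therefore
\[
N(t)=C_{r-1}\,C_{s-1}\,C_{n+1-r-s},
\]
because the third side has length $q:=n+2-r-s$, and $C_{q-1}=C_{n+1-r-s}$. This is the same reasoning as in \cref{lem:cartprod}. As a check, the exponents of $4$ satisfy $(r-1)+(s-1)+(q-1)=n-1$.

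Now apply $C_m=\Theta(4^m(m+1)^{-3/2})$ for all $m\ge0$. Writing $(m+1)$ makes the estimate uniform, including $m=0$. This gives
\[
\frac{N(t)}{C_n}=\Theta\!\left(\frac{4^{n-1}\,r^{-3/2}s^{-3/2}q^{-3/2}}{4^n (n+1)^{-3/2}}\right)=\Theta\!\left(\frac{1}{(rs)^{3/2}}\left(\frac{n+1}{q}\right)^{3/2}\right).
\]
The $\Theta$ constants depend only on the constants in the Catalan asymptotic, and the factor $4^{-1}$ is absorbed into them.

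The one step needing care is the factor $\big((n+1)/q\big)^{3/2}$, and this is where the hypothesis $r,s\le (n+2)/2$ has to be used. The hypothesis does not by itself stop $q$ from being small. For example, $r=s=(n+2)/2$ gives $q=0$, which is not a triangle. So I will read the hypothesis as saying that $r$ and $s$ are the two shorter sides, which is the setting of \cite{mrs}. Then $q\ge \max(r,s)$, and since $r+s+q=n+2$, this gives $q\ge (n+2)/3$. Consequently $(n+1)/q\in[1,3]$ up to constants, and both bounds follow with absolute constants $C_1,C_2$.

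If the statement is instead meant with only $r,s\le (n+2)/2$, the lower bound still holds as written, because $(n+1)/q\ge 1$. The upper bound, however, would pick up the factor $((n+1)/q)^{3/2}$ and would not hold in general. I will therefore state explicitly that $q$ denotes the longest side, equivalently that $r,s$ are the two smallest side lengths. "Sufficiently large $n$" is needed only so that the Catalan asymptotic applies with fixed constants; small indices are covered because the estimate is uniform.
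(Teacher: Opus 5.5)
Your argument is correct and is exactly the computation the paper has in mind: the paper gives no proof beyond citing the exact count $C_{r-1}C_{s-1}C_{n+1-r-s}$ of triangulations containing $t$ and Stirling's estimate $C_m=\Theta(4^m m^{-3/2})$. Your caveat is also right: the ratio is $\Theta\bigl((rs)^{-3/2}(n/q)^{3/2}\bigr)$ with $q=n+2-r-s$, so the upper bound needs $q=\Omega(n)$ (for instance, $r,s$ the two shorter sides). As literally stated it fails when, say, $r\approx s\approx (n+2)/2$ and $q\in\{1,2\}$, a configuration that does occur in case (i) of the proof of \cref{lem:boundlu}, while the lower bound holds as written.
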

To simplify notation, we will write that $\sum_{x\in \Omega: x \textnormal{ contains } t}\pi(x) = \Theta\left(\frac{1}{(rs)^{3/2}}\right)$ instead of stating the full inequality in \cref{lem:sidelengths}. We now prove the following:

\begin{lemma}\label{lem:boundlu}
    There exist constants $C,c > 0$ such that, if $n$ is sufficiently large, then given $i,j\in \Omorproj$, 
    \begin{equation*}
        \sum_{t \in \mathcal{T}_{ij}} \pi(\Omega_{it})\sqrt{l_i(t)}\sqrt{u_i(t)} \leq C \sqrt{n}\log^c n \piorproj(i)\,.
    \end{equation*}
\end{lemma}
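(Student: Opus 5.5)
We reduce the statement to a Catalan-number computation: parametrize the triangles $t \in \mathcal{T}_{ij}$ by their side lengths, evaluate $\pi(\Omega_{it})$ as a product of Catalan ratios, and then sum using \cref{lem:sidelengths}-type asymptotics. Both sides scale with $\piorproj(i)$, so we first divide through. This reduces the claim to
\[\sum_{t\in\mathcal{T}_{ij}} \Pr_{x\sim\pi_i}[x \ni t]\,\sqrt{l_i(t)u_i(t)} \le C\sqrt n\log^c n.\]
Given $x \in \Omega_i$, the triangle $(0,i,n+1)$ is fixed and the remaining structure is a triangulation of the polygon $Q$ on the right side of the special edge (WLOG $j>i$). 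This $Q$ has $m+2$ vertices with $m\le n$.

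A triangle $t\in\mathcal{T}_{ij}$ is determined by its two base vertices $a<j<b$ in $Q$. Under $\pi_i$, the event $t\in x$ has probability
\[\frac{C_{\alpha}C_{\beta}C_{\gamma}}{C_m},\]
where $\alpha = l_i(t)$ is the size of the part cut off by the base (containing the special edge), and $\beta,\gamma$ are the sizes of the two parts on the sides $(a,j)$ and $(j,b)$. These satisfy $\alpha+\beta+\gamma = m-1$ up to additive constants, and $u_i(t) = \max(\beta,\gamma)$. Using $C_k = \Theta(4^k k^{-3/2})$ (with the convention $\max(k,1)$), this probability is
\[\Theta\!\left(\left(\frac{m}{\alpha'\beta'\gamma'}\right)^{3/2}\right),\]
where primes denote $\max(\cdot,1)$. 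This is exactly the content of \cref{lem:sidelengths}.

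The position of $j$ fixes $\beta+\gamma$ plus the length of the path from $j$ around to the special edge. Concretely, the free parameters are the base endpoints, i.e.\ two integers $\beta$ and $\gamma$ with $\alpha = m-1-\beta-\gamma$ and $\beta + \gamma \ge j - i - 1$ constrained. So the sum is at most
\[\sum_{\beta,\gamma\ge 0} \left(\frac{m}{\alpha'\beta'\gamma'}\right)^{3/2} \sqrt{\alpha'\max(\beta',\gamma')}.\]
We split into regions according to which of $\alpha,\beta,\gamma$ is at least $m/3$; at least one must be.

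\begin{itemize}
\item If $\alpha\ge m/3$, the factor $(m/\alpha)^{3/2}$ is $O(1)$. The sum over $\beta,\gamma$ of $(\beta'\gamma')^{-3/2}\max(\beta',\gamma')^{1/2}$ is $O(\log m)$, since, taking $\beta\ge\gamma$, $\sum_\beta \beta^{-1}\sum_\gamma\gamma^{-3/2} = O(\log m)$. This leaves the factor $\sqrt{\alpha}\le\sqrt n$, giving $O(\sqrt n\log n)$.
\item If $\beta\ge m/3$ (the case $\gamma$ is symmetric), the summand becomes $O\big((\alpha'\gamma')^{-3/2}\sqrt{\alpha' m}\big)$. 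Summing $\alpha'^{-1}$ over $\alpha$ gives $O(\log m)$, and $\sum_\gamma \gamma'^{-3/2}=O(1)$, so the total is $O(\sqrt m\log m) = O(\sqrt n \log n)$.
\end{itemize}

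The main obstacle is making sure the sum of at least $\Theta(m)$ triangles does not lose a factor from degenerate regimes where some part is empty, and verifying that the indexing of $l_i,u_i$ matches the Catalan exponents. In particular, when $\alpha$ is tiny, the $\sqrt{\alpha}$ factor must be paired with $\alpha^{-3/2}$, which is summable; I would double-check this boundary case. I would also check that constants are uniform in $m$ by using the two-sided bounds $C_k \asymp 4^k(k+1)^{-3/2}$ valid for all $k\ge0$, so that no ``sufficiently large'' restriction on subpolygon sizes is needed. Multiplying back by $\piorproj(i)$ gives the lemma with $c=1$.
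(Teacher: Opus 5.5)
Your proof is correct and follows essentially the paper's route. You express $\pi(\Omega_{it})/\piorproj(i)$ through Catalan asymptotics and split according to which sub-polygon cut out by $t$ is large. You then bound one of $\sqrt{l_i(t)},\sqrt{u_i(t)}$ by $\sqrt n$, let the other be absorbed by the $3/2$-power of the same small parameter, and finish with a harmonic-times-convergent sum giving $O(\sqrt n\log n)$. (Your constraint ``$\beta+\gamma\ge j-i-1$'' should read $\beta\le j-i-1$, $\gamma\le n-j$; this is harmless since you sum over all admissible $(\beta,\gamma)$ anyway.)

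Your three-part estimate $\Theta\bigl((m/(\alpha'\beta'\gamma'))^{3/2}\bigr)$, split by which part is at least $(m-1)/3$, is in fact more careful than the paper's case (i). That case applies the two-parameter bound of \cref{lem:sidelengths} even when the base sub-polygon is tiny, where the bound fails: for $a=i$, $b=n+1$ and $j$ central the probability is $\Theta(m^{-3/2})$, not $\Theta(m^{-3})$. Your computation shows this regime still contributes only $O(\sqrt n\log n)$, so calling your formula ``exactly'' that lemma undersells it.
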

\begin{proof}
    Each $t \in \mathcal{T}_{ij}$ has three sides: one side has endpoint $j$ and another endpoint $w$ in the interval $[i, j-1]$. Another side has endpoint $j$ and another endpoint $z$ in the interval $[j+1, n+1]$. The third side has endpoints $w$ and $z$, satisfying $i \leq w < j < z \leq n + 1$.

    Assume that $j - i \leq \frac{n+2-i}{2}$. (If $j - i > \frac{n+2-i}{2}$ the proof will be symmetric.) Let $r = j - w$.

    Then:
    \begin{enumerate}[(i)]
        \item if $z - j \leq \frac{n+2-i}{2}$ let $s = z - j$;
        \item if $z - j > \frac{n+2-i}{2}$ let $s = n+2-i - z + w$ and note $s \leq \frac{n+2-i}{2}$.
    \end{enumerate}
    (In case (ii) we are setting $s$ to be the length of the ``bottom'' side of~$t$.) 
    In each case, applying \cref{lem:sidelengths} to the subpolygon bounded by the edge $(i, n+1)$ and containing~$j$, we have 
    
    \[
        \frac{\pi(\Omega_{it})}{\piorproj(i)} = \Theta\left(\frac{1}{(rs)^{3/2}}\right)\,.
    \]

    Furthermore, in the first case we have $u_i(t) = \max\{r,s\}$ and $l_i(t) \leq n + 2 - i$; in the second case we have $u_i(t) \leq n + 2 - i$ and $l_i(t) = s$.

    Therefore
    \begin{align}
        \sum_{t \in \mathcal{T}_{ij}}\frac{\pi(\Omega_{it})}{\piorproj(i)} \cdot \sqrt{l_i(t)}\cdot \sqrt{u_i(t)} 
        & \leq  \sum_{r=1}^{j-i}\Theta\left(\frac{1}{r^{3/2}}\right)\sum_{s=1}^{r}\Theta\left(\frac{1}{s^{3/2}}\right)\sqrt{n+2-i}\sqrt{r} \nonumber \\
        &\qquad+ \sum_{r=1}^{j-i}\Theta\left(\frac{1}{r^{3/2}}\right)\sum_{s=r+1}^{(n+2-i)/2}\Theta\left(\frac{1}{s^{3/2}}\right)\sqrt{n+2-i}\sqrt{s} \nonumber  \\
        &\qquad+  \sum_{r=1}^{j-i} \Theta\left(\frac{1}{r^{3/2}}\right)\sum_{s=j-i-r+1}^{(n+2-i)/2}\Theta\left(\frac{1}{s^{3/2}}\right)\sqrt{s}\sqrt{n+2-i} \nonumber \\
        &= O\left(\sqrt{n}\log n\right)\,. \nonumber
    \end{align}
To see the last asymptotic equality, we note that the first term is bounded by
$$O\left(\sqrt{n}\sum_{r=1}^n\frac{1}{r}\sum_{s=1}^\infty\frac{1}{s^{3/2}}\right)=O(\sqrt{n}\log n),$$
where we used the fact that the harmonic series is bounded by $O(\log n)$ and the series $\sum_s 1/s^{3/2}$ converges. We bound the other terms similarly.

\end{proof}

\begin{lemma}
   In the notation used in \cref{lem:l1dirweak}, given $x,y\in\Omega_i$, if $\phi_{ij,xy} \neq 0$ then there exists a pinning $\eta = (k_1 = i, k_2, \dots, k_d)$ such that $j \sim \eta$ and such that $x \in \Omega_{\eta j}$ or $y \in \Omega_{\eta j}$.
 \label{lem:sumoverj}
\end{lemma}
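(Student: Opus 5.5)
The plan is to trace how nonzero coefficients $\phi_{ij,xy}$ arise in the recursive construction defining $\phi_{ij}$ in the proof of \cref{lem:l1dirweak}. For $x,y\in\Omega_i$, the only edges receiving nonzero flow are those of the form $x\in\Omega_{\eta st}$, $y\in\Omega_{\eta ts}$, and these carry value $\phi_{ij,\eta st}$. Here $\eta=(k_1=i,\dots,k_d)$ is a pinning and $s,t$ both lie in $\Omorproj_{\eta,l}$ or both in $\Omorproj_{\eta,r}$. So the task reduces to showing that whenever $\phi_{ij,\eta st}\neq 0$, the pinnings through which $x$ (or $y$) passes include one of the form $\eta' j$ with $j\sim\eta'$.

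The key step is an induction on $d$, proving a support statement. Suppose $\phi_{ij,\eta s}\neq 0$ for a pinning $\eta s$ extending $i$. Then along the chain of prefixes of $\eta s$, vertex $j$ was "pinned next" at some stage. That is, there is a prefix $\eta'$ of $\eta s$ with $j\sim\eta'$ and with $\eta' j$ also a prefix of $\eta s$, or else $\eta s=\eta' j$ itself.

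The base of the induction comes from the definition. We have $\phi_{ij,ij'}=0$ for $j'\neq j$, and only $\phi_{ij,ij}$ is nonzero. In the recursion \eqref{eqn:rhoetast}, $\phi_{ij,\eta st}$ is a multiple of $\phi_{ij,\eta t}-\phi_{ij,\eta s}$. Hence if both $\phi_{ij,\eta s}$ and $\phi_{ij,\eta t}$ vanish, then $\phi_{ij,\eta st}=0$, and by the convention $\phi_{ij,\eta' s}=\phi_{ij,\eta k_{d+1}s}$, also every deeper value vanishes. By the inductive hypothesis, a nonzero $\phi_{ij,\eta st}$ forces $\phi_{ij,\eta s}\neq0$ or $\phi_{ij,\eta t}\neq 0$.

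Say $\phi_{ij,\eta s}\neq 0$. Then either $s=j$, and we take the pinning $\eta$ itself with $j\sim\eta$, or the inductive hypothesis already provides a prefix $\eta'$ of $\eta$ with $\eta'j$ also a prefix of $\eta$. In either case $x\in\Omega_{\eta s}\subseteq\Omega_{\eta' j}$, since $\Omega_{\eta s}\subseteq\Omega_{\eta}\subseteq\Omega_{\eta'j}$. The case $\phi_{ij,\eta t}\neq0$ gives the same conclusion for $y$, since $y\in\Omega_{\eta t}$. The intermediate terms $\flo{\eta s}{\eta st}$ are themselves re-decomposed at the next level, so they contribute edges only through deeper matchings and are covered by the same induction.

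The main obstacle is making precise that the recursion only ever produces flow on matching edges of the form $(\Omega_{\eta st},\Omega_{\eta ts})$, together with the cross edges $\Omega_i$–$\Omega_j$, which are excluded here since $x,y\in\Omega_i$. It must also be shown that nonzero values propagate only from parents to children in the pinning tree. I would first formalize the support claim as an explicit invariant: $\phi_{ij,\eta s}\neq0$ implies $j\in\eta s$. This follows directly from the recursive definition, the zero initial conditions $\phi_{ij,ij'}=0$, and the fact that the recursion \eqref{eqn:rhoetast} is linear in values one level up. With this invariant, the required pinning is the prefix of $\eta s$ (or $\eta t$) ending just before $j$.
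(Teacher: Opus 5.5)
Your proposal is correct and is essentially the paper's own argument. The paper simply observes from the recursive definition (the values $\phi_{ij,ij'}=0$ for $j'\neq j$, together with the linearity of \eqref{eqn:rhoetast} in $\phi_{ij,\eta t}-\phi_{ij,\eta s}$) that $\phi_{ij,\eta st}=0$ unless $j\in\eta$, $j=s$ or $j=t$; this is exactly your support invariant. Your extraction of the pinning ($\eta$ itself if $j\in\{s,t\}$, otherwise the prefix of $\eta$ ending just before $j$, using $j\neq i$) just spells out the paper's ``the claim follows.''

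One small slip: your aside that \emph{every} deeper value vanishes once $\phi_{ij,\eta s}=\phi_{ij,\eta t}=0$ is false as stated. For example, $\phi_{ij,isj}\neq 0$ even when $\phi_{ij,is}=\phi_{ij,it}=0$. Your induction never uses this aside, so the proof is unaffected.
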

\begin{proof}
    Recall that we defined $\phi_{ij,xy} = \phi_{ij,\eta s t}$ whenever $x\in \Omega_{\eta s t}$ and $y \in \Omega_{\eta t s}$. From the definition of $\phi_{ij,\eta s t}$ we have that $\phi_{ij, \eta s t} = 0$ (and therefore $\phi_{ij,xy} = 0$) whenever $j \notin \eta$ and $j\neq s$ and $j \neq t$. The claim follows.
\end{proof}

\begin{lemma}
    There exist constants $C,c > 0$ such that if $n$ is sufficiently large, for given $i,j\in \Omorproj$, we have:
    \[
        \sum_{x\in\Omega_i}\sum_{\eta: x \in \Omega_{\eta j}}\sum_{y \in \Omega_i: y \sim x}|\phi_{ij,xy}|\Delta\pi(x)P(x,y) \leq C\sqrt n \log^c n\cdot \piorproj(i)\piorproj(j)\,.
    \]
    \label{lem:numetastrong}
\end{lemma}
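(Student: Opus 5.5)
The plan is to organize the triple sum by the triangle of the pinning at which $j$ first appears, and then chain together \cref{lem:ldirstronger}, \cref{lem:numeta} and \cref{lem:boundlu}.

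For $x\in\Omega_i$ and a pinning $\eta=(i,k_2,\dots,k_d)$ with $x\in\Omega_{\eta j}$, the triangle of $x$ containing vertex $j$ and adjacent to the innermost triangle of $\eta$ belongs to $\mathcal{T}_{ij}$; call it $t=t(x)$. Since $x$ contains exactly one such triangle, this triangle is unique, and $\eta\sim t$. Hence the sum can be regrouped as
\[
\sum_{t\in\mathcal{T}_{ij}}\ \sum_{\eta\sim t}\ \sum_{x\in\Omega_{\eta j}}\sum_{y\in\Omega_i:\,y\sim x}|\phi_{ij,xy}|\Delta\pi(x)P(x,y).
\]
The pairs $(x,y)$ not covered this way (with $y\in\Omega_{\eta j}$ instead of $x$) are handled symmetrically, using \cref{lem:sumoverj} and $\phi_{ij,xy}=-\phi_{ij,yx}$. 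For each fixed $t$ and $\eta\sim t$, the innermost sum is bounded by \cref{lem:ldirstronger}:
\[
\sum_{x\in\Omega_{\eta j},\,y\in\Omega_\eta}|\phi_{ij,xy}|\Delta\pi(x)P(x,y)\le C\log^c n\sqrt{u_i(t)}\,\piorproj(\eta)\piorproj(j).
\]
To apply this, note that $y\sim x$ with $x\in\Omega_{\eta j}$ and $\phi_{ij,xy}\ne0$ forces $y\in\Omega_\eta$. The reason is that $\phi_{ij,xy}$ is only defined, and nonzero, for flips whose maximal common pinning extends $i$, and flips that break a triangle of $\eta$ have $\phi_{ij,xy}=0$, since the flow coefficient at those levels involves only $j'\ne j$. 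I would verify this from the recursive definition of $\phi_{ij,\eta st}$; if it fails, those edges still carry congestion at most one, and they can be charged through \cref{lem:maxcong}.

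Summing over $\eta\sim t$ and applying \cref{lem:numeta} gives, for each $t$,
\[
\sum_{\eta\sim t}\piorproj(\eta)\le C\sqrt{l_i(t)}\log^c n\,\pi(\Omega_{it}),
\]
so the contribution of $t$ is at most $C^2\log^{2c}n\,\sqrt{l_i(t)}\sqrt{u_i(t)}\,\pi(\Omega_{it})\piorproj(j)$. Summing over $t\in\mathcal{T}_{ij}$ and invoking \cref{lem:boundlu} then yields $C\sqrt n\log^{c}n\,\piorproj(i)\piorproj(j)$ after renaming constants, which is the claim.

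I expect the main obstacle to be the regrouping step. It requires checking that the map $(x,\eta)\mapsto t$ is well defined with $\eta\sim t$, and that the $\sqrt{u_i(t)}$ produced by \cref{lem:ldirstronger} matches the subtree in which the nonzero edges live. A second subtlety is that \cref{lem:ldirstronger} bounds the average over $x$ drawn from $\Omega_{\eta j}$ by using the depth of a random Catalan tree in the region of size $u_i(t)$, while here $x$ is further conditioned on $t$. Conditioning on $t$ decouples the subpolygons, so the relevant subtree is still a uniform Catalan tree on at most $u_i(t)$ nodes, and the depth estimates of \cref{lem:heightwhp} still apply.
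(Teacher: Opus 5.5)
Your argument is the paper's proof: you regroup by the unique $t\in\mathcal{T}_{ij}$ and $\eta\sim t$, restrict to $y\in\Omega_\eta$ (the paper cites \cref{lem:sumoverj} for this), and then chain \cref{lem:ldirstronger}, \cref{lem:numeta} and \cref{lem:boundlu}. One caveat applies equally to the paper: for $\eta=\eta'k_d$ with $d\ge 2$, flipping the diagonal between $t$ and the innermost triangle of $\eta$ gives $y\in\Omega_{\eta' j}\setminus\Omega_\eta$ with flow $\phi_{ij,\eta' k_d j}=\phi_{ij,\eta j}=\piorproj(j)/\piorproj_\eta(j)\neq 0$ (\cref{lem:rhoj1}), so your zero-flow claim fails for this edge, and charging it via \cref{lem:maxcong} would lose the factor $\piorproj(j)$; with the exact value, these edges contribute $\piorproj(\eta)\piorproj(j)$ per $\eta$, hence at most $C\sqrt n\log^c n\,\piorproj(i)\piorproj(j)$ in total by \cref{lem:numeta}, $l_i(t)=O(n)$ and $\sum_{t}\pi(\Omega_{it})=\piorproj(i)$.
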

\begin{proof} 
    Note that for each $x \in \Omega_i$, there exists a unique $t\in \mathcal{T}_{ij}$ such that $x \in \Omega_{it}$. Also, there exists a unique pinning~$\eta$ such that $x \in \Omega_{\eta j}$. Also if $x \in \Omega_{\eta j}$  then $\eta \sim t$. By \cref{lem:sumoverj} if $x \in \Omega_{\eta j}$ and $y \in \Omega_i$ and $\phi_{ij,xy} \neq 0$ then $y \in \Omega_{\eta}$. Therefore:
    \begin{align}
        \sum_{x \in \Omega_i} \sum_{\eta: x \in \Omega_{\eta j}} \sum_{y \in \Omega_i: y \sim x}|\phi_{ij,xy}|\Delta\pi(x)P(x, y) 
        &= \sum_{t\in\mathcal{T}_{ij}}\sum_{\eta \sim t}\sum_{x \in \Omega_{\eta j}}\sum_{y \in \Omega_{\eta}: y \sim x}|\phi_{ij,xy}|\Delta\pi(x)P(x, y)\nonumber \\
        &\leq C'\log^{c'} n\sum_{t\in\mathcal{T}_{ij}}\sum_{\eta \sim t}\piorproj(\eta)\piorproj(j)\sqrt{u_i(t)} \label{eqn:useldirstronger} \\
        &\leq C''\log^{c''} n\sum_{t\in\mathcal{T}_{ij}}\pi(\Omega_{it})\piorproj(j)\sqrt{l_i(t)}\sqrt{u_i(t)} \label{eqn:usenumeta}\\
        &\leq C\log^c n \cdot \sqrt n \piorproj(i)\piorproj(j)\,,\label{eqn:usenumetastrong} 
    \end{align}
    where \eqref{eqn:useldirstronger} is by \cref{lem:ldirstronger}, \eqref{eqn:usenumeta} is by \cref{lem:numeta}, and \eqref{eqn:usenumetastrong} is by \cref{lem:boundlu}, and where $C',C'',c',c''$ come from \cref{lem:ldirstronger} and \cref{lem:numeta}.
\end{proof}

We are now ready to prove \cref{lem:ldirstrong}:
\begin{proof}[Proof of \cref{lem:ldirstrong}]
We construct a transport flow from $\Omega[S]$ to $\Omega[T]$ where $S, T\subseteq \Omorproj$. We will then bound $\bar \rho_{\Omega[S]}$.

Define the flow as follows: consider the flow function $\phi_{ij}: \left\{(x, y) \in (\Omega_i \cup \Omega_j)^2\mid P(x, y) > 0\right\} \rightarrow \mathbb{R}$ in \cref{lem:l1dirweak}. Construct a function $\phi: \left\{(x, y) \in (\Omega_S \cup \Omega_T)^2 \mid P(x, y) > 0\right\} \rightarrow \mathbb{R}$ as follows:

Given $x, y \in \Omega[S] \cup \Omega[T]$ such that $P(x, y) > 0$, let $\phi_{x, y} = \sum_{i \in S, j \in T}\phi_{ij,xy}$.

(We may assume that $S \cap T = \emptyset$: otherwise, for all $i \in S \cap T$, one may construct a trivial flow $\phi_{ii}$ that sends zero flow across every edge between adjacent pairs of states in $\Omega_i$.)

The functions~$\phi_{S}$ and~$\phi_{T}$ together induce an $\Omega[S]$-$\Omega[T]$ flow (satisfying the axioms in in \cref{sec:mcflow}): this flow simply results from the (edge-wise) sum of all the flows $\Gamma^{i\rightarrow j}$, $i \in S, j \in T$. By \cref{lem:flowtotransportflow} this implies the existence of a transport flow~$\Gamma^{S\rightarrow T}$ from $\Omega[S]$ to $\Omega[T]$.

By \cref{lem:maxcong} the maximum congestion incurred by this flow is at most~$\Delta$.

Let $C, c$ be the constants in \cref{lem:numetastrong}.  We bound $\bar \rho_{\Omega[S]}$ as follows:

\begin{align}
    2\bar\rho_{\Omega[S]} &= \frac{\Delta}{\piorproj(S)}\sum_{x \in \Omega_S, y \in \Omega_S \cup \Omega_T}  |\phi_{xy}| \pi(x)P(x, y) \label{eqn:barrho2}  \\
    &\leq \frac{\Delta}{\piorproj(S)}\sum_{i \in S}\sum_{x \in \Omega_i}\sum_{j \in T}\sum_{y \in \Omega_i \cup \Omega_j}|\phi_{ij,xy}|\pi(x)P(x, y)  \nonumber \\
    &= \frac{\Delta}{\piorproj(S)}\sum_{i \in S}\sum_{j \in T}~\sum_{\eta = (k_1 = i, k_2, \dots, k_d),\atop j \sim \eta}~\sum_{x \in \Omega_{\eta j}, y \in \Omega_{\eta}} |\phi_{ij,xy}| \pi(x)P(x, y) \nonumber \\
    &\quad + \frac{\Delta}{\piorproj(S)}\sum_{i \in S, j \in T}\sum_{x \in \Omega_i, y \in \Omega_j}|\phi_{ij,xy}|\pi(x)P(x, y)  \label{eqn:sumoverj}  \\
    &\leq \frac{C \sqrt n \log^{c} n}{\piorproj(S)}\sum_{i \in S, j \in T}\piorproj(i)\piorproj(j) + \frac{1}{\piorproj(S)}\sum_{i\in S, j \in T}|\phi_{ij,ij}|\piorproj(i)\piorproj_i(j) \label{eqn:subnumetastrong} \\
    &=  \frac{C \sqrt n \log^{c} n}{\piorproj(S)}\sum_{i \in S, j \in T}\piorproj(i)\piorproj(j) + \frac{1}{\piorproj(S)}\sum_{i\in S, j \in T}\piorproj(i)\piorproj(j) \label{eqn:subrhoijij}\\
    &= C \sqrt n \log^c n \piorproj(T)\,,\label{eqn:piT}
\end{align}
where the factor of 2 on the left-hand side of~\eqref{eqn:barrho2} is because by the definition of~$\phi$ we have $\phi_{xy} = -\phi_{yx}$ and also because we have defined $\bar\rho_{\Omega[S]}$ as the average congestion of a transport flow that sends positive flow in at most one direction across an edge $(x, y)$.

We have \eqref{eqn:sumoverj} by \cref{lem:sumoverj}. For~\eqref{eqn:subnumetastrong} we have applied \cref{lem:numetastrong}; we have also applied the definition $\phi_{ij,xy} = \phi_{ij,ij}$, and have used the identity
\[
    \Delta\sum_{x \in \Omega_i, y \in \Omega_j}\pi(x)P(x,y) = \piorproj(i)\piorproj_i(j)\,,
\]
which follows from definitions. We have~\eqref{eqn:subrhoijij} by the definition $\phi_{ij,ij} = \frac{\piorproj(j)}{\piorproj_i(j)}$.  

The claim follows from \eqref{eqn:piT}. 
\end{proof}

We now prove \cref{lem:proj_flow}, the last ingredient in the proof of \cref{thm:lsi} given in \cref{sec:trimixing}: 
\begin{proof}[Proof of \cref{lem:proj_flow}]
It suffices to apply \cref{lem:ldirstrong} with $\Omega^{(t,\eta)}$ substituted for~$\Omega$ and, for each~$k$, with $S$ substituted for $\hat S_k^{(t,\eta)}$, with $T$ substituted for $\Omorproj^{(t,\eta)} \setminus S$, and with $\Omega[S]$ and $\Omega[T]$ substituted for $S$ and $T$ respectively. The result then follows from plugging in the bounds on $\bar \rho_{\Omega[S]}$ and $\bar \rho_{\Omega[T]}$ given by \cref{lem:ldirstrong}.
\end{proof}

\printbibliography

\appendix

\section{Multi-way Single-Commodity Flows (MSFs) and Transport Flows}
\label{sec:msf}
In this section we make formalize some missing details from \cref{sec:transport_flow}.

We also prove \cref{lem:flowreduction}. More precisely, we show that the flow construction of \cite{eppstein2022improved} implies the functional inequalities in \cref{lem:flowreduction}.

First, concerning the definition of transport flows given in \cref{sec:transport_flow}, we justify our assertion that one can assume that a transport flow uses a given transition $(x, y)$ if and only if it does not use $(y, x)$:
\begin{lemma}
\label{lem:onedir}
Given $S, T \subseteq \Omega$, suppose $\Gamma = \Gamma^{S\rightarrow T}$ is a transport flow from $S$ to $T$, and suppose for some $(x, y) \in \Omega^2$ such that $P(x, y) > 0$ there exist distinct paths $\gamma, \gamma'$ such that $(x, y) \in \gamma, (y, x) \in \gamma'$, and $\Gamma(\gamma) > 0$ and $\Gamma(\gamma') > 0$. Suppose without loss of generality that $\Gamma(\gamma) > \Gamma(\gamma')$.

Then $\Gamma$ can be modified to obtain a distribution $\Gamma'$ in which $\Gamma'(\gamma') = 0$, without introducing a new path that uses $(y, x)$. 
\end{lemma}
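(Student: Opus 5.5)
The plan is the classical path-splicing (cycle-cancellation) argument. Write $\gamma = (\gamma_1, (x,y), \gamma_2)$ and $\gamma' = (\gamma'_1, (y,x), \gamma'_2)$. Here $\gamma_1$ runs from $s(\gamma)$ to $x$, $\gamma_2$ from $y$ to $t(\gamma)$, $\gamma'_1$ from $s(\gamma')$ to $y$, and $\gamma'_2$ from $x$ to $t(\gamma')$. Set $m = \Gamma(\gamma') \le \Gamma(\gamma)$. Define two new walks $\delta = (\gamma_1, \gamma'_2)$, which goes from $s(\gamma)$ to $t(\gamma')$, and $\delta' = (\gamma'_1, \gamma_2)$, which goes from $s(\gamma')$ to $t(\gamma)$. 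Then define
\[
\Gamma' = \Gamma - m\,\mathbbm{1}_{\gamma} - m\,\mathbbm{1}_{\gamma'} + m\,\mathbbm{1}_{\delta} + m\,\mathbbm{1}_{\delta'}.
\]

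The first step is to check that $\Gamma'$ is still a transport flow from $\mu$ to $\nu$. The total mass is unchanged, and the weights stay nonnegative because $m \le \Gamma(\gamma)$ and $\Gamma'(\gamma') = \Gamma(\gamma') - m = 0$. We also need to make sure that $\gamma'$ does not reappear as $\delta$ or $\delta'$. This holds because $\delta$ and $\delta'$ are strictly shorter than the spliced originals, so neither equals $\gamma'$ unless a degenerate coincidence occurs, and such coincidences are handled by the same bookkeeping. The start marginal is preserved: mass $m$ leaves $s(\gamma)$ and $s(\gamma')$ and returns to the same two vertices via $\delta$ and $\delta'$. The same holds for the end marginal, since $t(\delta') = t(\gamma)$ and $t(\delta) = t(\gamma')$.

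Next, confirm that no new path uses $(y,x)$. The only new paths are $\delta$ and $\delta'$, and their edges are drawn from $\gamma_1, \gamma_2, \gamma'_1, \gamma'_2$. We have removed one occurrence of $(x,y)$ from $\gamma$ and one occurrence of $(y,x)$ from $\gamma'$. If $(y,x)$ occurred elsewhere inside the sub-walks, we pick the occurrence in $\gamma'$ to be the \emph{first} appearance of $(y,x)$ and the occurrence in $\gamma$ to be the \emph{last} appearance of $(x,y)$. Even so, it is cleaner to argue that a sub-walk still containing $(y,x)$ only reproduces a usage that already existed in the support. We then iterate: the total edge-usage $\sum_\gamma \Gamma(\gamma)|\gamma|$ strictly decreases by $2m$ at each step. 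We also shortcut any closed loops in $\delta$ and $\delta'$, which only decreases usage further. Because of this decrease, repeating the operation over all antiparallel pairs terminates.

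A side benefit worth recording is that edge loads never increase: for every edge $e$, $\sum_{\gamma \ni e}\Gamma'(\gamma) \le \sum_{\gamma\ni e}\Gamma(\gamma)$. So congestion and average congestion are both preserved or improved, which is what the rest of the paper needs.

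The main obstacle is the precise form of the claim ``without introducing a new path that uses $(y,x)$'' when the sub-walks themselves contain $(y,x)$ or $(x,y)$. I would handle this by choosing the splice occurrences extremally and by measuring progress with the potential $\sum_\gamma \Gamma(\gamma)\,\#\{(y,x)\in\gamma\}$, which strictly drops. This guarantees that after finitely many steps the mass on paths using $(y,x)$ is zero, in particular that $\Gamma'(\gamma') = 0$.
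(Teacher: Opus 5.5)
Your construction is the paper's: your $\delta=(\gamma_1,\gamma_2')$ and $\delta'=(\gamma_1',\gamma_2)$ are its $\gamma''$ and $\gamma'''$, and both arguments move the mass $m=\Gamma(\gamma')$ off $\gamma,\gamma'$ and onto the spliced paths. Your additive formula for $\Gamma'$ is the cleaner way to write this. Your remark that every directed-edge load weakly decreases is correct, and it is what guarantees the congestion bounds survive. The paper only asserts that the remaining properties are immediate. Where your write-up goes wrong is in how you verify two of them.

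\textbf{The coincidence $\delta=\gamma'$ is not ruled out.} You argue that $\delta,\delta'\neq\gamma'$ because they are ``strictly shorter''. That is invalid: only $|\delta|+|\delta'|=|\gamma|+|\gamma'|-2$ holds. In the walk setting you allow, the coincidence really happens. If $\gamma=(\gamma_1',(y,x),(x,y),\gamma_2)$, then even with your extremal choice of occurrences $\delta=\gamma'$, and your formula leaves $\Gamma'(\gamma')=m\neq 0$.

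\textbf{The new paths can still use $(y,x)$.} Choosing the last $(x,y)$ and the first $(y,x)$ does not stop $\gamma_2$ or $\gamma_2'$ from containing $(y,x)$. That is why you fall back on a potential instead of proving the stated claim.

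\textbf{The missing observation is that the paths are simple.} This holds as in the proof of \cref{lem:flowtotransportflow}, or after erasing loops beforehand. Then $x$ immediately precedes $y$ on $\gamma$, and $y$ immediately precedes $x$ on $\gamma'$. So $\gamma_1$ and $\gamma_2'$ avoid $y$, while $\gamma_1'$ and $\gamma_2$ avoid $x$. Hence $\delta$ never visits $y$ and $\delta'$ never visits $x$. This immediately gives:
\begin{itemize}
\item $\delta,\delta'\notin\{\gamma,\gamma'\}$, so $\Gamma'(\gamma')=0$ and $\Gamma'(\gamma)=\Gamma(\gamma)-m$;
\item neither new path uses $(y,x)$, or even $(x,y)$.
\end{itemize}
Erasing loops in $\delta,\delta'$ preserves this, which keeps the simple-path hypothesis for later iterations.

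\textbf{Termination of the iteration.} A strictly decreasing real potential is not enough by itself, since the decrements $2m$ could shrink. Use instead that each step removes $\gamma'$ from the support while adding only paths that avoid the edge $\{x,y\}$. So the number of support paths through $\{x,y\}$ drops by at least one per step. Since loads on already-cleaned edges never increase, those edges stay clean.
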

\begin{proof}
If $(x, y) \in \gamma$ and $(y, x) \in \gamma'$, where $\gamma$ has endpoints $z,w$ and $\gamma'$ has endpoints $z',w'$, then let $\gamma[z, x], \gamma[z', y']$ (respectively $\gamma[y, w], \gamma[x, w']$) denote the segments of the paths $\gamma, \gamma'$ from $z$ to $x$ and $z'$ to $y$ (respectively from $y$ to $w$ and from $x$ to $w'$). Then let $\gamma''$ be the path obtained from concatenating $\gamma[z, x]$ with $\gamma'[x, w']$ and let $\gamma'''$ be obtained from concatenating $\gamma'[z', y]$ with $\gamma[y, w]$. Now define the distribution $\Gamma'$ as follows:
\begin{enumerate}[(i)]
    \item for all paths $\delta \notin \{\gamma, \gamma', \gamma'', \gamma'''\}$ let $\Gamma'(\delta) := \Gamma(\delta)$.
    \item Let $\Gamma'(\gamma') := 0$.
    \item Let $\Gamma'(\gamma) := \Gamma(\gamma) - \Gamma(\gamma')$.
    \item Let $\Gamma'(\gamma'') := \Gamma(\gamma'') + \Gamma(\gamma')$.
    \item Let $\Gamma'(\gamma''') := \Gamma(\gamma''') + \Gamma(\gamma')$
\end{enumerate}
We readily obtain from the definition above that~$\Gamma'$ is a transport flow from $S$ to $T$, that $\gamma'', \gamma'''$ do not use $(y, x)$, and that $\Gamma'(\gamma') = 0$ as desired.

\end{proof}
Applying \cref{lem:onedir} iteratively to a transport flow~$\Gamma$, one obtains a transport flow with the desired property.

Inspired by the Ford-Fulkerson algorithm~\cite{Fordf56}, we also prove \cref{lem:flowtotransportflow}, which allows us to pass from combinatorial flows to transport flows:
\begin{proof}[Proof of \cref{lem:flowtotransportflow}]
For convenience, we modify the construction by adding a special ``source'' vertex (state) $s$ and a special ``sink'' vertex $t$. We add transitions (the probabilities are irrelevant for the present aim) for each $z \in S$ and for each $w \in T$.  

Now we view the $S$-$T$ flow instead as an $s$-$t$ flow, in which we use the additional transitions to route flow rom $s$ to $S$ and $T$ to $t$. Assign capacities to the (directed) edges in the resulting flow network in which the capacity of a (directed) edge $(x, y)$ across which $\phi$ sends positive flow is equal to $\phi(x,y)$. 

We construct the desired transport flow (collection of paths forming the distribution~$\Gamma = \Gamma^{S\rightarrow T}$) in an iterative fashion, by defining a sequence of weight functions $\{w_t \mid t \in [T]\}$ where $T$ is  finite and will be described shortly. For every $z \in S, w \in T$ and for every (simple) path $\gamma$ in $(\Omega, P)$ from $z$ to $w$, let $w_0(\gamma) = 0$.   

We will maintain the invariant that for all $t\geq 0$ and for every edge $(x, y)$,  we will have
\[\sum_{\gamma \ni (x, y)}w_t(\gamma) - \sum_{\gamma \ni (y, x)}w_t(\gamma)\leq \phi(x, y)\]

Given $t \geq 0$, let an edge $(x, y)$ be \emph{tight at iteration} $t$ if $\sum_{\gamma \ni (x, y)}w_t(\gamma) - \sum_{\gamma \ni (y, x)}w_t(\gamma) = \phi(x, y)$. 

The process will stop at time $T$, where  we define $T$ to be the minimum~$t$ such that
for all $z \in S$, 
\[
    \sum_{\gamma \ni (s,z)}w_t(\gamma) = \sum_{z \in S}\sum_{x \sim z}\phi(z, x)
\]
Note that this is equivalent to the condition that for all $w \in T$,
\[
    \sum_{\gamma \ni (w,t)}w_t(\gamma) = \sum_{x \sim w}\phi(x, w)
\]

Now, given $t \geq 0$, if $t = T$ then the weight function~$w_t$ forms a transport flow~$\Gamma$ from $S$ to $T$, by the definition of a transport flow. (Here we drop $s,t$ and their incident edges.)

If $t < T$, then we claim that there exists some $s$-$t$ path $\gamma$ such that no edge in $\gamma$ is tight. Then we can simply increase the weight of some such path until an edge on the path becomes tight, obtaining $w_{t+1}(\gamma) > w_t(\gamma)$.

To see the claim, suppose for a contradiction that every $s$-$t$ path has a tight edge. Then the set of all tight edges in the flow network forms a cut, call it $\mathcal{C}$. By the max-flow-min-cut theorem, the combined weight of the edges in this cut is at least equal to the maximum flow under $\phi$ that $s$ sends to $t$. That is:
\[
    \sum_{(x, y) \in \mathcal{C}}\sum_{\gamma \ni (x, y)}w_t(\gamma) \geq \sum_{z \in S}\sum_{x \sim z}\phi(z, x)
\]
However, we also have
\[
    \sum_{z \in S}\sum_{\gamma \ni (s, z)}w_t(\gamma) = \sum_{\gamma: \gamma \textnormal{is an s-t path}}w_t(\gamma) \geq  \sum_{(x, y) \in \mathcal{C}}\sum_{\gamma \ni (x, y)}w_t(\gamma)
\]
where the first equality is because every $s$-$t$ path $\gamma$ contains at least one edge $(s, z)$ for some $z \in S$. The last inequality is because the right-hand side is a summation over all $s$-$t$ paths containing cut edges, and the left-hand side is a summation over all $s$-$t$ paths.

These two inequalities together imply that 
\[
    \sum_{z \in S}\sum_{\gamma \ni (s, z)}w_t(\gamma) \geq \sum_{z \in S}\sum_{x \sim z}\phi(z, x)
\]
Since by construction the left-hand side never exceeds the right-hand side, this
implies that $t = T$, a contradiction.
\end{proof}

In the remainder of this section, we build the required machinery for proving \cref{lem:flowreduction}.
To this end, we will use the artifice of a \emph{multi-way single-commodity flow (MSF)} introduced and used in~\cite{eppstein2022improved}:

\begin{definition}[\cite{eppstein2022improved}]
Given a graph $G = (V, E)$, define a \emph{multi-way single-commodity flow (MSF) problem} as a tuple
$(S, T, \sigma, \delta)$
where:
\begin{itemize}
    \item $S \subseteq V$ is the set of \emph{source} vertices
    \item $T \subseteq V$ is the set of \emph{sink} vertices
    \item $\sigma: S \rightarrow \mathbb{R}_{\geq 0}$ is the \emph{surplus} function
    \item $\delta: T \rightarrow \mathbb{R}_{\geq 0}$ is the \emph{demand} function
\end{itemize}
We require that 
$\sum_{s \in S} \sigma(s) = \sum_{t \in T} \delta(t)$.

It may be that $\sigma$ (or $\delta$) is a constant function. In this case we will sometimes denote by $\sigma$ (or $\delta$) the value $\sigma(v)$ (or $\delta$).
\label{def:msfproblem}
\end{definition}

\begin{definition}
Given a graph $G = (V, E)$, let $A = \bigcup_{\{u, v\} in E}{(u, v), (v, u)}$ be the set of directed arcs obtained by creating two copies of each (undirected) edge in $E$, one copy in each direction.

Given an MSF problem $\Pi = (S, T, \sigma, \delta)$ where $S, T \subseteq V$, say that a function $\phi: A \rightarrow \mathbb{R}$ \emph{solves} $\Pi$ if $\phi$ satisfies the following conditions:

\begin{itemize}
    \item for all $s \in S \setminus T$, $\sum_{u \in V} \phi(s, u) = \sigma(s)$
    \item for all $t \in T \setminus S$, $\sum_{u \in V} \phi(u, s) = \sigma(T)$
    \item for all $v \in S \cap T$, $\sum_{u \in V} \phi(v, u) = \sigma(v) - \delta(v)$
    \item for all $v \in V \setminus (S \cup T)$, $\sum_{u \in V} \phi(v, u) = 0$
\end{itemize}

Call $\phi$ a \emph{multi-way single-commodity flow (MSF)}, or simply a \emph{flow function}. 
    \label{def:msf}
\end{definition}

\begin{remark}
    To make our proofs more convenient, we have deviated from the definition of an MSF in \cite{eppstein2022improved}, in that we define $\phi(x,y)~=~-\phi(y, x)$, and when $\phi(x, y) \geq 0$ we consider $\phi(x, y)$ to denote the flow routed across the edge $(x, y)$ in the $(x, y)$ direction. We have adjusted the criteria for $\phi$ accordingly in \cref{def:msf}.
    
    \cite{eppstein2022improved} instead considered the flow across the edge to be $\phi(x, y) - \phi(y, x)$ when $\phi(x, y) \geq \phi(y, x)$. 
\end{remark}

\cite{eppstein2022improved} considered the problem of constructing a multicommodity flow in the graph $(\Omega, Q)$ where $\Omega$ is the state space of the triangulation walk, viewed as the vertex set of a graph, and where $Q = \{(x, y) \mid P(x, y) > 0\}$ is the (undirected) edge set obtained by connecting every pair of triangulations $x, y \in \Omega$ for which the transition probability $P(x, y)$ is nonzero. This graph is regular and unweighted (since $\pi$ is uniform and since all nonzero transition probabilities under $P$ are the same), and has degree $\Delta = n - 1$.

\cite{eppstein2022improved} considered a collection of MSF problems $\{\Pi_s \mid s \in \Omproj\}$ where
\[
    \Pi_s = \left(\Omega_s, \Omega, \sigma = 1, \delta = \piproj(s)\right)
\]

\cite{eppstein2022improved} decomposed $\Omega$ recursively according to the projection chain $(\Omproj, \Pproj)$ and the Cartesian product structure of each $\Omega_s$ subspace, as we have done in \cref{sec:trixport}. They showed that to construct a uniform multicommodity flow in $(\Omega, Q)$, it suffices to construct an MSF for every $\Pi_t$ problem, for each projection graph $(\Omproj, \Qproj)$ that occurs at every level of the decomposition.

As our present aim is to prove \cref{lem:flowreduction}, we wish to bound the variance
\begin{align}
    \var_{\piproj} \FF &= \sum_{s \in \Omproj} \piproj(s)(\FF(s) - \Exp_{\piproj} \FF)^2 \nonumber \\
    &= \sum_{s \in \Omproj} \piproj(s)(\FF(s) - \FF(\Omega))^2 \nonumber
\end{align}
where we let $\FF(\Omega) = \Exp_{\piproj} \FF = \Exp_{\pi} f$.

We consider the term $\piproj(s)(\FF(s) - \FF(\Omega)^2$ to represent the problem of distributing $\piproj(s)$ total flow, initially concentrated evenly among the states in $\Omega_s$, throughout all of $\Omega$. 

That is, we associate the expression with the MSF problem $\Pi_s$.

The following lemmas are implicit in~\cite{eppstein2022improved} and immediate from the definitions. These lemmas connect MSFs with transport flows:

\begin{lemma}
    Suppose an MSF problem $\Pi = (S, T, \sigma, \delta)$ ``decomposes'' as a ``serial'' collection of problems
    \[
        \{\Pi_k = (S_k, T_k, \sigma_k, \delta_k) \mid k \in [K]\}
    \]
    for some $K \geq 1$. That is, suppose for $k \in [K - 1]$, $T_k = S_{k+1}$ and $\delta_k = \sigma_{k+1}$. Suppose $S = S_1$ and $T = T_K$, and suppose $\sigma = \sigma_1$ and $\delta = \delta_K$.
 
    Then
    $\sigma \pi(S) = \delta\pi(T) = \sigma_k \pi(S_k) = \delta_k\pi(T_k)$ for all $k$, and 
    \[
        \sigma \pi(S)(f(S) - f(T)) = \sum_k \sigma_k\pi(S_k)(f(S_k) - f(T_k)) = \sum_k \delta_k\pi(T_k)(f(S_k) - f(T_k))
    \]
\end{lemma}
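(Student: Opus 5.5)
The claim is a bookkeeping identity, so the plan is to use mass conservation along the chain of problems and then telescope.

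\emph{First claim: the masses agree.} Since each $\Pi_k$ is an MSF problem, \cref{def:msfproblem} gives $\sum_{s\in S_k}\sigma_k(s)=\sum_{t\in T_k}\delta_k(t)$. I read the constant-function convention as surplus and demand being measured relative to $\pi$, so the total surplus is $\sigma_k\pi(S_k)$ and the total demand is $\delta_k\pi(T_k)$. This is the same normalisation under which $\Pi_s=(\Omega_s,\Omega,1,\piproj(s))$ is balanced: $1\cdot\pi(\Omega_s)=\piproj(s)\cdot\pi(\Omega)$. Hence $\sigma_k\pi(S_k)=\delta_k\pi(T_k)$ for every $k$. The serial hypotheses $T_k=S_{k+1}$ and $\delta_k=\sigma_{k+1}$ give $\delta_k\pi(T_k)=\sigma_{k+1}\pi(S_{k+1})$. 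Chaining these equalities from $k=1$ to $K$ and using $S=S_1$, $\sigma=\sigma_1$, $T=T_K$, $\delta=\delta_K$ shows that all the quantities equal a common value $M$.

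\emph{Second claim: the telescoping identity.} Here $f(A)$ denotes $\Exp_{\pi_A}f$. Factor out the common mass $M$ to write
\[
\sum_k \sigma_k\pi(S_k)\bigl(f(S_k)-f(T_k)\bigr)=M\sum_{k=1}^K\bigl(f(S_k)-f(T_k)\bigr).
\]
Since $f(T_k)=f(S_{k+1})$, the sum telescopes to $f(S_1)-f(T_K)=f(S)-f(T)$. Multiplying back by $M=\sigma\pi(S)$ gives the first equality. The version with $\delta_k\pi(T_k)$ follows in the same way, because each $\delta_k\pi(T_k)$ also equals $M$.

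\emph{Main obstacle.} The only delicate point is fixing the convention in the constant-function case: $\sigma(v)$ and $\delta(v)$ must be taken as densities relative to $\pi$, not counting measure. I will state this convention explicitly and check it against $\Pi_s$ before running the telescoping step.
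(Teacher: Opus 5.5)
Your argument is correct and is essentially what the paper intends: the paper gives no separate proof and calls the lemma immediate from the definitions, and your steps supply that verification. You use mass balance for each $\Pi_k$, chain it through $T_k=S_{k+1}$ and $\delta_k=\sigma_{k+1}$, and then telescope using $f(T_k)=f(S_{k+1})$. The convention issue you flag causes no trouble in the paper's setting. Since $\pi$ is uniform on the triangulation state space, the counting-measure balance $\sum_{s\in S_k}\sigma_k(s)=\sum_{t\in T_k}\delta_k(t)$ of \cref{def:msfproblem} is already equivalent to $\sigma_k\pi(S_k)=\delta_k\pi(T_k)$.
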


\begin{lemma}
Suppose an MSF problem $(S, T, \sigma, \delta)$ decomposes as a ``parallel,'' disjoint ``composition'' of problems
\[
\{\Pi_k = (S_k, T_k, \sigma_k, \delta_k) \mid k \in [K]\}
\]
for some $K \geq 1$.

That is, suppose $S = \bigcup_{k \in [K]} S_k$ and $T = \bigcup_{k \in [K]} T_k$. Suppose for $k \in [K]$, for all $s \in S_k$, $\sigma_k(s) = \sigma(s)$, and for all $t \in T_k$, $\delta_k(t) = \delta(t)$.

Then 
\[\sigma \pi(S) = \delta\pi(T) = \sum_k \sigma_k\pi(S_k) = \sum_k \delta_k \pi(T_k)\]
\end{lemma}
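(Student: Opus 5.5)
The plan is to read off the two identities directly from the definitions of a parallel decomposition, using only that the $S_k$ are pairwise disjoint, the $T_k$ are pairwise disjoint, and the surplus/demand functions agree on the pieces. Throughout I interpret, as the paper does when $\sigma$ and $\delta$ are constant, $\sigma\pi(S)$ as the total surplus $\sum_{s\in S}\sigma(s)\pi(s)$, and similarly $\delta\pi(T)=\sum_{t\in T}\delta(t)\pi(t)$. I also use that each $\Pi_k$ is itself an MSF problem, so it satisfies the balance condition of \cref{def:msfproblem}: $\sigma_k\pi(S_k)=\delta_k\pi(T_k)$.

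First, by disjointness of the parallel pieces and $S=\bigcup_k S_k$, I will split the total surplus as
\[
\sigma\pi(S)=\sum_{s\in S}\sigma(s)\pi(s)=\sum_{k\in[K]}\sum_{s\in S_k}\sigma(s)\pi(s)=\sum_{k\in[K]}\sum_{s\in S_k}\sigma_k(s)\pi(s)=\sum_k\sigma_k\pi(S_k).
\]
The third equality uses $\sigma_k(s)=\sigma(s)$ on $S_k$. The same computation with $T=\bigcup_k T_k$ and $\delta_k=\delta$ on $T_k$ gives $\delta\pi(T)=\sum_k\delta_k\pi(T_k)$.

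Second, I obtain the middle equality $\sigma\pi(S)=\delta\pi(T)$ either from the balance requirement on $\Pi$ itself, or by summing the per-piece balances $\sigma_k\pi(S_k)=\delta_k\pi(T_k)$ over $k$. Chaining these gives the full string of equalities.

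The only delicate point, and the main obstacle, is that disjointness of the decomposition is implicit in the word ``disjoint'' but not written into the displayed hypotheses. If the $S_k$ were allowed to overlap, surplus would be double counted. I will therefore state explicitly that the union is taken to be disjoint, as the lemma's phrasing indicates, before performing the splitting of sums.
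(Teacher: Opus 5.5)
Your argument is correct and is the same routine unpacking of definitions the paper relies on when it calls this lemma immediate. You split the $\pi$-weighted sums over the disjoint pieces, use $\sigma_k=\sigma$ on $S_k$ and $\delta_k=\delta$ on $T_k$, and close the chain with the balance condition. One small precision: \cref{def:msfproblem} states balance in unweighted form, $\sum_{s\in S}\sigma(s)=\sum_{t\in T}\delta(t)$, so your weighted version $\sigma_k\pi(S_k)=\delta_k\pi(T_k)$ holds because $\pi$ is uniform, as it is for the flip walk.
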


\begin{corollary}
If an MSF problem decomposes into (parallel, disjoint) sums and (serial) decompositions of problems $\{\Pi_k = (S_k, T_k, \sigma_k, \delta_k)\}$ such that the longest serial ``chain'' in the decomposition has length $L$, then 
\begin{equation}
    \sigma \pi(S)(f(S) - f(T))^2 \leq L\sum_k\sigma_k\pi(S_k)(f(S_k) - f(T_k))^2 \label{eqn:msfdecomp}
\end{equation}
\label{cor:msfdecomp}
\end{corollary}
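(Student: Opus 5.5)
The statement is \cref{cor:msfdecomp}: for a decomposition into serial chains and parallel disjoint pieces with longest serial chain of length $L$, we want
\[
\sigma\pi(S)(f(S)-f(T))^2\le L\sum_k\sigma_k\pi(S_k)(f(S_k)-f(T_k))^2 .
\]
The plan is a weighted Cauchy--Schwarz argument combined with the two preceding lemmas. We induct on the structure of the decomposition tree. At a serial node the children are listed consecutively, and at a parallel node the children are disjoint.

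\emph{Serial step.} Suppose $\Pi$ decomposes serially into $\Pi_1,\dots,\Pi_K$ with $K\le L$. By the serial lemma, all the masses $m:=\sigma\pi(S)=\sigma_k\pi(S_k)$ coincide, and
\[
m(f(S)-f(T))=\sum_k m(f(S_k)-f(T_k)).
\]
Squaring and applying Cauchy--Schwarz over the $K$ summands gives
\[
m^2(f(S)-f(T))^2\le K\,m^2\sum_k (f(S_k)-f(T_k))^2 .
\]
Dividing by $m$ yields the claim with factor $K\le L$.

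\emph{Parallel step.} Here $S=\bigsqcup S_k$ and $T=\bigsqcup T_k$, with the surpluses inherited and total mass $m=\sum_k m_k$ where $m_k=\sigma_k\pi(S_k)$. The difference $f(S)-f(T)$ is a mass-weighted average of the $f(S_k)-f(T_k)$, because $f(S)=\sum_k\frac{\pi(S_k)}{\pi(S)}f(S_k)$ and the weights $\pi(S_k)/\pi(S)=m_k/m$ agree with $\pi(T_k)/\pi(T)=m_k/m$ by the balance conditions. Writing
\[
m(f(S)-f(T))=\sum_k m_k(f(S_k)-f(T_k)),
\]
Jensen (or Cauchy--Schwarz with weights $m_k$) gives
\[
m(f(S)-f(T))^2\le\sum_k m_k(f(S_k)-f(T_k))^2 .
\]
So no factor is lost at parallel nodes.

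\emph{Combining.} Composing the two steps, the losses multiply only along serial chains. We define $L$ as the maximal product of serial lengths along a root-to-leaf path, which equals the longest serial chain once nested serial chains are flattened into one chain. Since serial compositions are associative, we first flatten them; then every root-to-leaf path passes through at most one serial factor of length at most $L$, and the induction closes.

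The main obstacle is checking that the parallel weights really align, i.e.\ that $\pi(S_k)/\pi(S)=\pi(T_k)/\pi(T)$. This needs $\sigma$ and $\delta$ to be constant, as they are in the problems $\Pi_s$ used here, together with the balance condition of each subproblem. The other subtle point is stating precisely what ``longest serial chain'' means when serial and parallel compositions are nested. I would handle this by flattening the decomposition first, or else by proving the bound with $L$ defined as the maximal product of serial lengths.
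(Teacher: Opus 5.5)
Your serial step (telescoping plus Cauchy--Schwarz) and your parallel step (mass-weighted average plus Jensen, using $\pi(S_k)/\pi(S)=\pi(T_k)/\pi(T)=m_k/m$) are both correct. The gap is in the combining step, which does not give the stated constant. You pay the unweighted factor $K$ at every serial node. So your induction proves the inequality with $L$ replaced by the maximum, over root-to-leaf paths of the decomposition tree, of the \emph{product} of the serial arities along the path.

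The paper's $L$ is the length of the longest chain of telescoping differences, and it is additive: $1$ at a leaf, the maximum over children at a parallel node, and the \emph{sum} over children at a serial node. Your claim that flattening makes the two quantities agree is false. Associativity only merges a serial node into a serial parent; it cannot merge serial nodes separated by a parallel node. For example, let $\Pi=\mathrm{ser}(\Pi_1,D)$ with $\Pi_1=\mathrm{par}(\mathrm{ser}(A,B),C)$. The longest chain is $A\to B\to D$, of length $3$, yet your argument puts the factor $2\cdot 2=4$ on the $A$ and $B$ terms. Flattening would require splitting $D$ into pieces with sources $T_B$ and $T_C$. By Jensen, that replaces $m_D(f(S_D)-f(T_D))^2$ by something larger, so the bound you would obtain is not the one claimed. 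With $h$ such nested levels the product grows like $2^h$ while the chain length grows like $h$. Since the application needs $L=\widetilde O(1)$, this is not cosmetic.

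The repair is easy. One option is the paper's route. Write $d_k=f(S_k)-f(T_k)$ and $m_k=\sigma_k\pi(S_k)$ for leaf subproblems; your steps already give $m\,d=\sum_k m_k d_k$. Express this as $m\,d=\sum_p w_p\sum_{k\in p}d_k$ over chains $p$ of leaves, with $w_p\ge 0$, $\sum_p w_p=m$, and $\sum_{p\ni k}w_p=m_k$. Build the chains by taking unions at parallel nodes and, say, product couplings at serial nodes. Then apply Cauchy--Schwarz once:
\[
m\,d^2\le\sum_p w_p\Bigl(\sum_{k\in p}d_k\Bigr)^2\le\sum_p w_p\,|p|\sum_{k\in p}d_k^2\le L\sum_k m_k d_k^2 .
\]
The other option keeps your induction but weights the Cauchy--Schwarz at a serial node by the children's chain lengths $\ell_k$:
\[
\Bigl(\sum_k d_{c_k}\Bigr)^2\le\Bigl(\sum_k \ell_k\Bigr)\sum_k d_{c_k}^2/\ell_k .
\]
Then the hypothesis $m\,d_{c_k}^2\le \ell_k\sum_{j} m_j d_j^2$ (sum over leaves $j$ below $c_k$) closes with factor $\sum_k\ell_k$, which is exactly the longest chain through that node.
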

\begin{proof}
    From the definitions of parallel and serial decompositions, the expression $f(S) - f(T)$ on the left-hand side decomposes into an expectation of chains (or ``paths'') of telescoping differences, where the longest chain has length $L$. We obtain the right-hand side by applying the Cauchy-Schwarz inequality.
\end{proof}

Considering the inequality in \cref{lem:flowreduction}, one can compare the left-hand side of \cref{cor:msfdecomp} with the expression $\piproj(s)(\FF(s) - \FF(\Omega))^2$, letting $\sigma = 1, S = \Omega_s$, and $T = \Omega_t$. Our aim is to show an inequality of the form \eqref{eqn:msfdecomp}, where $S_k = S_k^{(t, \omega)}$ as in \cref{lem:flowreduction}.

For convenience, we will define the additional notion. Consider a collection of MSF problems, $\mathcal{P} = \{P_1, P_2, \dots, P_k\}$. Define the \emph{multicommodity transport flow} (MTF) problem  $\{\mathcal{S}, \mathcal{T}, \{\sigma_i \mid i = 1,\dots, k\}, \{\delta_i \mid i = 1, \dots, k\}\}$ where $\mathcal{S} = \{S_1, \dots, S_k\}, \mathcal{T} = \{T_1, \dots, T_k\}$. We say that a flow function $\phi = \{\phi_i \mid i = 1, \dots, k\}$ solves the MTF $\mathcal{P}$ if $\phi_i$ solves $P_i$ for all $i$.

We can then reason about subproblems (decomposition) of an MTF the way we do about an MSF: suppose MSF problems $P = (S, T, \sigma, \delta), P' = (S', T', \sigma', \delta')$ each split serially into $P_{1} = (S_{1}, T_{1}, \sigma_{1}, \delta_{1}), P_{2} = (S_{2}, T_{2}, \sigma_{2}, \delta_{2}), P'_{1} = (S'_{1}, T'_{1}, \sigma'_{1}, \delta'_{1}), P'_{2} = (S'_{2}, T'_{2}, \sigma'_{2}, \delta'_{2})$ and suppose $T_{1} = S_{2} = T'_{1} = S'_{2}$. Then we consider the MTF problem $\mathcal{P} = \{P, P'\}$ to be \emph{equivalent} to the \emph{collection} of MTF problems $\mathcal{Q} = \{P_1, P'_1\}, \mathcal{R} = \{P_2, P'_2\}$.

That is:
\begin{remark}
    If an MTF problem~$\mathcal{P}$ can be decomposed as in \cref{cor:msfdecomp}, then the inequality in \cref{cor:msfdecomp} holds for~$\mathcal{P}$.  
    \label{rmk:mtfdecomp}
\end{remark}

\begin{lemma}[\cite{eppstein2022improvedfull}]
\label{lem:efmtffull}
Consider an MTF problem $\mathcal{P}$ (over the state space $\Omega$) induced by $|\Omproj|$ MSF problems $\{P_t \mid t \in \Omproj\}$ in which each central-triangle-induced class $t \in \Omproj$ begins with a surplus $\sigma = 1$, i.e.~$\mathcal{P}_t = \{S = \Omega_t, T = \Omega, \sigma = 1, \delta = \piproj(t)\}$. There exists an MTF $\phi$ that solves all of the problems $\{\mathcal{P}_t\}$ in which each edge in $\Pproj$ carries total congestion at most $\widetilde O(\sqrt n)$.

The maximum path length of $\phi$ is $\widetilde O(1)$.

Furthermore, $\mathcal{P}$ is equivalent to a collection of MTF subproblems each of which is of one of two forms:
\begin{enumerate}[(i)]
\item \label{case:across} $\mathcal{P}_{t,t'} = (\Omega_{tt'}, \Omega_{t't}, \sigma_{tt'}, \delta_{tt'})$ where $\sigma_{tt'} = \widetilde O(\sqrt n)$.
\item \label{case:internal} $\mathcal{P}_{t,k} = \{(S_k^{(t, \eta)}, \Omega_t, \sigma_k, \delta_k)\}$ or $\mathcal{P}_{t,k} = \{(\Omega_t, S_k^{(t,\eta)}, \sigma_k, \delta_k)\}$ where $S_k^{(t, \eta)} = \bigcup_{i \in S} \Omega_{ti}^{(\eta)}$ for some $S \subseteq \Omorproj^{(t,\eta)}$, where $\eta \in \Omega_t^{(1)} \times \Omega_t^{(2)}$ or $\eta \in \Omega_t^{(2)} \times \Omega_t^{(3)}$ or $\eta \in \Omega_t^{(1)} \times \Omega_t^{(3)}$ and where $\sigma_k = \widetilde O(\sqrt n)$. 
\end{enumerate}
Each $t$ participates in at most $\widetilde O(1)$ of the subproblems.
\end{lemma}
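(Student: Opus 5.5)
The plan is to build the flow $\phi$ in two layers, first on the projection graph $(\Omproj,\Pproj)$ and then lifting each projection edge to a serial composition of MSF problems inside $\Omega$, so that the decomposition into forms \eqref{case:across} and \eqref{case:internal} comes out of the construction itself. For the top layer I would follow \cite{eppstein2022improved}. For each $t\in\Omproj$ I would route a total of $\piproj(t)$ units of flow to every $t'$, with $t'$ receiving a share proportional to $\piproj(t')$, along canonical paths in the central-triangle graph. Such a path moves the central triangle by successively ``shrinking'' the largest sub-polygon by a constant factor. Since each move at least halves some side length, every canonical path has $O(\log n)$ projection edges. This already gives the $\widetilde O(1)$ bound on the maximum path length at the projection level.

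Next I would bound the congestion on a projection edge $(t,t')$. Let $t$ have side lengths $r,s,n+2-r-s$. By \cref{lem:sidelengths}, $\piproj(t)=\Theta((rs)^{-3/2})$. The edge is used only by pairs $(a,b)$ for which $a$ lies in a ``subtree'' of central triangles cut off at $t$ and $b$ lies in the complementary region. I would sum $\piproj(a)\piproj(b)$ over these pairs using the same dyadic sums as in \cref{lem:boundlu}, and I expect the ratio to $\piproj(t)\Pproj(t,t')$ to be $\widetilde O(\sqrt n)$. The source of the $\sqrt n$ is that $\Pproj(t,t')=\Theta(\Delta^{-1}\cdot\text{boundary fraction})$ while the mass of the cut-off region has the form $\sum_r r^{-3/2}\cdot\sqrt{\cdot}$. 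With this bound, the across-boundary surplus is $\sigma_{tt'}=\widetilde O(\sqrt n)$, as required in \eqref{case:across}. By \cref{rmk:perfmat}, that subproblem is routed across the perfect matching between $\Omega_{tt'}$ and $\Omega_{t't}$.

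For the lifting step, each visit of a canonical path to an intermediate state $t$ enters through some boundary $\Omega_{t''t}$ and leaves through $\Omega_{tt'}$. I would implement this serially as $\Omega_{t''t}\to\Omega_t\to\Omega_{tt'}$, spreading the flow out and then concentrating it again. By \cref{lem:relateproj}, each boundary set is a disjoint union over $\eta$ of sets $\Omega_i^{(t,\eta)}$, with $\eta$ ranging over triangulations of two of the three sub-polygons. Grouping together all the boundaries of $t$ that are adjacent to the same sub-polygon and share a scale class yields the sets $S_k^{(t,\eta)}$ of \eqref{case:internal}, with surplus $\sigma_k$ inherited from the projection congestion, hence $\widetilde O(\sqrt n)$. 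I would use dyadic scale classes of the neighbour $t'$ of $t$, and there are $O(\log n)$ of them per sub-polygon. Each $t$ then participates in $O(\log n)$ such groups per sub-polygon, and the claimed $\widetilde O(1)$ participation bound follows. The serial and parallel lemmas above, together with \cref{rmk:mtfdecomp}, certify that $\mathcal P$ is equivalent to this collection of subproblems.

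I expect the congestion computation on projection edges to be the main obstacle. It requires a careful case analysis according to which side of $t$ the edge $(t,t')$ shrinks, and then an estimate of the two-dimensional Catalan sums without losing more than polylogarithmic factors. I would first handle the case where all of $t$'s sides are comparable to $n$, and then reduce the general case to it by self-similarity of the sub-polygon measures.
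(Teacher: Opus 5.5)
\textbf{There is a genuine gap: the estimate that carries the lemma is only conjectured.} Your architecture matches what the paper has in mind: a flow on the projection graph $(\Omproj,\Pproj)$, lifted by crossing the perfect matchings of \cref{rmk:perfmat} and by spreading and re-concentrating flow inside each $\Omega_t$. But the paper does not construct this flow. It imports the construction and its congestion analysis from \cite[Appendix B]{eppstein2022improvedfull} (Lemmas 32, 38--41). You replace that citation with a new scheme whose decisive estimate is left as ``I expect'', so the proposal does not prove the lemma. The requirement $\sigma_{tt'}=\widetilde O(\sqrt n)$ in (i) is exactly the statement that, for every projection edge, the routed load divided by $\pi(\Omega_{tt'})$ is $\widetilde O(\sqrt n)$. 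Here $\pi(\Omega_{tt'})=\Theta\bigl(n^{3/2}(a_1a_2a_3a_4)^{-3/2}\bigr)$, where $a_1,\dots,a_4$ are the arcs of the quadrilateral $t\cup u$. The bound must hold uniformly even though $\piproj$ ranges from $\Theta(n^{-3})$ for fat triangles to $\Theta(n^{-3/2})$ for nearly diametral ones. You compute none of this. Your ``subtree cut off at $t$'' picture also presupposes a tree structure that pairwise canonical paths do not have.

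The path system itself is not well defined. The three arcs of a central triangle are each at most $(n+2)/2$ and sum to $n+2$, so the largest is always at least $(n+2)/3$; it cannot be ``successively shrunk by a constant factor''. Every projection move shortens one arc while lengthening another, so ``each move halves some side'' gives no $O(\log n)$ length bound. You would need an actual potential, for instance halving each vertex's distance to its target while keeping the triangle central.

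\textbf{The lifting step has a second gap.} The subproblems in (ii) carry a single constant surplus $\sigma_k$ on $S_k^{(t,\eta)}$. This is what makes the serial telescoping produce the uniform average $\FF^{(t,\eta)}(S_k^{(t,\eta)})$ that appears in \cref{lem:flowreduction}. With pairwise canonical paths, the density deposited on the boundary $\Omega_{tt''}$ equals the load of the edge $(t'',t)$ divided by $\pi(\Omega_{tt''})$, and this varies with $t''$. Grouping boundary sets ``of the same dyadic scale class'' therefore does not give constant-surplus subproblems, and the $\widetilde O(1)$ participation bound does not follow from the grouping. In addition, boundary sets on different sides of $t$ overlap: each state of $\Omega_t$ lies in one boundary set per flippable side. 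Inflows from different sides therefore superpose on the same states. You would need a projection flow designed to deposit uniform density on unions of boundary sets, which is what the multi-way single-commodity construction cited by the paper is asserted to provide. Alternatively, you would need an explicit decomposition of the non-uniform surplus together with an accounting showing that what results is still controlled at the $\widetilde O(\sqrt n)$ level per $t$.
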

\begin{proof}
    The series of lemmas in \cite[Appendix B]{eppstein2022improvedfull} describes a collection of MSF problems (essentially an MTF problem) as described in the statement of this lemma, and establishes a flow construction with the congestion claimed. The construction decomposes~$\mathcal{P}$ into subproblems of the two forms claimed: this is immediate from the proofs of \cite[Lemma 32, Lemma 38, Corollary 39, Lemma 40, and Lemma 41]{eppstein2022improvedfull}.
\end{proof}

By \cref{rmk:mtfdecomp}, combining \cref{lem:efmtffull} (in particular the bounds on the congestion and the maximum path length) with \cref{cor:msfdecomp} implies \cref{lem:flowreduction}.

\section{Proofs of Lemmas in \cref{sec:avgcond}}
\label{sec:deferredproofs}
\begin{lemma}
    In the construction in the proof of \cref{lem:l1dirweak}, given any $\eta = (k_1 = i, k_2, \dots, k_{d-2}, k_{d-1}, k_d)$ where $j \in \eta$, and any $s, t \in \Omorproj_{\eta,r}$ we have, if $k_d < s < t$ then either $0 \leq \phi_{ij,\eta s} \leq \phi_{ij,\eta t}$ or $0 \geq \phi_{ij,\eta s} \geq \phi_{ij,\eta t}$.

    Similarly if $s, t \in \Omorproj_{\eta, l}$ and if $t < s < k_d$ then either $0 \leq \phi_{ij,\eta s} \leq \phi_{ij,\eta t}$ or $0 \geq \phi_{ij,\eta s} \geq \phi_{ij,\eta t}$.

    \label{lem:rhomono}
\end{lemma}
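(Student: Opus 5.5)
We argue by induction on the length $d$ of the pinning $\eta$, essentially as in the proof of \cref{lem:rholeq1}; the present statement is the sign-and-monotonicity part of that lemma, without the bound by $1$. We treat $s,t\in\Omorproj_{\eta,r}$ with $k_d<s<t$; the left case is the mirror image, with the order reversed.

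\textbf{Base case.} This is the shortest $\eta$ containing $j$, namely $k_d=j$ and $\eta=\tau' j$ with $\tau'=(k_1,\dots,k_{d-1})$. Since $j\notin\tau'$, we have $\phi_{ij,\tau' s}=\phi_{ij,\tau' t}=0$ by the definition (the only nonzero base values are $\phi_{ij,ij}$ and $\phi_{ji,ji}$, and $\phi$ vanishes at pinnings not touching $j$). Also $\phi_{ij,\tau' j}=\piorproj(j)/\piorproj_{\tau'}(j)$ by \cref{lem:rhoj1}. Then the recursion \eqref{eqn:rhoetast} gives
\[
\phi_{ij,\eta s}=-\frac{\piorproj(j)}{\piorproj_{\tau'}(j)}\cdot\frac{\piorproj_{\tau'}(s)}{\piorproj_{\eta}(s)},
\]
and the analogous formula holds for $t$. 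Both values are nonpositive. By \cref{lem:pimono}, applied to $\tau'\subset\eta$ with $s<t$ in $\Omorproj_{\eta,r}$, the ratio for $t$ is at least the ratio for $s$. Hence $0\ge\phi_{ij,\eta s}\ge\phi_{ij,\eta t}$.

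\textbf{Inductive step.} Let $\eta'=\eta k_{d+1}$ with $k_{d+1}\in\Omorproj_{\eta,r}$, and let $s,t\in\Omorproj_{\eta',r}$ with $k_{d+1}<s<t$. Geometrically, $s,t$ lie on the far side of $k_{d+1}$ within the same region, so $k_{d+1},s,t\in\Omorproj_{\eta,r}$ appear in this order. The inductive hypothesis then gives a common sign for the chain $\phi_{ij,\eta k_{d+1}},\phi_{ij,\eta s},\phi_{ij,\eta t}$, with monotone absolute values. The one point to check is that the sign is the same for both pairs; this holds because the hypothesis for the pair $(k_{d+1},t)$ fixes the common sign.

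Suppose the sign is positive. Then $0\le \phi_{ij,\eta s}-\phi_{ij,\eta k_{d+1}}\le\phi_{ij,\eta t}-\phi_{ij,\eta k_{d+1}}$. By \eqref{eqn:rhoetast},
\[
\phi_{ij,\eta' s}=\frac{\piorproj_{\eta}(s)}{\piorproj_{\eta'}(s)}\bigl(\phi_{ij,\eta s}-\phi_{ij,\eta k_{d+1}}\bigr).
\]
Up to the sign convention fixed in \eqref{eqn:rhoetast}, a nonnegative multiplier that is monotone in the right direction preserves both the sign and the order. \cref{lem:pimono} says exactly that $\piorproj_\eta(s)/\piorproj_{\eta'}(s)\le\piorproj_\eta(t)/\piorproj_{\eta'}(t)$ for $s<t$. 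So the product of the smaller nonnegative quantity with the smaller positive factor is at most the product of the larger ones, which gives the claim. The negative case is symmetric. The case where $k_{d+1}$ lies on the left, with $s,t\in\Omorproj_{\eta',r}$, is handled the same way, with the order of $k_{d+1}$ relative to $s,t$ reversed and the reflected version of \cref{lem:pimono}.

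\textbf{Main obstacle.} The delicate step is the orientation bookkeeping. Three choices must all point the same way: the sign convention $\phi_{\eta t}-\phi_{\eta s}$ versus $\phi_{\eta s}-\phi_{\eta k_{d+1}}$ in \eqref{eqn:rhoetast}, the direction of monotonicity in \cref{lem:pimono}, and which of $s,t$ is closer to $k_{d+1}$. If they do not, the product could reverse the order instead of preserving it. I would first fix the convention by checking the base case explicitly, then confirm that each inductive application uses the same orientation. For the left/right switch, I would use the reflection symmetry of the polygon about the special edge.
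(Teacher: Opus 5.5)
Your proof is correct and follows essentially the same route as the paper's. It uses induction on the pinning length, and a base case where $j$ is the last pinned vertex (the other values vanish, $\phi_{ij,\tau' j}$ comes from \cref{lem:rhoj1}, and the ordering comes from \cref{lem:pimono}). The inductive step then writes $\phi_{ij,\eta' s}$ via \eqref{eqn:rhoetast} as a ratio that is monotone by \cref{lem:pimono}, times a difference $\phi_{ij,\eta s}-\phi_{ij,\eta k_{d+1}}$ that is monotone by the inductive hypothesis.

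One small correction concerns your sketch of the case $k_{d+1}\in\Omorproj_{\eta,l}$ with $s,t\in\Omorproj_{\eta',r}$. There you still have $k_{d+1}<s<t$; what changes is that $k_{d+1}$ is now the end of the monotone chain farthest from $k_d$. So the differences change sign but stay monotone in the distance from $k_{d+1}$, and \cref{lem:pimono} is used in its unreflected form.
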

\cref{lem:rhomono} states that the congestion is monotonic as one travels around the polygon. See \cref{fig:rhomonost}.
\begin{figure}
    \centering
    \includegraphics[width=0.5\linewidth]{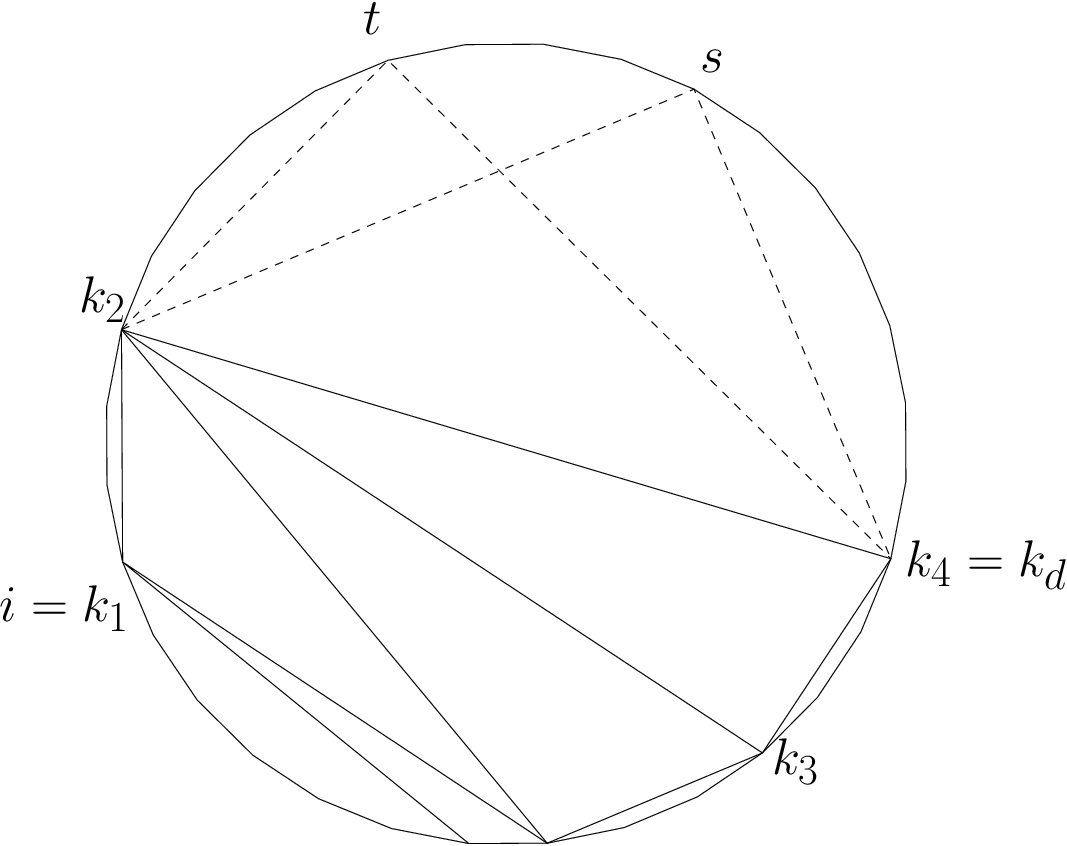}
    \caption{An illustration of \cref{lem:rhomono}: in this example, $\eta = (k_1, k_2, k_3, k_4)$; we have $s, t \in \Omega_{\eta,r}$ and $t < s < k_4$; and therefore either $0 \leq \phi_{ij,\eta s} \leq \phi_{ij,\eta t}$ or $0 \geq \phi_{ij,\eta s} \geq \phi_{ij,\eta t}$.}
    \label{fig:rhomonost}
\end{figure}

\begin{proof}[Proof of \cref{lem:rhomono}]
    We proceed by induction on $d$. Given $j \in T$, for $\eta = (i, j)$ we have 
    
    \[
        \phi_{ij,ijs} =  \frac{\piorproj_i(j)}{\piorproj_{is}(j)}(\phi_{ij,is} - \phi_{ij,ij}) = -\frac{\piorproj_i(j)}{\piorproj_{is}(j)}\phi_{ij,ij} = -\frac{\piorproj_i(s)}{\piorproj_{ij}(s)}\phi_{ij,ij}
    \]
        
     (where we have used an analogue of~\eqref{eqn:rhodistmat} for the last equality) and similarly $\phi_{ij,ijt} = -\frac{\piorproj_i(t)}{\piorproj_{ij}(t)}\phi_{ij,ij}$. The claim then follows for $\phi_{ij,ijs}, \phi_{ij,ijt}$ from \cref{lem:pimono}.

    Also $\phi_{ij,iks} = \phi_{ij,ikt} = 0$ whenever $k \neq j$.

    Now suppose $\eta = (k_1, k_2, \dots, k_{d-1}, k_d)$ and $\eta' = (k_1, k_2, \dots, k_{d-1}) = \eta \setminus \{k_d\}$, and $\eta'' = (k_1, k_2, \dots, k_{d-2}) = \eta' \setminus \{k_{d-1}\}$.

    Then 
    \begin{align}
        \phi_{ij,\eta s} = \frac{\piorproj_{\eta'}(s)}{\piorproj_{\eta}(s)}(\phi_{ij,\eta' s} - \phi_{ij,\eta})
        \label{eqn:rhomonoind}
    \end{align}
    and $\phi_{ij,\eta t}$ is similar. By the inductive hypothesis either 
    
    \begin{align}
        \phi_{ij,\eta' t} \geq \phi_{ij,\eta' s} \geq \phi_{ij,\eta' k_d} = \phi_{ij,\eta} \geq  0 \textnormal{ or } \phi_{ij,\eta} = \phi_{ij,\eta' k_d} \geq \phi_{ij,\eta' s} \geq \phi_{ij,\eta' t}  \geq  0 \label{eqn:userhomono}
    \end{align} (or else the same holds with the terms negated and the inequalities reversed).
    By \cref{lem:pimono}
    \begin{align}
        \frac{\piorproj_{\eta'}(t)}{\piorproj_{\eta}(t)}
        \geq \frac{\piorproj_{\eta'}(s)}{\piorproj_{\eta}(s)}
        \geq 0 \label{eqn:usepimono}
    \end{align}

    Therefore combining \eqref{eqn:usepimono} and \eqref{eqn:userhomono}, and using the definition of $\phi_{ij,\eta s}$ and $\phi_{ij,\eta t}$ (i.e.~\eqref{eqn:rhomonoind}) we obtain $\phi_{ij,\eta t} \geq \phi_{ij,\eta s} \geq 0$ or else $\phi_{ij,\eta t} \leq \pi_{ij,\eta s} \leq 0$.
\end{proof}

\begin{proof}[Proof of \cref{lem:rhodecomp}]
    Let $\Omorproj_{\eta', l} = \{i_l, i_l + 1, \dots, k_{d+1} - 1\}$ and $\Omorproj_{\eta', r} = \{k_{d+1} + 1, \dots, k_{d} - 1\}$ where $i_l$ is the leftmost element of $\Omorproj_{\eta', l}$ and is determined by $\eta$.

    Then we have
    \begin{equation}
        \phi_{ij,l}^*(\eta) = |\phi_{ij,\eta i_l}| \label{eqn:rholeta}
    \end{equation}
    which can be seen to follow from \cref{lem:pimono} and \cref{lem:rhomono}.

    For the same reason
    \begin{align}
        \phi_{ij,l}^*(\eta') &= |\phi_{ij, \eta' i_l}| \label{eqn:rholetaprime}\\
        \phi_{ij,r}^*(\eta') &= |\phi_{ij, \eta' k_{d}-1}| \label{eqn:rhoretaprime}
    \end{align}

    Furthermore, by the definition of $\phi_{\cdot, \cdot}$:
    \begin{align}
        \phi_{ij,\eta' i_l} &= \frac{\piorproj_{\eta}(i_l)}{\piorproj_{\eta k_{d+1}}(i_l)}(\phi_{ij,\eta i_l} - \phi_{ij, \eta k_{d+1}}) \label{eqn:rhodiff1}\\
        \phi_{ij,\eta' k_d-1} &= \frac{\piorproj_{\eta}(k_d-1)}{\piorproj_{\eta k_{d+1}}(k_d-1)}(\phi_{ij,\eta k_d-1} - \phi_{ij, \eta k_{d+1}}) \label{eqn:rhodiff2}
    \end{align}

    Then by \eqref{eqn:rhodiff1} and \eqref{eqn:rhodiff2} and another application of \cref{lem:pimono} and \cref{lem:rhomono}:
    \begin{align}
        |\phi_{ij,\eta' i_l}| + |\phi_{ij,\eta' k_d - 1}| &= |\phi_{ij,\eta' i_l} - \phi_{ij, \eta' k_d - 1}| \label{eqn:sumabs}\\
        &=   \left|\frac{\piorproj_{\eta}(i_l)}{\piorproj_{\eta k_{d+1}}(i_l)}(\phi_{ij,\eta i_l} - \phi_{ij, \eta k_{d+1}}) + \frac{\piorproj_{\eta}(k_d-1)}{\piorproj_{\eta k_{d+1}}(k_d-1)}(\phi_{ij, \eta k_{d+1}} - \phi_{ij,\eta k_d-1})\right| \nonumber \\
        &\leq  \left|(\phi_{ij,\eta i_l} - \phi_{ij, \eta k_{d+1}}) + (\phi_{ij, \eta k_{d+1}} - \phi_{ij,\eta k_d-1})\right| \label{eqn:removepi}\\
        &\leq |\phi_{ij, \eta i_l}| \label{eqn:sumrho}
    \end{align}
    where \eqref{eqn:sumabs} is because, by \cref{lem:rhomono}, $\phi_{ij,\eta' i_l}$ and $\phi_{ij,\eta' k_d-1}$ have opposite signs, and where \eqref{eqn:removepi} is by \cref{lem:pigood}.

    The claim follows by combining \eqref{eqn:sumrho} with \eqref{eqn:rholeta}, \eqref{eqn:rholetaprime}, and \eqref{eqn:rhoretaprime}.
\end{proof}
\begin{proof}[Proof of \cref{lem:maxcong}]
This analysis will closely follow \cite{eppstein2022improved}.

We will prove by induction on $|\eta|$ that for all $\eta,$ $|\phi_{\eta}| \leq 1$, implying the result\textemdash where we define $\phi_{\eta}:= \sum_{i\in S, j\in T}\phi_{ij,\eta}$.

It will be helpful to observe that $\phi_{\cdot,\cdot}$ satisfies the axioms of a flow function. We wish to show that at most one unit of flow is sent across any $x,y$ edge.

Observe first that for all $\eta = i \in \Omorproj$, and for all $s \in \Omorproj_{i,l}$ (similarly $s \in \Omorproj_{i,r}$) each triangulation  in $\Omega_{\eta s} = \Omega_{is}$ can be viewed as having at most one unit of congestion that it may have received from a neighboring state in some $\Omega_{si}$ (if $s \in T$), and which it must distribute throughout $\Omega_{\eta}$, i.e.
\[
    \phi_{\eta s} = \sum_{j \in T}\phi_{ij,\eta s} = \phi_{s,\eta s} 
\]
and therefore $|\phi_{\eta s}| \leq 1$.

Then for all $s, t \sim \eta$ we have
\begin{align}
    \phi_{\eta s t} &= \sum_{j \in T}\phi_{ij, \eta s t} \\
    &= \sum_{j \in T} \frac{\piorproj_{\eta}(t)}{\piorproj_{\eta s}(t)}(\phi_{ij,\eta t} - \phi_{ij, \eta s})  \\
    &= \frac{\piorproj_{\eta}(t)}{\piorproj_{\eta s}(t)}(\sum_{j \in T}\phi_{ij,\eta t} - \sum_{j \in T} \phi_{ij, \eta s}) \\
    &= \frac{\piorproj_{\eta}(t)}{\piorproj_{\eta s}(t)}(\phi_{\eta t} - \phi_{\eta s})
\end{align}
and applying \cref{lem:pigood} and \cref{lem:rhomono} implies $|\phi_{\eta s t}| \leq 1$.
\end{proof}
\end{document}